\newcommand{\mcm}[3]{\newcommand{#1}[#2]{{\ensuremath{#3}}}} 
\newcommand{\sm}{\setminus}
\newcommand{\se}{\subseteq}
\newcommand{\ct}{^\complement}
\mcm{\tuple}{1}{\langle #1 \rangle}
\mcm{\name}{1}{\ulcorner #1 \urcorner}
\mcm{\Nbb}{0}{\mathbb{N}}
\mcm{\Zbb}{0}{\mathbb{Z}}
\mcm{\Rbb}{0}{\mathbb{R}}
\mcm{\Cbb}{0}{\mathbb{C}}
\mcm{\Fbb}{0}{\mathbb{F}}
\mcm{\Bcal}{0}{\cal B}
\mcm{\Ccal}{0}{\cal C}
\mcm{\Dcal}{0}{\cal D}
\mcm{\Ecal}{0}{\cal E}
\mcm{\Fcal}{0}{\cal F}
\mcm{\Gcal}{0}{\cal G}
\mcm{\Hcal}{0}{\cal H}
\mcm{\Ical}{0}{\cal I}
\mcm{\Lcal}{0}{\cal L}
\mcm{\Mcal}{0}{\cal M}
\mcm{\Ncal}{0}{\cal N}
\mcm{\Pcal}{0}{{\cal P}}
\mcm{\Scal}{0}{{\cal S}}
\mcm{\Tcal}{0}{{\cal T}}
\mcm{\Ucal}{0}{{\cal U}}
\mcm{\Vcal}{0}{{\cal V}}
\mcm{\Wcal}{0}{{\cal W}}
\mcm{\Xcal}{0}{{\cal X}}
\mcm{\Ycal}{0}{{\cal Y}}
\mcm{\Zcal}{0}{{\cal Z}}
\mcm{\Mfrak}{0}{\mathfrak M}
\mcm{\restric}{0}{\upharpoonright}
\mcm{\upset}{0}{\uparrow}
\mcm{\downset}{0}{\downarrow}
\mcm{\onto}{0}{\twoheadrightarrow}
\mcm{\smallNbb}{0}{{\small \mathbb{N}}}
\DeclareMathOperator{\preop}{op}
\mcm{\op}{0}{^{\preop}}
\newcommand{\supth}{\textsuperscript{th}}
\newcommand{\itum}{\item[$\bullet$]}
\newcommand{\theoremize}[2]{\newaliascnt{#1}{thm} \newtheorem{#1}[#1]{#2} \aliascntresetthe{#1}}
\theoremstyle{plain}
\newtheorem{thm}{Theorem}[section]
\theoremstyle{definition}
\theoremstyle{plain}
\title{\scshape Infinite Matroids and Determinacy of Games}
\author{Nathan Bowler \and Johannes Carmesin}
\begin{document}

\maketitle

\begin{abstract}
Solving a problem of Diestel and Pott, we construct a large class of infinite matroids.
These can be used to provide counterexamples against the natural extension of the Well-quasi-ordering-Conjecture to infinite matroids and to show that the class of planar infinite matroids does not have a universal matroid.

The existence of these matroids has a connection to Set Theory in that it corresponds to the Determinacy of certain games.
To show that our construction gives matroids, we introduce a new very simple axiomatization of the class of countable tame matroids. 


\end{abstract}

\section{Introduction}

One of the big problems in the development of infinite matroid theory has been 
that there have not been very many known examples of infinite matroids, which then could have been used as a supply of counterexamples. 
Recent work of Diestel and Pott \cite{DP:dualtrees} suggested somewhere to look for a new large class of infinite matroids.
Before we will talk about the class itself, we shall first explain a little bit about what they were doing.

They looked at the question how one could extend the following theorem to infinite graphs:
Two finite graphs are dual if and only if their cycle matroids are dual to each other.
In the infinite case, the situation is no longer that easy since there are at least two different cycle matroids associated to an infinite locally finite graph $G$: the finite cycle matroid $M_{FC}(G)$,
whose circuits are the finite circuits of $G$, and the topological cycle matroid $M_C(G)$, whose circuits are edge sets of topological circles in the end-compactification $|G|$
of $G$ \cite{matroid_axioms}.
Note that $M_{FC}(G)$ is finitary and $M_C(G)$ is cofinitary.
In fact, if $G$ and $G^*$ are dual in a suitable sense, then $M_{FC}(G)$ and $M_C(G^*)$
are dual to each other.

Motivated by the slight asymmetry of this fact, Diestel and Pott \cite{DP:dualtrees} introduced 
a more general context in which a stronger result is true.
Given a partition of the ends of $G$ into $\Psi$ and $\Psi\ct$, a $\Psi$-circuit is a topological circuit using only ends from $\Psi$, and a $\Psi$-tree is a set of edges maximal with the property that it does not include a $\Psi$-circuit.
If $\Psi=\Omega(G)$, then the $\Psi$-circuits and $\Psi$-trees are the $M_C(G)$-circuits and $M_C(G)$-bases, whereas if $\Psi=\varnothing$, then the $\Psi$-circuits and $\Psi$-trees are the $M_{FC}(G)$-circuits and $M_{FC}(G)$-bases.

Let $G=(V,E,\Omega)$ and $G^*=(V^*,E,\Omega)$ be two finitely separable\footnote{
A graph is \emph{finitely separable} if any two vertices can be separated by removing only finitely many vertices} $2$-connected graphs with the same set of edges $E$ and the same set of ends $\Omega$.
Diestel and Pott showed that if $G$ and $G^*$ are duals, then for every $\Psi$ 
the complements of $\Psi$-trees in $G$ are precisely the $\Psi\ct$-trees in $G^*$.
This means that if the set of $\Psi$-trees were the set of bases of some matroid, then
the set of $\Psi\ct$-trees in $G^*$ would also be the set of bases of a matroid, namely its dual.
This tempted Diestel and Pott to ask\footnote{personal communication} the following.
\begin{que}\label{que:is_matroid}
Let $G$ be a locally finite graph and $\Psi\se \Omega(G)$.
Is the set of $\Psi$-trees the set of bases of a matroid?
\end{que}

Unfortunately, the answer to this question is no. Indeed, with some effort the question
can be reduced to the question 
about path-connectedness in certain connected subspaces of $|G|\sm \Psi\ct$.
Questions of this type have been considered by Georgakopoulos in \cite{cnpc}, and his main counterexample from there also gives a counterexample here.
However, the construction of the set $\Psi$ in this case heavily relies on the Axiom of Choice (we will return to this point later). 

The purpose of this paper is to show that if the set $\Psi$ is pleasant enough, in a sense we will now explain, then the set of $\Psi$-trees is the set of bases of a matroid.

It will turn out that the way pleasantness is measured has to with Determinacy of Sets (See Section \ref{sec:2sums} for an explanation why this is a good way to measure pleasantness here).
Determinacy of sets is usually defined using games.
Let $\Psi\se A^\Nbb$ for some set $A$, then the  $\Psi$-game $\Gcal(\Psi)$ is the following game between two players
which has one move for every natural number.
In each odd move the first player chooses an element of $A$ whereas in each even move the second player chooses such an element. The first player wins if and only if the sequence they generate between them is in $\Psi$.
The set $\Psi$ is \emph{determined} if one player has a winning strategy.
The question which sets are determined has been investigated a lot in set theory \cite{determinacy_survey}:
The statement that all subsets $\Psi\se A^\Nbb$ with $A$ countable are determined is called
\emph{the Axiom of Determinacy}, and is sometimes taken as an alternative  to the Axiom of Choice. Indeed, if one assumes the Axiom of Determinacy instead of the Axiom of Choice, every set of real numbers becomes Lebesgue measurable \cite{determinacy_to_measurability}.
A deep result in this area says that if $\Psi$ is Borel (in the product topology), then it is determined \cite{Martin_borel}. 

We will want to consider slightly more general games in which 
the set of moves available to a player may vary depending on the moves made so far in the game, and may even sometimes be empty.
Any game like this can be coded up by an equivalent game of the above type, so we will not worry too much about this issue.
A game is \emph{determined}
if at least one of the players has a winning strategy.

Next we sketch how we transform \autoref{que:is_matroid} into an equivalent statement about determinacy of games.
First we build from a given locally finite graph $G$ what we call a \emph{tree of matroids}
which is a tree $T$ whose ends are the ends of $G$, where for each node we store a finite matroid, and for each edge we store information about how to glue together the matroids for the two incident nodes. We do this in such a way that if we do all the gluing at once we get  back all the relevant information about $G$.

Then we introduce the \emph{circuit games} which are games of the above type
in which each possible play defines a (possibly infinite) path in $T$ starting at a fixed node of $T$. If play continues forever, then the path is infinite and the first player wins if and only if that path belongs to some end in $\Psi$ (for a precise Definition of the game see Section \ref{sec:2sums} or \ref{sec:arbi_sums}).
Having done this, we then are able to reduce \autoref{que:is_matroid}
to a question about the determinacy of circuit games:
\begin{thm}\label{main}
 The set of $\Psi$-trees is the set of bases of a matroid
if and only if certain circuit games are all determined.
\end{thm}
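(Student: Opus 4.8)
The plan is to carry out the reduction sketched above and turn the statement into a single axiom check. Write $\Ical_\Psi$ for the family of edge sets of $G$ that contain no $\Psi$-circuit, so that the $\Psi$-trees are exactly the $\subseteq$-maximal members of $\Ical_\Psi$. Using the independence axiomatisation of infinite matroids of \cite{matroid_axioms}, and observing that $\Ical_\Psi$ contains $\varnothing$, is closed under subsets, and satisfies the augmentation axiom (all elementary facts about $\Psi$-circuits in a tree of matroids, needing no determinacy), one checks that the $\Psi$-trees are the bases of a matroid if and only if $\Ical_\Psi$ also satisfies the maximality axiom (IM): for every $I\in\Ical_\Psi$ and every $X\supseteq I$, the poset $\{I'\in\Ical_\Psi : I\se I'\se X\}$ has a maximal element. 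The remaining task is to show that $\Ical_\Psi$ satisfies (IM) if and only if all the relevant circuit games are determined; here ``relevant'' means the circuit games of all restrictions and contractions of the tree of matroids built from $G$, since deleting or contracting edges again produces a tree of matroids and the assertion ``$S$ contains a $\Psi$-circuit through $e$'' is precisely a statement about the corresponding minor.

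For the backward direction, assume all these circuit games are determined, and fix $I\in\Ical_\Psi$ and $X\supseteq I$. I would build a maximal $I'$ with $I\se I'\se X$ by a recursion along the tree of matroids $T$: starting from a root and moving outward, at each node select a basis of the relevant minor of the finite matroid stored there, consistently with the finitely many choices already fixed at the neighbours nearer the root. Since $T$ is a tree these local decisions propagate outward without conflict, so they assemble into an edge set $I'$ with $I\se I'\se X$. Determinacy enters by letting us make each local decision according to a winning strategy in the appropriate circuit game, so that $I'$ never acquires a $\Psi$-circuit: a $\Psi$-circuit inside $I'$ through an edge $e$ is exactly a play of a circuit game won by the first player --- an infinite such play yields an end, which lies in $\Psi$ precisely when the circuit is a $\Psi$-circuit --- and we will have followed strategies that defeat all such plays. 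Maximality is forced the same way: the recursion declines to put an edge of $X$ into $I'$ only when doing so would be forced to create a $\Psi$-circuit, which is witnessed by a winning strategy for the first player in the corresponding minor's game.

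For the forward direction I would prove the contrapositive: from a circuit game $\Gcal$ that is not determined I construct $I\in\Ical_\Psi$ and $X\supseteq I$ for which $\{I'\in\Ical_\Psi : I\se I'\se X\}$ has no maximal element, so (IM) fails. Non-determinacy gives two complementary facts. Since the first player has no winning strategy, whenever we have built a $\Psi$-independent set covering the part of $T$ controlled by the current position, the second player's responses can be continued so as to expose a further edge that may still be added safely; so no candidate is maximal. Since the second player has no winning strategy, from any position the first player can steer the play so as eventually to complete a $\Psi$-circuit; so no $\Psi$-independent enlargement can swallow all of $X$. Threading these facts together in a back-and-forth along the ill-founded game tree produces a strictly increasing $\omega$-chain (or longer) in $\Ical_\Psi$, all of whose members lie between $I$ and $X$, with no upper bound of the same kind --- the required failure of (IM). Via the duality of Diestel and Pott, namely that the complements of $\Psi$-trees in $G$ are the $\Psi\ct$-trees in $G^*$, the construction is symmetric in $G$ and $G^*$, which is why the theorem refers to a whole family of circuit games.

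The step I expect to be the main obstacle is pinning down the exact dictionary between plays of the circuit games and $\Psi$-circuits confined to a prescribed edge set --- in particular the equivalence ``the first player wins the circuit game of the minor corresponding to $S$'' $\Longleftrightarrow$ ``$S$ contains a $\Psi$-circuit through the distinguished edge'' --- and then, in the forward direction, organising the back-and-forth so that the two hypotheses ``no winning strategy for the first player'' and ``no winning strategy for the second player'', applied to suitably nested subgames, genuinely combine into one chain with no maximal upper bound rather than two unrelated obstructions. Once the dictionary is available, the backward recursion down $T$ is conceptually routine, the real work being the bookkeeping that the outward-propagated choices stay mutually consistent and never close off a $\Psi$-circuit.
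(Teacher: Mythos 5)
There is a genuine gap, and it sits exactly where you relocate all the difficulty. You reduce the theorem to the independence axioms and assert that $\Ical_\Psi$ satisfies (I3) as an ``elementary fact\ldots needing no determinacy'', leaving only (IM) to be traded against determinacy. But the augmentation axiom (I3) for $\Psi$-systems is not elementary at all --- verifying exchange-type properties directly is precisely what is intractable here, and it is why the paper abandons the independence axiomatisation entirely in favour of a new circuit--cocircuit axiomatisation (the orthogonality axioms of \autoref{ortho_axioms+}). In that framework (O1) and tameness are genuinely easy (\autoref{2SumsO1}, \autoref{2Sumstame}, \autoref{repO1tame}), (O3) and (O3$^*$) follow from tameness, and the \emph{only} axiom traded against determinacy is (O2), one instance per partition $E=\{e\}\dot\cup P_{Co}\dot\cup P_{De}$. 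Your proposal has no counterpart to this decomposition, so the claim that everything except (IM) is free is unsupported; indeed it is unclear which of your ``free'' axioms fails in Georgakopoulos's counterexample.

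The second, related gap is that your dictionary between games and the $\Psi$-system is only half built. You correctly identify ``Sarah wins $\iff$ there is a $\Psi$-circuit through $e$ inside $P_{Co}+e$'', but the other half of (O2) is ``Colin wins $\iff$ there is a $\Psi\ct$-\emph{cocircuit} through $e$ inside $P_{De}+e$'', and establishing that is the technical heart of the paper: one must show that Colin has a winning strategy in the circuit game if and only if he has one in the cocircuit game (\autoref{2sumtransfer}, and \autoref{arbi_sumtransfer} in the representable case, where the proof needs the linear-algebra sublemmas and K\"onig's Infinity Lemma). Your forward direction never mentions cocircuits; instead you try to convert non-determinacy into a failure of (IM) by a ``back-and-forth along the ill-founded game tree'' producing an unbounded chain in $\Ical_\Psi$. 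There is no reason the mere absence of winning strategies for both players yields such a chain; what non-determinacy actually yields is the simultaneous absence of a $\Psi$-circuit and of a $\Psi\ct$-cocircuit through $e$, i.e.\ a violation of (O2), which contradicts matroidhood because the single-element minor on $\{e\}$ must be a loop or a coloop. Similarly, your backward recursion (choosing local bases node by node outward along $T$) does not control the end behaviour that decides whether a $\Psi$-circuit has been created; the paper's construction instead recurses over an enumeration of the countable ground set, using (O2) together with the minimality clauses of (O3)/(O3$^*$) to maintain the invariants, with determinacy consumed only in establishing (O2) beforehand. Finally, passing between ``the $\Psi$-circuits and $\Psi\ct$-bonds form a matroid'' and ``the $\Psi$-trees are the bases of a matroid'' needs \autoref{psi_trees} and \autoref{cir_c_scrawl}, which your reduction to $\Ical_\Psi$ glosses over.
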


Applying the determinacy of Borel sets mentioned above, we obtain the following.

\begin{cor}
Let $G$ be a locally finite graph and $\Psi\se \Omega(G)$ a Borel set.
 Then the set of $\Psi$-trees is the set of bases of a matroid.
\end{cor}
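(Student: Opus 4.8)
The plan is to invoke \autoref{main} and then appeal to Martin's theorem on the determinacy of Borel games. By \autoref{main}, it suffices to show that each of the relevant circuit games built from $G$ and $\Psi$ is determined. Since every such game can be recoded as a game of the standard form on $A^\Nbb$ with $A$ countable, its space of plays is a Polish space, and it is enough to check that the set of plays won by the first player is Borel in this space; Martin's theorem then yields determinacy of all these games at once, and \autoref{main} gives that the $\Psi$-trees form the set of bases of a matroid.

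First I would split the winning set $W$ of the first player into its finite-play part and its infinite-play part. A play that terminates after finitely many moves is won according to the finite rules of the game, and the set of such plays, together with the sub-collection on which the first player wins, is determined by finite prefixes, hence a countable union of cylinder sets and in particular Borel. So it remains to understand the set of \emph{infinite} plays won by the first player. By construction, every infinite play of a circuit game determines a ray in the tree of matroids $T$ starting at the fixed initial node, and this ray lies in a unique end of $T$; the first player wins precisely when this end belongs to $\Psi$. Writing $\phi$ for the resulting map from infinite plays to ends of $T$, the infinite-play part of $W$ is exactly $\phi^{-1}(\Psi)$.

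The key step is to see that $\phi$ is continuous (Borel would suffice) once the space of ends of $T$ is identified, via the tree-of-matroids construction, with $\Omega(G)$ carrying its usual topology. Continuity is bookkeeping: a finite prefix of a play pins down a finite initial segment of the corresponding ray in $T$, hence a basic open neighbourhood of its end, and longer prefixes give smaller neighbourhoods. Since $G$ is locally finite, the end-compactification $|G|$ is compact metrizable, so $\Omega(G)$, being closed in $|G|$, is compact metrizable, hence Polish; the construction identifies this space homeomorphically with the space of ends of $T$. Therefore $\phi$ is continuous into a Polish space, $\Psi$ is Borel there by hypothesis, and $\phi^{-1}(\Psi)$ is Borel. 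Together with the previous paragraph, $W$ is Borel, so by Martin's theorem the game is determined; as this applies uniformly to every relevant circuit game, \autoref{main} completes the proof.

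The main obstacle I anticipate is not conceptual but a matter of carefully matching up topologies: one must verify that the identification of the ends of $T$ with the ends of $G$ supplied by the tree-of-matroids construction is compatible with the end topologies in the way needed, and that the recoding of the variable-move circuit games into standard games preserves Borelness of the winning set — which it does, since the recoding is a continuous reparametrisation of plays and Borel sets pull back to Borel sets. Granting these routine checks, every relevant circuit game has a Borel winning condition and is hence determined, and the corollary follows from \autoref{main}.
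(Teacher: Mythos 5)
Your proposal is correct and follows essentially the same route as the paper: reduce via Theorem~\ref{main} to determinacy of the circuit games, observe that the projection from infinite plays to $\Omega(T)\cong\Omega(G)$ is continuous so that the winning set is Borel, and invoke Martin's Borel determinacy (this is exactly the argument of Corollary~\ref{Borel2sums} and its representable analogue Corollary~\ref{Borel_arbi_sums}, combined with Lemma~\ref{psi_trees}). Your extra bookkeeping about finite plays and the recoding of variable-move games only makes explicit what the paper leaves implicit.
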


A key ingredient in proving \autoref{main} is the following theorem which comes from a new axiomatisation of the class of countable tame matroids. Here a matroid is tame  if every circuit-cocircuit-intersection is finite.

\begin{thm}\label{new_axioms}
 Let $E$ be countable and $\Ccal,\Dcal\se \Pcal(E)$ such that $|C\cap D|$ is never $1$ or infinite for any $C\in \Ccal$ and $D\in \Dcal$.
Further assume that for all partitions $E=P_{Co}\dot\cup P_{De}\dot\cup \{e\}$ where $e\in E$
either $P_{Co}+e$ includes an element of $\Ccal$ through $e$ or
$P_{De}+e$ includes an element of $\Dcal$ through $e$.

Then the minimal nonempty element of $\Ccal$ and $\Dcal$ are the circuits and cocircuits of some matroid. 
\end{thm}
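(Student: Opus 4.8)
The plan is to extract from $\Ccal$ and $\Dcal$ a candidate family of bases and to verify the infinite base axioms. Call $B\se E$ \emph{in-good} if no member of $\Ccal$ is contained in $B$, and \emph{out-good} if no member of $\Dcal$ is contained in $B$; let $\Bcal$ be the family of $B$ with $B$ in-good and $E\sm B$ out-good. The basic observation, used throughout, is that the dichotomy in the hypothesis is \emph{exclusive}: if $C\in\Ccal$ with $e\in C\se P_{Co}+e$ and $D\in\Dcal$ with $e\in D\se P_{De}+e$, then $C-e\se P_{Co}$ and $D-e\se P_{De}$ are disjoint, so $C\cap D=\{e\}$, contradicting $|C\cap D|\neq1$. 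Hence for every partition $E=P_{Co}\dot\cup P_{De}\dot\cup\{e\}$ \emph{exactly one} of the two alternatives holds. At the end I will check that the circuits and cocircuits of the matroid with base family $\Bcal$ are exactly the minimal nonempty members of $\Ccal$ and of $\Dcal$.

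First I would develop fundamental circuits and cocircuits. If $B\in\Bcal$ and $e\notin B$, then applying the dichotomy to $P_{Co}=B$, $P_{De}=(E\sm B)-e$ rules out the $\Dcal$-alternative (it would place a member of $\Dcal$ inside the out-good set $E\sm B$), so $B+e$ contains a member of $\Ccal$; such a member automatically passes through $e$, and it is unique as an inclusion-minimal one, for if $C_1\neq C_2$ were two minimal ones, picking $f\in C_1\sm C_2\se B$ and the fundamental cocircuit $D$ of $f$ with respect to $B$ (so $D\cap B=\{f\}$) forces $|C_1\cap D|=1$ when $e\notin D$ and $|C_2\cap D|=1$ when $e\in D$. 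Write $C_B(e)$, and dually $D_B(e)$ for $e\in B$. Now for base exchange: given $B_1,B_2\in\Bcal$ and $x\in B_1\sm B_2$, the sets $D_{B_1}(x)$ and $C_{B_2}(x)$ both contain $x$, so by orthogonality they meet in a point $y\neq x$, which lies in $B_2\sm B_1$. One then checks $B_1-x+y\in\Bcal$: any $C\in\Ccal$ inside $B_1-x+y$ contains $y$ (else it lies in $B_1$) and hence meets $D_{B_1}(x)$ only in $y$; for the complement, $C_{B_1}(y)$ must contain $x$ (else it meets $D_{B_1}(x)$ only in $y$), and then any $D\in\Dcal$ inside $(E\sm B_1)-y+x$ contains $x$ (else it lies in $E\sm B_1$) and hence meets $C_{B_1}(y)$ only in $x$ — all three being impossible since $|C\cap D|\neq1$.

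The heart of the proof, and the one place where countability of $E$ enters, is a recursive construction of members of $\Bcal$ with prescribed behaviour, which yields simultaneously the existence axiom and the maximality axiom. Given an in-good $I$ and a set $X\supseteq I$ (for mere existence, take $I=\varnothing$ and $X=E$), enumerate $X\sm I=\{x_0,x_1,\dots\}$ and build disjoint increasing sets $J_n$ (``in'', containing $I$) and $K_n$ (``out'', containing $E\sm X$) that decide $x_0,\dots,x_{n-1}$: at step $n$ apply the dichotomy to $P_{Co}=J_n\cup\{x_{n+1},x_{n+2},\dots\}$, $P_{De}=K_n$, singleton $x_n$, and declare $x_n$ ``out'' if the $\Ccal$-alternative holds and ``in'' if the $\Dcal$-alternative holds (exactly one does). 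Put $J_\infty=\bigcup_n J_n$. Then $J_\infty$ is in-good: if $C\in\Ccal$ lay in $J_\infty$ it would not be contained in $I$, so some $x_n\in C$ was declared ``in'', witnessed by a $D\in\Dcal$ with $x_n\in D\se K_n+x_n$; since $J_\infty$ is disjoint from $K_n$ this gives $C\cap D=\{x_n\}$, which is impossible. The remaining point — that $E\sm J_\infty$ is out-good, equivalently that $J_\infty$ is a \emph{maximal} in-good subset of $X$ — is the genuinely hard step and the main obstacle of the proof; it is exactly here that the hypothesis ``$|C\cap D|$ is never infinite'' is needed, since without it an infinite member of $\Dcal$ could be swallowed by the complement of the set we build, and ruling this out requires a careful analysis of the run of the construction (the symmetric statement for the dual construction below is what forces tameness to be used again).

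Granting this, $\Bcal$ is nonempty, satisfies base exchange, and has the maximality property — the construction produces, for every in-good $I$ and every $X\supseteq I$, a maximal in-good subset of $X$ containing $I$ — so $\Bcal$ is the set of bases of a matroid $M$. Its independent sets are the in-good sets, so its circuits, the minimal dependent sets, are the minimal nonempty members of $\Ccal$. Finally, the hypotheses are symmetric under interchanging $\Ccal$ and $\Dcal$, so the same construction with the roles swapped shows the out-good sets are the independent sets of a matroid; its bases are the complements of the members of $\Bcal$, so it is $M^*$, and therefore the cocircuits of $M$ are the minimal nonempty members of $\Dcal$, as required.
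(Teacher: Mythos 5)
Your overall strategy (defining $\Bcal$ from in-good/out-good sets, verifying the base axioms, with a recursive construction supplying existence and maximality) is workable in outline, and your observations about exclusivity of the dichotomy, fundamental circuits and cocircuits, and base exchange are correct. But the step you defer --- that the set $J_\infty$ produced by your recursion is a \emph{maximal} in-good subset of $X$, equivalently that its complement is out-good --- is not merely ``the genuinely hard step'': the specific recursion you propose provably fails to achieve it, so the gap cannot be closed by ``a careful analysis of the run of the construction''. Take $E=\Nbb$, let $\Ccal$ be the set of all $2$-element subsets of $E$ and $\Dcal=\{E\}$ (the circuit/cocircuit data of $U_{1,E}$). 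All hypotheses hold: every $C\cap D$ has size $2$, and for a partition $E=P_{Co}\dot\cup P_{De}\dot\cup\{e\}$ the $\Ccal$-alternative holds exactly when $P_{Co}\neq\varnothing$ and the $\Dcal$-alternative exactly when $P_{Co}=\varnothing$. Run your recursion with $I=\varnothing$, $X=E$: at every step $n$ the set $P_{Co}=J_n\cup\{x_{n+1},x_{n+2},\dots\}$ contains $x_{n+1}$, so $\{x_n,x_{n+1}\}$ witnesses the $\Ccal$-alternative and $x_n$ is declared out. Hence $J_\infty=\varnothing$, which is not maximal in-good, and $K_\infty=E$ contains the member $E$ of $\Dcal$. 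The defect is that your ``out'' test looks ahead at the undecided elements $x_{n+1},x_{n+2},\dots$, which may themselves later be declared out; lumping them with $P_{De}$ instead fails dually (take $\Ccal=\{E\}$ and $\Dcal$ the set of all pairs: then every element is declared in and $J_\infty=E$ is not even in-good).

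The paper's proof (via \autoref{ortho_axioms+} and the theorem following it) shows what is needed to repair this. First, tameness together with $(O1)$ and $(O2)$ yields, by Zorn's lemma, the minimality axioms $(O3)$ and $(O3^*)$, and $(O2)$ yields circuit elimination for both $\Ccal$ and $\Dcal$ (\autoref{cireli_partitioning}). Then the recursion tests $x_n$ only against the already-committed sets: $x_n$ goes out only if some $C\in\Ccal$ satisfies $x_n\in C\se I_{n-1}+x_n$; otherwise $x_n$ goes in, and --- this is the missing idea --- an entire cocircuit $D$ with $x_n\in D\se E\sm I_{n-1}$, chosen with $D\sm J_{n-1}$ minimal, is committed to the out-side at the same step (and dually when $x_n$ was already forced out as part of such a $D$ at an earlier step). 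The minimal choice, combined with elimination in $\Dcal$, maintains the invariant that the out-side never swallows a member of $\Dcal$ meeting $X$, which is exactly the property your construction loses. So the theorem is true, but your proposal is missing the one idea that makes the recursion converge to a maximal in-good set with out-good complement.
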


These new axioms are simpler than the general matroid axioms since there is no axiom that is as complicated as the axiom called $(IM)$. It should be noted that this axiomatisation is very similar to Minty's axiomatisation of finite matroids
\cite{Minty}.

So far we have talked only about locally finite graphs. 
Our proof in that case heavily relies on the assumption that the graph is locally finite (Once the definition of a tree of matroids is made precise, it is clear that this requires the graph to be locally finite). However, we are able to extend our results to all countable graphs. The argument takes the whole of Section \ref{loc_to_count} and uses a new technique; we expect that this technique can also be used in other contexts to extend results from locally finite to countable graphs.

The new matroids we constructed in this paper can be used to find counterexamples
to various conjectures about infinite matroids. We shall illustrate this with three examples.

A class $\Fcal$ of matroids is \emph{well-quasi-ordered} 
if for every sequence $(M_n|n\in\Nbb)$ with $M_n\in\Fcal$
there are $i<j$ such that $M_i \preceq M_j$.
Robertson and Seymour proved \cite{MR2099147} that the class of finite graphs is
well-quasi-ordered. 
In 1965, Nash-Williams \cite{MR0175814} proved that infinite trees are well-quasi-ordered.
This was extended by Thomas to the class of graphs of bounded branch width \cite{MR932450}.
On the other hand, he provided a sequence of uncountable graphs showing that the class of all graphs is not well-quasi-ordered \cite{MR913450}. 
It is not known if the class of countable graphs is well-quasi-ordered.
For finite matroids, it is at the moment an important project to prove that the class of matroids representable over a fixed finite field is well-quasi-ordered.
Geelen, Gerards and Whittle \cite{MR1889259} proved that this is true if the matroids have bounded branch width.
For infinite matroids almost nothing is known. 
Azzato and  Jeffrey \cite{MR2771295} 
made a first step towards proving that the class of finitary matroids of bounded branch width representable over a fixed finite field is well-quasi-ordered.
In this paper we consider the corresponding question for infinite matroids, not just for the finitary ones.
The new matroids we construct can be used to show that the answer to this question is no, even in a very special case.

\begin{cor}\label{wqo}
 The countable binary matroids of branch-width at most $2$ are not well-quasi-ordered (under the minor relation).
\end{cor}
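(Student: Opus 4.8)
The plan is to extract from the construction of this paper an explicit sequence $(M_n \mid n \ge 1)$ of countable binary matroids of branch-width at most $2$ that forms an antichain under the minor relation, i.e.\ with $M_i \not\preceq M_j$ whenever $i<j$. Each $M_n$ will be a tree of matroids over a suitable tree $T_n$ in which every node carries a finite \emph{circuit} $U_{1,k}$ or \emph{cocircuit} $U_{k-1,k}$ and the gluings along the edges of $T_n$ are $2$-sums. Three observations then place $M_n$ inside the scope of the corollary. First, $M_n$ is binary: circuits and cocircuits are binary and the class of binary matroids is closed under $1$- and $2$-sums. Second, $M_n$ has branch-width at most $2$: a branch decomposition is obtained from $T_n$ by replacing each node by a branch decomposition of its node matroid (a finite circuit or cocircuit, of branch-width $1$) and, for each edge of $T_n$, identifying the two leaves that correspond to the glued element; every edge of $T_n$ induces a separation of order at most $2$, so the width is at most $2$. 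Third, $M_n$ really is a matroid — this is exactly what the tree-of-matroids machinery developed for \autoref{main} delivers, and at bottom it rests on applying \autoref{new_axioms} to the (countable, tame) candidate circuit and cocircuit sets. Finally, we arrange the labelling of $T_n$ so that circuits and cocircuits both occur cofinally, making $M_n$ neither finitary nor cofinitary; this is the point of the corollary, since for \emph{finitary} matroids of bounded branch-width over a fixed field a positive answer is at least plausible.

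\textbf{Encoding and the antichain property.} The trees $T_n$ are thin — each a ray, or a ray carrying uniformly bounded finite decorations — and the pattern of circuit/cocircuit labels along $T_n$ encodes a combinatorial invariant $p_n$ (for definiteness, say the sequence of lengths of the maximal monochromatic blocks of circuit- and cocircuit-nodes, together with the behaviour towards the end of the ray). The key tool is a structural description of the minors of a tree of matroids: every minor is again (a summand of) a tree of matroids obtained by passing to a subtree of $T$, contracting some tree-edges — which amalgamates the two incident node matroids by a $2$-sum — and replacing each node matroid by a minor of it, subject to the compatibility conditions forced by the roles of the glued elements. Granting this, any witness to $M_i\preceq M_j$ would induce an embedding of the pattern $p_i$ into $p_j$ of a very constrained kind: contracting a tree-edge can merge two adjacent blocks only if they carry the same label, so it can never erase a label change, while deletion can only shorten blocks. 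It then suffices to choose the $p_n$ so that $p_i$ admits no such constrained embedding into $p_j$ for $i<j$ — for instance by letting the block-length pattern of $M_i$ begin with a finite word that is incompatible with every subpattern of $M_j$ obtainable in this way. With the structural lemma in hand, this last step is a routine finite-combinatorial check.

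\textbf{Main obstacle.} The crux is the structural lemma on minors of trees of matroids, and specifically making precise how deletion and contraction interact with the $2$-sum gluings and with the circuit/cocircuit labels at the glued elements; this rigidity — which is absent, say, in the topological-minor order on trees, where Nash-Williams's theorem already gives well-quasi-ordering — is precisely what allows a countable antichain of matroids even though none is known for countable graphs. A secondary but genuine burden is to choose the $T_n$ and their labellings so that all the requirements hold at once: countability and tameness (so that \autoref{new_axioms} certifies the $M_n$), branch-width at most $2$, non-finitariness, and the antichain property of the induced pattern order. None of these is individually difficult, but they must be met simultaneously.
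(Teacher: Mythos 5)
There is a genuine gap, and it is not merely that your ``structural lemma on minors of trees of matroids'' is left unproved: the approach cannot work as described. The decisive point is your claim that the induced embedding of patterns is so rigid that ``contracting a tree-edge can merge two adjacent blocks only if they carry the same label, so it can never erase a label change''. This is false for the matroid minor relation. In a node whose matroid is a circuit $U_{k-1,k}$ you may contract \emph{all} of its non-dummy elements, leaving $U_{1,2}$ on the two dummy elements; such a node becomes transparent (every circuit of the glued matroid passes straight through it), so the two neighbouring nodes are effectively $2$-summed directly together. Dually, deleting all non-dummy elements of a cocircuit node trivialises it. Hence an entire block of your pattern can be erased, after which its two neighbouring blocks (which carry the same label) merge. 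Together with the fact that blocks can be shortened, this means that $M_i\preceq M_j$ whenever the word of node-labels of $M_i$ embeds as a subword into that of $M_j$. But infinite sequences over a finite alphabet are well-quasi-ordered (indeed better-quasi-ordered) under subword embedding by Nash-Williams's theorem --- the very theorem you invoke for trees --- so \emph{every} infinite family of ray-shaped trees of circuits and cocircuits with bounded decorations contains a comparable pair. No choice of the patterns $p_n$ can produce an antichain, and a ray has only one end, so the ``behaviour towards the end'' contributes only finitely much extra information.

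The paper's proof is structured quite differently and shows why the obstruction above must be circumvented by passing to a tree with $2^{\aleph_0}$ ends. One fixes the single graph $G=T_2\times K_2$, whose end space is homeomorphic to $2^{\Nbb}$, and takes $M_n$ to be the $\Psi_n$-matroid of $G$ for suitably chosen sets $\Psi_n\subseteq\Omega(G)$ (these are matroids for \emph{every} $\Psi$ by \autoref{T_2_lem}, since each circuit meets at most two dummy edges, so the relevant games are trivially determined). A connectivity argument --- lifting $2$-separations of a minor to $2$-separations of $M_j$ --- shows that any witness to $M_i\preceq M_j$ induces a continuous injection $f\colon\Omega(G)\to\Omega(G)$ with $f(\Psi_i)\subseteq\Psi_j$. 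The antichain is then obtained from \autoref{wqo_lem}, which constructs the $\Psi_n$ by transfinite recursion, diagonalising against all $2^{\aleph_0}$ continuous injections of Cantor space into itself. In particular the sets $\Psi_n$ are produced with the Axiom of Choice and the antichain is inherently non-explicit, in contrast with the explicit sequence you were aiming for; the set-theoretic wildness of the $\Psi_n$ is exactly what replaces the combinatorial rigidity that, by Nash-Williams, cannot exist.
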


The next conjecture concerns the number of possible non-isomorphic matroids on a countable ground set. Clearly, there cannot be more than $2^{2^{\aleph_0}}$. We show that this bound is actually attained.

\begin{cor}\label{non_iso_cor}
There are $2^{2^{\aleph_0}}$ non-isomorphic tame matroids with no $M(K_4)$-minor and no $U_{2,4}$-minor on a countable ground set. 
\end{cor}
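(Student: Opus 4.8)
The upper bound is immediate: a matroid on a countable set $E$ is determined by its set of circuits, a subset of $\Pcal(E)$, so there are at most $2^{2^{\aleph_0}}$ matroids on $E$ and hence at most that many isomorphism classes. For the lower bound the plan is to exhibit $2^{2^{\aleph_0}}$ pairwise \emph{distinct} tame binary matroids on one fixed countable ground set $E$, each with no $M(K_4)$-minor; the number of isomorphism classes then follows, because the permutation group of $E$ has cardinality $2^{\aleph_0}$, so each isomorphism class of matroids on $E$ has at most $2^{\aleph_0}$ members, and a set of $2^{2^{\aleph_0}}$ matroids therefore meets $2^{2^{\aleph_0}}$ classes (using that $2^{2^{\aleph_0}}$ has cofinality greater than $2^{\aleph_0}$).

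To produce the matroids I would fix a single tree of matroids $\Mfrak$, over an index tree $T$ that branches often enough to have $2^{\aleph_0}$ ends, in which every node carries one small finite matroid and every edge carries series/parallel-type gluing data, chosen so that $\Mfrak$ is assembled entirely by $1$- and $2$-sums of circuits and cocircuits. Then $\Mfrak$ lives on a countable set $E$ and has $2^{\aleph_0}$ ends. For each $\Psi\se\mathrm{Ends}(\Mfrak)$ form the candidate circuit family $\Ccal_\Psi$, consisting of the finite circuits of $\Mfrak$ together with the infinite ``$\Psi$-circuits'', and the candidate cocircuit family $\Dcal_\Psi$, defined symmetrically using $\Psi\ct$. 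One checks that $|C\cap D|$ is never $1$ or infinite for $C\in\Ccal_\Psi$ and $D\in\Dcal_\Psi$ --- this is forced by the finite-node, tree-like shape of $\Mfrak$, since a circuit and a cocircuit ``cross'' in $T$ only finitely often --- and that the partition axiom of \autoref{new_axioms} holds for $\Mfrak$ and $\Psi$; by \autoref{main} the latter is exactly the assertion that the relevant circuit games of $\Mfrak$ are determined. Granting this, \autoref{new_axioms} yields a matroid $M_\Psi$ on $E$ whose circuits and cocircuits are the minimal members of $\Ccal_\Psi$ and $\Dcal_\Psi$; it is tame by the very hypothesis of \autoref{new_axioms}, and since every node matroid of $\Mfrak$ is binary and $M(K_4)$-minor-free and $\Mfrak$ is built by $1$- and $2$-sums, $M_\Psi$ is binary --- hence has no $U_{2,4}$-minor --- and has no $M(K_4)$-minor, because any minor of $M_\Psi$ localises to a finite $1/2$-sum of circuits and cocircuits, i.e.\ to a series--parallel matroid, and series--parallel matroids exclude $M(K_4)$.

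It remains to see that $\Psi\mapsto M_\Psi$ is injective: for $\omega\in\Psi\sm\Psi'$ one exhibits an infinite circuit of $M_\Psi$ converging to $\omega$, which cannot be a circuit of $M_{\Psi'}$ since every infinite circuit of $M_{\Psi'}$ uses only ends belonging to $\Psi'$; hence $M_\Psi\neq M_{\Psi'}$ as matroids on $E$. This gives $2^{2^{\aleph_0}}$ distinct matroids of the required kind, and the count above finishes the proof.

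The whole weight of the argument sits on one point: choosing $\Mfrak$ so that the partition axiom of \autoref{new_axioms} --- equivalently, determinacy of the circuit games --- holds for \emph{all} $\Psi$, or at least for a family of $\Psi$ of full size $2^{2^{\aleph_0}}$. Borel determinacy is of no help here: the end space carries only $2^{\aleph_0}$ Borel sets, far too few, and determinacy of arbitrary subsets of a set of size $2^{\aleph_0}$ contradicts the Axiom of Choice. So $\Mfrak$ must be engineered so that its circuit games are degenerate for a structural rather than a set-theoretic reason --- for instance so that in each such game every play is forced by the gluing data, so the loser never has a genuine choice and the game is trivially determined no matter what $\Psi$ is --- while at the same time $\Psi$ must still genuinely affect $M_\Psi$ (the matroids are neither all finitary nor all cofinitary, and distinct $\Psi$ must remain distinguishable by the infinite circuits). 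Reconciling ``games trivial'' with ``$2^{2^{\aleph_0}}$ distinct outputs'' is the delicate design problem; once such an $\Mfrak$ is in hand, the tameness check, the exclusion of the two minors, and the cardinal arithmetic are all routine.
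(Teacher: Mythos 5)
Your overall strategy is the paper's: build a tree of matroids of overlap $1$ on a countable ground set with $2^{\aleph_0}$ ends, arranged so that the circuit games are determined for \emph{every} $\Psi$ for structural rather than set-theoretic reasons, and then distinguish the resulting matroids $M_\Psi$ by which infinite circuits they contain. You have correctly identified the crux, but you leave it unsolved: the ``delicate design problem'' you flag at the end is exactly the content of \autoref{T_2_lem}, where the paper takes $G = T\times K_2$ for a locally finite tree $T$; the torsos are then $S\times K_2$ for finite stars $S$, no circuit of a torso contains more than two dummy edges, so Colin's replies in every circuit game are forced and every such game is trivially determined, for arbitrary $\Psi$. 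Without this (or some equally concrete construction) your argument is a plan rather than a proof, since everything downstream is conditional on it. A second, smaller imprecision: your exclusion of $M(K_4)$ via ``any minor localises to a finite series--parallel matroid'' does not obviously survive infinite contraction and deletion sets; the paper instead argues that a hypothetical $M(K_4)$-minor would yield a $2$-separation of $M(\Psi)$ with at least two of the six relevant elements on each side, hence a $2$-separation of $M(K_4)$ with at least two elements on each side, which does not exist.

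Where you genuinely diverge from the paper is the counting step, and your route is arguably cleaner. The paper proves that distinct $\Psi$ give non-isomorphic matroids, which requires a rigidity trick: $T$ is chosen with exactly one vertex of each finite degree $\geq 2$, so that the edges $v_nv_n'$ are recognisable (each lies in exactly $n$ circuits of length $4$) and any isomorphism $M(\Psi)\to M(\Psi')$ essentially fixes the graph and hence forces $\Psi=\Psi'$. You instead only show the $M_\Psi$ are pairwise \emph{distinct} as matroids on a fixed countable $E$ and then observe that each isomorphism class on $E$ has at most $2^{\aleph_0}$ members, so $2^{2^{\aleph_0}}$ distinct matroids must meet $2^{2^{\aleph_0}}$ classes (plain cardinal multiplication suffices here; cofinality is not needed). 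This dispenses with the rigidity engineering entirely and would let one work with the homogeneous graph $T_2\times K_2$. For distinctness you still need, for each end $\omega$, an infinite circuit whose closure meets only $\omega$ --- in $T\times K_2$ this is the double ray through a fixed rung whose two tails both converge to $\omega$, which is a circuit of $M_\Psi$ if and only if $\omega\in\Psi$; this matches the paper's distinguishing device. In summary: same construction idea with its key lemma left as an acknowledged gap, a slicker counting argument, and a slightly too-casual $M(K_4)$ exclusion.
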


Diestel and K\"uhn \cite{MR1709660} proved that there is a countable planar graph that has all other
countable planar graphs as minors. Such a graph is called a \emph{universal countable planar graph (with respect to the minor relation)}.
In the same spirit, we call a matroid \emph{universal for a class $\Fcal$ of matroids (with respect to the minor relation)} if it is in $\Fcal$ and it has every member of $\Fcal$ as a minor.
A matroid is \emph{planar} if it is tame and all its finite minors are planar \cite{BCC:graphic_matroids}.
The result of    Diestel and K\"uhn does not extend to infinite matroids: 

\begin{cor}\label{uni_cor}
 There is no universal matroid for the class of countable planar matroids. 
\end{cor}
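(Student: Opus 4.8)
The goal is to show there is no countable planar matroid having every countable planar matroid as a minor. The natural strategy is a diagonalization/cardinality argument: if $U$ were universal for the class of countable planar matroids, then in particular $U$ would have every member of a suitably large family of countable planar matroids as a minor. The idea is to produce $2^{2^{\aleph_0}}$ pairwise non-isomorphic countable planar matroids, so many that a single countable matroid cannot have them all as minors --- because a fixed countable matroid $U$ on ground set $E$ has at most $2^{2^{\aleph_0}}$ minors, but one must check more carefully that it cannot realize all the isomorphism types in our family simultaneously unless the family has size at most... Actually the clean route is: $U$ has at most $|{\cal P}({\cal P}(E))| = 2^{2^{\aleph_0}}$ minors up to isomorphism, so a pure counting bound is not by itself enough. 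Instead I would use \autoref{non_iso_cor}: it already supplies $2^{2^{\aleph_0}}$ non-isomorphic tame matroids with no $M(K_4)$-minor and no $U_{2,4}$-minor on a countable ground set, and such matroids are planar (all their finite minors, having no $M(K_4)$- and no $U_{2,4}$-minor, are planar by the excluded-minor characterization of planar graphic matroids, and they are tame by hypothesis). So there is a family $\Fcal$ of $2^{2^{\aleph_0}}$ non-isomorphic countable planar matroids.

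**Key steps.** First I would record that every matroid produced by \autoref{non_iso_cor} is planar in the sense defined just before \autoref{uni_cor}: it is tame, and each of its finite minors excludes both $M(K_4)$ and $U_{2,4}$ --- these are among the excluded minors for graphic planar matroids, so the finite minors are planar. Hence $\Fcal$, the family from \autoref{non_iso_cor}, consists of $2^{2^{\aleph_0}}$ pairwise non-isomorphic countable planar matroids. Second, suppose for contradiction that $U$ is universal for the class of countable planar matroids; then $U$ is itself a countable planar matroid and every $M\in\Fcal$ is isomorphic to a minor of $U$. Third --- and this is the crux --- I would show a countable matroid $U$ has only $2^{\aleph_0}$ minors up to isomorphism: a minor is determined by a pair $(C,D)$ of disjoint subsets of the countable ground set $E$ of $U$ (the contracted and deleted sets), and there are only $|{\cal P}(E)\times{\cal P}(E)| = 2^{\aleph_0}$ such pairs, hence at most $2^{\aleph_0}$ isomorphism types of minors. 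Since $2^{\aleph_0} < 2^{2^{\aleph_0}} = |\Fcal|$, not every member of $\Fcal$ can be a minor of $U$, a contradiction. Therefore no such $U$ exists.

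**Main obstacle.** The only real subtlety is the second step above --- verifying that the matroids from \autoref{non_iso_cor} genuinely fall under the paper's definition of ``planar matroid''. This needs the (finite) fact that a finite matroid with no $M(K_4)$-minor and no $U_{2,4}$-minor is planar, which one should cite from \cite{BCC:graphic_matroids} or the standard excluded-minor characterization of planar-graphic matroids; the tameness half is immediate from the statement of \autoref{non_iso_cor}. Everything else is a transparent cardinality comparison: a countable matroid has at most continuum-many minors (indexed by pairs of disjoint subsets of its ground set), while \autoref{non_iso_cor} furnishes strictly more --- $2^{2^{\aleph_0}}$ --- non-isomorphic members of the class, which a single universal object would be forced to contain as minors. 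I expect the proof to be short, essentially this paragraph made precise.
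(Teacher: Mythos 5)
Your proposal is correct and is essentially the paper's own argument: the paper also deduces the corollary directly from \autoref{non_iso_cor} by observing that a countable matroid has at most $2^{\aleph_0}$ non-isomorphic minors while the class of countable planar matroids has $2^{2^{\aleph_0}}$ non-isomorphic members. Your extra check that the matroids from \autoref{non_iso_cor} are planar in the paper's sense (tame with all finite minors planar, via the exclusion of $M(K_4)$ and $U_{2,4}$) is a point the paper leaves implicit, and your self-correction from counting pairs of subsets rather than all of $\Pcal(\Pcal(E))$ is exactly the right bound.
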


This paper is organised as follows. 
In \autoref{prelims}, we sum up the basic definitions and facts. 
In \autoref{sec:psi}, we explain how the matroids arise from infinite graphs and their ends even if those graphs are not finitely separable.
The new axioms for countable tame matroids are introduced in \autoref{orth}.
At the end of that section we explain how Georgakopoulos' construction 
can be used to get a counterexample against \autoref{que:is_matroid}.

The proof of \autoref{main} takes the next 4 sections.
The proof can be subdivided into two parts: first we construct from
a given graph a tree of matroids and analyse it.
Then we use this tree of matroids as a tool to prove \autoref{main}.

Here, the purpose of \autoref{treesofmatroids} and \autoref{sec:2sums} is to prove \autoref{main}
in a special case. Although this special case is much simpler than the general case, many ideas are already visible
there.
In \autoref{tm2} and \autoref{sec:arbi_sums}, we prove \autoref{main} in the general case.
While \autoref{treesofmatroids} is concerned with trees of matroids arising in the special case,
in \autoref{tm2} we show how one has to extend the method for the general case.
The other two sections are both considered with the second part of the proof and bear a similar relation.
In \cite{BC:inter_psi}, we shall give an alternative new proof of \autoref{main}.

In \autoref{loc_to_count}, we deduce the countable case from the locally finite case.
In \autoref{apps}, we prove \autoref{wqo}, \autoref{non_iso_cor} and \autoref{uni_cor}.

\section{Preliminaries}\label{prelims}
Throughout, notation and terminology for (infinite) graphs are those of~\cite{DiestelBook10}, and for matroids those of~\cite{Oxley,matroid_axioms}. In this paper, we only work with simple graphs. However, all the results and proofs can easily be extended to multigraphs. We will rely on the following lemma from \cite{DiestelBook10}:

\begin{lem}[K\"onig's Infinity Lemma \cite{DiestelBook10}]\label{Infinity_Lemma}
Let $V_0,V_1,\ldots$ be an infinite sequence of disjoint non-empty finite sets, and let $G$ be a graph on their union. Assume that every vertex $v$ in $V_n$ with $n\geq 1$ has a neighbour $f(v)$ in $V_{n-1}$. Then $G$ includes a ray $v_0v_1\ldots$ with $v_n\in V_n$ for all $n$.
\end{lem}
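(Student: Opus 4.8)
The plan is to run the classical König argument: build the ray one vertex at a time, always stepping to a vertex that has ``infinitely many descendants'', so that the construction can never get stuck. For this I iterate the given neighbour map $f$. Since every $w\in V_m$ with $m\ge 1$ has a unique image $f(w)\in V_{m-1}$, the iterates $w,f(w),f^{(2)}(w),\ldots$ form a well-defined descending walk through the levels down to $V_0$. I will call $w\in V_m$ a \emph{descendant} of $v\in V_n$ (for $m\ge n$) when $f^{(m-n)}(w)=v$, and call $v$ \emph{fertile} if it has infinitely many descendants.

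First (the base of the induction): every vertex of $\bigcup_m V_m$ descends via $f$ to exactly one vertex of $V_0$, so $\bigcup_m V_m$ is the union of the descendant sets of the finitely many vertices of $V_0$. Since $\bigcup_m V_m$ is infinite (the $V_m$ are nonempty and there are infinitely many of them) while $V_0$ is finite, the infinite pigeonhole principle — an infinite set partitioned into finitely many blocks has an infinite block — produces a fertile $v_0\in V_0$.

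For the inductive step, suppose $v_n\in V_n$ is fertile. Every descendant of $v_n$ other than $v_n$ itself lies in some level $\ge n+1$, and its descending walk passes through a unique vertex $w\in V_{n+1}$ with $f(w)=v_n$; hence it is a descendant of such a $w$. The set $\{\,w\in V_{n+1}:f(w)=v_n\,\}$ is finite, so by the infinite pigeonhole principle again one such $w$, which I take to be $v_{n+1}$, is fertile. Crucially, $f(v_{n+1})=v_n$ forces $v_{n+1}$ to be adjacent to $v_n$ in $G$, and $v_{n+1}\in V_{n+1}$, so both defining requirements of the ray are met at each step.

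Finally I would observe that $v_0v_1v_2\cdots$ is a ray: the vertices are pairwise distinct because the $V_n$ are disjoint and $v_n\in V_n$, and consecutive vertices are adjacent. There is no serious obstacle here; the only point requiring care is the repeated use of the infinite pigeonhole principle, together with the bookkeeping that guarantees the chosen preimage simultaneously stays fertile and supplies the edge $v_nv_{n+1}$. I would also note that the construction needs no choice beyond selecting from finite sets — fix a linear order on each $V_n$ and always take the least fertile candidate — a point worth flagging in a paper so attentive to the distinction between the Axiom of Choice and the Axiom of Determinacy.
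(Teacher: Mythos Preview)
Your argument is correct and is the standard proof of K\"onig's Infinity Lemma. Note, however, that the paper does not supply its own proof of this statement: it is quoted as a known result from \cite{DiestelBook10}, so there is nothing to compare against beyond observing that your proof is essentially the classical one found there.
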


For any graphs $G$ and $H$, we will use $G \times H$ to denote the graph with vertex set $V(G) \times V(H)$ and with edge set $$\{e \times \{v\} | e \in E(G), v \in V(H)\} \cup \{\{v\} \times e| v \in V(G), e \in E(H)\}.$$
The edges in $\{e \times \{v\} | e \in E(G), v \in V(H)\}$ are called $G$-edges, and those in $\{\{v\} \times e| v \in V(G), e \in E(H)\}$ are called $H$-edges.

We will also make use of the following elementary fact of Linear Algebra:

\begin{lem}\label{span}
Let $X$ be a finite set of vectors in a finite dimensional vector space $V$,
and let $y\in V$.
 $X^\perp \se \{y\}^\perp$ if and only if $y$ is in the span $\langle X \rangle$ of $X$.
\end{lem}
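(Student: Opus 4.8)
The plan is to read $\perp$ as orthogonality with respect to a fixed nondegenerate symmetric bilinear form on $V$ — for instance the standard form $\langle u,v\rangle=\sum_i u_iv_i$ when $V=\Fbb^n$, which is the setting in which the lemma gets applied. The only structural fact I will use is that nondegeneracy makes the map $v\mapsto\langle v,-\rangle$ an isomorphism from $V$ onto its dual space $V^*$; in particular every linear functional on $V$ is represented by some vector.

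The forward implication is immediate from bilinearity. If $y\in\langle X\rangle$, write $y=\sum_i a_ix_i$ with $x_i\in X$; then for every $v\in X^\perp$ we get $\langle v,y\rangle=\sum_i a_i\langle v,x_i\rangle=0$, so $v\in\{y\}^\perp$, and hence $X^\perp\se\{y\}^\perp$. For the converse I would argue by contraposition: assume $y\notin\langle X\rangle$. Pick a basis $B$ of $\langle X\rangle$; since $y\notin\langle X\rangle$ the set $B\cup\{y\}$ is linearly independent, so it extends to a basis $B\cup\{y\}\cup B'$ of $V$. Let $\phi\in V^*$ be the functional sending $y$ to $1$ and every vector in $B\cup B'$ to $0$, so that $\phi$ vanishes on $\langle X\rangle\supseteq X$ but $\phi(y)=1$. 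Using nondegeneracy choose $v\in V$ with $\phi=\langle v,-\rangle$; then $v\in X^\perp$ while $\langle v,y\rangle=\phi(y)=1\neq 0$, so $v\notin\{y\}^\perp$ and therefore $X^\perp\not\se\{y\}^\perp$.

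There is no real obstacle here: this is a standard fact of linear algebra, and the only point that needs a moment's care is ensuring the ambient form is nondegenerate so that functionals can be represented by vectors. An equivalent packaging of the same argument runs through the identity $W=W^{\perp\perp}$ for subspaces $W\se V$, which follows from the dimension count $\dim W+\dim W^\perp=\dim V$ (again a consequence of nondegeneracy); then, since taking $\perp$ reverses inclusions, $X^\perp\se\{y\}^\perp$ is equivalent to $\langle y\rangle=\{y\}^{\perp\perp}\se X^{\perp\perp}=\langle X\rangle$, i.e. to $y\in\langle X\rangle$.
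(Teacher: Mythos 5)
Your proof is correct. The paper states \autoref{span} as an ``elementary fact of Linear Algebra'' and gives no proof at all, so there is nothing to compare against; your argument --- bilinearity for the forward direction, and for the converse extending a basis of $\langle X\rangle$ past $y$ to build a functional vanishing on $X$ but not on $y$, then representing it by a vector via nondegeneracy of the standard form on $k^{E}$ --- is the standard one and fills the gap cleanly. Your remark that the statement implicitly presupposes a nondegenerate form is also the right observation, since in the paper's applications (Sublemmas \ref{sub_lem_Sarah} and \ref{Colin_calc}) the form is the standard dot product on $k^{E(t)}$ over a finite field, which is nondegenerate even though subspaces there may be self-orthogonal; your argument never needs $W\cap W^\perp=0$, only nondegeneracy on the whole space, so it goes through.
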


$M$ always denotes a matroid and $E(M)$ (or just $E$), $\Ical(M)$ and $\Ccal(M)$ denote its ground 
set and its sets of independent sets and circuits, respectively. For the remainder of this section we shall recall some basic facts about infinite matroids.

A set system $\Ical\subseteq \Pcal(E)$ is the set of independent sets of a matroid if and only if it satisfies the following {\em independence  axioms\/} \cite{matroid_axioms}.
\begin{itemize}
	\item[(I1)] $\varnothing\in \Ical(M)$.
	\item[(I2)] $\Ical(M)$ is closed under taking subsets.
	\item[(I3)] Whenever $I,I'\in \Ical(M)$ with $I'$ maximal and $I$ not maximal, there exists an $x\in I'\setminus I$ such that $I+x\in \Ical(M)$.
	\item[(IM)] Whenever $I\subseteq X\subseteq E$ and $I\in\Ical(M)$, the set $\{I'\in\Ical(M)\mid I\subseteq I'\subseteq X\}$ has a maximal element.
\end{itemize}

A set system $\Ccal\subseteq \Pcal(E)$ is the set of circuits of a matroid if and only if it satisfies the following {\em circuit  axioms\/} \cite{matroid_axioms}.
\begin{itemize}
\item[(C1)] $\varnothing\notin\Ccal$.
\item[(C2)] No element of $\Ccal$ is a subset of another.
\item[ (C3)](Circuit elimination) Whenever $X\subseteq o\in \Ccal(M)$ and $\{o_x\mid x\in X\} \subseteq \Ccal(M)$ satisfies $x\in o_y\Leftrightarrow x=y$ for all $x,y\in X$, 
then for every $z \in o\setminus \left( \bigcup_{x \in X} o_x\right)$ there exists a  $o'\in \Ccal(M)$ such that $z\in o'\subseteq \left(o\cup  \bigcup_{x \in X} o_x\right) \setminus X$.

\item[(CM)] $\Ical$ satisfies (IM), where $\Ical$ is the set of those subsets of $E$ not including an element of $\Ccal$.
\end{itemize}


\begin{lem}\label{fdt}
 Let $M$ be a matroid and $s$ be a base.
Let $o_e$ and $b_f$ a fundamental circuit and a fundamental cocircuit with respect to $s$, then
\begin{enumerate}
 \item $o_e\cap b_f$ is empty or $o_e\cap b_f=\{e,f\}$ and
\item $f\in o_e$ if and only if $e\in b_f$.
\end{enumerate}
\end{lem}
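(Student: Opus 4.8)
I would deduce both parts from a single base-exchange fact applied once in $M$ and once in the dual $M^{*}$. First recall the relevant facts about fundamental circuits and cocircuits, all valid for infinite matroids. Since $s$ is a base, $s+e$ is dependent for $e\notin s$, so it contains a circuit; that circuit is unique (given two of them, each contains $e$, for otherwise it would lie in the independent set $s$, and (C3) applied to them would then produce a circuit inside $s$), and this unique circuit is $o_e$, with $e\in o_e$. Dually $b_f$ is defined only for $f\in s$ and is the unique cocircuit inside $s\ct+f$; equivalently, since cocircuits of $M$ are the circuits of $M^{*}$ and $s\ct$ is a base of $M^{*}$, $b_f$ is the fundamental circuit of $f$ with respect to $s\ct$ in $M^{*}$, so $f\in b_f$. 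Thus in the statement $e\notin s$ and $f\in s$.

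The auxiliary fact is: for any matroid $N$, base $t$, $e\notin t$ and $f\in t$, one has $f\in o_e$ if and only if $t-f+e$ is a base of $N$. For the ``if'' direction, were $f\notin o_e$ then $o_e\se(t+e)\sm\{f\}=t-f+e$, contradicting that $t-f+e$ is independent. For the ``only if'' direction, assume $f\in o_e$; then $t-f+e$ is independent, since a circuit inside it must contain $e$ (else it lies in $t$), hence lies in $t+e$, hence equals $o_e$, which is impossible as $f\in o_e\sm(t-f+e)$; and $t-f+e$ is spanning, since $o_e-f\se t-f+e$ and $o_e$ a circuit give $f\in\Cl(o_e-f)\se\Cl(t-f+e)$, whence $\Cl(t-f+e)\supseteq t+e$ and so $\Cl(t-f+e)=E$. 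An independent spanning set is a base, so $t-f+e$ is a base of $N$. (Alternatively one verifies directly via (C3) that $(t-f+e)+x$ is dependent for every $x\notin t-f+e$.)

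Now for part~2: applying the auxiliary fact in $N=M$ with $t=s$ yields $f\in o_e\Leftrightarrow s-f+e$ is a base of $M$; applying it in $N=M^{*}$ with $t=s\ct$ (so that the non-base element is $f$ and the base element is $e$, i.e.\ the roles of $e$ and $f$ are interchanged) yields $e\in b_f\Leftrightarrow s\ct-e+f$ is a base of $M^{*}$, and $s\ct-e+f$ is a base of $M^{*}$ exactly when its complement $s-f+e$ is a base of $M$. Chaining these two equivalences gives $f\in o_e\Leftrightarrow e\in b_f$. For part~1: from $o_e\se s+e$, $b_f\se s\ct+f$ together with $f\in s$, $e\notin s$ we get $o_e\cap b_f\se(s+e)\cap(s\ct+f)=\{e,f\}$; and since $e\in o_e$ and $f\in b_f$ always, $e$ lies in $o_e\cap b_f$ precisely when $e\in b_f$, and $f$ lies in it precisely when $f\in o_e$ --- by part~2 these two conditions hold together, so $o_e\cap b_f$ is either $\varnothing$ or $\{e,f\}$.

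I do not expect a real obstacle here; the only care needed is that the three background facts used --- uniqueness of $o_e$, that complements of bases of $M$ are bases of $M^{*}$, and that an independent spanning set is a base --- are invoked in the form valid for arbitrary (infinite) matroids, each of which follows from the axioms (I1)--(IM) recalled above.
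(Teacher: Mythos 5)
Your proof is correct, but it runs in the opposite logical direction from the paper's. The paper first proves part~1 directly: from $o_e\se s+e$ and $b_f\se (E\sm s)+f$ it gets $o_e\cap b_f\se\{e,f\}$, and then invokes the orthogonality fact that a circuit and a cocircuit can never meet in exactly one element to conclude the intersection is $\varnothing$ or $\{e,f\}$; part~2 then falls out of part~1 in one line (if $f\in o_e$ then $f\in o_e\cap b_f$, so the intersection is $\{e,f\}$ and $e\in b_f$, with the converse by duality). You instead prove part~2 first, via the base-exchange characterisation $f\in o_e\Leftrightarrow s-f+e$ is a base, applied once in $M$ and once in $M^*$, and then deduce part~1 from part~2 together with the containment $o_e\cap b_f\se\{e,f\}$ --- so you never use circuit--cocircuit orthogonality at all. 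The paper's route is shorter and leans on a fact it uses pervasively elsewhere (cf.\ \autoref{is_scrawl}); yours is more self-contained at the cost of having to verify the auxiliary exchange fact carefully in the infinite setting (uniqueness of the fundamental circuit, idempotence of closure, and that an independent spanning set is a base), all of which you do handle correctly. One small point of care, which you address: the lemma implicitly assumes $e\notin s$ and $f\in s$, and your argument, like the paper's, depends on this.
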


\begin{proof}
To see the first note that $o_e\subseteq s+e$ and $b_f\subseteq (E\setminus s)+f$.
So $o_e\cap b_f\subseteq \{e,f\}$. As a circuit and a cocircuit can never meet in only one edge, the assertion follows.

To see the second, first let $f\in o_e$.
Then $f \in o_e \cap b_f$, so by (1) $o_e \cap b_f = \{e, f\}$ and so $e \in b_f$.
The converse implication is the dual statement of the above implication.
\end{proof}

\begin{lem}\label{o_cap_b}
 For any circuit $o$ containing two edges $e$ and $f$, there is a cocircuit $b$ such that $o\cap b=\{e,f\}$.
\end{lem}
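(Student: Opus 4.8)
The plan is to exhibit $o$ as a \emph{fundamental} circuit with respect to a suitably chosen base, and then take the required cocircuit to be the corresponding fundamental cocircuit, so that the whole statement reduces to \autoref{fdt}. The point of the construction is to arrange that a single application of \autoref{fdt}(1)--(2) forces the intersection to be exactly $\{e,f\}$.

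First I would observe that $o-f$ is a proper subset of the circuit $o$, hence independent (a proper subset of a circuit contains no circuit, by (C2)), and extend it to a base $s$ of $M$; such a base exists by applying (IM) with $X=E$. Since $s$ is independent and $o-f\se s$, we cannot have $f\in s$, for otherwise $o\se s$. Thus the fundamental circuit $o_f$ of $f$ with respect to $s$ is defined, and it is the unique circuit contained in $s+f$. But $o\se (o-f)+f\se s+f$ and $o$ is a circuit, so by this uniqueness $o_f=o$.

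Now let $b_e$ be the fundamental cocircuit of $e$ with respect to $s$; this makes sense because $e\in o-f\se s$. By \autoref{fdt}(1), $o\cap b_e=o_f\cap b_e$ is either empty or equal to $\{e,f\}$. By \autoref{fdt}(2), applied with the roles of $e$ and $f$ interchanged, $e\in o_f$ implies $f\in b_e$, so the intersection is nonempty; hence $o\cap b_e=\{e,f\}$, and $b:=b_e$ is the desired cocircuit.

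The only step needing care is the claim that $s+f$ contains a unique circuit: any circuit inside $s+f$ must contain $f$ (otherwise it would lie in the independent set $s$), and two circuits both containing $f$ inside $s+f$ are forced to be equal by circuit elimination, since otherwise (C3) would produce a circuit inside $s$. This is the standard fact about fundamental circuits in (infinite) matroids, and is really the main obstacle to watch; everything else is bookkeeping with the facts already recorded in \autoref{fdt}.
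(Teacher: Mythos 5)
Your proof is correct and is essentially the paper's own argument: extend $o$ minus one of the two elements to a base, recognise $o$ as the resulting fundamental circuit, and apply \autoref{fdt} to that circuit and the fundamental cocircuit of the other element (the paper removes $e$ and takes the fundamental cocircuit of $f$, while you remove $f$ and take the fundamental cocircuit of $e$, which is the same argument by symmetry). Your extra care about uniqueness of the circuit in $s+f$ is sound but not strictly needed, since \autoref{fdt} only requires that $o$ be \emph{a} circuit contained in $s+f$ through $f$.
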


\begin{proof}
As $o-e$ is independent, there is a base including $o-e$.
By \autoref{fdt}, the fundamental cocircuit of $f$ of this base intersects $o$ in $e$ and $f$, as desired. \end{proof}

\begin{lem} \label{rest_cir}
 Let $M$ be a matroid with ground set $E = C \dot \cup X \dot \cup D$ and let $o'$ be a circuit of $M' = M / C \backslash D$.
Then there is an $M$-circuit $o$ with $o' \subseteq o \subseteq o' \cup C$.
\end{lem}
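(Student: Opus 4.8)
The plan is to treat the deletion and the contraction one at a time. First I would set $N := M\backslash D$, so that $M' = N/C$ has ground set $C\cup X$; since deleting $D$ does not change which subsets of $C\cup X$ are dependent, the circuits of $N$ are exactly the circuits of $M$ that avoid $D$, and in particular every circuit of $N$ is already a circuit of $M$. Thus it suffices to find a circuit $o$ of $N$ with $o'\se o\se o'\cup C$.

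Next I would fix a base $B_C$ of the restriction $N\restric C$ — this exists by the instance of (IM) with $I=\varnothing$, $X=C$ — and use the standard description of independence in a contraction: for $S\se X$, the set $S$ is independent in $N/C$ if and only if $S\cup B_C$ is independent in $N$. Since $o'$ is dependent in $N/C = M'$, the set $o'\cup B_C$ is dependent in $N$ and hence contains a circuit $o_0$ of $N$. Then automatically $o_0\se o'\cup B_C\se o'\cup C$, so everything comes down to showing $o'\se o_0$, after which $o:=o_0$ completes the proof.

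For $o'\se o_0$ I would argue element by element: given $x\in o'$, the set $o'-x$ is a proper subset of the circuit $o'$ of $N/C$, hence independent in $N/C$, so $(o'-x)\cup B_C$ is independent in $N$; since $o_0$ is dependent and $o_0\se o'\cup B_C = \big((o'-x)\cup B_C\big)+x$, the element $x$ must lie in $o_0$ (otherwise $o_0$ would be a subset of the independent set $(o'-x)\cup B_C$, contradicting (I2)). As $x$ ranges over $o'$, this yields $o'\se o_0$.

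I expect the whole argument to be short, with the only genuine step being the last paragraph: one has to use the minimality of $o'$ among the dependent sets of the contraction to prevent the circuit $o_0$ extracted from $o'\cup B_C$ from lying across only part of $o'$. The remaining ingredients — that $M\backslash D$ retains exactly the circuits of $M$ avoiding $D$, that $M\restric C$ has a base, and the contraction-independence criterion — are all standard for matroids in the sense of \cite{matroid_axioms}, so no separate preparation is needed.
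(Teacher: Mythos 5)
Your proposal is correct and is essentially the paper's own argument: the paper likewise fixes a base $s$ of $C$, observes that $s\cup o'$ is $M$-dependent while $s\cup o'-e$ is $M$-independent for each $e\in o'$, and extracts a circuit that therefore must contain all of $o'$. Your explicit separation of the deletion step and the element-by-element verification are just slightly more detailed renderings of the same proof.
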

\begin{proof}
Let $s$ be any $M$-base of $C$. Then $s \cup o'$ is $M$-dependent since $o'$ is $M'$-dependent.
On the other hand,  $s \cup o'-e$ is $M$-independent whenever $e\in o'$ since $o'-e$ is $M'$-independent.
Putting this together yields that $s \cup o'$ contains an $M$-circuit $o$, and this circuit must not avoid any $e\in o'$, as desired.
\end{proof}

A \emph{scrawl} is a union of circuits. In \cite{BC:rep_matroids},
(infinite) matroids are axiomatised in terms of scrawls. 
The set $\Scal(M)$ denotes the set of scrawls of the matroid $M$.
Dually a \emph{coscrawl} is a union of cocircuits.
Since no circuit and cocircuit can meet in only one element,
no scrawl and coscrawl can meet in only one element. 
In fact, this property gives us a simple characterisation of scrawls in terms of coscrawls and vice versa.

\begin{lem}\label{is_scrawl}\cite{BC:rep_matroids}
Let $M$ be a matroid, and let $w\subseteq E$. The following are equivalent:
\begin{enumerate}
 \item $w$ is a scrawl of $M$.
 \item $w$ never meets a cocircuit of $M$ just once.
 \item $w$ never meets a coscrawl of $M$ just once.
\end{enumerate}
\end{lem}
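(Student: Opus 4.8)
The plan is to prove the cycle of implications $(1)\Rightarrow(3)\Rightarrow(2)\Rightarrow(1)$. The implication $(3)\Rightarrow(2)$ is immediate, since every cocircuit is in particular a coscrawl. For $(1)\Rightarrow(3)$: suppose $w$ is a scrawl, say $w=\bigcup_{i\in I}o_i$ with each $o_i\in\Ccal(M)$, and let $v$ be a coscrawl, say $v=\bigcup_{j\in J}b_j$ with each $b_j$ a cocircuit. If $w\cap v=\{x\}$, then $x\in o_i$ for some $i$ and $x\in b_j$ for some $j$; but then $o_i\cap b_j\se w\cap v=\{x\}$ and $x\in o_i\cap b_j$, so $o_i\cap b_j=\{x\}$, contradicting the fact that a circuit and a cocircuit never meet in exactly one element (recorded in the text just before the lemma). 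Hence $w$ never meets a coscrawl just once.

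The substantive direction is $(2)\Rightarrow(1)$: assuming $w$ never meets a cocircuit just once, we must exhibit $w$ as a union of circuits. It suffices to show that for every $e\in w$ there is a circuit $o$ with $e\in o\se w$; then $w$ is the union of these circuits. So fix $e\in w$ and consider the set $\Ical'$ of independent subsets of $w-e$ together with their maximal elements under (IM) applied inside $w-e$; concretely, let $s$ be a maximal independent subset of $w-e$. If $s+e$ were independent, extend it to a base $B\se$ ... of $M$; more carefully, take a base $B$ of $M$ containing a maximal independent subset of $w$, chosen so that $e\notin B$ whenever possible. The key point to establish is that $e$ lies on the fundamental circuit of some element, equivalently that $e$ is not a coloop of $M\restric w$ (the restriction), and that the fundamental cocircuit argument keeps us inside $w$. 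The clean way: let $B$ be a base of $M$ with $w\cap B$ a maximal independent subset of $w$ and $e\notin B$ (possible because if every such base contained $e$, then $e$ would be forced into every base meeting $w$ maximally). Then the fundamental cocircuit $b_e$ of $e$ with respect to $B$ satisfies $b_e\cap B=\{e\}$, so $b_e$ meets $w$ in at least the single point $e$; since $w$ meets no cocircuit exactly once, $b_e$ meets $w$ in a second point $f\ne e$. Now $f\notin B$ (as $b_e\cap B=\{e\}$... wait, $f\in b_e$ so $f\notin B$ only if $f\ne e$, which holds), hence $f$ has a fundamental circuit $o_f$ with respect to $B$, and by \autoref{fdt} we have $e\in o_f$ (since $f\in b_e$) and $o_f\se B+f$. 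The obstacle is to conclude $o_f\se w$: this is not automatic from $B+f$, so instead one should run the argument inductively, peeling off $e$ and repeating. I expect the main obstacle to be exactly this bookkeeping, and the cleanest route is via \autoref{o_cap_b} together with a maximality argument: among all subsets $u\se w$ with $e\in u$ and $u$ meeting every cocircuit either not at all or at least twice, pick a minimal one (such minimal $u$ exists by (CM)/(IM) applied suitably); minimality forces $u$ to be a single circuit, because if $u$ properly contained a circuit $o$ through $e$ we could replace $u$ by $o$, and if $u$ contained no circuit through $e$ then $e$ lies outside the scrawl generated by $u\setminus\{e\}$...

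Let me instead record the argument I would actually write: for $(2)\Rightarrow(1)$, fix $e\in w$; by \autoref{o_cap_b} applied repeatedly it suffices to find one circuit through $e$ inside $w$. Take $s$ a maximal independent subset of $w-e$. Then $s+e$ contains a circuit $o$ (namely, if $s+e$ is independent, we derive a contradiction: extend $s+e$ to a base $B$; its fundamental cocircuit at $e$ meets $B$ only in $e$, so meets $w$ only in $e$, contradicting (2) — here we use that $s$ was maximal in $w-e$ so $w\cap B=s+e$ and no further element of $w$ lies in $b_e$). Hence $s+e$ is dependent, so it contains a circuit $o\se s+e\se w$, and $e\in o$ since $s$ is independent. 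This $o$ is the required circuit, completing the proof.
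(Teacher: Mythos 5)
Your reduction of the lemma to the key claim is fine: $(1)\Rightarrow(3)$ via ``no circuit meets a cocircuit exactly once'' and $(3)\Rightarrow(2)$ are exactly as in the paper, and for $(2)\Rightarrow(1)$ it does suffice to produce, for each $e\in w$, a circuit through $e$ inside $w$ (you do not need \autoref{o_cap_b} for this; the union of these circuits is $w$). The problem is in the one substantive step of your final argument. You take $s$ a maximal independent subset of $w-e$, suppose $s+e$ is independent, extend to a base $B$, and claim that the fundamental cocircuit $b_e$ ``meets $B$ only in $e$, so meets $w$ only in $e$.'' That ``so'' is a non sequitur: $b_e\cap B=\{e\}$ only rules out elements of $w\cap B=s+e$ from lying in $b_e$; it says nothing about elements of $w\setminus B$, and $b_e$ lives inside $(E\setminus B)+e$, which is exactly where those elements sit. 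Your parenthetical asserts ``no further element of $w$ lies in $b_e$'' but gives no reason. The assertion is in fact true, and the missing argument is: for $f\in w\setminus(s+e)$, maximality of $s$ gives a circuit $o\subseteq s+f$ through $f$; since $s+f\subseteq B+f$, this $o$ is the fundamental circuit $o_f$, and it avoids $e$; hence by \autoref{fdt}(2), $f\notin b_e$. Without some such step the contradiction with $(2)$ is not established, so as written there is a genuine gap, though a fillable one.

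It is worth noting that your route is genuinely different from the paper's. The paper passes to the one-element minor $M/(w-e)\setminus(E\setminus w)$: the element $e$ is either a loop or a coloop there, the coloop case is excluded by the dual of \autoref{rest_cir} together with $(2)$, and the loop case hands back a circuit $o_e$ with $e\in o_e\subseteq w$ via \autoref{rest_cir}. That argument needs no bases, no fundamental circuits, and no case analysis over elements of $w$; all the bookkeeping you struggled with (whether the cocircuit stays outside $w$, whether the circuit stays inside $w$) is absorbed into the two halves of \autoref{rest_cir}. Your approach via maximal independent subsets and fundamental cocircuits works once patched, but it buys nothing over the minor argument and costs the extra lemma invocation above; if you keep it, state the $b_e\cap w=\{e\}$ step as a separate claim and prove it.
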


\begin{proof}
It is clear that (1) implies (3) and (3) implies (2), so it suffices to show that (2) implies (1). 
Suppose that (2) holds and let $e\in w$. Then in the minor $M/(w - e)\setminus(E \setminus w)$ on the groundset $\{e\}$, $e$ cannot be a co-loop, by the dual of \autoref{rest_cir} and (2). So $e$ must be a loop, and by \autoref{rest_cir} there is a circuit $o_e$ with $e \in o_e \subseteq w$. Thus $w$ is the union of the $o_e$, and so is a scrawl.

\end{proof} 

\begin{lem}\label{is_circuit}
 Let $w$ be a dependent set. Then $w$ is a circuit if and only if for any edges $e$ and $f$ of $w$ there is a cocircuit $b$ with $w \cap b = \{e, f\}$.
\end{lem}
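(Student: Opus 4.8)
The plan is to prove both directions. The forward direction is immediate from \autoref{o_cap_b}: if $w$ is a circuit, then in particular it is a circuit containing any two given edges $e,f$, so \autoref{o_cap_b} supplies a cocircuit $b$ with $w\cap b=\{e,f\}$. The substance of the lemma is the converse.

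For the converse, suppose $w$ is dependent and that for all $e,f\in w$ there is a cocircuit $b$ with $w\cap b=\{e,f\}$. First I would observe that $w$ is a scrawl: by \autoref{is_scrawl} it suffices to check that $w$ never meets a cocircuit just once, and if some cocircuit $b'$ met $w$ in a single edge $e$, then picking any other edge $f\in w$ (which exists since $w$ is dependent, hence has at least two elements — a one-element dependent set would be a loop, but a loop meets the cocircuit structure appropriately; in any case a circuit has size at least $2$ and $w$ being dependent contains a circuit) we get a cocircuit $b$ with $w\cap b=\{e,f\}$, and then $b\cup b'$ would be a coscrawl meeting $w$ exactly in $\{e,f\}$... this does not immediately contradict anything, so instead I argue directly: the hypothesis with $f=e$ — no, the hypothesis is stated for edges $e$ and $f$, presumably allowing $e=f$ is not intended. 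So the cleaner route is: since $w$ is dependent it contains a circuit $o\subseteq w$; I claim $o=w$. Suppose not, and pick $e\in w\sm o$ and any $f\in o$. By hypothesis there is a cocircuit $b$ with $w\cap b=\{e,f\}$. Then $o\cap b\se w\cap b=\{e,f\}$, and since $e\notin o$ we get $o\cap b\se\{f\}$; as $f\in o\cap b$ this forces $o\cap b=\{f\}$, so the circuit $o$ meets the cocircuit $b$ in exactly one element, contradicting the basic orthogonality of circuits and cocircuits. Hence $w=o$ is a circuit.

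The main thing to get right is the bookkeeping at the start of the converse: one must know $w$ contains a circuit (true since $w$ is dependent), and one must handle the trivial case where $w$ could be small. A dependent set contains a circuit, and circuits are nonempty by (C1); the argument above needs an edge of $w$ outside a chosen circuit $o$, which is exactly the assumption $o\neq w$ we are contradicting, so no size issues actually arise. I expect no real obstacle here — the only care needed is to use the hypothesis with an $e\in w\sm o$ and an $f\in o$ so that $o\cap b$ is pinned down to be the singleton $\{f\}$, and then invoke that a circuit and a cocircuit cannot meet in exactly one element.
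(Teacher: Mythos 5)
Your proposal is correct and, after the abandoned detour through scrawls, arrives at exactly the paper's argument: pick a circuit $o\se w$, and if $o\neq w$ use the hypothesis on one edge inside $o$ and one outside to produce a cocircuit meeting $o$ in a single element, contradicting orthogonality. The paper's proof is just the two-line version of this, citing \autoref{o_cap_b} for the forward direction and \autoref{is_scrawl} for the contradiction.
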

\begin{proof}
 The `only if' direction is immediate from \autoref{o_cap_b}. For the `if' direction, pick a circuit $o \se w$. If $o \not = w$ then we can find $e \in o$ and $f \in w \sm o$, and choosing $b$ a cocircuit with $b \cap w = \{e, f\}$, we get $b \cap o = \{e\}$, contradicting \autoref{is_scrawl}.
\end{proof}

\begin{lem}\label{cir_c_scrawl}
 Let $M$ be a matroid and $\Ccal,\Dcal\se \Pcal(E)$
such that every $M$-circuit is a union of elements of $\Ccal$, every $M$-cocircuit is a union of elements of $\Dcal$ and $|C\cap D|\neq 1$ for every $C\in\Ccal$ and every $D\in\Dcal$.

Then $\Ccal(M)\se \Ccal\se \Scal(M)$ and  $\Ccal(M^*)\se \Dcal\se \Scal(M^*)$
\end{lem}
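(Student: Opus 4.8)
The plan is to prove the two chains of inclusions by symmetry; it suffices to establish $\Ccal(M)\se\Ccal$ and $\Ccal\se\Scal(M)$, since the statements about $M^*$ follow by interchanging the roles of $\Ccal$ and $\Dcal$ (and using that $M^{**}=M$, that cocircuits of $M$ are circuits of $M^*$, and that scrawls of $M^*$ are coscrawls of $M$).

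For $\Ccal\se\Scal(M)$ I would invoke \autoref{is_scrawl}: a set $w\se E$ is a scrawl of $M$ iff it never meets an $M$-cocircuit in exactly one element. So fix $C\in\Ccal$ and an $M$-cocircuit $b$; I must show $|C\cap b|\neq 1$. By hypothesis, $b$ is a union of elements of $\Dcal$, say $b=\bigcup_{i}D_i$ with $D_i\in\Dcal$. If $|C\cap b|=1$, say $C\cap b=\{x\}$, then $x\in D_j$ for some $j$, so $C\cap D_j\neq\varnothing$; but $C\cap D_j\se C\cap b=\{x\}$, forcing $|C\cap D_j|=1$, contradicting the assumption that $|C\cap D|\neq 1$ for all $C\in\Ccal$, $D\in\Dcal$. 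Hence $C$ meets no $M$-cocircuit just once, so $C\in\Scal(M)$. The dual argument (every $M$-cocircuit is a union of elements of $\Dcal$, and no $D\in\Dcal$ meets an $M$-circuit just once because every $M$-circuit is a union of elements of $\Ccal$ and $|C\cap D|\neq 1$) gives $\Dcal\se\Scal(M^*)$.

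For $\Ccal(M)\se\Ccal$ I would argue as follows. Let $o$ be an $M$-circuit. We know $o$ is a union of elements of $\Ccal$, say $o=\bigcup_i C_i$ with $C_i\in\Ccal$. It suffices to show some $C_i$ equals $o$; since each $C_i\se o$, I want to show some $C_i$ is not a proper subset of $o$. Suppose for contradiction that every $C_i\subsetneq o$. Pick any $C_i$ and any $x\in C_i$; since $C_i\neq o$ there is $y\in o\sm C_i$. By \autoref{o_cap_b} there is an $M$-cocircuit $b$ with $o\cap b=\{x,y\}$. Then $C_i\cap b\se o\cap b=\{x,y\}$, and $x\in C_i\cap b$ while $y\notin C_i$, so $C_i\cap b=\{x\}$. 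But $b\in\Scal(M^*)$-- wait, more directly: $C_i\in\Scal(M)$ by the paragraph above, so by \autoref{is_scrawl} $C_i$ cannot meet the $M$-cocircuit $b$ just once, contradiction. Hence some $C_i=o$, so $o\in\Ccal$. Dually $\Ccal(M^*)\se\Dcal$.

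The main obstacle is making sure the two halves are assembled without circularity: the argument for $\Ccal(M)\se\Ccal$ uses $\Ccal\se\Scal(M)$, which in turn only uses the hypothesis that $M$-cocircuits are unions of elements of $\Dcal$ together with the intersection condition — so there is no loop, and the only subtlety is bookkeeping which of the four symmetric statements feeds into which. One should also double-check the degenerate cases (e.g.\ $\varnothing$ cannot arise as some $C_i$ since a union of empty sets is empty but $o\neq\varnothing$), but these are routine.
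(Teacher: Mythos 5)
Your proof is correct and uses the same key ingredients as the paper (\autoref{o_cap_b} and \autoref{is_scrawl}); the paper merely runs the two halves in the opposite order, proving $\Ccal(M)\se\Ccal$ first by refining the cocircuit $b$ supplied by \autoref{o_cap_b} to an element of $\Dcal$, and then deducing $\Ccal\se\Scal(M)$ from $\Ccal(M^*)\se\Dcal$, whereas you prove $\Ccal\se\Scal(M)$ directly from the union hypothesis and feed it into the other half. The only point to tidy is that in your second step you should pick a \emph{nonempty} $C_i$ (one exists since $\bigcup_i C_i=o\neq\varnothing$), rather than asserting that $\varnothing$ cannot occur among the $C_i$.
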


\begin{proof}
We begin by showing that $\Ccal(M) \se \Ccal$. For any circuit $o$ of $M$, pick an element $e$ of $o$. Since $o$ is a union of elements of $\Ccal$ there is $o' \in \Ccal$ with $e \in o' \se o$. Suppose for a contradiction that $o'$ isn't the whole of $o$, so that there is $f \in o \sm o'$. By \autoref{o_cap_b} there is some cocircuit $b$ of $M$ with $o' \cap b = \{e\}$. Then we can find $b' \in \Dcal$ with $e \in b' \se b$, and so $o' \cap b' = \{e\}$, giving the desired contradiction. Similarly we obtain that $\Ccal(M^*) \se \Dcal$.

The fact that $\Ccal \se \Scal(M)$ is immediate from \autoref{is_scrawl} since $\Ccal(M^*) \se \Dcal$, and the proof that $\Dcal \se \Scal(M^*)$ is similar.
\end{proof}

\section{What is a $\Psi$-matroid?}\label{sec:psi}

In this section we shall review the definitions of $\Psi$-circuits and $\Psi^{\complement}$-bonds for a graph $G$ with a specified set $\Psi$ of ends. Much of what we say will be a review of the early parts of \cite{DP:dualtrees}, though we shall work in a slightly more general context: in \cite{DP:dualtrees}, only finitely separable graphs are considered (a graph is {\em finitely separable} if any two vertices lie on opposite sides of some finite cut). We shall rely on\cite{DP:dualtrees} for the results we need about finitely separable graphs.

We say that two rays in a graph $G$ are {\em equivalent} if they cannot be separated by removing finitely many vertices from $G$. An {\em end} of $G$ is an equivalence class of rays under this relation, and the set of ends of $G$ is denoted $\Omega(G)$.

 Let $d$ be the distance function on $V(G) \sqcup (0, 1) \times E(G)$ considered as the ground set of the simplicial 1-complex formed from the vertices and edges of $G$. We define a topology {\scshape VTop} on the set $V(G) \sqcup \Omega(G) \sqcup (0,1) \times E(G)$ by taking basic open neighbourhoods as follows:
\begin{itemize}
\itum For $v \in V(G)$, the basic open neighbourhoods of $v$ are the $\epsilon$-balls $B_{\epsilon}(v) = \{x | d(v, x) < \epsilon\}$ for $\epsilon \leq 1$.
\itum For $(x, e) \in (0, 1) \times E$ we say $(x, e)$ is an {\em interior point} of $e$, and take the basic open neighbourhoods to be the $\epsilon$-balls about $(x, e)$ with $\epsilon \leq \min(x, 1-x)$.
\itum For $\omega \in \Omega(G)$, the basic open neighbourhoods of $\omega$ will be parametrised by the finite subsets $S$ of $V(G)$. Given such a subset, we let $C(S, \omega)$ be the unique component of $G - S$ that contains a ray from $\omega$, and let $\hat C(S, \omega)$ be the set of all vertices and inner points of edges contained in or incident with $C(S, \omega)$, and of all ends represented by a ray in $C(S, \omega)$. We take the basic open neighbourhoods of $\omega$ to be the sets $\hat C(S, \omega)$.
\end{itemize}

We call the topological space obtained in this way $|G|$. We will need a fundamental lemma about this topology. A {\em comb} in $G$ consists of a ray $R$ together with infinitely many vertex-disjoint finite paths having precisely their first vertex on $R$. $R$ is called the {\em spine} of the comb, and the final vertices of the paths are called the {\em teeth} of the comb. 

\begin{lem}\label{spine}
Let $G$ be a graph.
Let $X$ be a set of vertices of $G$ and $\omega$ an end of $G$.
Let $R_\omega$ be some ray in $\omega$.

Then $\omega$ is in the closure of $X$ if and only if there is a comb with spine $R_\omega$ all of whose teeth are in $X$.
\end{lem}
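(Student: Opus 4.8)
The plan is to prove both directions of the equivalence, with the forward direction (from closure to comb) being the substantive one; the reverse direction is essentially a routine check that a comb forces $\omega$ into the closure.

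For the \emph{easy direction}, suppose there is a comb with spine $R_\omega$ and all teeth in $X$. I would show $\omega$ lies in the closure of $X$ by checking that every basic open neighbourhood $\hat C(S,\omega)$ meets $X$. Given a finite $S \subseteq V(G)$, the ray $R_\omega$ is eventually inside $C(S,\omega)$, so all but finitely many of the vertex-disjoint finite paths of the comb start on that tail of $R_\omega$; since $S$ is finite it meets only finitely many of these (pairwise disjoint) paths, so some path avoids $S$ entirely and starts inside $C(S,\omega)$. Its tooth is then a vertex of $X$ in $C(S,\omega) \subseteq \hat C(S,\omega)$. Hence $\omega \in \overline{X}$.

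For the \emph{main direction}, assume $\omega \in \overline{X}$. I would build the comb by a König's-Infinity-Lemma / exhaustion argument. Enumerate $R_\omega = r_0 r_1 r_2 \dots$. Define a nested sequence of finite vertex sets $S_0 \subseteq S_1 \subseteq \dots$ exhausting $V(R_\omega)$ together with enough extra vertices; at stage $n$, since $\omega \in \overline{X}$, the neighbourhood $\hat C(S_n,\omega)$ contains a vertex $x_n \in X$, so there is a finite path $P_n$ from some vertex of $R_\omega$ to $x_n$ lying inside $C(S_n,\omega)$ — one obtains such a path because $x_n$ and the tail of $R_\omega$ both lie in the connected subgraph $C(S_n,\omega)$. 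The paths $P_n$ need not be disjoint, so the key step is to thin them out: I would choose the $S_n$ growing fast enough (absorbing all previously constructed paths) so that $P_{n+1}$ avoids $P_0,\dots,P_n$, and then truncate each $P_n$ to its final segment after its last visit to $R_\omega$, making each a path meeting $R_\omega$ in exactly its first vertex. Passing to an infinite subsequence on which the attachment points along $R_\omega$ are strictly increasing (possible since each $P_n$ attaches beyond $S_n \supseteq \{r_0,\dots,r_{k_n}\}$ with $k_n \to \infty$) yields infinitely many vertex-disjoint finite paths attached to $R_\omega$, each with a tooth in $X$ — a comb as required. Alternatively, one can phrase the thinning via \autoref{Infinity_Lemma} directly, building level sets out of candidate partial combs, but the greedy nested-exhaustion version seems cleanest.

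The \textbf{main obstacle} is the disjointness bookkeeping in the hard direction: raw connectivity only gives paths to $X$ inside arbitrarily small neighbourhoods of $\omega$, and one must simultaneously (i) keep these paths finite, (ii) make them pairwise vertex-disjoint, and (iii) arrange that their attachment points march off to infinity along $R_\omega$ rather than piling up. The trick that resolves all three at once is to make the finite separators $S_n$ large enough at each stage to contain the union of all previously built paths together with the initial segment $r_0 \dots r_n$ of the spine; then any path living in $C(S_n,\omega)$ is automatically disjoint from the earlier ones and attaches to $R_\omega$ past $r_n$. I should be slightly careful that the new path actually does attach to $R_\omega$ (not just wander in $C(S_n,\omega)$): since $R_\omega$ has a tail in $C(S_n,\omega)$ and $C(S_n,\omega)$ is connected, a shortest path from $x_n$ to $V(R_\omega)$ within $C(S_n,\omega)$ meets $R_\omega$ only in its last vertex, and that last vertex lies beyond $S_n$, which is exactly what is needed.
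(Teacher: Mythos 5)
Your proof is correct, and the easy direction coincides with the paper's. For the substantive direction, however, you take a genuinely different route. The paper disposes of it in two lines by invoking the infinite form of Menger's theorem applied to $V(R_\omega)$ and $X$: either there are infinitely many vertex-disjoint $R_\omega$--$X$ paths (which, after the routine truncation, form the comb), or a finite set $S$ separates $X$ from $R_\omega$, in which case $\hat C(S,\omega)$ is a neighbourhood of $\omega$ avoiding $X$, contradicting $\omega\in\overline{X}$. You instead build the comb greedily with nested finite separators $S_0\subseteq S_1\subseteq\cdots$, absorbing at each stage the previously constructed paths and an initial segment of the spine, and extracting from the connected set $C(S_n,\omega)$ a path from a new vertex $x_n\in X$ to the tail of $R_\omega$ meeting the spine only in its last vertex. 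Your disjointness and attachment bookkeeping is sound (note that the marching-off of attachment points is not even needed: vertex-disjointness already forces distinct first vertices on $R_\omega$, which is all the comb definition requires, and degenerate one-vertex paths for $x_n$ lying on the spine are harmless). What the two approaches buy: the paper's argument is shorter and cleaner but outsources the combinatorial core to Menger's theorem for infinite graphs; yours is self-contained and elementary, essentially re-proving the relevant instance of the star--comb dichotomy by hand, at the cost of the nesting bookkeeping you correctly identify as the main obstacle. Either is acceptable here; you could also compress your construction by quoting \autoref{Infinity_Lemma}, but as you say the greedy version is already clean.
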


\begin{proof}
For the `if' direction, let $\hat C(S, \omega)$ be a basic open neighbourhood of $\omega$. Then only finitely many of the paths in the comb can meet $S$, so without loss of generality none of them do. Some tail of $R$ must lie in $C(S, \omega)$, so without loss of generality the whole of $R$ does. Then all teeth of the comb lie in $\hat C(S, \omega)$.
 
For the `only if' direction, we apply Menger's Theorem to get either 
infinitely many vertex-disjoint $R_\omega$-$X$-paths
or a finite vertex set $S$ whose removal separates $X$ from  $R_\omega$.
In the first case we are done and in the second we get a contradiction to the assumption that $\omega$ lies in the closure of $X$.
\end{proof}

For any set $\Psi$ of ends of $G$, we set $\Psi^{\complement} = \Omega(G) \setminus \Psi$ and $|G|_{\Psi} = |G| \setminus \Psi^{\complement}$. This topological space, derived from a graph, seems almost to fit the notion of graph-like space explored in \cite{BCC:graphic_matroids} (and closely related to the earlier work of \cite{ThomassenVellaContinua}). We can make this precise as follows:

\begin{dfn}
An \emph{almost graph-like space $G$} is a 
topological space (also denoted $G$) together with 
a \emph{vertex set} $V=V(G)$, an \emph{edge set} $E=E(G)$ and for each $e \in E$ a continuous map $\iota_e \colon [0, 1] \to G$ such that:
\begin{itemize}
\itum The underlying set of $G$ is $V\sqcup (0,1)\times E$
\itum For any $x \in (0, 1)$ we have $\iota_e(x) = (x, e)$.
\itum $\iota_e(0)$ and $\iota_e(1)$ are vertices (called the {\em endvertices} of $e$).
\itum $\iota_e \restric_{(0, 1)}$ is an open map.
\end{itemize} Such an almost graph-like space is a {\em graph-like space} if in addition for any $v, v' \in V$, there are disjoint open subsets $U, U'$ of $G$ partitioning $V(G)$ and with $v \in U$ and $v' \in U'$. This ensures that $V(G)$, considered as a subspace of $G$, is totally disconnected, and that $G$ is Hausdorff.
\end{dfn}

Thus we can give $|G|_{\Psi}$ the structure of an almost graph-like space, with edge set $E(G)$ and vertex set $V(G) \cup \Psi$.

Let $e$ be an edge in a graph-like space with $\iota_e(0) \neq \iota_e(1)$. Then $\iota_e$ is a continuous injective map from a compact to a Hausdorff space and so it is a homeomorphism onto its image. The image is compact and so is closed, and so is the closure of $(0, 1) \times \{e\}$ in $G$. So in this case $\iota_e$ is determined by the properties above and the topology of $G$. The same is true if $\iota_e(0) = \iota_e(1)$: in this case we can lift $\iota_e$ to a continuous map from $S^1 = [0, 1]/(0 = 1)$ to $G$, and argue as above that this map is a homeomorphism onto the closure of $(0, 1) \times \{e\}$ in $G$.

\begin{dfn}
We say that two vertices $v$ and $v'$ of an almost graph-like space $G$ are {\em equivalent} (denoted $v \sim v'$) if for any disjoint open subsets $U, U'$ of $G$ partitioning $V(G)$, $v$ and $v'$ lie on the same side of the partition. The {\em graph-like quotient} $\widetilde G$ of $G$ is the space obtained from $G$ by identifying equivalent vertices.
$\widetilde G$ has the structure of a graph-like space with the same edge set as $G$ and with vertex set $V(G) / \sim$.
\end{dfn}

\begin{lem}
If $G$ is an almost graph-like space, then $\widetilde G$ is a graph-like space.
\end{lem}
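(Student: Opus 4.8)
The plan is to check the two things required of $\widetilde G$: that it is an almost graph-like space at all, and then the extra separation property. Write $q\colon G\to\widetilde G$ for the quotient map and, for each $e\in E$, set $\tilde\iota_e:=q\circ\iota_e$. Since the identification defining $\widetilde G$ only ever merges vertices with vertices, as a set $\widetilde G=(V/{\sim})\sqcup(0,1)\times E$, and $q$ restricts to a bijection on $(0,1)\times E$. The first three bullets in the definition of an almost graph-like space are then immediate: $\tilde\iota_e$ is continuous as a composite of continuous maps; $\tilde\iota_e(x)=(x,e)$ for $x\in(0,1)$; and $\tilde\iota_e(0),\tilde\iota_e(1)\in V/{\sim}$ are vertices. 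For the fourth bullet, if $O\se(0,1)$ is open then $\iota_e(O)$ is open in $G$ by hypothesis, and since $q$ identifies no point of $(0,1)\times E$ with anything else we have $q^{-1}\big(q(\iota_e(O))\big)=\iota_e(O)$; hence $\tilde\iota_e(O)=q(\iota_e(O))$ is open in $\widetilde G$, so $\tilde\iota_e\restric_{(0,1)}$ is an open map.

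The content of the lemma is the separation axiom. Let $v,v'\in V$ be inequivalent, so that they represent distinct vertices $[v],[v']$ of $\widetilde G$; I must produce disjoint open $U,U'\se\widetilde G$ partitioning $V(\widetilde G)$ with $[v]\in U$ and $[v']\in U'$. By definition of $\sim$ there is a vertex partition $(W,W')$ of $G$, i.e.\ disjoint open sets with $V\se W\cup W'$, such that $v\in W$ and $v'\in W'$. The key observation, which makes everything else routine, is that $W$ is already \emph{saturated} for $q$: if $w\in W\cap V$ and $w\sim w''$, then $w''$ lies on the same side of the partition $(W,W')$ as $w$, so $w''\in W$; together with the fact that $q$ fixes every point of $(0,1)\times E$, this gives $q^{-1}(q(W))=W$, and likewise $q^{-1}(q(W'))=W'$.

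From this the result follows quickly. Since $q^{-1}(q(W))=W$ is open, $q(W)$ is open in $\widetilde G$, and similarly $q(W')$ is open. They are disjoint: if $q(x)=q(y)$ with $x\in W$ and $y\in W'$, then $x\neq y$ (as $W\cap W'=\varnothing$), so $x$ and $y$ must be $\sim$-equivalent vertices, contradicting that they lie on opposite sides of $(W,W')$. Also $q(W)\cup q(W')\supseteq V(\widetilde G)$ because $W\cup W'\supseteq V$, and $[v]\in q(W)$, $[v']\in q(W')$. So $U=q(W)$ and $U'=q(W')$ do the job, and $\widetilde G$ is a graph-like space; the total disconnectedness of $V(\widetilde G)$ and the Hausdorffness of $\widetilde G$ recorded in the definition then come for free.

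I expect no real obstacle here: the only thing one has to notice is that a vertex partition witnessing $v\not\sim v'$ is automatically $q$-saturated, so there is never any need to enlarge it to a saturated set — which is the step one might otherwise worry about, since $q$ need not be an open or a closed map in general.
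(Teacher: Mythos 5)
Your proof is correct and follows essentially the same route as the paper's: the paper also takes a vertex partition $(U,U')$ of $G$ witnessing $v\not\sim v'$, observes that equivalent vertices always lie on the same side of such a partition (your saturation observation), and passes to the induced open partition of $\widetilde G$. You merely spell out in more detail the verification that $\widetilde G$ is an almost graph-like space, which the paper dismisses as clear.
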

\begin{proof}
It is clear that $\widetilde{G}$ is an almost graph-like space. Let $[v]_{\sim}$ and $[v']_{\sim}$ be distinct vertices of $\widetilde G$. Then $v \not \sim v'$, and so there are disjoint open sets $U$ and $U'$ in $G$ which partition $V(G)$ and with $v \in U$ and $v' \in U'$. Then any pair of equivalent vertices of $G$ are either both in $U$ or both in $U'$, so $U$ and $U'$ induce disjoint open subsets $U/\sim$ and $U'/\sim$ of $\widetilde G$ which partition the vertices of $\widetilde G$ and such that $[v]_{\sim} \in U / \sim$ and $[v']_{\sim} \in U' / \sim$. 
\end{proof}

We say that a cut $b$ in a graph $G$ is {\em $\Psi$-bounded} if the closure of $b$ in $|G|_{\Psi}$ contains no ends. Thus if $b$ is $\Psi$-bounded and $\omega$ is an end in $\Psi$ then any ray to $\omega$ in $G$ lies eventually on one side of $b$ - we then say that $\omega$ is on that side of $b$.

\begin{lem}\label{vertices_equivalent}
Two vertices of $|G|_{\Psi}$ are equivalent if and only if they lie on the same side of every $\Psi$-bounded cut.
\end{lem}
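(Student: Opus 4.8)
The plan is to prove each direction separately, with the easier implication being that equivalent vertices lie on the same side of every $\Psi$-bounded cut. For this direction, suppose $b$ is a $\Psi$-bounded cut, say with sides $A$ and $A^\complement$ (as vertex sets of $G$, extended to $|G|_\Psi$). Since $b$ is $\Psi$-bounded, the closure of $b$ in $|G|_\Psi$ contains no ends of $\Psi$, so every end in $\Psi$ lies strictly on one side; this lets us partition $V(|G|_\Psi) = V(G) \cup \Psi$ into two pieces $U, U'$ according to the side, and the $\Psi$-boundedness is exactly what is needed to check that $U$ and $U'$ are open in $|G|_\Psi$ (a basic neighbourhood $\hat C(S,\omega)$ of an end $\omega \in \Psi$ can be shrunk, using that only finitely many edges of $b$ are incident with $C(S,\omega)$ after enlarging $S$, so that it lies entirely on $\omega$'s side). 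Hence $U, U'$ witness that any two vertices on opposite sides of $b$ are inequivalent, which is the contrapositive of what we want.

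For the converse, suppose $v \not\sim v'$, so there are disjoint open sets $U, U'$ of $|G|_\Psi$ partitioning $V(|G|_\Psi) = V(G) \cup \Psi$ with $v \in U$, $v' \in U'$. I want to produce a $\Psi$-bounded cut separating $v$ from $v'$. Let $b$ be the cut of $G$ consisting of all edges of $G$ with one endvertex in $U$ and the other in $U'$; equivalently, $b$ is the edge boundary of the vertex set $U \cap V(G)$. The key claims are: (i) $b$ really is a cut with $v$ and $v'$ on opposite sides — immediate from the construction; and (ii) $b$ is $\Psi$-bounded, i.e. its closure in $|G|_\Psi$ contains no ends. For (ii), suppose some end $\omega \in \Psi$ were in the closure of $b$. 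Since $\omega$ lies in $U$ or in $U'$, say $\omega \in U$, take a basic neighbourhood $\hat C(S,\omega) \subseteq U$. Every edge of $b$ has an endvertex in $U' \subseteq V(G) \setminus U$, hence not in $C(S,\omega)$; so every edge of $b$ meeting $C(S,\omega)$ is incident with $S$, of which there are only finitely many. Then a slightly larger neighbourhood $\hat C(S',\omega)$, obtained by adding the other endvertices of those finitely many edges to $S$, meets $b$ not at all, contradicting $\omega$ being in the closure of $b$. (One should also handle inner points of edges of $b$ in the closure, but these accumulate only at endvertices, hence never at ends, so the argument goes through via \autoref{spine} applied to the teeth set being the endvertices of edges of $b$ on the far side.)

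The main obstacle I anticipate is the topological bookkeeping in step (ii): one must be careful that "closure of $b$" is taken in $|G|_\Psi$, where $b$ is construed as a subset of the edge space $(0,1) \times E(G)$, and that an end $\omega$ is in that closure iff every $\hat C(S,\omega)$ meets $b$. The clean way to phrase the contradiction is via \autoref{spine}: if $\omega$ is in the closure of $b$, then (considering the endvertices of edges of $b$ as a vertex set $X$) there is a comb with spine a ray $R_\omega \in \omega$ and all teeth in $X$; infinitely many of these teeth then lie in $C(S,\omega)$ for every finite $S$, forcing infinitely many edges of $b$ to have an endvertex in $C(S,\omega)$. But since $b$ is the boundary between $U \cap V(G)$ and $U' \cap V(G)$ and $C(S,\omega) \subseteq U$ (for $S$ large), each such edge has its other endvertex outside $C(S,\omega)$, hence in $S$ — contradicting finiteness of $S$. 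So $\omega$ cannot be in the closure of $b$, and $b$ is $\Psi$-bounded, completing the proof.
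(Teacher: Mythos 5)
Your overall strategy matches the paper's: in one direction you turn the two sides of a $\Psi$-bounded cut into an open partition of $V(|G|_{\Psi})$, and in the other you take the boundary cut $b$ of an open partition $U, U'$ and show it is $\Psi$-bounded. The gap is in the latter step, where you argue that no end $\omega \in \Psi$ lies in the closure of $b$. Both of your arguments for this rest on a finiteness claim that is not available: you assert that only finitely many edges of $b$ meet $C(S,\omega)$ because each such edge is incident with the finite set $S$, but this lemma is stated for arbitrary graphs (the section explicitly works beyond the finitely separable case, let alone the locally finite one), and a single vertex of $S$ may send infinitely many edges into $C(S,\omega)$. Your fallback via \autoref{spine} has the same flaw: infinitely many teeth in $C(S,\omega)$, each joined by an edge of $b$ to a vertex of $S$, does not contradict the finiteness of $S$, since those edges may all share an endvertex. (There is also a smaller slip: the vertices you would need to add to $S$ are the endvertices lying \emph{inside} $C(S,\omega)$, not ``the other endvertices'', which by your own argument already lie in $S$.)

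The missing idea is to use the openness of $U'$, which you never invoke at the crucial point. If $e \in b$ has an endvertex in $U'$, then since $U'$ is open it contains some interior points of $e$; those points are therefore not in $\hat C(S,\omega) \subseteq U$, and since $\hat C(S,\omega)$ contains either all interior points of an edge or none of them, $e$ does not meet $\hat C(S,\omega)$ at all. So in fact \emph{no} edge of $b$ meets $\hat C(S,\omega)$ --- there is nothing to enlarge $S$ for --- and $\omega$ is not in the closure of $b$. This is exactly the paper's argument. Your first direction is essentially the paper's as well, though you should make $U$ and $U'$ genuine open subsets of $|G|_{\Psi}$ (unions of half-balls around vertices and of $b$-avoiding basic neighbourhoods of ends, as in the paper) rather than subsets of $V(|G|_{\Psi})$; and here too $\Psi$-boundedness hands you, for each end, a basic neighbourhood disjoint from $b$ directly, with no need for the finiteness-based shrinking you describe.
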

\begin{proof}
For the `if' direction, let $v$ and $v'$ be inequivalent vertices of $|G|_{\Psi}$, and let $U$ and $U'$ be disjoint open subsets partitioning $V(|G|_{\Psi})$ with $v \in U$ and $v' \in U'$. Let $b$ be the cut of $G$ consisting of those edges with one endvertex in $U$ and the other in $U'$. We shall show that $b$ is $\Psi$-bounded. Let $\omega \in \Psi$. Without loss of generality $\omega \in U$ and so there is some $S$ with $\hat C(S, \omega) \subseteq U$. Let $e \in b$, so one endvertex is in $U'$. Then since $U'$ is open some interior point of $e$ is in $U'$, so that interior point of $e$ isn't in $\hat C(S, \omega)$, so $e$ doesn't meet $\hat C(S, \omega)$. Since $e$ was arbitrary, $\hat C(S, \omega) \cap b = \varnothing$ and so $\omega$ isn't in the closure of $b$, as required.

For the `only if' direction, let $v$ and $v'$ be equivalent vertices of $|G|_{\Psi}$ and let $b$ be a $\Psi$-bounded cut of $G$. For each end $\omega \in \Psi$ there is by the definition of $\Psi$-boundedness a basic open set $U_{\omega} = \hat C(S_{\omega}, \omega)$ that doesn't meet $b$. Each set $C(S_{\omega}, b)$ is connected and so lies entirely on one side of $b$. Letting the sides of $b$ be $X$ and $X'$, we may take $U = \bigcup_{v \in V(G) \cap X} B_{\frac12}(v) \cup \bigcup_{\omega \in \Psi \cap X} U_{\omega}$ and $U' = \bigcup_{v \in V(G) \cap X'} B_{\frac12}(v) \cup \bigcup_{\omega \in \Psi \cap X'} U_{\omega}$. Now since $v$ and $v'$ are equivalent they must either be both in $U$ or both in $U'$, so they lie on the same side of $b$. Since $b$ was arbitrary, we are done.
\end{proof}

For a vertex $v$ of $G$ and a ray $R$ of $G$, we say that $v$ {\em dominates} $R$ if there are infinitely many paths from $v$ to $R$, vertex-disjoint except at $v$. We say that $v$ {\em dominates} some end $\omega$ if it dominates some ray (or equivalently all rays) in $\omega$.

\begin{lem}
Let $v \in V(G)$ dominate some end $\omega \in \Psi$. Then $v$ and $\omega$ are equivalent as vertices of $|G|_{\Psi}$. 
\end{lem}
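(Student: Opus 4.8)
The plan is to deduce this from \autoref{vertices_equivalent}: it suffices to show that $v$ and $\omega$ lie on the same side of every $\Psi$-bounded cut $b$. So let $b$ be a $\Psi$-bounded cut, with sides $X$ and $X'$, say $\omega$ on side $X$, and suppose for a contradiction that $v \in X'$.

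The first step is to translate the $\Psi$-boundedness of $b$ into a statement about a single finite vertex set. Since $b$ is $\Psi$-bounded and $\omega \in \Psi$, the end $\omega$ is not in the closure of $b$ in $|G|_{\Psi}$, so some basic open neighbourhood $\hat C(S, \omega)$ with $S$ finite is disjoint from $b$. By the definition of the topology on $|G|_{\Psi}$, this says precisely that no edge of $b$ has an endvertex in the component $C(S, \omega)$ of $G - S$. Being connected, $C(S, \omega)$ then lies entirely on one side of $b$, and since it contains a tail of a ray to $\omega$ (which lies in $X$, as $\omega$ is on side $X$ of the $\Psi$-bounded cut $b$), that side is $X$; thus $C(S, \omega) \se X$.

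The second step uses domination to contradict this. Fix a ray $R$ to $\omega$ with $v \notin R$. Since $v$ dominates $\omega$, there are infinitely many $v$--$R$ paths that are pairwise disjoint except at $v$. Because $S$ is finite, all but finitely many of these avoid $S$ outside of $v$; and because their endvertices on $R$ are pairwise distinct while only finitely many vertices of $R$ lie outside $C(S, \omega)$, all but finitely many of them end inside $C(S, \omega)$. Choosing a path $P$ with both properties, $P - v$ is a connected subgraph of $G - S$ meeting $C(S, \omega)$, hence $P - v \se C(S, \omega) \se X$. In particular the neighbour $n$ of $v$ on $P$ lies in $X$, while $v \in X'$, so the edge $vn$ lies in $b$ and has the endvertex $n$ in $C(S, \omega)$ --- contradicting the conclusion of the first step.

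I expect the one point that needs care is the combinatorial selection in the second step: finding a \emph{single} path from the family provided by domination that both avoids $S$ (away from $v$) and terminates inside $C(S, \omega)$. Each of these is a cofinite condition on that family of internally disjoint paths, so they can be satisfied simultaneously, but one has to be a little careful to treat uniformly the case where $v$ itself belongs to $S$ --- which is why it is convenient to work with $P - v$ rather than $P$ throughout. (One could instead assemble the tails of these paths beyond their last crossing of $b$ into a comb with spine $R$ and invoke \autoref{spine}, but the direct argument above seems shorter.) Everything else is a routine unwinding of the definitions of $|G|_{\Psi}$ and of $\Psi$-boundedness.
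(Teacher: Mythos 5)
Your proof is correct. Both arguments run by contradiction from the same starting point --- a $\Psi$-bounded cut $b$ with $v$ and $\omega$ on opposite sides, attacked via the infinitely many internally disjoint $v$--$R$ paths supplied by domination --- but the way the contradiction is extracted differs. The paper truncates each path $P_i$ at its first vertex $v_i$ on the $\omega$-side of $b$, assembles the ray $R$ together with the paths $v_iP_i$ into a comb, and invokes \autoref{spine} to conclude that $\omega$ lies in the closure of the set of teeth and hence of $b$, contradicting $\Psi$-boundedness. You instead unwind $\Psi$-boundedness into a single basic open neighbourhood $\hat C(S,\omega)$ disjoint from $b$, observe that $C(S,\omega)$ lies entirely on the $\omega$-side, and then use a pigeonhole on the internally disjoint paths to find \emph{one} path whose interior avoids $S$ and lands in $C(S,\omega)$, so that its first edge is an edge of $b$ incident with $C(S,\omega)$. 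Your route avoids the comb machinery and \autoref{spine} (whose proof uses Menger's theorem) at the cost of redoing a small amount of the topology by hand; the paper's route is shorter on the page because it delegates exactly that work to an already-proved lemma. The points you flag as needing care --- that the two cofinite conditions on the path family can be met simultaneously, and that $P-v$ rather than $P$ is the right object when $v$ might lie in $S$ --- are handled correctly.
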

\begin{proof}
Let $R$ be a ray in $\omega$ and let $(P_i | i \in \Nbb)$ be a sequence of paths from $v$ to $\omega$ meeting only at $v$. Suppose for a contradiction that there is a $\Psi$-bounded cut with $v$ and $\omega$ on opposite sides. Then $R$ must eventually lie on the same side of $b$ as $\omega$, so without loss of generality it lies entirely on that side. For each $P_i$, let $v_i$ be the first vertex of $P_i$ on the same side of $b$ as $\omega$. Then $R$ together with the paths $v_iP_i$ forms a comb, so by \autoref{spine} the end $\omega$ is in the closure of the set of teeth $v_i$, so it is in the closure of $b$, which is the desired contradiction.
\end{proof}

We let $\simeq$ be the smallest equivalence relation identifying any vertex with any end that it dominates. If $G$ is finitely separable, then by [{\cite{DP:dualtrees}, {Lemma 6}}], no two vertices will be equivalent under $\simeq$. In \cite{DP:dualtrees}, the topological space $\widetilde G_{\Psi}$ is defined, for $G$ a finitely separable graph, to be the quotient of $|G|_{\Psi}$ by $\simeq$. By the above lemma, $\sim$ refines $\simeq$ and so there is a continuous quotient map $f_G \colon \widetilde G_{\Psi} \to \widetilde{|G|_{\Psi}}$.

\begin{lem}
If $G$ is finitely separable, then $f_G$ is an homeomorphism.
\end{lem}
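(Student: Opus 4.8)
The plan is to reduce the statement to the claim that the two equivalence relations $\simeq$ and $\sim$ on $V(|G|_\Psi)=V(G)\cup\Psi$ coincide when $G$ is finitely separable. Indeed, $\widetilde G_\Psi$ and $\widetilde{|G|_\Psi}$ are by definition the quotients of the one space $|G|_\Psi$ by $\simeq$ and by $\sim$, and $f_G$ is the map induced by the identity of $|G|_\Psi$; so $f_G$ is automatically a continuous surjection, and it is a homeomorphism if and only if $\simeq$ and $\sim$ agree, in which case $\widetilde G_\Psi=\widetilde{|G|_\Psi}$ and $f_G$ is literally the identity map. One inclusion is free: by the previous lemma every vertex is $\sim$-equivalent to each end it dominates, and $\simeq$ is the smallest equivalence relation with that property, so $\simeq\ \subseteq\ \sim$. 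Thus it remains to show, for finitely separable $G$, that $x\sim y$ implies $x\simeq y$; by \autoref{vertices_equivalent} this amounts to showing that if $x\not\simeq y$ then some $\Psi$-bounded cut of $G$ separates $x$ from $y$.

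First I would cut down the number of cases. For finitely separable $G$, by \cite{DP:dualtrees} no two vertices are $\simeq$-equivalent, so every $\simeq$-class is either $\{v\}$ together with the ends in $\Psi$ dominated by $v$, for a unique vertex $v$, or else a single end of $\Psi$ dominated by no vertex. Pick such a representative $x'$ for the class of $x$ and $y'$ for that of $y$; then $x\simeq x'$, hence $x\sim x'$ by the inclusion just proved, hence by \autoref{vertices_equivalent} $x$ lies on the same side as $x'$ of every $\Psi$-bounded cut, and likewise for $y,y'$. So it suffices to separate $x'$ from $y'$ by a $\Psi$-bounded cut, and $x',y'$ are still $\simeq$-inequivalent and each is either a vertex or a vertex-free end. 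This leaves three cases: (a) $x',y'$ are distinct vertices; (b) $x'$ is a vertex $v$ and $y'$ a vertex-free end $\omega$, in which case $v$ does not dominate $\omega$ (else $v\simeq\omega$); and (c) $x',y'$ are distinct vertex-free ends.

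In case (a), finite separability supplies a finite cut $b$ separating the two vertices, and $b$ is $\Psi$-bounded because its closure in $|G|_\Psi$ contains no end: enlarging a finite vertex set to contain the endvertices of $b$, no basic neighbourhood of any end meets $b$. In cases (b) and (c) I would apply the fan form of Menger's Theorem to obtain a finite vertex set $S$ such that, in $G-S$, the vertex $v$ (respectively a ray in $\omega$) and a ray in $\omega'$ lie in distinct components; letting $C$ be the component on the ``$\omega$-side'' and $b$ the cut between $C$ and $V(G)\sm C$, the two points in question lie on opposite sides of $b$, so everything hinges on $b$ being $\Psi$-bounded. If it were not, then by \autoref{spine} some end $\eta\in\Psi$ would have a comb whose teeth are endvertices of edges of $b$; since every such edge has exactly one endvertex in the finite set $S$, infinitely many of these teeth would be joined to a single $s\in S$, and combining those edges with the comb would give infinitely many internally disjoint $s$--$\eta$ paths, so $s$ would dominate $\eta$, with $\eta$ represented in $C$.

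The heart of the matter — and, I expect, the main obstacle — is to rule this out, i.e.\ to choose $S$ (hence $C$) so that no vertex of $S$ dominates an end of $\Psi$ that is both represented in $C$ and accumulated to by $b$. This is precisely where finite separability is indispensable; without it the counterexamples underlying \autoref{que:is_matroid} already show that no such $\Psi$-bounded cut need exist. I would handle it by iterating: a bad end $\eta$ as above is distinct from every end we are trying to separate (it is dominated, while they are not, or they lie in other components), so we may enlarge $S$ by a finite set separating $\eta$ from the $\omega$-side inside $C$, shrink $C$ accordingly, and repeat, using König's Infinity Lemma (\autoref{Infinity_Lemma}) to pass to a limiting cut that is genuinely $\Psi$-bounded; alternatively one may simply invoke the corresponding separation statement for finitely separable graphs from \cite{DP:dualtrees}, on which the paper already relies for such facts. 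Once $\simeq\ =\ \sim$ is established, $\widetilde G_\Psi$ and $\widetilde{|G|_\Psi}$ are one and the same quotient of $|G|_\Psi$, so $f_G$ is the identity and in particular a homeomorphism.
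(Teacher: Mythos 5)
Your reduction is sound: $f_G$ is a continuous quotient map, so it is a homeomorphism iff it is injective, i.e.\ iff $\sim$ and $\simeq$ coincide; one inclusion is free from the preceding lemma, the representative-picking step is legitimate via \autoref{vertices_equivalent}, and case (a) together with the observation that finite cuts are automatically $\Psi$-bounded is correct. The paper's own proof is exactly your fallback option and nothing more: it cites Lemma~6 of \cite{DP:dualtrees} to obtain, for any two $\simeq$-inequivalent points, a \emph{finite} edge set $F$ separating them in $\widetilde G_\Psi$, takes the sub-cut $b\subseteq F$ bounding the component containing one of the points, and notes that $b$, being finite, is $\Psi$-bounded. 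So if you invoke that separation statement, you are done, and your argument coincides with the paper's.

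The self-contained route you sketch for cases (b) and (c), however, has a genuine gap precisely where you suspect it. After Menger gives you a finite $S$ and a component $C$, the cut $b$ around $C$ may have infinitely many ends of $\Psi$ in its closure, one for each of possibly infinitely many dominated ends accumulating on $b$; your iteration removes them one at a time, each step enlarging $S$ by a finite set whose new vertices may themselves dominate further ends of $\Psi$ in the shrunken $C$. Nothing in the argument shows this process terminates after finitely many steps, and a ``limit'' over $\omega$ steps produces an infinite separator $S_\infty$, for which the resulting cut need not be $\Psi$-bounded and \autoref{vertices_equivalent} no longer applies in the way you need. K\"onig's Infinity Lemma does not help here: it extracts a ray from a levelled locally finite graph, whereas what you need is a \emph{finite} object surviving the limit. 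This is exactly the content of the Diestel--Pott lemma (whose proof uses finite separability in an essentially different, non-iterative way), so the step cannot be waved through; either cite it, as the paper does, or supply a genuinely different argument for the existence of the finite separating edge set.
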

\begin{proof}
Since $f$ is a quotient map, it suffices to show that it is injective.

Let $v$ and $v'$ be vertices of $\widetilde G_{\Psi}$. By [{\cite{DP:dualtrees}, {Lemma 6}}], there is a finite set $F$ of edges such that $v$ and $v'$ lie in disjoint open subsets of $\widetilde G_{\Psi} \setminus (0, 1) \times F$ whose union is $\widetilde G_{\Psi} \setminus (0, 1) \times F$. Let $C$ be the connected component of $G \setminus F$ containing $v$ (or a ray to $v$ if $v$ is an end), and let $b \subseteq F$ be the cut consisting of edges with one endvertex in $C$ and the other not. Since $b$ is finite, it is a $\Psi$-bounded cut, and so $v \not \sim v'$, as required.
\end{proof}

We therefore extend the definition in \cite{DP:dualtrees} by taking $\widetilde G_{\Psi}$ for $G$ an arbitrary graph to be the graph-like quotient of $|G|_{\Psi}$.

In \cite{BCC:graphic_matroids}, topological circuits and topological bonds are defined in any graph-like space.
A \emph{circuit of $\widetilde G_{\Psi}$}, or just \emph{a $\Psi$-circuit,} is an edge set whose
$\widetilde G_{\Psi}$-closure is homeomorphic to the unit circle.
A \emph{bond of $\widetilde G_{\Psi}$}, or just \emph{a $\Psi\ct$-bond}-is an edge set 
of a  minimal nonempty $\Psi$-bounded cut.
In the following sense the $\Psi$-circuits and $\Psi\ct$-bonds behave like the circuits and cocircuits of some matroid.

\begin{lem}\label{never_meet_just_once}
 No $\Psi$-circuit meets any $\Psi\ct$-bond in a single edge.
\end{lem}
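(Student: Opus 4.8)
The plan is to argue by contradiction, supposing there is a $\Psi$-circuit $o$ and a $\Psi\ct$-bond $b$ with $o \cap b = \{e\}$ for a single edge $e$. The two structures live most naturally in different quotient spaces: $o$ is a circuit of $\widetilde G_\Psi$ (its closure there is a circle), while $b$ is a minimal nonempty $\Psi$-bounded cut of $G$. The first step is to relate these. Since $b$ is $\Psi$-bounded, by \autoref{vertices_equivalent} the two sides $X, X'$ of $b$ are unions of $\sim$-classes, so $b$ descends to a cut of the graph-like quotient $\widetilde G_\Psi$ that still separates the vertex set into two clopen pieces; conversely any such partition of $V(\widetilde G_\Psi)$ into clopen sets pulls back to a $\Psi$-bounded cut of $G$. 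So without loss of generality I may work entirely inside the graph-like space $H := \widetilde G_\Psi$, where $o$ is a circle-circuit and $b$ is a minimal nonempty cut whose two sides $U, U'$ are disjoint open sets partitioning $V(H)$, and $o \cap b = \{e\}$.

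The core of the argument is then a purely topological contradiction in $H$. Let $C := \Cl_H(o)$ be the circle. Removing the (open) interior points of the single edge $e$ from $C$ leaves an arc $A$, which is connected. Now consider the partition $H = \hat U \sqcup \hat U'$, where $\hat U$ is the union of $U$ with the interior points of all edges of $b'$-side... more carefully: let $\hat U$ consist of all of $U$ together with all interior points of edges of $H$ not in $b$ whose endvertices lie in $U$, plus the interior points of edges in $b$ up to their midpoint on the $U$-side, and symmetrically for $\hat U'$, splitting each edge of $b$ at its midpoint. This makes $H \setminus (\text{midpoints of } b)$ the disjoint union of the two open sets $\hat U, \hat U'$. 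The arc $A$ avoids all midpoints of edges of $b$ except possibly the midpoint of $e$ — but $A$ is obtained from the circle by deleting \emph{all} interior points of $e$, in particular its midpoint, so $A$ misses every midpoint of every edge of $b$. Hence $A \subseteq \hat U \sqcup \hat U'$, and since $A$ is connected it lies wholly in one side, say $\hat U$. But the two endpoints of $A$ are the two endvertices of $e$, and since $e \in b$ these endvertices lie on opposite sides of $b$, i.e.\ one is in $U \subseteq \hat U$ and the other in $U' \subseteq \hat U'$ — contradicting $A \subseteq \hat U$.

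The step I expect to be the main obstacle is making the "split each edge of $b$ at its midpoint" construction rigorous: I need that the two pieces $\hat U$ and $\hat U'$ are genuinely open in $H$ and cover $H$ minus the midpoints of $b$. Openness of $\hat U$ at a vertex $v \in U$ follows because $U$ is open in $V(H)$ and, $H$ being graph-like, small neighbourhoods of $v$ only meet edges incident with $v$; the only subtlety is that infinitely many edges of $b$ may be incident with a single vertex, but since $b$ is $\Psi$-bounded no end lies in $\Cl b$, so locally this is controlled exactly as in the proof of \autoref{vertices_equivalent}. Openness at interior points of edges is immediate, and at interior points of an edge $f \in b$ on the $U$-side we only include points strictly before the midpoint, which form an open subset of that edge's image. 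Once these topological bookkeeping details are in place the contradiction is immediate, so the real content is the translation of the cut $b$ into a clopen partition of $\widetilde G_\Psi$ and the midpoint-splitting; everything after that is the elementary observation that an arc cannot connect two points separated by a clopen partition that the arc avoids.
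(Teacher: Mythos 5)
Your proposal is correct and follows essentially the same route as the paper's (very terse) proof: delete the interior of the cut $b$ from $\widetilde G_\Psi$ to disconnect the two sides, and observe that the arc obtained from the circle by removing the interior of the single shared edge is connected, avoids the deleted set, yet joins the two endvertices of that edge, which lie on opposite sides. Your midpoint-splitting and the appeal to \autoref{vertices_equivalent} are just a more careful write-up of the openness claims the paper leaves implicit.
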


\begin{proof}
Suppose for a contradiction that some $\Psi$-circuit $o$ meets some $\Psi\ct$-bond $b$ in a single edge $f$

Then $\widetilde G_\Psi$ with all the interior points of edges of $b$ removed has two connected components, namely the two sides of the bond.  
This contradicts the fact that $o-f$ is connected and contains both endvertices of $f$.
\end{proof}

We say that \emph{$(G,\Psi)$ induces a matroid $M$} if $E(M)=E(G)$ and the $M$-circuits are the $\Psi$-circuits and the $M$-cocircuits are the $\Psi\ct$-bonds. In this case, we call $M$ the {\em $\Psi$-matroid} of $G$. Even if we don't get a matroid, we call $(\Ccal, \Dcal)$, where $\Ccal$ is the set of $\Psi$-circuits of $G$ and $\Dcal$ is the set of $\Psi\ct$-bonds of $G$, the {\em $\Psi$-system} of $G$.

A $\Psi$-tree is an edge set maximal with the property that it includes no $\Psi$-circuit.
The main results of Diestel and Pott \cite{DP:dualtrees} are phrased in terms of $\Psi$-trees.
These results let them to suspect that the $\Psi$-trees are the bases of some matroid.
Although we shall mostly work with $\Psi$-circuits and $\Psi\ct$-cocircuits instead,
the fact that our results do confirm this suspicion in many cases follows from the following lemma.

\begin{lem}\label{psi_trees}
If $(G,\Psi)$ induces a matroid, then the bases of this matroid are the $\Psi$-trees.
\qed
\end{lem}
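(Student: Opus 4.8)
The plan is to show that when $(G,\Psi)$ induces a matroid $M$, the maximal edge sets not including any $\Psi$-circuit coincide with the bases of $M$. Since the $M$-circuits are exactly the $\Psi$-circuits, an edge set includes no $\Psi$-circuit if and only if it is $M$-independent. Thus a $\Psi$-tree is precisely a \emph{maximal} $M$-independent set, and it remains only to recall that the maximal independent sets of a matroid are exactly its bases. This last fact is part of the standard theory of infinite matroids (it follows from $(IM)$ together with the exchange axiom $(I3)$), so the lemma is essentially immediate once the definitions are unwound.

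Concretely, I would argue as follows. Let $M$ be the matroid induced by $(G,\Psi)$, so $\Ccal(M)$ equals the set of $\Psi$-circuits. For an edge set $t \subseteq E(G)$: $t$ includes no $\Psi$-circuit $\iff$ $t$ includes no element of $\Ccal(M)$ $\iff$ $t \in \Ical(M)$. Hence the $\Psi$-trees, being the $\subseteq$-maximal members of $\{t : t \text{ includes no } \Psi\text{-circuit}\}$, are exactly the $\subseteq$-maximal members of $\Ical(M)$. Every base of $M$ is a maximal independent set by definition; conversely, if $I$ is a maximal independent set and $B$ is any base, then $(I3)$ applied with $I' = B$ shows that if $I$ were not itself maximal we could enlarge it, so in fact any maximal independent set is a base (alternatively, $(IM)$ with $X = E$ guarantees a maximal independent set exists above any given one, and all maximal independent sets have the same status as bases). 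Therefore the $\Psi$-trees are precisely the bases of $M$.

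I do not anticipate a genuine obstacle here; the only point requiring a little care is the identification ``maximal independent set $=$ base'' in the infinite setting, where one must invoke $(I3)$ (or equivalently the defining property of bases in the axiomatisation of \cite{matroid_axioms}) rather than a naive dimension-counting argument. Everything else is a direct translation between the topological language of $\Psi$-circuits and the combinatorial language of $M$-circuits, which is licensed by the hypothesis that $(G,\Psi)$ induces $M$.
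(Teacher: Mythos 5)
Your argument is correct and is exactly the reasoning the paper leaves implicit (the lemma is stated with an immediate \qed and no written proof): since ``$(G,\Psi)$ induces $M$'' is defined so that the $M$-circuits are precisely the $\Psi$-circuits, the $\Psi$-trees are by definition the maximal $M$-independent sets, i.e.\ the bases. The only slight over-engineering is your appeal to $(I3)$/$(IM)$ at the end --- in the axiomatisation of \cite{matroid_axioms} a base simply \emph{is} a maximal independent set, so nothing further needs to be invoked.
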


We say that \emph{$G$ and $G^*$ are plane duals}
if there is an isomorphism $\iota$ from $E(G)$ to $E(G^*)$ that maps the $G$-cycles
to the $G^*$-bonds.
In \cite{MR2575097}, it is proved that $\iota$ induces a bijection $\iota_\Omega$ between the ends of $G$ and the ends of $G^*$.
Then \autoref{psi_trees} yields the following.

\begin{cor}\label{embeddings}
Let $G$ and $G^*$ be two finitely separable graphs that are plane duals, as witnessed by some map $\iota$.
If $(G,\Psi)$ induces a matroid, then its dual is induced by $(G^*,\iota_\Omega(\Psi\ct))$.
\qed
\end{cor}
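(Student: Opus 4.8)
The plan is to unwind the conclusion into a statement about circuits and cocircuits and then feed it through \autoref{psi_trees}. To say that $(G^*,\iota_\Omega(\Psi\ct))$ induces $M^*$ is to say that the $M^*$-circuits are the $\iota_\Omega(\Psi\ct)$-circuits of $G^*$ and the $M^*$-cocircuits are the $(\iota_\Omega(\Psi\ct))\ct$-bonds of $G^*$, which, since $\iota_\Omega$ is a bijection, are the $\iota_\Omega(\Psi)$-bonds of $G^*$. By matroid duality the $M^*$-circuits are the $M$-cocircuits, i.e.\ the $\Psi\ct$-bonds of $G$, and the $M^*$-cocircuits are the $M$-circuits, i.e.\ the $\Psi$-circuits of $G$. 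Thus it suffices to show that $\iota$ carries the $\Psi$-circuits of $G$ onto the $\iota_\Omega(\Psi)$-bonds of $G^*$ and the $\Psi\ct$-bonds of $G$ onto the $\iota_\Omega(\Psi\ct)$-circuits of $G^*$. Since ``plane dual'' is a symmetric relation --- $G$ is the plane dual of $G^*$, witnessed by $\iota^{-1}$, which induces the inverse end-bijection by \cite{MR2575097} --- the second of these is the first applied to the pair $(G^*,G)$ with the end set $\iota_\Omega(\Psi\ct)$. So everything reduces to the single assertion
\[(\star)\qquad \iota \text{ maps the }\Psi\text{-circuits of }G\text{ bijectively onto the }\iota_\Omega(\Psi)\text{-bonds of }G^*.\]

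For $(\star)$ I would work at the level of bases first. Since $(G,\Psi)$ induces $M$, \autoref{psi_trees} identifies the $\Psi$-trees of $G$ with the bases of $M$. The duality theorem of Diestel and Pott \cite{DP:dualtrees}, which applies here because $G$ and $G^*$ are finitely separable, says that the complements of the $\Psi$-trees of $G$ are exactly the $\iota_\Omega(\Psi\ct)$-trees of $G^*$ (transported along $\iota$ and $\iota_\Omega$); and the complements of the bases of $M$ are the bases of $M^*$. Hence the $\iota_\Omega(\Psi\ct)$-trees of $G^*$ are precisely the bases of the matroid $M^*$. In particular, once one knows that $(G^*,\iota_\Omega(\Psi\ct))$ induces \emph{some} matroid $N$, \autoref{psi_trees} applied to $G^*$ shows that $N$ and $M^*$ have the same bases, whence $N=M^*$; this is the shortest route to the corollary, and is presumably the one intended by the remark that \autoref{psi_trees} yields it.

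What remains --- and this is where the real work lies --- is to see that $(G^*,\iota_\Omega(\Psi\ct))$ induces a matroid at all, equivalently to establish $(\star)$ outright. One route is topological: extract from the plane duality $\iota$ together with the end-identification $\iota_\Omega$ of \cite{MR2575097} that the graph-like spaces $\widetilde G_\Psi$ and $\widetilde{G^*}_{\iota_\Omega(\Psi)}$ sit in the plane-dual relation, in the sense that the edge sets of topological circles in one are exactly the minimal bounded cuts of the other; this is the natural strengthening of ``$\iota$ sends finite cycles of $G$ to finite bonds of $G^*$''. A more combinatorial route is to show that whenever the $\Phi$-trees of a graph $H$ are the bases of a matroid $N$, the $\Phi$-circuits of $H$ are already the $N$-circuits: every $\Phi$-circuit is $N$-dependent and hence contains an $N$-circuit, every $N$-circuit (being minimally $N$-dependent) contains a $\Phi$-circuit, and a $\Phi$-circuit cannot properly contain such a set because, using \autoref{never_meet_just_once} and \autoref{is_scrawl}, it is a scrawl of $N$ meeting no $N$-cocircuit just once (and dually for cocircuits), so $(H,\Phi)$ does induce $N$. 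The main obstacle in either route is the same: one must keep the two complementations apart --- the edge-complementation $B\mapsto E\sm B$ coming from matroid duality and the end-complementation $\Psi\mapsto\Psi\ct$ entering through $\iota_\Omega$ --- so that the Diestel--Pott statement is invoked with exactly the right parameters, and in the combinatorial route one must verify that ``contains no $\Phi$-circuit'' genuinely coincides with $N$-independence even though $\Phi$-circuits may be infinite, which is where one leans on $N$ being a matroid and on the orthogonality lemmas \autoref{never_meet_just_once}, \autoref{is_scrawl} and \autoref{is_circuit}.
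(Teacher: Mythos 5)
Your opening reduction is correct and is, in fact, more careful than the paper itself: the paper's entire proof is the one-line route you call the ``shortest'' one --- apply \autoref{psi_trees} to identify the bases of $M$ with the $\Psi$-trees of $G$, invoke the Diestel--Pott complementation theorem to identify the complements of these with the $\iota_\Omega(\Psi\ct)$-trees of $G^*$, and conclude. You have put your finger on exactly what this elides: ``induces'' is defined at the level of circuits and cocircuits, \autoref{psi_trees} only runs in the direction ``induces $\Rightarrow$ bases are the $\Psi$-trees'', and so equality of bases does not by itself yield the stated conclusion unless one already knows that $(G^*,\iota_\Omega(\Psi\ct))$ induces \emph{some} matroid.

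The problem is that your proposal never closes the gap it identifies: both of your routes to $(\star)$ are left as sketches, and the combinatorial one would fail as written. From ``the $\Phi$-trees of $H$ are the bases of $N$'' one gets that every $N$-independent set is $\Phi$-circuit-free, but not the converse: a $\Phi$-circuit-free set need not extend to a maximal one, because $\Phi$-circuits may be infinite and Zorn's lemma does not apply to them --- this failure of an (IM)-type property is exactly the phenomenon the whole paper is about. Hence ``minimally $N$-dependent'' need not mean ``minimally containing a $\Phi$-circuit'', and your claim that every $N$-circuit contains a $\Phi$-circuit is unjustified. The missing ingredient is that $(\star)$ --- $\iota$ carries the $\Psi$-circuits of $G$ onto the minimal nonempty cuts of $G^*$ whose closures only contain ends in $\iota_\Omega(\Psi)$, and dually --- is not something to be re-derived from the tree-level statement: it is the circuit/cut-level duality of \cite{DP:dualtrees} itself (transported along the end bijection of \cite{MR2575097}), and the paper announces at the start of \autoref{sec:psi} that it will import such results from \cite{DP:dualtrees} wholesale. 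The same goes for the symmetry of plane duality, which you assert but which is itself a theorem for finitely separable graphs rather than part of the definition. With those two facts quoted, your first paragraph already finishes the proof, and the hypothesis that $(G,\Psi)$ induces a matroid is needed only so that ``its dual'' makes sense.
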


\section{Orthogonality axioms}\label{orth}

The purpose of this section is to axiomatize countable matroids in a new way using 
more axioms, each of which is simpler to check.
This makes the process of testing whether a given system is a matroid
more straightforward.
We were motivated by Section 2.2 from \cite{BC:rep_matroids}.


The {\em orthogonality axioms} are as follows, where we think of $\Ccal$ as the set of circuits of the matroid
and $\Dcal$ the set of cocircuits.

\begin{itemize}
	\item[(C1)] $\varnothing\notin \Ccal$
	\item[(C2)] No element of $\Ccal$ is a subset of another.  
        \item[(C1$^*$)] $\varnothing\notin \Dcal$
        \item[(C2$^*$)] No element of $\Dcal$ is a subset of another.  
        \item[(O1)] $|C\cap D|\neq 1$ for all $C\in \Ccal$ and $D\in \Dcal$. 
        \item[(O2)] For all partitions $E=P_{Co}\dot\cup P_{De}\dot\cup \{e\}$
either $P_{Co}+e$ includes an element of $\Ccal$ through $e$ or
$P_{De}+e$ includes an element of $\Dcal$ through $e$.
\item[(O3)]For every $C\in \Ccal$, $e\in C$ and $X\subseteq E$, there is some $C_{min}\in \Ccal$ with $e\in C_{min} \se X \cup C$ such that $C_{min}\sm X$ is minimal.
\item [(O3$^*$)]For every $D\in \Dcal$, $e\in D$ and $X\subseteq E$, there is some $D_{min}\in \Dcal$ with $e\in D_{min} \se X \cup D$ such that $D_{min}\sm X$ is minimal.
\end{itemize}

The aim of the rest of this section will be to show the following:

\begin{thm}\label{ortho_axioms}
Let $E$ be a countable set and let $\Ccal,\Dcal\subseteq \Pcal(E)$.

Then $\Ccal$ is the set of circuits of a matroid and $\Dcal$ is the set of cocircuits of the same matroid
if and only if $\Ccal$ and $\Dcal$ satisfy the orthogonality axioms.
\end{thm}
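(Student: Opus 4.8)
The forward direction is routine: if $\Ccal$ and $\Dcal$ are the circuits and cocircuits of one matroid $M$, then (C1), (C2), (C1$^*$), (C2$^*$) are the circuit axioms for $M$ and $M^*$, (O1) is the fact that a circuit and cocircuit never meet in one point, (O2) follows by considering the minor $M/P_{Co}\backslash P_{De}$ on the single element $e$ — either $e$ is a loop there, so by \autoref{rest_cir} some circuit through $e$ lies in $P_{Co}+e$, or $e$ is a coloop, so dually some cocircuit through $e$ lies in $P_{De}+e$ — and (O3), (O3$^*$) are an easy consequence of \autoref{rest_cir} applied to $M\backslash(E\sm(X\cup C))$ (or rather $M/(\cdots)$ to extract the minimal "new" part $C_{min}\sm X$). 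So the whole content is the converse.

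**The converse.** Assume $\Ccal,\Dcal$ satisfy all the orthogonality axioms; I want to produce a matroid. The natural route is to check the hypotheses of \autoref{new_axioms}: I need $|C\cap D|$ never $1$ or infinite, and the partition condition. The partition condition is exactly (O2), and $|C\cap D|\ne 1$ is exactly (O1); but (O1) does \emph{not} forbid infinite intersections, and \autoref{new_axioms} does. So the first real task is to \textbf{prove that (O1)--(O3$^*$) already force all circuit-cocircuit intersections to be finite} — i.e. prove tameness from the axioms. This should be the main obstacle. The idea: given $C\in\Ccal$, $D\in\Dcal$ with $C\cap D$ infinite, pick $e,f\in C\cap D$ and try to use (O3) with $X$ chosen to "shrink" $C$ down to a small circuit $C_{min}$ through $e$ contained in $X\cup C$ with $C_{min}\sm X$ minimal, then similarly shrink $D$ via (O3$^*$); the aim is to arrive at a finite circuit and finite cocircuit meeting in a single edge, contradicting (O1). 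One has to choose the sets $X$ carefully (probably inside $C\cup D$, using $e$ as the surviving element and everything else of $C$ as $X$ for the cocircuit step and vice versa), and argue that the minimality clauses of (O3)/(O3$^*$), combined with (O1) applied repeatedly, drive the intersection down. A König's-Infinity-Lemma argument (\autoref{Infinity_Lemma}) on an enumeration of $E$ may be needed to actually extract a single-edge intersection in the limit; this is where countability of $E$ is used.

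**Finishing via \autoref{new_axioms}.** Once tameness is established, \autoref{new_axioms} gives a matroid $M$ whose circuits are the minimal nonempty members of $\Ccal$ and whose cocircuits are the minimal nonempty members of $\Dcal$. It remains to check that $\Ccal$ itself (not just its minimal members) equals $\Ccal(M)$, and likewise $\Dcal=\Ccal(M^*)$. One inclusion is essentially \autoref{cir_c_scrawl}: every $M$-circuit is a minimal, hence an, element of $\Ccal$, and every element of $\Ccal$ is a scrawl of $M$ (it never meets a cocircuit of $M$ in one edge, by (O1) plus the fact that $M$-cocircuits lie in $\Dcal$). For the reverse inclusion I must show no $C\in\Ccal$ is a proper union of $M$-circuits — equivalently, every $C\in\Ccal$ is already minimal in $\Ccal$, i.e. $\Ccal$ is an antichain of \emph{minimal} sets. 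Here (C2) says $\Ccal$ is an antichain, but I still need that if $C\in\Ccal$ then no proper subset is in $\Ccal$ \emph{and} that $C$ contains no $M$-circuit other than itself; the latter follows because an $M$-circuit $o\subsetneq C$ would by \autoref{o_cap_b} (in $M$) meet some $M$-cocircuit $b\in\Dcal$ in exactly two edges, one in $o$ and — picking the second edge in $C\sm o$ — we engineer $b\cap C$ of size $1$ via a further application of (O3$^*$)/scrawl reasoning, contradicting (O1). The symmetric argument handles $\Dcal$. Assembling these pieces: $\Ccal=\Ccal(M)$, $\Dcal=\Ccal(M^*)$, so $\Ccal$ and $\Dcal$ are the circuits and cocircuits of the single matroid $M$, as required.

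**Where the difficulty lies.** The genuinely delicate step is deriving finiteness of circuit--cocircuit intersections from (O1), (O3), (O3$^*$) alone; the rest is bookkeeping against results already in the excerpt (\autoref{new_axioms}, \autoref{cir_c_scrawl}, \autoref{o_cap_b}, \autoref{rest_cir}). I would expect the proof in the paper to isolate a lemma of the form "(C2),(O1),(O3),(O3$^*$) imply every $C\in\Ccal$ is a minimal circuit and every $C\cap D$ is finite," proved by a simultaneous shrinking/diagonalisation argument over an enumeration of $E$, and then to invoke \autoref{new_axioms} essentially verbatim.
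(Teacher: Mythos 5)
Your forward direction is fine and matches the paper's. But the converse has a fatal gap at exactly the point you identify as ``the main obstacle'': you propose to derive from (O1)--(O3$^*$) that every $C\cap D$ is finite, and then invoke \autoref{new_axioms}. This cannot work, because tameness is \emph{not} a consequence of the orthogonality axioms. \autoref{ortho_axioms} characterises \emph{all} countable matroids, not just tame ones, and countable matroids with an infinite circuit--cocircuit intersection do exist (their construction is a separate paper of the same authors; the present paper treats ``tame'' as a genuine restriction throughout). For such a matroid $M$, the pair $(\Ccal(M),\Ccal(M^*))$ satisfies all eight orthogonality axioms by the forward direction of the very theorem you are proving, yet has an infinite circuit--cocircuit intersection. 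So no shrinking or K\"onig-type argument can close this gap, and \autoref{new_axioms} --- whose hypotheses include that $|C\cap D|$ is never infinite --- is simply not applicable. (Relative to the paper's own development the reduction would also be circular: \autoref{new_axioms} is deduced \emph{from} the orthogonality-axiom machinery, via the result that for tame systems (O1) and (O2) already imply (O3) and (O3$^*$).)

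What the paper actually does is prove the stronger \autoref{ortho_axioms+} directly. Circuit elimination for $\Ccal$ comes from \autoref{cireli_partitioning} (namely, (O2) for $\Ccal$ and $\Ccal^\perp$ is equivalent to (C3)), and the substantial step is (CM): given $I\se X\se E$ with $I$ independent, one enumerates $X=\{e_1,e_2,\dots\}$ and recursively builds a partition $E=I_\infty\dot\cup J_\infty$, at step $n$ deciding which side $e_n$ joins and using (O2) together with the minimality clauses of (O3) and (O3$^*$) to certify the decision with an element of $\Dcal$ inside $J_n+e_n$ or an element of $\Ccal$ inside $I_n+e_n$; these certificates and (O1) then show that $I_\infty$ is a maximal independent subset of $X$. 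This is where countability enters. Your final bookkeeping step --- passing from $\Ccal(M)\se\Ccal\se\Scal(M)$ and the antichain axioms to $\Ccal=\Ccal(M)$ --- is correct and is exactly how \autoref{ortho_axioms} follows from \autoref{ortho_axioms+}, though it is simpler than you make it: any $C\in\Ccal$ is a scrawl, hence contains a circuit $o$, and $o\in\Ccal(M)\se\Ccal$, so (C2) forces $C=o$.
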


To determine, rather than define, a matroid, the last four of the orthogonality axioms suffice. What we mean by this slightly subtle distinction is captured by the following strengthening of the theorem above: 

\begin{thm}\label{ortho_axioms+}
Let $E$ be a countable set and let $\Ccal,\Dcal\subseteq \Pcal(E)$.

Then there is a matroid $M$ such that $\Ccal(M)\subseteq \Ccal \subseteq \Scal(M)$
and $\Ccal(M^*)\subseteq \Dcal \subseteq \Scal(M^*)$ if and only if 
 $\Ccal$ and $\Dcal$ satisfy the last four orthogonality axioms.
\end{thm}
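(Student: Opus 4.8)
The plan is to derive \autoref{ortho_axioms+} from \autoref{ortho_axioms}, with the real work going into producing, from $(\Ccal,\Dcal)$ satisfying only (O1), (O2), (O3), (O3$^*$), a refined pair $(\Ccal',\Dcal')$ that satisfies \emph{all} the orthogonality axioms. The natural candidate is to let $\Ccal'$ be the set of $\subseteq$-minimal nonempty elements of $\Ccal$ and $\Dcal'$ the set of $\subseteq$-minimal nonempty elements of $\Dcal$; this is exactly the recipe anticipated in \autoref{new_axioms}. One direction is routine: if $M$ is a matroid with $\Ccal(M)\se\Ccal\se\Scal(M)$ and $\Ccal(M^*)\se\Dcal\se\Scal(M^*)$, then (O1) holds because no scrawl meets a coscrawl exactly once (\autoref{is_scrawl}); (O2) holds because for a partition $E=P_{Co}\dot\cup P_{De}\dot\cup\{e\}$, the element $e$ is either a loop of $M/P_{De}\backslash\varnothing$-style minor or not, and in the two cases \autoref{rest_cir} (and its dual) gives a circuit of $M$ inside $P_{Co}+e$ through $e$, respectively a cocircuit inside $P_{De}+e$ through $e$, each of which is a union of elements of $\Ccal$ (resp.\ $\Dcal$), hence contains such an element through $e$; and (O3), (O3$^*$) follow from the corresponding statement for the circuits/cocircuits of $M$ together with the sandwiching. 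So the substantive direction is the converse.

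For the converse, assume (O1), (O2), (O3), (O3$^*$). First I would check that $(\Ccal',\Dcal')$ still satisfies (O1) (immediate, since $\Ccal'\se\Ccal$, $\Dcal'\se\Dcal$), and (C1), (C2), (C1$^*$), (C2$^*$) (immediate from minimality and the definition). The key points are that $(\Ccal',\Dcal')$ inherits (O2), (O3) and (O3$^*$). For (O3): given $C'\in\Ccal'$, $e\in C'$, $X\se E$, apply (O3) to $C'\in\Ccal$ to get $C_{min}\in\Ccal$ with $e\in C_{min}\se X\cup C'$ and $C_{min}\sm X$ minimal among such; then take any minimal nonempty element $C''\in\Ccal'$ with $e\in C''\se C_{min}$ — I must argue $C''\sm X$ is still minimal, which follows because $C''\sm X\se C_{min}\sm X$ and any smaller witness for $C''$ would also be a witness contradicting minimality of $C_{min}\sm X$ (using that $C''\se X\cup C'$ since $C''\se C_{min}\se X\cup C'$). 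The analogous argument handles (O3$^*$). For (O2): given the partition, (O2) for $(\Ccal,\Dcal)$ yields $C\in\Ccal$ with $e\in C\se P_{Co}+e$ or $D\in\Dcal$ with $e\in D\se P_{De}+e$; in the first case use (O3) with $X=\varnothing$ to shrink $C$ to a minimal nonempty element of $\Ccal$ through $e$ still inside $P_{Co}+e$ (which lies in $\Ccal'$), and dually in the second case. Hence $(\Ccal',\Dcal')$ satisfies all orthogonality axioms, so by \autoref{ortho_axioms} there is a matroid $M$ with $\Ccal(M)=\Ccal'$ and $\Ccal(M^*)=\Dcal'$.

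It then remains to verify $\Ccal(M)=\Ccal'\se\Ccal\se\Scal(M)$ and the dual. The inclusion $\Ccal'\se\Ccal$ is by construction. For $\Ccal\se\Scal(M)$: by \autoref{is_scrawl} it suffices to show each $C\in\Ccal$ meets no cocircuit of $M$, i.e.\ no element of $\Dcal'\se\Dcal$, exactly once — and that is precisely (O1). Dually $\Dcal\se\Scal(M^*)$. This finishes the proof. The main obstacle I anticipate is the inheritance of (O3)/(O3$^*$) by the minimal elements: one has to be careful that passing to a $\subseteq$-minimal subelement through $e$ does not enlarge the part outside $X$, and that the minimality of $C_{min}\sm X$ among \emph{all} suitable elements of $\Ccal$ (not just $\Ccal'$) is strong enough to transfer — but since $\Ccal'\se\Ccal$, the witness set only shrinks, so the transfer goes through. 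A secondary subtlety is making sure (O2) for the refined system really does produce an element of $\Ccal'$ and not merely of $\Ccal$, which is exactly why (O3) with $X=\varnothing$ is invoked as a shrinking tool. No use of countability is needed in this reduction beyond what \autoref{ortho_axioms} already requires.
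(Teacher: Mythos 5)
Your proposal reduces \autoref{ortho_axioms+} to \autoref{ortho_axioms}, but in the paper the logical dependence runs the other way: \autoref{ortho_axioms} is never proved independently --- the only proof given in this section is a direct proof of \autoref{ortho_axioms+}, and \autoref{ortho_axioms} follows from it (each element of $\Ccal$, being a scrawl, contains a circuit lying in $\Ccal$, and (C2) then forces equality). So the step ``by \autoref{ortho_axioms} there is a matroid $M$ with $\Ccal(M)=\Ccal'$ and $\Ccal(M^*)=\Dcal'$'' assumes exactly the content that has to be established. The entire substance of the theorem --- actually constructing the matroid, i.e.\ verifying (CM) for the set $\Ccal_{min}$ of minimal nonempty elements of $\Ccal$ --- is absent from your argument. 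In the paper this is done by enumerating the countable ground set as $e_1,e_2,\ldots$ and recursively building a partition $E=I_\infty\dot\cup J_\infty$: at each step $e_n$ is placed on the independent side (witnessed by some $D\in\Dcal$ with $e_n\in D\se J_n+e_n$, chosen with $D\sm J_{n-1}$ minimal using (O3$^*$) so that the invariant can be preserved via elimination in $\Dcal$) or on the spanning side (witnessed by some $C\in\Ccal$ with $e_n\in C\se I_n+e_n$), the choice being governed by (O2). This recursion is where countability and (O3)/(O3$^*$) genuinely enter; your closing remark that no countability is needed ``beyond what \autoref{ortho_axioms} already requires'' is precisely the acknowledgement that all the hard work has been pushed into an unproved statement.

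A secondary gap: even granting \autoref{ortho_axioms}, your verification that $(\Ccal',\Dcal')$ satisfies (O2) and (O3) is incomplete. You need that below any $C\in\Ccal$ and through any given $e\in C$ there lies a \emph{minimal nonempty} element of $\Ccal$. Applying (O3) with $X=\varnothing$ only yields a $C_{min}$ minimal among elements of $\Ccal$ through $e$ inside $C$; a priori $C_{min}$ could still properly contain a nonempty element of $\Ccal$ avoiding $e$, so it need not lie in $\Ccal'$, and simply ``taking a minimal nonempty element inside $C_{min}$'' presupposes existence, which is not automatic over an infinite ground set. Ruling this out requires circuit elimination for $\Ccal$, which the paper derives from (O1) and (O2) via \autoref{cireli_partitioning}: eliminating an element $f$ of such a smaller member of $\Ccal$ against $C_{min}$ would produce an element of $\Ccal$ through $e$ strictly inside $C_{min}$, contradicting minimality. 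This part is fixable, but it is not the one-line inheritance you describe.
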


First we show the following lemma.
For a set $\Ccal\subseteq \Pcal(E)$, let $\Ccal^\perp$
be the set of those subsets of $E$ that meet no element of $\Ccal$ just once.
Note that $(O1)$ is equivalent to $\Dcal$ being a subset of $\Ccal^\perp$.

\begin{lem}\label{cireli_partitioning}
Let $\Ccal \subseteq \Pcal(E)$.
Then $\Ccal$ and $\Ccal^\perp$ satisfy $(O2)$ if and only if
$\Ccal$ satisfies circuit elimination $(C3)$. 
\end{lem}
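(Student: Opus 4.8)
The plan is to prove the two implications separately, exploiting the fact that the "scrawl-like" condition $(O2)$ and circuit elimination $(C3)$ are both, at heart, statements about how circuits interact with partitions of the ground set that isolate a single element $e$. First I would fix notation: given a partition $E = P_{Co}\dot\cup P_{De}\dot\cup\{e\}$, note that "$P_{De}+e$ includes an element of $\Ccal^\perp$ through $e$" means there is a set $W$ with $e\in W\se P_{De}+e$ meeting no element of $\Ccal$ just once. So the failure of $(O2)$ for the pair $(\Ccal,\Ccal^\perp)$ says: there is a partition $E = P_{Co}\dot\cup P_{De}\dot\cup\{e\}$ such that no $C\in\Ccal$ with $e\in C$ satisfies $C\se P_{Co}+e$, and also $P_{De}+e$ contains no $W\ni e$ avoiding single-element intersections with $\Ccal$.

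For the direction "$(C3)$ implies $(O2)$": I would argue contrapositively. Suppose $(O2)$ fails, witnessed by $P_{Co},P_{De},e$ as above. The key observation is that $W := P_{De}+e$ itself must then meet some $C_0\in\Ccal$ in exactly one edge; since $e\in W$ but no circuit through $e$ lies inside $P_{Co}+e = E\sm P_{De}$, one expects to build, using $(C3)$ repeatedly, a circuit through $e$ contained in $P_{Co}+e$, contradicting the first half of the failure of $(O2)$. Concretely: because $W$ meets $C_0$ just once, the single edge of $C_0\cap W$ is some $f_0\in P_{De}$ (it cannot be $e$ unless $C_0\se P_{Co}+e$ already). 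One then wants to eliminate $f_0$: find circuits $o_x$ for $x$ ranging over the edges of $C_0\cap P_{De}$ that are "closer" to lying in $P_{Co}+e$, and use circuit elimination to push $C_0$ out of $P_{De}$. Since $E$ need not be countable here, some care with the indexing in $(C3)$ is needed — but $(C3)$ as stated already allows an arbitrary set $X$ of edges to be eliminated simultaneously, provided one has a family $\{o_x : x\in X\}$ with $x\in o_y\Leftrightarrow x=y$, so the right move is to choose $X = C_0\cap P_{De}$ and, for each $x\in X$, a circuit $o_x\se$ (some controlled set) with $o_x\cap X = \{x\}$, obtained inductively from the failure of the first alternative of $(O2)$.

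For the converse "$(O2)$ implies $(C3)$": here I would start with data for circuit elimination — $X\se o\in\Ccal$, circuits $\{o_x : x\in X\}$ with $x\in o_y\Leftrightarrow x=y$, and $z\in o\sm\bigcup_{x\in X}o_x$ — and must produce $o'\in\Ccal$ with $z\in o'\se (o\cup\bigcup_{x\in X}o_x)\sm X$. The natural move is to apply $(O2)$ to the partition with $e = z$, $P_{Co} = \big((o\cup\bigcup_{x\in X}o_x)\sm X\big)\sm\{z\}$, and $P_{De} = $ everything else. If the first alternative holds we get exactly the desired circuit $o'$ through $z$ inside $P_{Co}+z$, and we are done. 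So it suffices to rule out the second alternative: that there is $W$ with $z\in W\se P_{De}+z$ meeting no element of $\Ccal$ just once. But $W$ meets $o$: since $z\in W\cap o$ and $W$ cannot meet $o$ just once, $W$ contains some other edge $y$ of $o$; as $o\sm\{z\}\se (P_{Co}\cup X)$ and $W\cap P_{Co}=\varnothing$, we get $y\in X$. Then $W$ meets $o_y$ (because $y\in W\cap o_y$), so again $W$ must contain a second edge of $o_y$; but $o_y\sm\{y\}$ is disjoint from $X$ (by the $x\in o_y\Leftrightarrow x=y$ condition) and disjoint from $W\cap P_{Co}=\varnothing$ forces that second edge to lie in $P_{De}\sm\big((o\cup\bigcup o_x)\sm X\big)$ — contradiction, since $o_y\se (o\cup\bigcup o_x)$ by hypothesis... wait, that's not given; $o_y$ is just some circuit, not contained in $o\cup\bigcup o_x$. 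The correct bookkeeping is instead: $W\cap o_y\ni y$, so $W$ contains some $y'\in o_y\sm\{y\}$; since $W\se P_{De}+z$ and $P_{De}$ was defined as the complement of $(o\cup\bigcup_{x\in X}o_x)$, and $y'\in o_y\se\bigcup_{x\in X}o_x$, we get $y'\notin P_{De}$ and $y'\neq z$, so $y'\notin W$ — the contradiction. Hence the second alternative is impossible and $(C3)$ holds.

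The main obstacle I anticipate is the $(C3)\Rightarrow(O2)$ direction: turning the "no circuit through $e$ inside $P_{Co}+e$" hypothesis into a usable simultaneous family $\{o_x : x\in X\}$ for circuit elimination, and verifying the disjointness condition $x\in o_y\Leftrightarrow x=y$ for the chosen family, while controlling that the eliminated circuit stays inside the prescribed region. One clean way to organise this is to pick a circuit $C_0$ meeting $P_{De}+e$ minimally (among circuits through $e$ — such minimal ones exist? not obviously without $(O3)$, which is why the lemma is carefully only about $(O2)\leftrightarrow(C3)$), so more realistically one argues by a direct contradiction: take any $C_0$ meeting $W=P_{De}+e$ in just one edge, chosen so that $|C_0\cap P_{De}|$ is as small as possible; if that single edge is $e$ then $C_0\se P_{Co}+e$ and we contradict the first failure clause, and otherwise eliminating the unique edge of $C_0\cap W$ using a circuit supplied by... — this is exactly the delicate induction, and getting the minimality parameter and the elimination witness to cooperate is where the real work lies.
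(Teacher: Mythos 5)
Your second half --- that $(O2)$ implies $(C3)$ --- is correct and is essentially the paper's argument: apply $(O2)$ with $e=z$ and $P_{Co}=\bigl(o\cup\bigcup_{x\in X}o_x\bigr)\sm(X+z)$, and rule out the second alternative by showing that any $W\in\Ccal^\perp$ through $z$ inside $P_{De}+z$ would have to meet some $o_y$ ($y\in X$) in exactly the single element $y$. Your final bookkeeping there (after the mid-paragraph correction) is right.

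The direction $(C3)\Rightarrow(O2)$, however, has a genuine gap, and you have flagged the gap yourself without closing it. The difficulty is that you try to work with the set $W=P_{De}+e$ itself: you observe that some $C_0$ meets $W$ in a single edge $f_0\in P_{De}$ and then propose to ``eliminate $f_0$'' by an induction whose eliminating circuits are ``obtained inductively from the failure of the first alternative of $(O2)$.'' But the failure of the first alternative only concerns circuits through $e$, and says nothing about whether a circuit through $f_0$ lies inside $P_{Co}+f_0$; moreover $C_0$ need not contain $e$ at all (indeed it cannot, since $C_0\cap W=\{f_0\}\neq\{e\}$), so eliminating $f_0$ from $C_0$ does not produce the circuit through $e$ inside $P_{Co}+e$ that you want to contradict. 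There is no visible termination or well-definedness for the proposed minimality scheme. The missing idea is to replace $P_{De}+e$ by the smaller witness
$D=\{x\in P_{De}+e\ :\ P_{Co}+x\text{ includes no element of }\Ccal\text{ through }x\}$,
which contains $e$ by hypothesis, and to prove $D\in\Ccal^\perp$ directly: if some $C\in\Ccal$ met $D$ in exactly one element $z$, set $X=C\cap\bigl((P_{De}+e)\sm D\bigr)$; each $x\in X$ has, by the definition of $D$, a circuit $C_x$ with $x\in C_x\se P_{Co}+x$, these satisfy $x\in C_y\Leftrightarrow x=y$ and avoid $z$, and a \emph{single} application of $(C3)$ to $z$, $C$, $X$ and the $C_x$ yields a circuit through $z$ contained in $P_{Co}+z$, contradicting $z\in D$. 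No induction is needed once $D$ is in hand; without $D$ the argument does not go through.
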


\begin{proof}
For the ``if'' implication, suppose we are given a partition  
 $E=P_{Co}\dot\cup P_{De}\dot\cup \{e\}$ where $e\in E$ such that 
 $P_{Co}+e$ does not include an element of $\Ccal$ through $e$.

Let $D$ consist of those elements $x$ of $P_{De}+e$ such
that $P_{Co}+x$ does not include an element of $\Ccal$ through $x$.  

Suppose for a contradiction that $D\notin \Ccal^\perp$.
Then there is some $C\in \Ccal$ meeting $D$ only in a single element $z$.
Let $X=C\cap ((P_{De}+e)\sm D)$. For any $x\in X$ pick an element $C_x$ of $\Ccal$
such that $C_x\se P_{Co}+x$ and $x\in C_x$.
Applying circuit elimination to $z,C,X$ and the $C_x$ yields an element of $\Ccal$
meeting $P_{De}+e$ exactly in $z$, which contradicts the choice of $z$.

It remains to show the ``only if''-implication.
Suppose we are given $z,C,X$ and the $C_x$ as in the circuit elimination axiom.
Put $e=z$, and $P_{Co}=(C\cup \bigcup_{x\in X}C_x)\sm (X+z)$, and $P_{De}=(E\sm P_{Co})-z$.

To prove circuit elimination, it remains to show that there is no
element  $D\subseteq P_{De}+z$ of $\Ccal^\perp$ through $z$.
Since $z\in C$ and $C\cap D\subseteq X+z$, any such set $D$ would contain some $x\in X$ since $D\in \Ccal^\perp$.
But then $C_x\cap D=\{x\}$, which is impossible since $D\in \Ccal^\perp$.
This completes the proof.
\end{proof}

\begin{proof}[Proof of \autoref{ortho_axioms+}.]
First we show the ``only if''-implication.
The axiom $(O1)$ follows from \autoref{is_scrawl}. 
To show $(O2)$ consider the matroid $M_e$ on $\{e\}$ obtained from $M$ by contracting $P_{Co}$ and deleting $P_{De}$. If $M_e$ is a loop, then by \autoref{rest_cir} $P_{Co}+e$ includes a circuit through $e$, and
if $M_e$ is a co-loop, then by the dual of \autoref{rest_cir} $P_{De}+e$ includes a cocircuit through $e$.

By duality, it remains to show $(O3)$.
For this we consider the matroid $M_X$ obtained from $M$ by contracting $X-e$.
Note that $(C\sm X)+e$ is an $M_X$-scrawl.
Hence we may pick any $M_X$-circuit through $e$ included in $(C\sm X)+e$.
By \autoref{rest_cir}, this circuit extends to an $M$-circuit $C_{min}$, which has the desired properties.
This completes the proof of the ``only if''-implication.

For the ``if''-implication, our aim is to show that the set 
$\Ccal_{min}$ of minimal non-empty elements of $\Ccal$ is the set of circuits of a matroid $M$.
Note that circuit elimination $(C3)$ for $\Ccal$ follows from \autoref{cireli_partitioning},
and this implies circuit elimination for $\Ccal_{min}$ using $(O3)$.

Next, we prove $(CM)$ for $\Ccal_{min}$.
Suppose we are given a set $I$ not including an element of $\Ccal_{min}$
and a set $X$ with  $I\se X\se E$.
Put $I_0=I$ and $J_0=E\sm X$.

Let $e_1,e_2,\ldots$ be an enumeration of $X$.
We shall construct a partition of $E$ into $I_\infty$ and $J_\infty$
such that $I_\infty$ is a base of $X$.
The construction will be recursive. So we take 
$I_\infty=\bigcup_{n\in \Nbb} I_n$ and $J_\infty=\bigcup_{n\in \Nbb} J_n$
where we construct the $I_n$ and $J_n$ both at step $n$ to satisfy the following conditions.
\begin{enumerate}
 \item $I_n$ and $J_n$ are disjoint.
\item $I_j\se I_n$ for all $j\leq n$.
\item $J_j\se J_n$ for all $j\leq n$.
\item $e_n\in I_n\cup J_n$.
\item If $e_n\in I_n$, then there is some $D\in \Dcal$ with $D\se J_n+e_n$ through $e_n$.
\item If $e_n\in J_n$, then there is some $C\in \Ccal$ with $C\se I_n+e_n$ through $e_n$.
\item If $J_n$ includes any $D\in \Dcal$, then $D\se J_0$.
\item If $I_n$ includes any $C\in \Ccal$, then $C\se I_0$. (That is, $C=\varnothing$: this condition says that $I_n$ is independent.)
\end{enumerate}

What we do at step $n$ depends on whether there is any $C \in \Ccal$ with $e_n \in C \subseteq I_{n-1} + e_n$. If there is such a $C$, we let $I_n = I_{n-1}$ and $J_n = J_{n-1} + e_n$. The only nontrivial condition in this case is $(7)$. By the induction hypothesis, any $D$ violating this condition would contain $e_n$ and so would meet $C$ just once, contradicting $(O1)$.

Next, we consider the case that $e_n \notin J_{n-1}$. In this case, we let $I_n = I_{n-1} + e_n$, but the construction of $J_n$ is more complex. First, we note that by $(O2)$ applied to $e_n$, $I_{n-1}$ and $E \sm I_{n-1} - e_n$ we can obtain some $D \in \Dcal$ with $e_n \in D \se E \sm I_{n-1}$. Then using $(O3^*)$ we may assume that $D$ is chosen with these properties so that $D \sm J_{n-1}$ is minimal. We take $J_n = (J_{n-1} \cup D) - e_n$.

Once more, the only nontrivial condition is $(7)$. Suppose for a contradiction that there is some $D'$ violating this condition. Then $D'$ must meet $D \sm J_{n-1}$ in some element $x$. We showed above that $\Ccal$ satisfies $(C3)$, and by symmetry we may also show that $\Dcal$ satisfies $(C3)$. We apply this with $X = \{x\}$ to $D$ and $D'$ to obtain $D'' \in \Dcal$ with $e_n \in D'' \se (D \cup J_{n-1}) - x$, contradicting the minimality of $D\sm J_{n-1}$.

The remaining case is that $e_n \in J_{n-1}$. In this case, we let $J_n = J_{n-1}$ and, dualising the construction from the last case, we choose $C \in \Ccal$ such that $e_n \in C \subseteq E \sm (J_{n-1}-e_n)$ and $C \sm I_{n-1}$ is minimal subject to these conditions. This construction succeeds for a reason dual to that given in the last case.

This completes the recursive construction. As promised, we take $I_{\infty}=\bigcup_{n\in \Nbb} I_n$ and $J_\infty=\bigcup_{n\in \Nbb} J_n$. It is clear that this is a partition of $E$. Next, we show that $I_\infty$ includes no element $C$ of $\Ccal_{min}$. Suppose for a contradiction that there is such a $C$. Then there is some $n$ with $e_n$ in $C$. Then by $(5)$ there is some $D \in \Dcal$ with $e_n \in D \subseteq J_n + e_n \subseteq J_\infty + e_n$, so that $C \cap D = \{e_n\}$, violating $(O1)$.

We can also show that $I_{\infty}$ is maximal amongst the independent subsets of $X$. Suppose for a contradiction that there is a bigger independent set $I'$, and pick some $n$ with $e_n \in I' \sm I$. Then by $(6)$ there is $C \in \Ccal$ with $e_n \in C \se I_n + e_n \se I'$, contradicting the independence of $I'$ as by (O3), $C$ is a union of elements of $C_{min}$. This completes the proof that $\Ccal_{min}$ is the set of circuits of some matroid $M$.

By $(O3)$, every element of $\Ccal$ is a union of elements of $\Ccal(M)$.
Hence $\Ccal(M)\subseteq \Ccal \subseteq \Scal(M)$. $(O1)$ and 
\autoref{is_scrawl} imply that $\Dcal \subseteq \Scal(M^*)$. It remains only to show that $\Ccal(M^*) \se \Dcal$. So let $D$ be any cocircuit of $M$. Let $e \in D$, and apply $(O2)$ to $e$, $E \sm D$ and $D-e$. There can't be $C \in \Ccal$ with $e \in C \se (E \sm D) + e$, as then we would have $C \cap D = \{e\}$, which is impossible with $C$ a scrawl and $D$ a cocircuit. So there is some $D' \in \Dcal$ with $e \in D' \se D$, and we must have $D' = D$ since no nonempty proper subset of $D$ can be a scrawl of $M^*$.
\end{proof}

We are left with the open questions of whether the restriction that $E$ should be countable can be removed from Theorems 0.1 and 0.2, or if not whether there is a simple axiom which can be added to fix this defect.

We can also show that for tame matroids we do not need $(O3)$ or $(O3)^*$. More precisely:

\begin{thm}
Let $\Ccal$ and $\Dcal$ be sets of subsets of a countable set $E$ satisfying $(O1)$ and $(O2)$, and such that for any $C \in \Ccal$ and any $D \in \Dcal$ the intersection $C \cap D$ is finite. Then $\Ccal$ and $\Dcal$ also satisfy $(O3)$ and $(O3)^*$, so that they induce a matroid as above.
\end{thm}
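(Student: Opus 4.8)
The plan is to show that $(O3)$ holds for $\Ccal$ under the tameness hypothesis; the argument for $(O3^*)$ is then obtained by symmetry (swapping the roles of $\Ccal$ and $\Dcal$). So fix $C \in \Ccal$, $e \in C$, and $X \subseteq E$; we must produce $C_{min} \in \Ccal$ with $e \in C_{min} \subseteq X \cup C$ such that $C_{min} \sm X$ is minimal among such choices. The key observation is that, since $|C \cap D|$ is finite for every $D \in \Dcal$, and since we only ever care about $C' \in \Ccal$ with $C' \subseteq X \cup C$ — for such $C'$ the set $C' \sm X$ is contained in $C \sm X$, which is \emph{finite} only if $C$ is finite, so I cannot argue purely by finiteness of $C \sm X$. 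Instead I would use a descending-chain argument combined with circuit elimination.

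First I would establish the auxiliary fact that $\Ccal$ satisfies circuit elimination $(C3)$: by \autoref{cireli_partitioning} this follows from $(O2)$ for $\Ccal$ and $\Ccal^\perp$, and $(O1)$ guarantees $\Dcal \subseteq \Ccal^\perp$, but we need $(O2)$ for the full $\Ccal^\perp$; however $(O2)$ as stated for $\Ccal, \Dcal$ together with the fact that any witnessing element of $\Dcal$ lies in $\Ccal^\perp$ suffices to run the ``if'' direction of \autoref{cireli_partitioning}, giving $(C3)$ for $\Ccal$. Now, among all $C' \in \Ccal$ with $e \in C' \subseteq X \cup C$, consider the collection of sets $C' \sm X$. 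I claim this collection, ordered by inclusion, has a minimal element. Take a $\subseteq$-decreasing sequence $C_0 \sm X \supseteq C_1 \sm X \supseteq \cdots$ with each $C_i \in \Ccal$, $e \in C_i \subseteq X \cup C$; I want a lower bound. Let $Y = \bigcap_i (C_i \sm X)$ and set $Z = X \cup Y$ — I would like an element $C_\infty \in \Ccal$ with $e \in C_\infty \subseteq Z$. To get this, for each element $y \in (C_0 \sm X) \sm Y$ there is some $i$ with $y \notin C_i \sm X$, hence $y \notin C_i$; this is exactly the kind of data fed to circuit elimination, but $(C3)$ as stated requires a family $\{o_x : x \in W\}$ with $x \in o_y \iff x = y$, so I would instead apply $(O2)$-type reasoning at $e$ with the partition $E = P_{Co} \dot\cup P_{De} \dot\cup \{e\}$ where $P_{Co} + e = Z$ and $P_{De}$ the rest: if $Z$ includes no element of $\Ccal$ through $e$, then some $D \in \Dcal$ lies in $P_{De} + e$ through $e$, so $e \in D$ and $D \cap Z = \{e\}$; but then $D \cap C_i = \{e\}$ for all $i$ large enough that $C_i \sm X \subseteq$ (a tail approximating $Y$) — more precisely, using that $C_i \cap D$ is finite (tameness!) and decreasing, $D$ must meet $\bigcap_i C_i \subseteq Z$ only in $e$, contradicting $(O1)$ applied to $C_i$ and $D$ for a suitable $i$. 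This yields $C_\infty \in \Ccal$ with $e \in C_\infty \subseteq Z = X \cup Y$, so $C_\infty \sm X \subseteq Y$, a lower bound for the chain.

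By Zorn's Lemma the collection of sets $C' \sm X$ then has a minimal element, witnessed by some $C_{min} \in \Ccal$, which is exactly what $(O3)$ demands. Finally, once both $(O3)$ and $(O3^*)$ hold, $\Ccal$ and $\Dcal$ satisfy all of the last four orthogonality axioms, so \autoref{ortho_axioms+} gives a matroid $M$ with $\Ccal(M) \subseteq \Ccal \subseteq \Scal(M)$ and $\Ccal(M^*) \subseteq \Dcal \subseteq \Scal(M^*)$; and since $C \cap D$ is always finite, this $M$ is tame, so in fact by \autoref{cir_c_scrawl}-type reasoning together with $(O3)$ the minimal nonempty elements of $\Ccal$ and $\Dcal$ are precisely $\Ccal(M)$ and $\Ccal(M^*)$, i.e.\ $M$ is the matroid ``induced by'' $(\Ccal, \Dcal)$ in the sense of \autoref{ortho_axioms}.

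I expect the main obstacle to be the chain-bounding step: making the application of $(O2)$ and $(O1)$ to the intersection $\bigcap_i C_i$ rigorous really does seem to need the tameness hypothesis (to control $C_i \cap D$ as a decreasing sequence of finite sets, so that the limiting cocircuit candidate $D$ still meets some $C_i$ in just $\{e\}$). Without tameness one cannot rule out that $D$ picks up a new intersection point with every $C_i$ while still meeting the intersection only in $e$, which is presumably exactly why $(O3)$ is not redundant in general.
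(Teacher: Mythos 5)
Your proof is correct and follows essentially the same route as the paper's: a Zorn's Lemma argument in which the finiteness of $C \cap D$ is used to bound each descending chain, $(O2)$ produces the witnessing circuit, and $(O1)$ rules out the cocircuit alternative. The only (cosmetic) differences are that the paper runs Zorn on the auxiliary family of all subsets of $C \setminus X$ through $e$ meeting no $D \in \Dcal$ with $D \cap X = \varnothing$ exactly once, so that $(O2)$ is applied just once at the end rather than once per chain, and that your preliminary derivation of $(C3)$ is never actually used.
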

\begin{proof}
By symmetry, it is enough to show $(O3)$. Let $C \in \Ccal$, $e \in C$ and $X \se E$. Let $\Ycal$ be the set of subsets $Y$ of $C \sm X$ such that $e \in Y$ and for every $D \in \Dcal$ with $D \cap X = \varnothing$ we have $|Y \cap D| \neq 1$. 
We will use Zorn's Lemma to show that $\Ycal$ has a minimal element. $\Ycal$ is nonempty because it contains $C \sm X$ by $(O1)$. Let $\Zcal$ be a nonempty chain of elements of $\Ycal$. We shall show that $\bigcap \Zcal$ is in $\Ycal$ and so forms a lower bound for $\Zcal$ there. Evidently $e \in \bigcap \Zcal$. For any $D \in \Dcal$ with $D \cap X = \varnothing$ we know that $D \cap (C \sm X)$ is finite and so we can find a finite subset $\Zcal'$ of $\Zcal$ such that for any $f \in D \cap C \sm \bigcap \Zcal$ there is $Z \in \Zcal'$ such that $f \not \in Z$. Let $Z$ be the least element of $\Zcal'$. Then $|\bigcap \Zcal \cap D| = |Z \cap D| \not = 1$.

Let $Y$ be a minimal element of $\Ycal$. We apply $(O2)$ to the partition $E = (X \cup Y - e) \dot \cup (E \sm X \sm Y) \dot \cup \{e\}$. By the definition of $\Ycal$ there is no $D \in \Dcal$ with $e \in D \subseteq E \sm X \sm Y$, so there is some $C_{min} \in \Ccal$ with $e \in C_{min} \subseteq X \cup Y$. For any other $C' \in \Ccal$ with $e \in C' \subseteq X \cup C$, we have $C' \sm X \in \Ycal$ by $(O1)$ and so $C_{min} \sm X \se C' \sm X$.
\end{proof}

In the following, we look at how tameness, $(O1)$ and $(O2)$ look for $\Ccal$ the set of $\Psi$-circuits and $\Dcal$ the set of $\Psi\ct$-bonds for a locally finite graph $G$ and $\Psi\se \Omega(G)$. We abbreviate $G_\Psi=|G|\sm (\Omega(G)\sm \Psi)$.

First, we look at tameness. If a $\Psi$-circuit $o$ and a $\Psi\ct$-bond $b$ meet infinitely, this gives rise to a minimal cover of $o$ with infinitely many open sets, contradicting the compactness of $o$. 
Hence tameness is implied by the fact that every circuit is compact.

Next, we look at $(O1)$.
If we have a $\Psi$-circuit $o$, then for every $e\in o$, the closure of $o- e$ in $G_\Psi$
is still connected and hence there cannot be a $\Psi\ct$-bond $b$ meeting $o$ only in $e$.
Thus $(O1)$ is implied by the fact that for every $\Psi$-circuit $o$ and every $e\in o$, the closure of $o- e$ in $G_\Psi$
is connected.

Finally, we look at $(O2)$, so we are 
given a partition $E=P_{Co}\dot\cup P_{De}\dot\cup \{e\}$.
Let $\bar P_{Co}$ be the closure of the edge set $P_{Co}$ in $G_\Psi$.
Let us consider the topological space $G_\Psi\cap \bar P_{Co}$.
 Then $(O2)$ says that we either find an arc joining the two endvertices of $e$ or we find a
$\Psi{\ct}$-bond whose induced topological bond separates the two endvertices.
The first is equivalent to the statement that the two endvertices of $e$ are in the same arc-component since the topological circles in $|G|_\Psi$ are precisely the $\Psi$-circuits.

The second is equivalent to the statement that the two endvertices of $e$ are 
not in the same connected component. Indeed, if there is a such a bond, then the two endvertices are clearly not in the same connected component.
For the converse, we assume that there is an open partition of $|G|_\Psi$ into two sets $C_1$ and $C_2$ with the two endvertices of $e$ on different sides.
Let $V_1$ be the set of those vertices in $C_1$.
Then the set of edges crossing the $G$-separation $(V_1,V(G)\sm V_1)$
is a (possibly infinite) cut of $G$.
This cut is a disjoint
union of bonds, which are all $\Psi\ct$-bonds. From these, the bond including $e$ is the desired one. 

Hence $(O2)$ is equivalent to the following:
The two endvertices of $e$ lie in the same connected component of $G_\Psi\cap \bar P_{Co}$ if and only if 
they lie in the same arc-component of $G_\Psi\cap \bar P_{Co}$.

The question whether any connected subspace of $|G|$ is path-connected was solved by Georgakopoulos in \cite{cnpc}. Idneed, he constructed a locally finite graph $G$
such that $|G|$ has a subset $S$ that is connected but not path-connected. Note that since $|G|$ is a Hausdorff space, path-connectedness is equivalent to arc-connectedness.
It is straightforward to show that $(G,S\cap\Omega(G))$ does not induce a matroid since it does not satisfy $(O2)$ for $P_{Co}=S\cap E(G)$. 
We note for future reference that Georgakopoulos's argument heavily relies on the Axiom of Choice.
The purpose of this paper is to examine for which $G$ and $\Psi$, the pair $(G,\Psi)$
induces a matroid.

\section{Trees of matroids I}\label{treesofmatroids}

We wish to paste together infinite collections of matroids to obtain interesting new infinite matroids. Before we can be more explicit about this construction, we must give a precise account of the configurations of matroids we will seek to paste together. These will be given by tree-like structures.

\begin{dfn}
A {\em tree $\Tcal$ of matroids} consists of a tree $T$, together with a function $M$ assigning to each node $t$ of $T$ a matroid $M(t)$ on ground set $E(t)$, such that for any two nodes $t$ and $t'$ of $T$, if $E(t) \cap E(t')$ is nonempty then $tt'$ is an edge of $T$.

For any edge $tt'$ of $T$ we set $E(tt') = E(t) \cap E(t')$. We also define the {\em ground set} of $\Tcal$ to be $E = E(\Tcal) = \left(\bigcup_{t \in V(T)} E(t)\right) \setminus \left(\bigcup_{tt' \in E(T)} E(tt')\right)$. 

We shall refer to the edges which appear in some $E(t)$ but not in $E$ as {\em dummy edges} of $M(t)$: thus the set of such dummy edges is $\bigcup_{tt' \in E(T)} E(tt')$.
\end{dfn}

The idea is that the dummy edges are to be used only to give information about how the matroids are to be pasted together, but they will not be present in the final pasted matroid, which will have ground set $E(\Tcal)$. 

We shall consider a couple of different sorts of pasting. First, in this section, we will consider a type of pasting corresponding to 2-sums. Later, in \autoref{tm2}, we will define a type of pasting along larger separators. In each case, we will make use of some additional information to control the behaviour at infinity: a set $\Psi$ of ends of $T$. The first type of pasting is only possible for a restricted class of trees of matroids.

\begin{dfn}
A tree $\Tcal = (T, M)$ of matroids is {\em of overlap 1} if, for every edge $tt'$ of $T$, $|E(tt')| = 1$. In this case, we denote the unique element of $E(tt')$ by $e(tt')$.

Given a tree of matroids of overlap 1 as above and a set $\Psi$ of ends of $T$, a {\em $\Psi$-pre-circuit} of $\Tcal$ consists of a connected subtree $C$ of $T$ together with a function $o$ assigning to each vertex $t$ of $C$ a circuit of $M(t)$, such that all ends of $C$ are in $\Psi$ and for any vertex $t$ of $C$ and any vertex $t'$ adjacent to $t$ in $T$, $e(tt') \in o(t)$ if and only if $t' \in C$. The set of $\Psi$-pre-circuits is denoted $\overline\Ccal(\Tcal, \Psi)$. 

Any $\Psi$-pre-circuit $(C, o)$ has an {\em underlying set} $\underline{(C, o)} = E \cap \bigcup_{t \in V(C)} o(t)$. Nonempty subsets of $E$ arising in this way are called {\em $\Psi$-circuits} of $\Tcal$. The set of $\Psi$-circuits of $\Tcal$ is denoted $\Ccal(\Tcal, \Psi)$.
\end{dfn}

We shall show in \autoref{sec:2sums} that $\Ccal(\Tcal, \Psi)$ very often gives the set of circuits of a matroid on $E_{\Tcal}$. To do this, we will make use of the orthogonality axioms, and so we will also need a specified collection of putative cocircuits. These will be given by the $\Psi\ct$-circuits of a tree of matroids dual to $\Tcal$. Not only is there a natural notion of duality for trees of matroids, there are also natural notions of contraction and deletion.

\begin{dfn}
Let $\Tcal = (T, M)$ be a tree of matroids. Then the {\em dual} $\Tcal^*$ of $\Tcal$ is given by $(T, M^*)$, where $M^*$ is the function sending $t$ to $(M(t))^*$. For a subset $C$ of the ground set, the tree of matroids $\Tcal/C$ obtained from  $\Tcal$ by {\em contracting} $C$ is given by $(T, M/C)$, where $M/C$ is the function sending $t$ to $M(t)/(C \cap E(t))$. For a subset $D$ of the ground set, the tree of matroids $\Tcal\backslash D$ obtained from  $\Tcal$ by {\em deleting} $D$ is given by $(T, M \backslash D)$, where $M \backslash D$ is the function sending $t$ to $M(t) \backslash (C \cap E(t))$. 
We say that a tree of matroids $\Tcal$ of overlap 1 together with a set $\Psi$ of its ends {\em induce a matroid} $M = M(\Tcal, \Psi)$ if $\Ccal(M) \subseteq \Ccal(\Tcal, \Psi) \subseteq \Scal(M)$ and $\Ccal(M^*) \subseteq \Ccal(\Tcal^*, \Psi\ct) \subseteq \Scal(M^*)$.
\end{dfn}

\begin{lem}
For any tree $\Tcal$ of matroids, $\Tcal = \Tcal^{**}$. For any disjoint subsets $C$ and $D$ of the ground set of $\Tcal$ we have $(\Tcal / C)^* = \Tcal^* \backslash C$, $(\Tcal \backslash D)^* = \Tcal^* / D$ and $\Tcal / C \backslash D = \Tcal \backslash D /C$. If $\Tcal$ has overlap 1 and $(\Tcal, \Psi)$ induces a matroid $M$, then $(\Tcal/C\backslash D, \Psi)$ induces the matroid $M /C \backslash D$ and $(\Tcal^*, \Psi\ct)$ induces the matroid $M^*$. \qed
\end{lem}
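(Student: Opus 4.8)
The first three assertions I would dispatch as bookkeeping: none of the operations $(\cdot)^*$, $/C$, $\backslash D$ touches the tree $T$ or the edge-overlaps $E(tt')$, so it suffices to verify the identities node by node. Since $C,D\se E(\Tcal)$ are disjoint and avoid every dummy edge, at each node $t$ the sets $C\cap E(t)$ and $D\cap E(t)$ are disjoint subsets of $E(t)$, so the node-level identities $(M(t)^*)^* = M(t)$, $(M(t)/(C\cap E(t)))^* = M(t)^*\backslash(C\cap E(t))$, $(M(t)\backslash(D\cap E(t)))^* = M(t)^*/(D\cap E(t))$ and $M(t)/(C\cap E(t))\backslash(D\cap E(t)) = M(t)\backslash(D\cap E(t))/(C\cap E(t))$ are the standard facts about finite matroids, and $E(\Tcal/C)=E(\Tcal)\sm C$, $E(\Tcal\backslash D)=E(\Tcal)\sm D$ come straight from the definitions.

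For ``$(\Tcal^*,\Psi\ct)$ induces $M^*$'' I would just observe that this is a restatement: by definition it asks for $\Ccal(M^*)\se\Ccal(\Tcal^*,\Psi\ct)\se\Scal(M^*)$ together with $\Ccal(M^{**})\se\Ccal(\Tcal^{**},\Psi)\se\Scal(M^{**})$, and since $\Tcal^{**}=\Tcal$ and $M^{**}=M$ these are exactly the two inclusions defining ``$(\Tcal,\Psi)$ induces $M$'', written in the opposite order.

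The substantive claim is that $(\Tcal/C\backslash D,\Psi)$ induces $M/C\backslash D$, which unpacks into four inclusions: two ``scrawl'' inclusions ($\Ccal(\Tcal/C\backslash D,\Psi)\se\Scal(M/C\backslash D)$ and its dual) and two ``circuit'' inclusions ($\Ccal(M/C\backslash D)\se\Ccal(\Tcal/C\backslash D,\Psi)$ and its dual). First I would record two structural facts about an arbitrary overlap-$1$ tree of matroids $\Tcal'$ with a set $\Psi'$ of ends of its tree: (a) $\Ccal(\Tcal'\backslash D',\Psi') = \{w\in\Ccal(\Tcal',\Psi') : w\cap D'=\varnothing\}$, which is immediate because the circuits of $M(t)\backslash(D'\cap E(t))$ are exactly the $M(t)$-circuits avoiding $D'$ and dummy edges lie outside $D'$; and (b) every $w'\in\Ccal(\Tcal'/C',\Psi')$ has the form $\hat w\sm C'$ with $\hat w\in\Ccal(\Tcal',\Psi')$ and $w'\se\hat w\se w'\cup C'$, obtained by lifting each node-circuit of $M(t)/(C'\cap E(t))$ to an $M(t)$-circuit via \autoref{rest_cir} (the pre-circuit condition at the dummy edges survives, since dummy edges avoid $C'$). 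Granting (a) and (b), the two scrawl inclusions follow by combining them with \autoref{is_scrawl} and two routine matroid facts: $\Scal(N\backslash D)=\{s\in\Scal(N):s\cap D=\varnothing\}$, and $s\in\Scal(N)\Rightarrow s\sm C\in\Scal(N/C)$ (because the cocircuits of $N/C$ are the $N$-cocircuits missing $C$, so $s\sm C$ meets each such $b$ in $s\cap b$, of size $\neq 1$); one applies these once with $N=M$ and once with $N=M/C$, and dually on $M^*$. For the circuit inclusions, the first three assertions plus the previous paragraph reduce the dual one to the primal one with the roles of $C$ and $D$ exchanged, so it is enough to treat $\Ccal(M/C\backslash D)\se\Ccal(\Tcal/C\backslash D,\Psi)$; by \autoref{rest_cir} a circuit $o''$ of $M/C\backslash D$ equals $o\sm C$ for some $M$-circuit $o\se o''\cup C$ disjoint from $D$, hence $o\in\Ccal(\Tcal,\Psi)$, and by (a) it remains to exhibit a $\Psi$-pre-circuit of $\Tcal/C$ with underlying set $o\sm C$.

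This last step is where I expect the real work to be. The naive attempt, restricting a pre-circuit $(C',\hat o)$ realising $o$ in $\Tcal$, fails: after contraction the node sets $\hat o(t)\sm(C\cap E(t))$ are dependent in $M(t)/(C\cap E(t))$ but need not be circuits, and in general do not contain a single circuit through all the dummy edges that the tree structure forces at $t$. To push it through one has to use that $o\sm C$ is a genuine \emph{circuit} of $M/C$ (not merely dependent): this forces the local picture to be connected enough that the required node-circuits, each through its forced dummy edges and jointly covering $o\sm C$, can be chosen coherently, an argument one can run from circuit elimination $(C3)$ and \autoref{is_circuit} together with the fact that the matroid connectivity of a circuit is preserved by contraction. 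This is essentially the combinatorial core of the $\Tcal$-versus-$\Tcal/C$ correspondence analysed in \autoref{sec:2sums}; everything else in the lemma is bookkeeping around it.
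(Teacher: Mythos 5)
The paper offers no proof of this lemma at all --- it is stated with no proof, the authors evidently regarding it as routine --- so there is nothing to compare your route against except its correctness. Your bookkeeping for $\Tcal^{**}=\Tcal$ and the duality/commutation identities, the observation that ``$(\Tcal^*,\Psi\ct)$ induces $M^*$'' is a restatement of the definition, your structural facts (a) and (b), and the two scrawl inclusions are all correct as written.

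The gap is in the one step you yourself flag as ``the real work'', which you sketch but do not prove, and the sketch as stated does not go through. Given a circuit $o''$ of $M/C\backslash D$, lifted via \autoref{rest_cir} to an $M$-circuit $o$ realised by a pre-circuit $(S,\hat o)$ of $\Tcal$, you propose to find node-circuits of the $M(t)/(C\cap E(t))$ ``each through its forced dummy edges and jointly covering $o\sm C$''. But after contraction the scrawl $\hat o(t)\sm C$ can split into several circuits of $M(t)/(C\cap E(t))$ with the dummy edges of $\hat o(t)$ lying in different pieces, so there need be no single circuit through all of them: the subtree $S$ must in general be pruned, not preserved, and ``jointly covering $o\sm C$'' cannot be arranged directly. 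The missing idea is that covering is not needed. Fix $g\in o''$ with $g\in E(t_0)$, choose a circuit of $M(t_0)/(C\cap E(t_0))$ through $g$ inside $\hat o(t_0)\sm C$, and grow a pre-circuit of $\Tcal/C\backslash D$ greedily along whichever dummy edges the chosen circuits happen to contain; this is always possible, since each $\hat o(t)\sm C$ is a scrawl of $M(t)/(C\cap E(t))$ containing the incoming dummy edge and avoiding $D$, and the resulting subtree of $S$ has all its ends in $\Psi$. Its underlying set is a nonempty subset of $o''$, and by the scrawl inclusion you have already established it is a scrawl of $M/C\backslash D$; a nonempty scrawl contained in a circuit equals that circuit, so the underlying set is exactly $o''$. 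With this replacement your outline closes; the appeal to $(C3)$, \autoref{is_circuit} and preservation of circuit connectivity under contraction can be dropped.
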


We will sometimes use the expression {\em $\Psi\ct$-cocircuits of $\Tcal$} for the $\Psi\ct$-circuits of $\Tcal^*$.

We will examine the question of when $(\Tcal, \Psi)$ induces a matroid using the orthogonality axioms. The question of whether $(O2)$ holds for these systems is tricky and will be addressed in \autoref{sec:2sums}. However, we are already in a position to give simple proofs of $(O1)$, and of tameness if all the $M(t)$ are tame.

\begin{lem}[$(O1)$ for trees of matroids of overlap 1]\label{2SumsO1}
Let $\Tcal = (T, M)$ be a tree of matroids, $\Psi$ a set of ends of $T$, and let $(C, o)$ and $(D, b)$ be respectively a $\Psi$-pre-circuit of $\Tcal$ and a $\Psi\ct$-pre-circuit of $\Tcal^*$. Then $|\underline{(C, o)} \cap \underline{(D, b)}| \neq 1$.
\end{lem}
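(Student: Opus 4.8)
The plan is to show that the underlying sets $\underline{(C,o)}$ and $\underline{(D,b)}$ cannot meet in exactly one edge of $E$. Suppose for a contradiction that $\underline{(C,o)} \cap \underline{(D,b)} = \{f\}$ for some $f \in E$. Since $f$ is not a dummy edge, $f$ lies in $E(t)$ for a unique node $t$, and since $f \in o(t')$ forces $f \in E(t')$, we must have $t \in V(C)$ and $f \in o(t)$; similarly $t \in V(D)$ and $f \in b(t)$. So the subtrees $C$ and $D$ both contain $t$, and their intersection $C \cap D$ is a nonempty subtree.

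First I would analyse how $C$ and $D$ interact at each node of $C \cap D$. For a node $s \in V(C) \cap V(D)$, the circuit $o(s)$ of $M(s)$ and the cocircuit $b(s)$ of $M(s)$ can meet in only finitely many edges (no circuit-cocircuit intersection is a single element by \autoref{is_scrawl}, but I only need finiteness is not needed here — what I need is that $|o(s) \cap b(s)| \neq 1$). The key combinatorial observation is about the dummy edges: by definition of a $\Psi$-pre-circuit, for an edge $ss'$ of $T$ we have $e(ss') \in o(s) \iff s' \in V(C)$, and likewise $e(ss') \in b(s) \iff s' \in V(D)$. So the edges of $E(T)$ incident with $s$ that lie in $o(s) \cap b(s)$ are precisely the edges $ss'$ with $s' \in V(C) \cap V(D)$, i.e. the edges of the subtree $C \cap D$ at $s$. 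Together with the real edges of $E$ in $o(s) \cap b(s)$, and using $|o(s) \cap b(s)| \neq 1$, I get that at each node $s$ of $C \cap D$, the number of edges of $C \cap D$ at $s$ plus the number of real edges of $E$ in $o(s) \cap b(s)$ is never exactly $1$.

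Now I would do a counting/parity argument on the finite-or-infinite tree $C \cap D$. Double-count the set $P = \{(s, x) : s \in V(C \cap D),\ x \in (o(s) \cap b(s)) \cap E\}$ together with incidences of edges of $C \cap D$. The total contribution $\sum_{s} \big(|\{\text{edges of } C\cap D \text{ at } s\}| + |(o(s)\cap b(s))\cap E|\big)$; each edge of the tree $C \cap D$ is counted twice, and each real edge $x \in E$ in the intersection — here I use that $x \in \underline{(C,o)} \cap \underline{(D,b)}$, so $x = f$ — contributes exactly once from the unique node owning it. If $C \cap D$ is a finite tree with $k$ vertices and $k-1$ edges, the sum equals $2(k-1) + 1 = 2k - 1$, which is odd, so some term is odd, in particular some node $s$ has that count equal to an odd number; but more carefully, since every term is $\neq 1$ and $\geq 0$, and they sum to $2k-1$... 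I need the stronger statement: I claim $C \cap D$ must be finite (if it had an end, that end would lie in $\Psi$ since it's an end of $C$, and in $\Psi\ct$ since it's an end of $D$ — contradiction, as $\Psi \cap \Psi\ct = \varnothing$). So $C \cap D$ is a finite tree. Then the leaves of $C \cap D$ are the obstruction: at a leaf $s$ of $C \cap D$ there is exactly one edge of $C \cap D$, so $|(o(s) \cap b(s)) \cap E| = 0$ is forced by the "$\neq 1$" condition would give $1 + 0 = 1$, contradiction — unless $s = t$ and the real edge $f$ sits at $s$, giving $1 + 1 = 2$, which is allowed. Since a finite tree with $\geq 2$ vertices has $\geq 2$ leaves, and at most one of them can be $t$, the other leaf yields the contradiction; and if $C \cap D$ is a single vertex $s = t$, then $o(t) \cap b(t) \supseteq \{f\}$ and contains no dummy edge (no neighbour is in both $C$ and $D$), so $|o(t) \cap b(t)| = 1$, directly contradicting $|o(t) \cap b(t)| \neq 1$.

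The main obstacle I anticipate is making the leaf argument fully rigorous — in particular handling the degenerate case $C \cap D = \{t\}$ and confirming that the only real edge available in any $o(s) \cap b(s)$ is $f$ (which uses that $\underline{(C,o)} \cap \underline{(D,b)} = \{f\}$, so no node can contribute a second real edge), and confirming $C \cap D$ has no ends via the disjointness of $\Psi$ and $\Psi\ct$. Once those are pinned down, the parity-of-leaves argument closes the proof cleanly: every node of $C \cap D$ violates the condition "$|o(s) \cap b(s)| \neq 1$" except possibly the single node $t$, and a finite tree cannot consist of just that one exceptional node unless it is a single vertex, which is the case handled directly.
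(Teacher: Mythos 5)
Your overall strategy is sound and in fact very close to the paper's: both arguments reduce to the observation that for each node $s$ of the subtree $C\cap D$, the circuit $o(s)$ and the cocircuit $b(s)$ of $M(s)$ cannot meet in exactly one element, and that the elements of $o(s)\cap b(s)$ are exactly the dummy edges $e(ss')$ with $s'\in C\cap D$ plus (at the one node $t$ owning $f$) the edge $f$ itself. Your treatment of the singleton case and of leaves of $C\cap D$ is correct.

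However, there is a genuine gap in the step ``$C\cap D$ has no end, so $C\cap D$ is a finite tree.'' What you actually prove is that $C\cap D$ contains no ray; but a rayless tree need not be finite (consider an infinite star), and nothing here guarantees that $C\cap D$ is locally finite. Indeed, the matroids $M(s)$ in this lemma are arbitrary, so $o(s)\cap b(s)$ may be infinite and a node of $C\cap D$ may have infinitely many neighbours in $C\cap D$; this is precisely why the companion statement \autoref{2Sumstame} has to assume tameness before concluding that $C\cap D$ is locally finite and hence has an end in its closure. So your case split ``finite (use leaves) versus infinite (use an end)'' does not cover the case of an infinite rayless $C\cap D$. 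The repair is cheap and is essentially what the paper does: if $C\cap D$ has at least two vertices and no leaf other than $t$, then every vertex other than $t$ has degree at least $2$ in $C\cap D$ (and $t$ has degree at least $1$), so starting at $t$ one can greedily build a non-backtracking walk, hence a ray, in $C\cap D$; its end lies in both $\Psi$ and $\Psi\ct$, a contradiction. Equivalently, you may prove the general fact that a rayless tree with at least two vertices has at least two leaves. The paper skips the leaf bookkeeping entirely and just constructs this ray directly, using at each step that $o(t_{n-1})\cap b(t_{n-1})$ is nonempty and therefore has at least two elements; the parity/double-counting digression in the middle of your write-up is not needed for either version.
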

\begin{proof}
Suppose for a contradiction that $|\underline{(C, o)} \cap \underline{(D, b)}| = \{e\}$, with $e \in t_0$. We recursively construct a sequence of nodes $t_n \in C \cap D$ forming a ray from $t_0$. To construct $t_n$, we note that $o(t_{n-1})$ meets $b(t_{n-1})$ (in $e$ if $n = 1$, and in $e(t_{n-2}t_{n-1})$ if $n>1$), so since they are respectively a circuit and a cocircuit of $M(t_{n-1})$ they must meet at least twice. Since they cannot meet in any edge of $E$, they must meet in some edge $e(t_{n-1}t_n)$ with $t_n$ adjacent to $t_{n-1}$ in $T$ and $t_n \neq t_{n-2}$ (for $n>1$). It follows that $t_n \in C \cap D$. Then the end of this ray is in $\Psi$ by the definition of $(C, o)$ and is in $\Psi^{\complement}$ by the definition of $(D, b)$, which is the desired contradiction.
\end{proof}

\begin{lem}[Tameness for trees of tame matroids of overlap 1]\label{2Sumstame}
Let $\Tcal = (T, M)$ be a tree of tame matroids, $\Psi$ a set of ends of $T$, and let $(C, o)$ and $(D, b)$ be respectively a $\Psi$-pre-circuit of $\Tcal$ and a $\Psi\ct$-pre-circuit of $\Tcal^*$. Then $\underline{(C, o)} \cap \underline{(D, b)}$ is finite.
\end{lem}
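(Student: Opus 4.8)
The plan is to mirror the proof of \autoref{2SumsO1}, but instead of deriving a contradiction from a single shared edge we will argue that an \emph{infinite} intersection forces an infinite set of shared dummy edges inside some single matroid $M(t)$. First I would suppose for a contradiction that $\underline{(C,o)}\cap\underline{(D,b)}$ is infinite. Since both $C$ and $D$ are subtrees of $T$ and for each node $t\in C$ the circuit $o(t)$ is finite (being a circuit of the finite... — actually the $M(t)$ need not be finite, but they are tame), the key point is that for each node $t$ the intersection $o(t)\cap b(t)$ is finite whenever $t\in C\cap D$, by tameness of $M(t)$ applied to the circuit $o(t)$ and cocircuit $b(t)$. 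So if all the infinitely many shared ground-set edges lay below finitely many nodes, we would be done; the work is to rule out the intersection being ``spread out'' across infinitely many nodes of $C\cap D$.

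The main step: I would show that the set of nodes $S = \{t : o(t)\cap b(t)\neq\varnothing\}$, intersected with $C\cap D$, is connected (a subtree) and that whenever it is infinite it contains a ray. Concretely, if $t\in C\cap D$ and $o(t)\cap b(t)\neq\varnothing$, then since $o(t)$ is a circuit and $b(t)$ a cocircuit of $M(t)$ they meet at least twice (by \autoref{is_scrawl}), and none of these meetings can be at an edge of $E$ lying in $\underline{(C,o)}\cap\underline{(D,b)}$ only finitely often — more carefully, the meetings that are \emph{not} in $E$ are dummy edges $e(tt')$ with $t'$ adjacent, and by the defining property of pre-circuits $e(tt')\in o(t)\Leftrightarrow t'\in C$ and similarly for $b,D$, so each such dummy meeting pushes us to an adjacent node $t'\in C\cap D$ with $o(t')\cap b(t')\neq\varnothing$. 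This lets me either grow an infinite ray in $C\cap D$ through nodes where $o$ and $b$ meet — whose end would be in $\Psi\cap\Psi\ct=\varnothing$, a contradiction exactly as in \autoref{2SumsO1} — or conclude that $S\cap C\cap D$ is finite. In the latter case, each of the finitely many nodes $t$ in $S\cap C\cap D$ contributes only the finite set $o(t)\cap b(t)$ to the intersection, and $\underline{(C,o)}\cap\underline{(D,b)}\se E\cap\bigcup_{t\in S\cap C\cap D}\big(o(t)\cap b(t)\big)$ is therefore finite, contradicting our assumption.

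The delicate bookkeeping — and the step I expect to be the real obstacle — is the claim that if $o(t)$ and $b(t)$ meet, and \emph{all} their meetings are at dummy edges, then they meet at a dummy edge $e(tt')$ with $t'\neq$ the node we came from, so that the ray-construction does not get stuck. In \autoref{2SumsO1} this worked because a circuit and cocircuit meet at least twice, so after discarding the one edge leading back toward $t_{n-2}$ there is still at least one more. Here I need: the meeting set $o(t)\cap b(t)$, minus the (at most one) back-edge, is either empty — in which case $t$ is a ``leaf'' of $S\cap C\cap D$ and contributes only finitely much — or nonempty, letting the construction continue. So I would run a slightly more careful recursion: I build a \emph{maximal} subtree $S'$ of $C\cap D$ consisting of nodes where $o$ and $b$ meet, rooted appropriately; if $S'$ is infinite it has a ray (by König's Infinity Lemma, \autoref{Infinity_Lemma}, applied to the finite level-sets of $S'$ which are finite because $o(t)$ is a finite... again tameness only gives $o(t)\cap b(t)$ finite, but that suffices to bound branching at each node since each branch uses a distinct dummy edge of $o(t)\cap b(t)$), giving a contradiction; if $S'$ is finite, finitely many finite contributions give a finite intersection. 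I would phrase the write-up so that tameness is invoked exactly where finiteness of $o(t)\cap b(t)$ is needed, both to bound the branching of $S'$ at each node and to bound each node's contribution to the underlying intersection.
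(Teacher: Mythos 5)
Your proof is correct and is essentially the paper's argument: tameness makes each $o(t)\cap b(t)$ finite, which both forces an infinite intersection to involve infinitely many nodes of $C\cap D$ and makes $C\cap D$ locally finite, so one gets a ray whose end would have to lie in $\Psi\cap\Psi\ct$. The only simplification you missed is that for adjacent $t,t'\in C\cap D$ the dummy edge $e(tt')$ automatically lies in both $o(t)\cap b(t)$ and $o(t')\cap b(t')$, so your set $S'$ is just $C\cap D$ whenever the latter has at least two nodes, and the ``getting stuck'' worry evaporates.
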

\begin{proof}
Otherwise $C \cap D$ is infinite, and is locally finite since the $M(t)$ are all tame, and so it has an end $\omega$ of $T$ in its closure. Then $\omega$ is in $\Psi$ by the definition of $(C, o)$ and is in $\Psi^{\complement}$ by the definition of $(D, b)$, which is a contradiction.
\end{proof}

We shall later show that any $\Psi$-system for a locally finite graph can be recovered by a more complex version of the construction above from a tree of finite matroids. We illustrate this by showing that many interesting $\Psi$-systems can already be recovered from the construction given above.

\begin{dfn}
Let $G$ be a graph. A {\em tree structure} on $G$ is a tree $T$ whose nodes form a partition of the vertices of $G$, such that distinct nodes are adjacent in $T$ if and only if they contain adjacent vertices of $G$ and the induced subgraph on each partition class is finite and connected. A tree structure {\em has width 2} if and only if for any pair of adjacent partition classes in $T$ there are precisely 2 edges of $G$ with one endvertex in each class.
\end{dfn}

\begin{rem}
Any tree structure $T$ on $G$ induces a tree decomposition of $G$, in which the parts are the sets $E(t, t')$ of edges of $G$ with one endvertex in $t$ and the other in $t'$, for $t$ and $t'$ (not necessarily distinct) nodes of $T$.
\end{rem}

\begin{eg}\label{eg:wildstruc}
The wild cycle graph (so called because it includes a wild cycle in the sense of \cite{DiestelBook10}), depicted in \autoref{fig:wildcyc}, has a tree structure of width 2. The grey blobs represent the nodes of the tree.
\end{eg}

\begin{figure}
\begin{center}
 \includegraphics[height=6cm]{./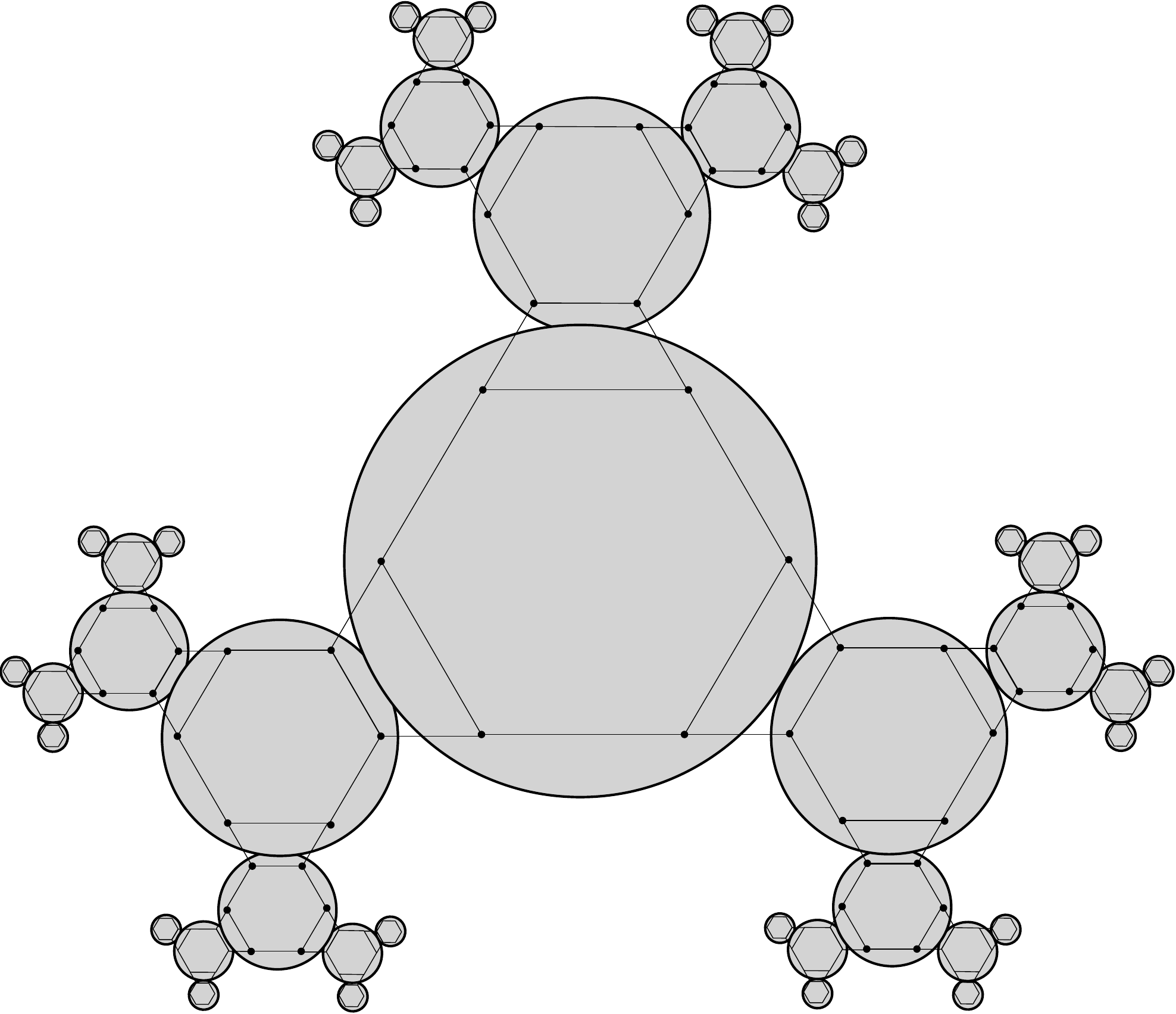}
\end{center}
\caption{A tree structure on the Wild Cycle Graph}\label{fig:wildcyc}
\end{figure}

\begin{lem}
Let $T$ be a tree structure on a locally finite graph $G$. Then there is a canonical homeomorphism from the ends of $G$ to the ends of $T$, sending an end $\omega$ of $G$ to the unique end in the closure of the set of vertices of $T$ that meet some ray $R$ to $\omega$.
\end{lem}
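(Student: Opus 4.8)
The statement asserts a canonical homeomorphism $\Omega(G)\to\Omega(T)$ when $T$ is a tree structure on a locally finite graph $G$, given by sending an end $\omega$ of $G$ to the unique end of $T$ in the closure of $\{t\in V(T)\mid t\text{ meets some ray of }\omega\}$. The plan is to first check that the proposed assignment is well-defined, then exhibit a two-sided inverse, and finally argue continuity both ways (which, since both end spaces are compact Hausdorff, reduces to checking the map is a continuous bijection).

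First I would verify well-definedness. Fix $\omega\in\Omega(G)$ and a ray $R$ in $\omega$. Since each node of $T$ is a finite set of vertices and $R$ is infinite, $R$ meets infinitely many nodes; since consecutive vertices of $R$ lie in equal or adjacent nodes, the nodes met by $R$ form a connected subgraph of $T$ containing infinitely many nodes, hence (as $G$ is locally finite, so each node has finite degree in $T$, making $T$ locally finite) this connected infinite subgraph has a ray and thus its closure in $|T|$ contains at least one end. To see the end is unique and independent of the choice of $R$: if $R'$ is another ray in $\omega$, then $R$ and $R'$ cannot be separated by finitely many vertices of $G$; I would show this forces the node sets met by $R$ and $R'$ to be ``close'' in $T$ in the sense that their union is still connected and has a single end in its closure — concretely, infinitely many vertex-disjoint $R$--$R'$ paths in $G$ project to walks in $T$ connecting the two node-sets infinitely often, pinning down a common end. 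For the uniqueness of the end within the closure of the node set of a single $R$: the set of nodes met by $R$ is a connected subgraph in which all but finitely many nodes have exactly two ``$R$-neighbours'' — more care is needed here since $R$ may revisit a node — but the key point is that $R$, being a ray, eventually leaves every finite set $S$ of vertices of $G$, hence eventually leaves every finite set of nodes, so the tail of $R$ lives in a single component of $T-S'$ for any finite node set $S'$; this is exactly the statement that the node set has a unique end in its closure.

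Next I would construct the inverse $\Omega(T)\to\Omega(G)$. Given an end $\varepsilon$ of $T$, pick a ray $t_0t_1t_2\cdots$ in $\varepsilon$. Adjacent nodes contain adjacent vertices of $G$ and each node induces a finite connected subgraph, so I can greedily build a ray in $G$ that passes through each $t_i$ in turn: walk inside $t_0$ to a vertex adjacent to $t_1$, cross into $t_1$, walk inside $t_1$ to a vertex adjacent to $t_2$, and so on; finiteness and connectedness of each node makes each step a finite detour, and the result is a genuine ray $R$ in $G$ because it uses infinitely many distinct nodes. The end of $R$ in $G$ is independent of all choices: two such rays are separated in $G$ only by a finite vertex set $S$, but $S$ meets only finitely many nodes and hence cannot separate two rays that both follow the same tail of $t_0t_1\cdots$. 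One then checks the two assignments are mutually inverse, which is essentially immediate from the constructions. I would also remark that this inverse map is continuous by the same style of argument — a basic neighbourhood $\hat C(S,\omega)$ of an end of $G$ pulls back to a neighbourhood of the corresponding end of $T$ determined by the finitely many nodes meeting $S$ — and invoke Lemma~\ref{spine} (the comb lemma) if a clean topological argument is wanted for the ``closure'' description.

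Finally, continuity and the homeomorphism conclusion. Both $\Omega(G)$ and $\Omega(T)$ are compact (as closed subspaces of $|G|$ and $|T|$ respectively, for $G$, $T$ locally finite), Hausdorff, and totally disconnected. A basic clopen neighbourhood of an end of $T$ has the form $\{\text{ends with a ray in component }C\text{ of }T-S'\}$ for finite $S'\subseteq V(T)$; taking $S=\bigcup S'$, a finite vertex set of $G$, the preimage is the set of ends of $G$ all of whose rays eventually enter nodes of $C$, which is a union of basic clopen sets $\hat C(S,\omega)$, hence open — giving continuity of the inverse; symmetrically one gets continuity of the forward map, or one simply invokes that a continuous bijection from a compact space to a Hausdorff space is a homeomorphism. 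I expect the main obstacle to be the bookkeeping in the well-definedness step — specifically, proving cleanly that a ray in $G$ determines a unique end of $T$ when the ray may repeatedly re-enter the same node — and the symmetric subtlety that two $\omega$-equivalent rays in $G$ land on the same end of $T$; both are handled by the principle that a ray eventually escapes every finite vertex set and that finite vertex sets of $G$ touch only finitely many nodes of $T$, but this deserves to be spelled out rather than waved through.
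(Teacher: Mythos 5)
Your proposal is correct and follows essentially the same route as the paper: establish that each ray of $G$ determines a unique end of $T$, that equivalent rays determine the same end (both via the principle that a ray escapes every finite vertex set and that nodes/finite vertex sets of $G$ correspond to finite sets on the other side), construct the inverse using connectedness of the nodes, and check continuity via the correspondence between components of $G\setminus t$ and of $T-t$. The only cosmetic difference is that you offer the compact-to-Hausdorff shortcut for openness where the paper argues openness directly; both are fine.
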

\begin{rem}
We shall use this homeomorphism to identify the ends of $T$ with those of $G$.
\end{rem}
\begin{proof}
First, we show that for any ray $R$ in $G$ there is a unique end $\varphi(R)$ of $T$ in the closure of the set of vertices of $T$ that meet $R$. There is certainly at least one such end, since $R$ is infinite and so must meet infinitely many of the (finite) partition classes. If there were 2, say $\omega$ and $\omega'$, then for any vertex $t$ of $T$ whose removal separated $\omega$ and $\omega'$, $R$ would have to meet $t$ infinitely often, which would be a contradiction.

A similar argument shows that $\varphi(R)$ only depends on the end of $G$ containing $R$: if there were 2 equivalent rays $R$ and $R'$ in $G$, with $\varphi(R) \neq \varphi(R')$, then for any vertex $t$ of $T$ whose removal separated $\varphi(R)$ from $\varphi(R')$, $R$ and $R'$ would eventually have to lie in the same component of $G \setminus t$, and so the components of $T - t$ meeting $R$ and $R'$ infinitely often would be the same, which would be a contradiction.

Thus $\varphi$ induces a map $\tilde \varphi$ taking ends of $G$ to ends of $T$. This map is injective, because for any distinct ends $\omega$ and $\omega'$ of $G$ there is a finite set $X$ of vertices of $G$ separating $\omega$ from $\omega'$ in $G$: the (finite) set of vertices of $T$ containing elements of $X$ then separates $\tilde \varphi(\omega)$ from $\tilde \varphi(\omega')$ in $T$. It is surjective, because for any ray $R$ in $T$ there is a ray in $G$ meeting exactly the nodes of $T$ on $R$ (here we use the fact that each node $t$ of $T$ is connected in $G$). It is continuous because for any node $t$ of $T$ the components of $G \setminus t$ are precisely the unions of the components of $T - t$, and it is open by the same fact together with the fact that any finite set $X$ of vertices of $G$ is a subset of a union of finitely many nodes of $T$.
\end{proof}

\begin{dfn}
Given a graph $G$ together with a tree structure $T$ on $G$ and a node $t$ of $T$, the {\em torso} $\tau(t)$ of $G$ at a node $t$ is the graph constructed as follows: the vertices are the elements of $t$, together with a new dummy vertex $v_e$ for each edge $e$ of $G$ with one endpoint in $t$ and the other not in $t$. The edges are of three types: edges of $G$ with both ends in $t$, an edge $vv_e$ for each edge $e = vv'$ of $G$ with $v \in t$ and $v' \in t'$ with $t'$ adjacent to $t$, and an edge joining any two dummy vertices corresponding to edges of $G$ from vertices in $t$ to vertices in the same adjacent node $t'$ of $T$.

For a graph $G$ with a tree structure $T$ this gives a corresponding tree of finite matroids $\Tcal(G, T) = (T, t \mapsto M(\tau(t)))$. 
\end{dfn}

Observe that if $T$ has width 2, then $\Tcal(G, T)$ has overlap 1.

\begin{eg}\label{eg:wildtorso}
Each torso arising from the tree structure in \autoref{eg:wildstruc} is isomorphic to the graph in \autoref{fig:wildtorso}.
\end{eg}

\begin{figure}
\begin{center}
 \includegraphics[height=4cm]{./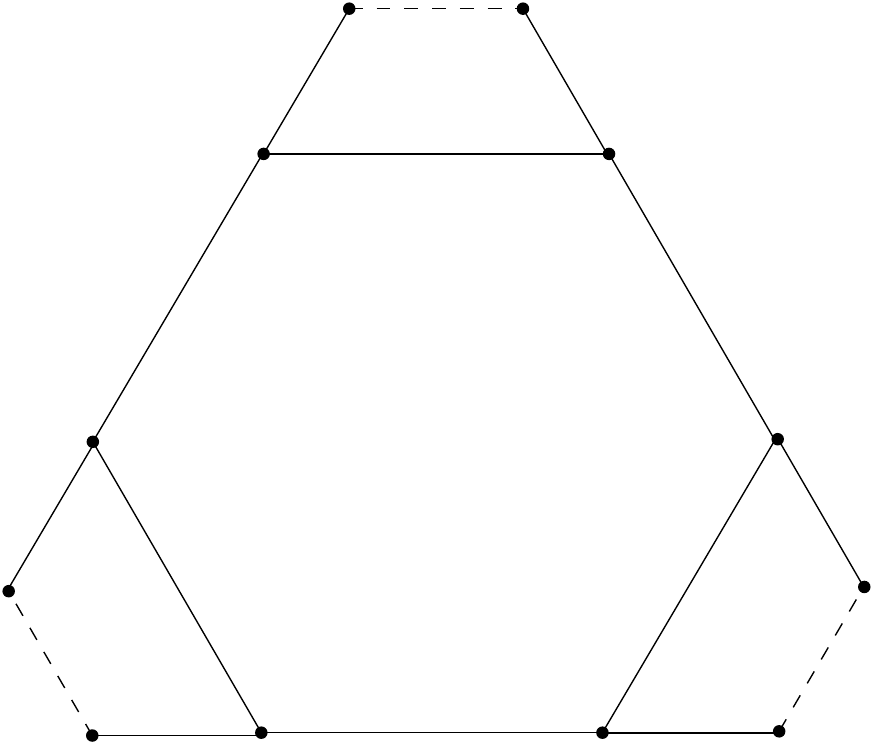}	
\end{center}
\caption{A typical torso of the Wild Cycle Graph}\label{fig:wildtorso}
\end{figure}

We shall see later that this is a particularly simple example of a tree structure of width 2, but it illustrates that the topological space $\Omega(G)$ may still be rich enough in such cases to support very complicated subsets $\Psi$. We end this section by showing that the construction outlined above does capture the $\Psi$-systems of graphs in the width 2 case.

\begin{lem}\label{treefromgraph2sums}
Let $G$ be a graph, and let $T$ be a tree structure on $G$ of width 2. Let $\Psi$ be a set of ends of $G$. Let $G'$ be the graph obtained from $G$ by subdividing each edge which has endpoints in different nodes of $T$.\footnote{formally, we add a new vertex $v_e$ corresponding to each such edge $e = vv'$, and replace $e$ in the set of edges by the two new edges $vv_e$ and $v'v_e$.}Then the $\Psi$-circuits of $G'$ are exactly the $\Psi$-circuits of $\Tcal(G, T)$ and the $\Psi^{\complement}$-bonds of $G'$ are exactly the $\Psi\ct$-cocircuits of $\Tcal(G, T)$.
\end{lem}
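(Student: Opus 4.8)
The plan is to set up a correspondence between the combinatorial objects on the graph side ($\Psi$-circuits and $\Psi\ct$-bonds of $G'$) and the tree-of-matroids side ($\Psi$-circuits and $\Psi\ct$-cocircuits of $\Tcal(G,T)$), passing through the $\Psi$-pre-circuits. The key observation is that a $\Psi$-pre-circuit $(C,o)$ of $\Tcal(G,T)$ records, for each node $t\in C$, a circuit $o(t)$ of $M(\tau(t))$, i.e. a cycle of the torso $\tau(t)$; each such cycle is either a genuine cycle of $G'$ living inside the torso, or it uses dummy edges, each dummy edge $e(tt')$ corresponding to the pair of subdivided edges of $G'$ leading from $t$ to its neighbour $t'$. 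The compatibility condition ``$e(tt')\in o(t)$ iff $t'\in C$'' is exactly what is needed to glue these local cycles along the width-2 separators into a single closed edge set in $G'$. So I would first prove a gluing lemma: given a $\Psi$-pre-circuit $(C,o)$, its underlying set $\underline{(C,o)}$ is the edge set of a topological circle in $\widetilde{G'}_\Psi$, and conversely every $\Psi$-circuit of $G'$ decomposes canonically along $T$ into such a pre-circuit. The ``all ends of $C$ lie in $\Psi$'' clause matches the requirement that a $\Psi$-circle of $G'$ only passes through ends in $\Psi$, via the canonical homeomorphism $\Omega(G)\cong\Omega(T)$ established in the preceding lemma and the fact (from \autoref{spine}) that the ends in the closure of an infinite connected subtree/subgraph are controlled by its rays.

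Concretely, for the forward direction (every $\Psi$-circuit of $\Tcal(G,T)$ is a $\Psi$-circuit of $G'$): given $(C,o)$, form the subgraph $H$ of $G'$ whose edges are $\bigcup_{t\in C} o(t)$ with dummy edges of each $o(t)$ replaced by the corresponding subdivided-edge pairs of $G'$. Using that each $o(t)$ is a cycle of $\tau(t)$ and that torsos overlap in single dummy edges (width 2 gives overlap 1), a local check at each separator shows every vertex of $H$ has degree $2$ in $H$, so $H$ is a disjoint union of finite cycles and double rays; connectedness of $C$ plus the compatibility condition forces $H$ to be connected, hence a single circle (finite, or a double ray closed up at one or two ends of $T\cong\Omega(G)$, all of which lie in $\Psi$), i.e. a $\Psi$-circuit of $G'$. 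For the converse, start with a $\Psi$-circle $o$ of $G'$; for each node $t$, intersecting $o$ with the torso $\tau(t)$ (contracting/deleting appropriately, equivalently: replacing each maximal subpath of $o$ leaving $t$ and returning, or leaving $t$ forever, by the appropriate dummy edge) yields a cycle $o(t)$ of $M(\tau(t))$; let $C$ be the set of nodes $t$ with $o(t)\ne\varnothing$ (equivalently, the nodes met by $o$). One checks $C$ is a connected subtree, the compatibility condition holds by construction, and the ends of $C$ are precisely the ends through which $o$ passes, hence in $\Psi$. The cocircuit/bond statement is then obtained by applying the same argument to $\Tcal^*=\Tcal(G,T)^*$; here one uses that the torso of the plane/combinatorial dual behaves well, or more directly that $M(\tau(t))^*$ has as cocircuits exactly the bonds of $\tau(t)$, and that a $\Psi\ct$-bond of $G'$ decomposes along $T$ into minimal $\Psi\ct$-bounded cuts of the torsos — the boundedness condition again translating across the homeomorphism $\Omega(G)\cong\Omega(T)$.

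The main obstacle I expect is the converse decomposition, specifically checking that restricting a global $\Psi$-circle (or $\Psi\ct$-bond) of $G'$ to a single torso really produces a \emph{circuit} (resp. \emph{cocircuit}) of the finite matroid $M(\tau(t))$ rather than merely a dependent (resp. codependent) set, and that these local pieces are \emph{minimal}. For circuits this is the assertion that a topological circle, cut off at the width-2 separators around $t$ and with the external arcs collapsed to dummy edges, yields no proper sub-cycle inside $\tau(t)$; this follows because any such sub-cycle would lift back (via \autoref{rest_cir}-type reasoning, or directly by re-expanding dummy edges) to a proper closed sub-edge-set of $o$, contradicting that $o$ is a circle. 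The dual statement for bonds requires the dual bookkeeping and is where one must be careful that ``minimal $\Psi$-bounded cut of $G'$'' restricts to ``minimal cut of the torso'', using that the width-2 condition makes each separator a single dummy edge so that cuts split cleanly across the tree structure. Once the circuit case is done carefully, the cocircuit case should follow by duality of trees of matroids ($\Tcal(G,T)^*$) together with the fact that $G'$ restricted to a torso is self-dual enough for the bond/cycle interchange; I would organise the writeup so that the circuit direction is proved in full and the bond direction is deduced by this dualisation, minimising repetition.
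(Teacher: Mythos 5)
Your overall plan (decompose along $T$, match local circuits of the torsos with pieces of the global object, translate the end condition through $\Omega(G)\cong\Omega(T)$) is the right shape, and your treatment of the direction ``$\Psi$-circuit of $G'$ $\Rightarrow$ $\Psi$-pre-circuit of $\Tcal(G,T)$'' is essentially what the paper does. But there is a genuine gap in the other circuit direction. You argue that the underlying edge set $H$ of a $\Psi$-pre-circuit has all degrees $2$, is connected because $C$ is connected, and is therefore ``a single circle (finite, or a double ray closed up at one or two ends)''. This is false as a description of topological circles: a circle in $\widetilde{G'}_{\Psi}$ may traverse infinitely many ends and its underlying subgraph may be a disjoint union of infinitely many double rays --- this is exactly the wild-circle phenomenon the paper is built around (\autoref{eg:wildstruc}). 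Conversely, a connected subgraph in which every vertex has degree $2$ can be a double ray whose two tails converge to \emph{distinct} ends of $\Psi$; its closure is then an arc, not a circle, so ``all ends of $C$ lie in $\Psi$'' plus degree and connectivity conditions do not suffice. (Graph-theoretic connectivity of $H$ is itself not immediate, since rerouting around a dummy edge of $o(t)$ may recurse forever down the tree.) The paper avoids all of this by proving the bond directions \emph{first}, deducing via \autoref{2SumsO1} that $\underline{(C,o)}$ meets no finite bond of $G'$ exactly once, hence is a union of topological circuits by \autoref{is_scrawl}, and then forcing it to be a \emph{single} circle by exhibiting, for every pair $e,f\in\underline{(C,o)}$, a finite bond meeting it exactly in $\{e,f\}$ (built by concatenating torso bonds along the $T$-path from $e$ to $f$) and invoking \autoref{is_circuit}. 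Some such matroid-theoretic or genuinely topological argument is unavoidable here; the purely local degree count cannot deliver the homeomorphism with $S^1$.

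A secondary problem: you propose to obtain the bond statement from the circuit statement ``by duality''. The duality $\Tcal\mapsto\Tcal^*$ only converts $\Psi\ct$-cocircuits of $\Tcal$ into circuits of $\Tcal^*$; it does not convert $\Psi\ct$-bonds of $G'$ into circuits of any graph unless you pass to a plane dual, which is not available for general $G$. The paper proves both bond inclusions directly (connectedness of the two sides of the bond gives that each $b(t)$ is a cocircuit of $M(\tau(t))$; the converse is an induction along $T$ showing the two sides propagate consistently across the width-$2$ separators), and, as noted above, these bond directions are needed as input for the hard circuit direction, so they cannot be deferred.
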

\begin{proof}
First we show that every $\Psi^{\complement}$-bond of $G'$ is a $\Psi\ct$-cocircuit of $\Tcal(G, T)$. Let $\underline b$ be such a $\Psi^{\complement}$-bond. Let $X$ be the set of vertices of $G'$ on one side of $\underline b$. Let $D$ be the set of vertices $t$ of $T$ such that $\tau(t)$ contains a vertex from $X$ and a vertex not from $X$. Since both $X$ and $V(G') \setminus X$ are connected, $D$ is an intersection of 2 connected subsets of the tree $T$, and so is also connected. $D$ doesn't include a ray to any end in $\Psi$, because $\underline b$ is a $\Psi^{\complement}$-bond.

For each $t \in D$, let $b(t)$ be the $\tau(t)$-cut of edges of $\tau(t)$ with one endpoint in $X$ and the other not in $X$. Both sides of $b(t)$ are connected, since both $X$ and $V(G') \setminus X$ are, so $b(t)$ is a circuit of $(M(\tau(t))^*$. For any $t'$ adjacent to $t$ in $T$, let the shared dummy vertices of $\tau(t)$ and $\tau(t')$ be $v_e$ and $v_f$. If $t' \not \in D$ then $v_e$ and $v_f$ are on the same side of $\underline{b}$, so $e(tt') \not \in b(t)$. If $t' \in D$, then since both $X$ and $V(G') \setminus X$ are connected exactly one of $v_e$ or $v_f$ is in $X$, so $e(tt') \in b(t)$. Thus we obtain that $\underline b = \underline{(D, b)}$ is a $\Psi\ct$-cocircuit of $\Tcal(G, T)$.

Next, we show that every $\Psi$-circuit of $G'$ is a $\Psi$-circuit of $\Tcal(G, T)$. Let $\underline o$ be such a $\Psi$-circuit, and let $C$ be the set of vertices $t$ of $T$ such that $\underline o$ meets $\tau(t)$. For any $t \not \in C$, there can only be one component of $T - t$ meeting $C$, since the unions of these components are separated by $t$ in $G \setminus t$. Thus $C$ is a subtree of $T$. Any end in the closure of $C$ is also in the closure of $\underline o$ and so must lie in $\Psi$.

For any $t \in C$, let $o(t)$ be the union of $\underline o \cap E(\tau(t))$ with the set of all edges $ee'$ of $\tau(t)$ where $e$ and $e'$ are the two edges of $G$ with endpoints in both $t$ and $t'$ for some $t'$ adjacent to $t$ in $C$. Then every vertex of $\tau(t)$ has degree 0 or 2 with respect to $o(t)$: this is immediate for vertices in $t$, and vertices given by edges with one endpoint in $t$ and the other in $t'$ have degree 0 if $t' \not \in C$, 2 if $t' \in C$. To show that $o(t)$ is a circuit, it remains to show that it is connected. Suppose not, for a contradiction. Then there is a cut $b$ of $\tau(t)$ not meeting $o(t)$ but with edges of $o(t)$ on both sides, so there is such a cut that doesn't contain any dummy edges. This cut is a finite cut of $G$ not meeting $\underline o$ but with edges of $\underline o$ on both sides, which is the desired contradiction. Thus each $o(t)$ is a circuit in $M(\tau(t))$. Thus we obtain that $\underline o = \underline{(C, o)}$ is a $\Psi$-circuit of $\Tcal(G, 
T)$.

To show that every $\Psi\ct$-cocircuit $\underline{(D, b)}$ of $\Tcal(G, T)$ is a $\Psi^{\complement}$-bond of $G'$, we pick any edge $e_0 \in \underline{(D, b)}$ and let $X$ and $Y$ be the sets of vertices in the same connected components of $G' \setminus \underline{(D, b)}$ as the endvertices $x_0$, $y_0$ of $e_0$. If $X = Y$ then there is a finite circuit in $G'$ meeting $\underline{(D, b)}$ just once, which is impossible by the argument above and \autoref{2SumsO1}. Let $t_0$ be the vertex of $T$ with $e_0 \in \tau(t_0)$. We prove by induction on the distance of $t$ from $t_0$ that $X \cup Y$ includes all vertices of $\tau(t)$ and if $t \in D$ then $b(t)$ is the set of edges of $\tau(t)$ with one end in $X$ and the other in $Y$. This is immediate if $t = t_0$, since $b(t_0)$ is a bond of $\tau(t_0)$. For any other $t' \in V(T)$, let $t$ be the neighbour of $t'$ in the direction of $t_0$. If $t' \in D$ then also $t \in D$ and so of the two dummy vertices shared by $\tau(t)$ and $\tau(t')$ one is in $X$ and 
the other in $Y$, giving the result since $b(t')$ is a bond of $\tau(t')$. If $t' \not \in D$ then the two dummy vertices shared by $\tau(t)$ and $\tau(t')$ are either both in $X$ or both in $Y$, so either all vertices of $\tau(t')$ are in $X$ or all of them are in $Y$. This shows that $\underline{(D, b)}$ is the bond of $G'$ consisting of all edges with one end in $X$ and the other in $Y$. It is a $\Psi^{\complement}$-bond since every end in its closure is in the closure of $D$ and so is in $\Psi^{\complement}$. 

Finally, we show that every $\Psi$-circuit of $\Tcal(G, T)$ is a $\Psi$-circuit of $G'$. Consider such a circuit $\underline{(C, o)}$. By the above argument and \autoref{2SumsO1} it never meets a finite bond of $G'$ just once and so, by \autoref{is_scrawl} applied to the topological cycle matroid of $G'$ it is a union of topological circuits. To show that it is the edge set of a single topological circle, it is enough by \autoref{is_circuit} to show that for any $e, f \in \underline{(C, o)}$ there is a finite bond $b$ of $G$ with $b \cap \underline{(C, o)} = \{e, f\}$. Consider the unique finite path $t_1, ... t_n$ in $T$ with $e \in E(\tau(t_1))$ and $f \in E(\tau(t_n))$. Let $e_0 = e, e_n = f$ and for $0 < i < n$ let $e_i = e(t_it_{i+1})$. For each $i \leq n$ we let $b_i$ be any bond of $\tau(t_i)$ with $b_i \cap o(t_i) = \{e_{i-1}, e_i\}$. Without loss of generality we may choose the $b_i$ to contain no dummy edges other than the $e_i$. Then $\bigcup_{i = 1}^n b_i \setminus E$ is the desired finite bond 
of $G$. Thus $\underline{(C, o)}$ is a topological circuit of $G$. It is a $\Psi$-circuit since every end in its closure is in the closure of $C$ and so is in $\Psi$.
\end{proof}

\section{Determinacy and $(O2)$ for trees of matroids of overlap 1}\label{sec:2sums}

In \autoref{orth}, we saw that $(O2)$ corresponds, for $\Psi$-systems, to a principle implying path-connectedness from connectedness. Here we will show that, for the systems arising from trees of matroids, $(O2)$ has close links with determinacy of games. We begin by analysing an illuminating example.

Let $\Tcal^{game}$ be the tree of matroids given by $(T_2, M^{game})$, as follows: $T_2$ is the infinite rooted binary tree (to fix notation, we take the vertices of $T_2$ to be the finite sequences from $\{0, 1\}$, with $s$ adjacent to each of $s0$ and $s1$ for any such sequence $s$, and we call the empty sequence $\emptyset$). For any node $s$ of $T_2$, we take the ground set of $M^{game}(s)$ to be $\{d_s, d_{s0}, d_{s1}\}$ and we take $M^{game}(s)$ to be uniform, of rank 1 if the length of $s$ is even and of rank 2 if the length of $s$ is odd. This tree of matroids has overlap 1, with all edges except $d_{\emptyset}$ being dummy edges. The ground set $E^{game}$ of $\Tcal^{game}$ is simply $\{d_{\emptyset}\}$. The structure of this tree of matroids is displayed in \autoref{fig:treeofmats}.

\begin{figure}
\begin{center}
 \includegraphics[height=6cm]{./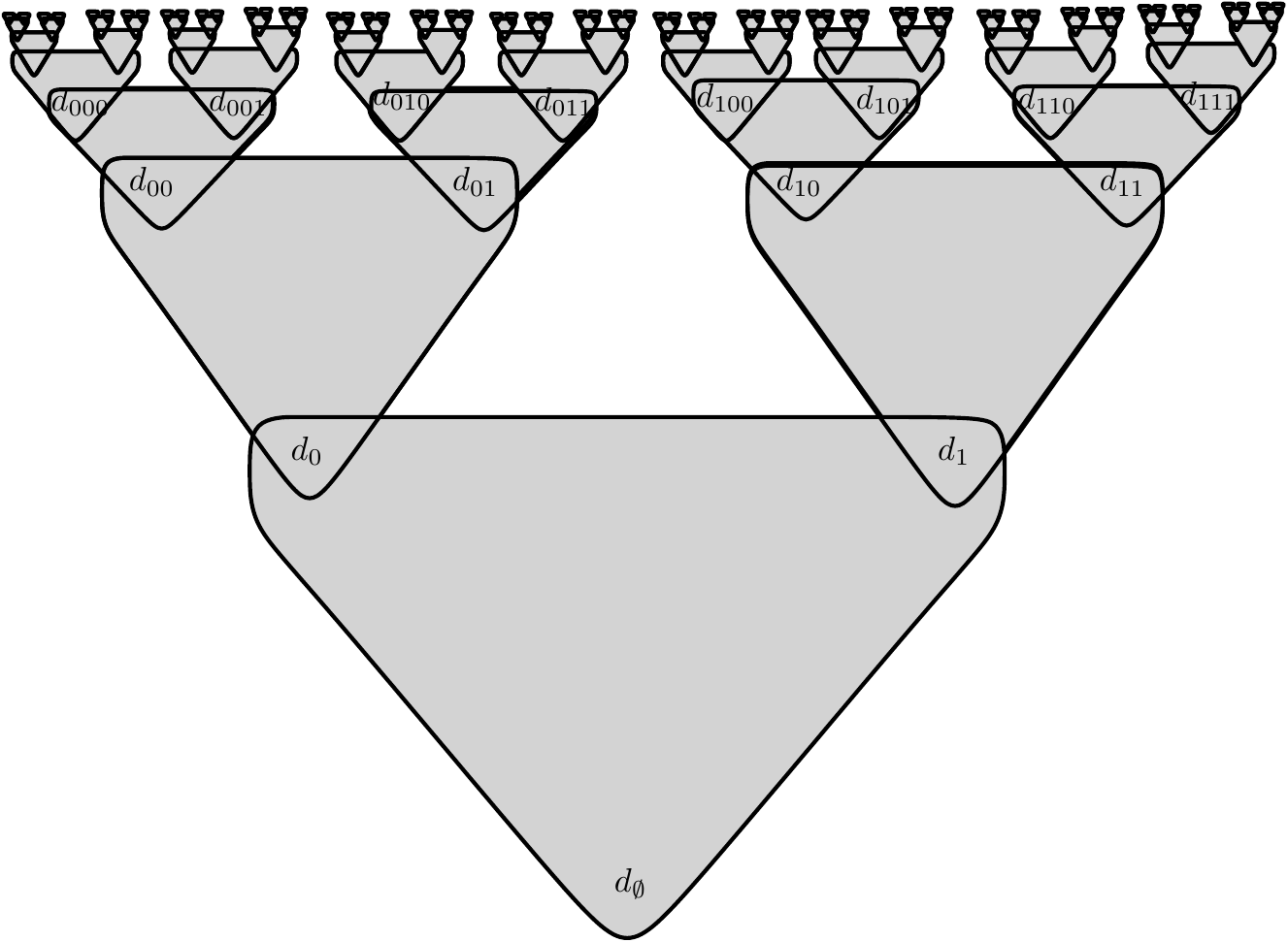}
\end{center} 
\caption{The tree of matroids $\Tcal^{game}$}	\label{fig:treeofmats}
\end{figure}

Although the ground set has only 1 element, so that the sets of $\Psi$-circuits of $\Tcal$ or $\Tcal^*$ must be very simple for any $\Psi$, our analysis of $(O2)$ will still be complex because of the way in which these sets arise from $\Tcal$. Any instance of $(O2)$ for trees of matroids is reducible to one on which the ground set has only one element, since $(O2)$ holds for the partition $E = \{e\} \dot \cup P_{Co} \dot \cup P_{De}$ of the ground set of $\Tcal$ if and only if it holds for the partition $\{e\} = \{e\} \dot \cup \varnothing \dot \cup \varnothing$ of the ground set of $\Tcal/P_{Co}\backslash P_{De}$. However, as this section will illustrate, this reduction does not diminish the complexity of the problem.

Let's fix some set $\Psi \se \{0, 1\}^{\Nbb}$ and examine the meaning of $(O2)$ applied to the $\Psi$-circuits and $\Psi\ct$-cocircuits of $\Tcal^{game}$, with the partition $E^{game} = \{d_{\emptyset}\} \dot \cup \varnothing \dot \cup \varnothing$. If $(O2)$ is true, then one of the following 2 things happens:
\begin{enumerate}
\item There is a $\Psi$-circuit through $d_{\emptyset}$.
\item There is a $\Psi\ct$-cocircuit through $d_{\emptyset}$.
\end{enumerate}

Let's think first of all about $(1)$. This says that we can find a $\Psi$-precircuit $(C, o)$ with $\emptyset \in C$, $d_{\emptyset} \in o(\emptyset)$.
The shape of $C$ is now quite constrained. For any $s \in C$ we have $d_s \in o(s)$. If $s$ has even length, then $o(s)$ can only be $\{d_s, d_{s0}\}$ or $\{d_s, d_{s1}\}$. On the other hand, if $s$ has odd length then $o(s)$ can only be $\{d_s, d_{s0}, d_{s1}\}$. Thus $C$ is a set of finite sequences from $\{0, 1\}$ with the following properties:
\begin{itemize}
\itum $\emptyset \in C$.
\itum $C$ is closed under taking initial segments.
\itum For any $s \in C$ of even length, exactly one of $s0$ and $s1$ is in $C$.
\itum For any $s \in C$ of odd length, both of $s0$ and $s1$ are in $C$.
\itum For any $s \in \{0, 1\}^{\Nbb}$ such that all finite initial segments of $s$ are in $C$, $s \in \Psi$.
\end{itemize}  
These properties collectively state that $C$ gives a winning strategy for the first player in the game $\Gcal(\Psi)$ from the introduction, with $\Psi$ considered as a subset of $\{0,1\}^\Nbb$: the first player should play so as to ensure that the finite sequence generated so far always remains in $s$. Conversely, given a set $C$ with these properties, we can define a function $o$ on $C$ sending $s$ to $\{d_s, d_{s0}\}$ if $s$ has even length and $s0 \in C$, to $\{d_s, d_{s1}\}$ if $s$ has even length and $s1 \in C$, and to $\{d_s, d_{s0}, s_{s1}\}$ if $s$ has odd length. Then $\underline{(C, o)}$ is a $\Psi$-circuit of $\Tcal^{game}$ with $\underline{(C, o)}$ witnessing $(1)$.

What this shows is that $(1)$ is equivalent to the statement that the first player has a winning strategy in the game $\Gcal(\Psi)$. A similar argument shows that $(2)$ is equivalent to the statement that the second player has a winning strategy in that game. Thus in this case $(O2)$ is equivalent to determinacy of the game $\Gcal(\Psi)$. By introducing some slightly more complex games, we will now show that for any tree $\Tcal$ of matroids of overlap 1 and any set $\Psi$ of ends of $\Tcal$ there is a collection of games such that $(\Tcal, \Psi)$ induces a matroid if and only if all of the games in that collection are determined.

We temporarily fix such a $\Tcal$ and $\Psi$, together with a partition $E = \{e\} \dot \cup P_{Co} \dot \cup P_{De}$ of the ground set of $\Tcal$. Let $t_0$ be the node of $T$ such that $e \in E(t_0)$.

\begin{dfn}
The {\em circuit game} $\Gcal = \Gcal(\Tcal, \Psi, P_{Co}, P_{De})$ is played between two players, called Sarah and Colin\footnote{The name `Sarah' has been chosen because it sounds similar to `circuit', and `Colin' because it may be pronounced co-lin, to sound a bit like `cocircuit'}, as follows:

Play alternates between the players, with Sarah making the first move. At any point in the game there is a {\em current node} $t_c \in V(t)$, and a {\em current edge} $e_c \in E(t_c)$. Initially we set $t_c = t_0$ and $e_c=e$ to be the node of $T$ with $e_c \in E(t_c)$. For any $n$ the $(2n-1)$\textsuperscript{st} move is made by Sarah: she must play a circuit $o_n$ of $M(t_c)$ such that $e_c \in o_n$ but $o_n \cap P_{De} = \varnothing$. Then the $2n$\textsuperscript{th} move is made by Colin: he must play a node $t_n$ adjacent to $t_c$ and further from $t_0$ than $t_c$ is, such that $e(t_ct_n) \in o_n$. After he does this, the current node is updated to $t_n$, and the current edge to $e(t_{n-1}t_n)$. If play continues forever, then Sarah wins if the end $\omega$ of $T$ containing $(t_n | n \in \Nbb)$ is in $\Psi$, and Colin wins if $\omega \in \Psi^{\complement}$.

The {\em cocircuit game} $\Gcal^* = \Gcal^*(\Tcal, \Psi, P_{Co}, P_{De})$ is the game like the dual circuit game $\Gcal(\Tcal^*, \Psi\ct, P_{De}, P_{Co})$, but with the roles of Sarah and Colin reversed. We will also use a different notation for the cocircuit game, putting stars on the notation for the circuit game. Thus for example the current edge is denoted $e_c^*$ and Colin's $n$\textsuperscript{th} move is denoted $o_n^*$.
\end{dfn}

\begin{lem}
Sarah has a winning strategy in $\Gcal$ if and only if there is a $\Psi$-circuit $\underline{(C, o)}$ of $\Tcal$ with $e \in \underline{(C, o)} \subseteq \{e\} \dot\cup P_{Co}$.
\end{lem}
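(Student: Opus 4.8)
The plan is to establish both directions by translating a winning strategy for Sarah into a $\Psi$-pre-circuit and back. Recall that $t_0$ is the node with $e \in E(t_0)$, and that in $\Gcal$ Sarah plays circuits $o_n$ of the current-node matroid through the current edge avoiding $P_{De}$, while Colin picks the next node along a dummy edge lying in $o_n$. A position of the game after Colin's last move is a finite path $t_0 t_1 \cdots t_k$ in $T$ together with the circuits $o_1, \dots, o_k$ Sarah has played, which I will think of as a partial pre-circuit: a subtree of the part of $T$ explored so far, with $o_i$ attached to $t_{i-1}$. The point is that Colin's freedom to choose any dummy edge $e(t_ct_n) \in o_n$ with $t_n$ further from $t_0$ is exactly what forces a strategy for Sarah to produce a coherent function $o$ on a whole subtree.

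First I would prove the ``only if'' direction. Suppose $\sigma$ is a winning strategy for Sarah. I build a $\Psi$-pre-circuit $(C, o)$ as follows. Run $\sigma$ against \emph{all} possible Colin-plays simultaneously: let $C$ be the set of nodes $t$ of $T$ reachable (as a current node) in some play consistent with $\sigma$, and for such a $t$ reached via the path $t_0 \cdots t = t_k$ let $o(t)$ be the circuit $\sigma$ prescribes at that position. I must check this is well-defined (the path from $t_0$ to any $t \in C$ is unique in the tree $T$, so the position is determined by $t$ together with $\sigma$, \emph{provided} $\sigma$'s earlier outputs along that path are themselves determined — which follows by induction on $\mathrm{dist}(t_0,t)$, since at each step the current node and current edge are fixed by the path and $\sigma$ is deterministic). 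Then: $\emptyset$-style root condition holds since $t_0 \in C$ with $e \in o(t_0)$ (Sarah's first move must contain $e_c = e$); for $t \in C$ and $t'$ adjacent to $t$ in $T$, if $t'$ is further from $t_0$ then $t' \in C$ iff $e(tt') \in o(t)$ — the ``if'' because Colin is allowed to move there, the ``only if'' because Colin is only allowed to move along edges of $o(t)$; if $t'$ is closer to $t_0$ then $e(tt') = e_c$ is the current edge at the position reaching $t$, hence lies in $o(t)$ by the rule on Sarah's moves, and $t' \in C$ trivially. So the pre-circuit condition holds at every node. Every end of $C$ arises from an infinite play consistent with $\sigma$, hence lies in $\Psi$ because $\sigma$ is winning. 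Thus $(C,o)$ is a $\Psi$-pre-circuit. Since every $o(t)$ avoids $P_{De}$, the underlying set $\underline{(C,o)}$ is contained in $E \sm P_{De} = \{e\} \dot\cup P_{Co}$, and it contains $e$ because $e \in o(t_0)$ and $e \in E = E(\Tcal)$ is not a dummy edge. Hence $\underline{(C,o)}$ is a $\Psi$-circuit with the required property.

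Conversely, given a $\Psi$-circuit $\underline{(C, o)}$ with $e \in \underline{(C, o)} \se \{e\} \dot\cup P_{Co}$, witnessed by a pre-circuit $(C,o)$, I claim $e \in o(t_0)$: indeed $e \in E(t_0)$ and $e \notin E(tt')$ for any edge, so if $e$ lies in $o(t)$ for some $t$ then $t = t_0$; and $e \in \underline{(C,o)}$ forces $e \in o(t)$ for some $t \in C$ (and $t_0 \in C$). Also, since $\underline{(C,o)} \cap P_{De} = \varnothing$, no $o(t)$ contains an edge of $P_{De}$ — here I use that dummy edges are in neither $P_{Co}$ nor $P_{De}$, and every non-dummy edge of $o(t)$ lies in $\underline{(C,o)}$, hence avoids $P_{De}$. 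Now Sarah plays the obvious copying strategy: maintain the invariant that the current node $t_c \in C$ and the current edge $e_c \in o(t_c)$, and always play $o_n := o(t_c)$. This is legal: $e_c \in o_n$ by the invariant and $o_n \cap P_{De} = \varnothing$ by the above. After Colin picks $t_n$ adjacent to $t_c$, further from $t_0$, with $e(t_ct_n) \in o_n = o(t_c)$, the pre-circuit condition gives $t_n \in C$, and the new current edge $e(t_ct_n)$ lies in $o(t_n)$ — again by the pre-circuit condition applied at $t_n$ to its neighbour $t_c$ (which is closer to $t_0$), since $e(t_ct_n) \in o(t_c)$ forces $t_c \in C$ hence the biconditional forces $e(t_{c}t_n) \in o(t_n)$. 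So the invariant is restored. If play is infinite, the resulting ray $(t_n)$ lies in $C$, so its end is in $\Psi$ (all ends of $C$ are), and Sarah wins. Hence Sarah has a winning strategy.

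The main obstacle I anticipate is the bookkeeping in the ``only if'' direction showing that $o$ is genuinely well-defined on $C$ — i.e.\ that the value $\sigma$ assigns at a node $t \in C$ does not depend on \emph{which} consistent play reached $t$. This is where one really uses that $T$ is a tree (unique $t_0$–$t$ path) and that the game's state (current node and current edge) after Colin's $k$-th move depends only on the sequence of nodes played, combined with determinism of $\sigma$; the induction on distance to $t_0$ closes the loop, but it is the one place the argument is more than a direct transcription. Everything else is a routine unwinding of the definitions of the circuit game and of $\Psi$-pre-circuit.
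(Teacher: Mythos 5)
Your proof is correct and follows essentially the same route as the paper's: the copying strategy $o_n := o(t_c)$ for the "if" direction, and for the "only if" direction the construction of $(C,o)$ by collecting all current nodes reachable in plays consistent with $\sigma$, with well-definedness coming from the uniqueness of the $t_0$--$t$ path in $T$. The paper dismisses the verification of the pre-circuit conditions as "immediate"; you have simply written out those checks explicitly.
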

\begin{proof}
Suppose first that there is such a $\Psi$-circuit $\underline{(C, o)}$. Then Sarah can win in $\Gcal$ by always choosing $o(t_c)$ when it is her turn to play.

Suppose for the converse that Sarah has a winning strategy $\sigma$ in $\Gcal$. Let $C$ be the set of nodes $t$ of $T$ such that there is some finite play according to $\sigma$ consisting of $2n + 1$ moves for some $n$ after which $t$ is the current node. Then this play is unique, since Sarah's moves are determined by $\sigma$, and Colin's must be the sequence of vertices along the finite path in $T$ from $t_0$ to $t$. We set $o(t)$ to be the final move $o_n$ made by Sarah in that play. It is immediate that $(C, o)$ is a $\Psi$-pre-circuit of $\Tcal$ with the desired properties.
\end{proof}

\begin{cor}
Colin has a winning strategy in $\Gcal^*$ if and only if there is a $\Psi\ct$-cocircuit $\underline{(C, o)}$ of $\Tcal$ with $e \in \underline{(C, o)} \subseteq \{e\} \dot\cup P_{De}$.\qed
\end{cor}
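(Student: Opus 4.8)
The plan is to deduce this directly from the preceding Lemma by duality, with no new work beyond bookkeeping. Recall that by definition the cocircuit game $\Gcal^* = \Gcal^*(\Tcal, \Psi, P_{Co}, P_{De})$ is precisely the dual circuit game $\Gcal(\Tcal^*, \Psi\ct, P_{De}, P_{Co})$ with the roles of Sarah and Colin interchanged. Hence Colin has a winning strategy in $\Gcal^*$ if and only if Sarah has a winning strategy in $\Gcal(\Tcal^*, \Psi\ct, P_{De}, P_{Co})$.

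Next I would apply the preceding Lemma with $\Tcal$ replaced by $\Tcal^*$, with $\Psi$ replaced by $\Psi\ct$, and with the partition $E = \{e\}\dot\cup P_{De}\dot\cup P_{Co}$ in place of $E = \{e\}\dot\cup P_{Co}\dot\cup P_{De}$ (so that the set playing the role of $P_{Co}$ in the Lemma is here $P_{De}$). This is legitimate: $\Tcal^*$ is again a tree of matroids of overlap $1$ on the same ground set $E(\Tcal^*) = E(\Tcal)$, the node $t_0$ with $e \in E(t_0)$ is unchanged, and $\Psi\ct$ is again a set of ends of $T$. The Lemma then tells us that Sarah has a winning strategy in $\Gcal(\Tcal^*, \Psi\ct, P_{De}, P_{Co})$ if and only if there is a $\Psi\ct$-circuit $\underline{(C,o)}$ of $\Tcal^*$ with $e \in \underline{(C,o)} \se \{e\}\dot\cup P_{De}$. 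Since a $\Psi\ct$-circuit of $\Tcal^*$ is by definition exactly a $\Psi\ct$-cocircuit of $\Tcal$, chaining the two equivalences yields the statement.

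There is no real obstacle here; the only things that need checking are the bookkeeping points — that the duality and the player-swap built into the definition of $\Gcal^*$ line up correctly with the hypotheses of the Lemma (in particular that the winning condition for Colin in $\Gcal^*$ becomes the winning condition for Sarah in $\Gcal(\Tcal^*,\Psi\ct,P_{De},P_{Co})$), and that $E(\Tcal^*)=E(\Tcal)$ together with $\Tcal^{**}=\Tcal$ and overlap $1$ being preserved under duality, so that the same data $(e, P_{Co}, P_{De})$ is admissible for both games. Each of these is immediate from the earlier definitions of $\Tcal^*$, of $\Psi\ct$-cocircuits, and of the cocircuit game, which is why the statement is recorded as a corollary rather than given a separate proof.
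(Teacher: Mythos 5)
Your proposal is correct and is exactly the argument the paper intends: the corollary is stated with no proof precisely because it follows from the preceding Lemma by applying it to $\Tcal^*$, $\Psi\ct$ and the swapped partition, using the definition of $\Gcal^*$ as the role-reversed game $\Gcal(\Tcal^*,\Psi\ct,P_{De},P_{Co})$ and the convention that $\Psi\ct$-cocircuits of $\Tcal$ are $\Psi\ct$-circuits of $\Tcal^*$. Your bookkeeping checks (same ground set, overlap $1$ preserved under duality, same node $t_0$) are all that is needed.
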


In order to relate $(O2)$ to determinacy of $\Gcal$, we need to show that $\Gcal$ and $\Gcal^*$ are closely related games.

\begin{lem}\label{2sumtransfer}
Colin has a winning strategy in $\Gcal$ if and only if he has one in $\Gcal^*$.
\end{lem}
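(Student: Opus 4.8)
The plan is to exhibit an explicit correspondence between strategies for Colin in $\Gcal$ and strategies for Colin in $\Gcal^*$, both of which can be described concretely in terms of $\Tcal$. Recall that in $\Gcal^* = \Gcal^*(\Tcal, \Psi, P_{Co}, P_{De})$ the positions consist of a current node and current edge, Sarah moves first by choosing a \emph{node} adjacent to the current one (as this is Colin's role in the dual circuit game, with roles reversed), and then Colin answers by choosing a \emph{cocircuit} of $M(t_c^*)$ through the current edge that avoids $P_{Co}$. So in both games Colin is the player who, at each of his turns, must choose a \emph{node} adjacent to the current node and further from $t_0$, lying on the relevant set played by his opponent. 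The asymmetry between $\Gcal$ and $\Gcal^*$ is therefore only in what the \emph{other} player (Sarah in $\Gcal$, Sarah-in-$\Gcal^*$) does: in $\Gcal$ she plays circuits of $M(t_c)$ avoiding $P_{De}$, while in $\Gcal^*$ she plays nodes and it is Colin who implicitly must respond within a cocircuit structure.

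First I would set things up symmetrically: observe that a play of $\Gcal$ of length $2n+1$ determines (and is determined by) the path $t_0 t_1 \dots t_n$ together with the circuits $o_1, \dots, o_n$ with $o_i$ a circuit of $M(t_{i-1})$ through $e(t_{i-2}t_{i-1})$ (with $e(t_{-1}t_0) := e$) and $e(t_{i-1}t_i) \in o_i$, $o_i \cap P_{De} = \varnothing$; symmetrically for $\Gcal^*$ with cocircuits $b_1^*, \dots, b_n^*$. Next, given a winning strategy $\sigma$ for Colin in $\Gcal$, I want to produce a winning strategy $\tau$ for Colin in $\Gcal^*$. The idea is that Colin, playing $\Gcal^*$, will privately simulate a play of $\Gcal$: whenever it is his turn in $\Gcal^*$ to pick the next node $t_n$, he needs a circuit $o_n$ of $M(t_{c})$ through $e_c$ avoiding $P_{De}$ so that $\sigma$ can tell him which node to go to. The key local fact that makes this possible is $(O2)$ \emph{at each finite matroid} $M(t_c)$ — more precisely, since $M(t_c)$ is finite, for the current edge $e_c$ and the partition of $E(t_c)$ induced by which neighbouring directions have already been "used up" or forced by $P_{Co}, P_{De}$, either there is a circuit through $e_c$ avoiding $P_{De}$ (and then Colin can feed it to $\sigma$), or there is a cocircuit through $e_c$ avoiding $P_{Co}$. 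In the first subcase he continues the simulation; in the second subcase he has found, locally, exactly the kind of cocircuit $b_n^*$ he is required to produce as a move of $\Gcal^*$ — and crucially this means the simulated $\Gcal$-play must actually terminate, i.e. the position where Sarah in $\Gcal$ has no legal move, which by the rules means... wait — but Sarah losing $\Gcal$ by being stuck is not how $\Gcal$ is scored. I would therefore instead argue contrapositively using \autoref{cireli_partitioning} / the local validity of $(O2)$ in finite matroids to show the two games never "diverge": any position reachable in one game lifts to a position in the other with the same current node and edge, so Colin can transport $\sigma$ across move-by-move, and if play continues forever the end produced is the same in both games, so $\sigma$ winning ($\omega \in \Psi^{\complement}$... but in $\Gcal^*$ Colin wins when $\omega \in \Psi$) — here one must be careful, since the winning conditions for Colin in $\Gcal$ and $\Gcal^*$ are swapped.

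So the cleaner route, which I would actually pursue, is: a play of $\Gcal^*$ that lasts forever produces the \emph{same} combinatorial object (an infinite path from $t_0$, together with local circuits and cocircuits along it) as a play of $\Gcal$, because at each step the "used direction" bookkeeping forces $o_n$ and $b_n^*$ to meet in $\{e_{n-1}, e_n\}$ exactly as in the proof of \autoref{2SumsO1}; and Colin wins $\Gcal^*$ iff $\omega \in \Psi$ while Colin wins $\Gcal$ iff $\omega \in \Psi^{\complement}$. That sign mismatch is resolved by the other direction: I would show Colin has a winning strategy in $\Gcal$ iff \emph{Sarah} does not, which is false in general (the game need not be determined), so this whole approach needs the \emph{transfer} to be done without assuming determinacy. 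The honest statement of the plan is: define, for each Colin-strategy in $\Gcal$, a Colin-strategy in $\Gcal^*$ by having Colin in $\Gcal^*$ internally run a copy of $\Gcal$ in which \emph{he plays Sarah's side trivially-forced}: at his move he uses $(O2)$ applied to $M(t_c)$ — either he can honestly move as in $\Gcal$ (staying inside some circuit avoiding $P_{De}$) or $(O2)$ hands him the required cocircuit avoiding $P_{Co}$, which is a legal $\Gcal^*$ move that \emph{wins immediately or forces play into a node from which, by the structure of the game tree, the end condition is unchanged}. Then do the symmetric construction going back. The main obstacle, and the place I expect all the real work to be, is exactly this step: verifying that the local use of $(O2)$ at a finite matroid $M(t_c)$ is consistent with the global bookkeeping of which neighbouring dummy edges have been committed to $P_{Co}$ versus $P_{De}$ versus still-free — i.e. checking that the partition of $E(t_c)$ one must feed into $(O2)$ is genuinely a partition of the right form (every dummy edge toward an already-visited neighbour is treated correctly, and edges toward not-yet-visited neighbours go into the "free" class in a way that both games agree on), and that the resulting circuit/cocircuit is a legal move in the other game with the current node and current edge preserved. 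Once that local-to-global consistency is nailed down, the infinite plays in $\Gcal$ and $\Gcal^*$ correspond to the same infinite path of $T$, the winning conditions match up under the strategy transfer, and the equivalence follows. By symmetry of the construction ($\Tcal \leftrightarrow \Tcal^*$, $P_{Co} \leftrightarrow P_{De}$, $\Psi \leftrightarrow \Psi^{\complement}$) one direction gives the other, completing the proof.
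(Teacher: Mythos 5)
Your proposal does not close the argument, and it stalls exactly where the paper's proof does its real work. First, your bookkeeping of $\Gcal^*$ is inconsistent: in $\Gcal^*$ Colin plays the cocircuits and Sarah picks the nodes (you state this and then immediately assert the opposite), and Colin's winning condition in \emph{both} games is $\omega \in \Psi\ct$ --- the cocircuit game is the circuit game of $(\Tcal^*, \Psi\ct, P_{De}, P_{Co})$ with the players swapped, so Colin inherits the first player's winning set, which is the $\Psi$-parameter of that dual game, namely $\Psi\ct$. The ``sign mismatch'' you try to resolve is not there, but worrying about it derails your plan; and your closing appeal to symmetry cannot supply the second implication, because the two players have structurally different roles (one chooses circuits or cocircuits, the other chooses nodes), so the two directions genuinely require different arguments --- and the paper gives two different ones.

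Second, and more seriously, your local dichotomy from $(O2)$ --- ``either there is a circuit through $e_c$ avoiding $P_{De}$, or there is a cocircuit through $e_c$ avoiding $P_{Co}$'' --- is too weak to drive the simulation. If a circuit avoiding $P_{De}$ exists, Sarah has possibly many legal moves in $\Gcal$ and $\sigma$ prescribes a different node for each; this does not hand Colin a legal cocircuit move in $\Gcal^*$, and an arbitrary cocircuit would let Sarah answer with a node corresponding to no simulated $\Gcal$-position, breaking the simulation. The missing idea is to let $R$ be the set of \emph{all} responses $\sigma$ prescribes to Sarah's legal circuit moves at the current node; then $R \cup P_{De}$ meets every circuit of $M(t_c)$ through $e_c$, so $(O2)$ for the finite matroid $M(t_c)$ yields a cocircuit $o_n^*$ with $e_c \in o_n^* \se \{e_c\} \cup R \cup P_{De}$. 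This is a legal move avoiding $P_{Co}$, and it guarantees that whatever node Sarah then picks lies in $R$, i.e.\ is $\sigma$'s response to some circuit that Colin can pretend Sarah played in the auxiliary game. The converse direction rests on a different fact: when Sarah plays $o_n$ in $\Gcal$ and Colin holds an imagined cocircuit $o_n^*$, these cannot meet in only the current edge, so they share a second (necessarily dummy) edge, which tells Colin which node to play. Neither of these two key steps appears in your proposal.
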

\begin{proof}
For the `if' part, suppose that he has a winning strategy $\sigma^*$ in $\Gcal^*$. Then he can win in $\Gcal$ by playing as follows:

He should imagine an auxilliary play in the game $\Gcal^*$, in which he plays according to $\sigma^*$, and for which he should ensure that at any point the current edge and node agree with those in $\Gcal$. When Sarah makes the move $o_n$, he should pick some edge in $o_n \cap o_n^*$ other than $e_n$ (there is such an edge by \autoref{is_scrawl}). This edge $t$ can then only be a dummy edge $e(t_ct)$ for some $t$ adjacent to $t_c$. He should play $t$ as $t_n$ in $\Gcal$ and imagine that Sarah also plays $t$ as $t_n^*$ in $\Gcal^*$. If play continues forever, then the end $\omega$ containing $(t_n | n \in \Nbb)$ is in $\Psi^{\complement}$ since $\sigma^*$ is winning.

For the `only if' part, suppose that he has a winning strategy $\sigma$ in $\Gcal$. Then he can win in $\Gcal^*$ by playing as follows:

He should imagine an auxilliary play in the game $\Gcal$, in which he plays according to $\sigma$, and for which he should ensure that at any point the current edge and node agree with those in $\Gcal^*$. When he has to make a move $o_n^*$, he should consider the set $R$ of responses prescribed by $\sigma$ to legal moves $o_n$ that Sarah could make in $\Gcal$. Then $R \cup P_{De}$ meets every circuit $o$ of $M(t_c)$ with $e_c \in o$. Thus since $(O2)$ holds for the matroid $M(t_c)$ there is some cocircuit $o_n^*$ of that matroid with $e_c \in o_n^* \subseteq \{e_c\} \cup R \cup P_{De}$, and Colin should play such a cocircuit. If Sarah responds by playing $t_n^*$, then we must have $t_n^* \in R$ and so there is some legal move $o_n$ in $\Gcal$ to which $\sigma$ prescribes the response $t_n^*$. Then Colin should imagine that, in the play of $\Gcal$, Sarah plays $o_n$ and he responds by playing $t_n^*$ as $t_n$. If play continues forever, then the end $\omega$ containing $(t_n^* | n \in \Nbb)$ is in $\
\Psi^{\complement}$ since $\sigma$ is winning.
\end{proof}

\begin{cor}\label{2sumO2}
$(O2)$ holds for the partition $E = \{e\} \dot\cup P_{Co} \dot\cup P_{De}$ of the groundset of $\Tcal$ if and only if $\Gcal(\Tcal, \Psi, P_{Co}, P_{De})$ is determined.\qed
\end{cor}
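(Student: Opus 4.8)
The plan is simply to assemble the three preceding results. First I would unwind what $(O2)$ asserts for the partition $E = \{e\} \dot\cup P_{Co} \dot\cup P_{De}$ of the ground set of $\Tcal$, recalling that here the putative circuit set is $\Ccal = \Ccal(\Tcal, \Psi)$ (the $\Psi$-circuits of $\Tcal$) and the putative cocircuit set is $\Dcal = \Ccal(\Tcal^*, \Psi\ct)$ (the $\Psi\ct$-cocircuits of $\Tcal$). For this partition, $(O2)$ holds exactly when either there is a $\Psi$-circuit through $e$ contained in $\{e\} \dot\cup P_{Co}$, or there is a $\Psi\ct$-cocircuit through $e$ contained in $\{e\} \dot\cup P_{De}$.

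Next I would translate each disjunct into a statement about winning strategies in $\Gcal$. By the lemma characterising Sarah's winning strategies in $\Gcal$, the first disjunct --- existence of a $\Psi$-circuit $\underline{(C,o)}$ of $\Tcal$ with $e \in \underline{(C,o)} \se \{e\} \dot\cup P_{Co}$ --- is equivalent to Sarah having a winning strategy in $\Gcal$. By the corollary characterising Colin's winning strategies in $\Gcal^*$, the second disjunct is equivalent to Colin having a winning strategy in $\Gcal^*$, and by \autoref{2sumtransfer} this is in turn equivalent to Colin having a winning strategy in $\Gcal$.

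Putting these together, $(O2)$ for the partition $E = \{e\} \dot\cup P_{Co} \dot\cup P_{De}$ is equivalent to the disjunction ``Sarah has a winning strategy in $\Gcal$, or Colin has a winning strategy in $\Gcal$'', which is precisely the assertion that the two-player game $\Gcal = \Gcal(\Tcal, \Psi, P_{Co}, P_{De})$ is determined. Since all the substantial work has already been carried out in the cited lemma, the cited corollary, and especially \autoref{2sumtransfer}, no real obstacle remains: the only point needing a moment's care is matching the side conditions in the game (Sarah's moves $o_n$ with $o_n \cap P_{De} = \varnothing$, the dual condition in $\Gcal^*$, and the requirement that Colin move strictly away from $t_0$) against the containments $\{e\} \dot\cup P_{Co}$ and $\{e\} \dot\cup P_{De}$ appearing in the circuit/cocircuit characterisations --- but this matching is exactly what those two results already record, so the proof is a direct chain of ``if and only if''s.
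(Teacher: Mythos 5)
Your proof is correct and is exactly the argument the paper intends: the corollary is stated with a \qed precisely because it follows by chaining the lemma on Sarah's winning strategies, the corollary on Colin's winning strategies in $\Gcal^*$, and \autoref{2sumtransfer}, which is what you do. No further comment is needed.
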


Since any game $\Gcal(\Psi)$ with  $\Psi\se A^\Nbb$ and $A$ countable can be coded by such a game with $A=\{0,1\}$, we also get:

\begin{cor}
The Axiom of Determinacy is equivalent to the statement that every set $\Psi$ of ends of every tree of finite matroids of overlap 1 induces a matroid. If the Axiom of Choice holds then there is a tree of finite matroids of overlap 1 and a set $\Psi$ of ends of that tree that doesn't induce a matroid. \qed
\end{cor}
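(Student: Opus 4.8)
The plan is to route everything through the orthogonality-axiom characterisation of countable matroids (\autoref{ortho_axioms+}, \autoref{new_axioms}) together with the translation of $(O2)$ into determinacy of circuit games (\autoref{2sumO2}), and, for the final sentence, to invoke the classical fact that $(AC)$ produces a non-determined subset of $\{0,1\}^{\Nbb}$. Fix a tree $\Tcal=(T,M)$ of finite matroids of overlap $1$ and a set $\Psi$ of ends of $T$. For any node $t$, distinct neighbours of $t$ contribute distinct dummy edges, all lying in the finite set $E(t)$; so $T$ is locally finite, hence countable, and $E(\Tcal)$, being a countable union of finite sets, is countable. Thus \autoref{ortho_axioms+} and \autoref{new_axioms} are available for the pair $\bigl(\Ccal(\Tcal,\Psi),\Ccal(\Tcal^*,\Psi\ct)\bigr)$.

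The core of the argument is the equivalence: \emph{$(\Tcal,\Psi)$ induces a matroid if and only if every circuit game $\Gcal(\Tcal,\Psi,P_{Co},P_{De})$ is determined.} For the forward direction, if $(\Tcal,\Psi)$ induces a matroid then by \autoref{ortho_axioms+} the pair satisfies $(O2)$, so by \autoref{2sumO2} every circuit game of $\Tcal$ is determined. For the converse: $(O1)$ holds by \autoref{2SumsO1}, all circuit--cocircuit intersections are finite by \autoref{2Sumstame} (each $M(t)$ is finite, hence tame), and $(O2)$ holds for every partition by \autoref{2sumO2} and the hypothesis; hence \autoref{new_axioms} yields a matroid $M$ whose circuits and cocircuits are the minimal elements of $\Ccal(\Tcal,\Psi)$ and $\Ccal(\Tcal^*,\Psi\ct)$, and the remaining scrawl containments follow from $(O1)$ via \autoref{cir_c_scrawl}, so $(\Tcal,\Psi)$ induces $M$. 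Consequently, ``every set of ends of every tree of finite matroids of overlap $1$ induces a matroid'' is equivalent to ``every circuit game (ranging over all such $\Tcal$, $\Psi$, $P_{Co}$, $P_{De}$) is determined''.

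Now I would check that this last statement is equivalent to $(AD)$. If $(AD)$ holds, then since $T$ is countable and locally finite a circuit game is finitely branching, and with its ``a player who cannot move loses'' convention it is coded by an equivalent game of the standard type over a countable alphabet, so $(AD)$ makes it determined. Conversely, assume every circuit game is determined, and let $\Psi'\se\{0,1\}^{\Nbb}$ be arbitrary, regarded as a set of ends of $T_2$; applying the hypothesis to $\Tcal^{game}$ with the partition $E^{game}=\{d_\emptyset\}\dot\cup\varnothing\dot\cup\varnothing$ and using the analysis of $\Tcal^{game}$ in \autoref{sec:2sums} (equivalently \autoref{2sumO2}), we obtain that $\Gcal(\Psi')$ is determined; since any game $\Gcal(\Psi'')$ with $\Psi''\se A^{\Nbb}$ and $A$ countable is coded by one with $A=\{0,1\}$, this gives $(AD)$. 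Chaining these equivalences proves the first sentence. For the second sentence, under $(AC)$ the usual diagonalisation against the continuum-many strategies produces $\Psi\se\{0,1\}^{\Nbb}$ for which neither player has a winning strategy in $\Gcal(\Psi)$; by the analysis of $\Tcal^{game}$, the system of $(\Tcal^{game},\Psi)$ then fails $(O2)$ at the partition $\{d_\emptyset\}\dot\cup\varnothing\dot\cup\varnothing$, so by \autoref{ortho_axioms+} $(\Tcal^{game},\Psi)$ does not induce a matroid, while $\Tcal^{game}$ is a tree of finite matroids of overlap $1$.

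The step I expect to need the most care is the coding between circuit games and standard games $\Gcal(\Psi')$ with $\Psi'\se A^{\Nbb}$: one must verify that a circuit game on a countable locally finite tree of matroids, with the ``stuck $=$ lose'' rule, is genuinely equivalent --- same winner, if any --- to a standard game over a countable (hence, after re-coding, two-element) alphabet, so that $(AD)$ as formulated in the introduction really does apply to it. The remainder is assembly of results already established: \autoref{2SumsO1}, \autoref{2Sumstame}, \autoref{2sumO2}, \autoref{cir_c_scrawl}, and the orthogonality axioms \autoref{ortho_axioms+}/\autoref{new_axioms}.
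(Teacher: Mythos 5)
Your proposal is correct and follows essentially the same route the paper intends: combining \autoref{2sumO2} with the orthogonality/tameness lemmas (\autoref{2SumsO1}, \autoref{2Sumstame}) and \autoref{ortho_axioms+} to turn ``induces a matroid'' into ``all circuit games are determined'', then using the $\Tcal^{game}$ analysis plus the coding remark for the converse and the classical $(AC)$ diagonalisation for the final sentence. The paper leaves all of this implicit behind the \qed, so your explicit assembly (including the observation that the ground set is countable and that the circuit games have countable move sets) is a faithful expansion of the intended argument.
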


\begin{cor}\label{Borel2sums}
For any tree of countable tame matroids $\Tcal = (T, M)$ of overlap 1 and any Borel set $\Psi$ of ends of $T$, the pair $(\Tcal, \Psi)$ induces a matroid.
\end{cor}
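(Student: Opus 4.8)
The plan is to deduce this from the tameness variant of \autoref{ortho_axioms+} (the unnumbered theorem immediately following it), applied to $\Ccal := \Ccal(\Tcal, \Psi)$ and $\Dcal := \Ccal(\Tcal^*, \Psi\ct)$. First one checks that the ground set $E(\Tcal)$ is countable. For any node $t$ of $T$, the assignment $tt' \mapsto e(tt') \in E(t)$ is injective: if $e(tt') = e(tt'')$ lay in $E(t') \cap E(t'')$ then by the defining property of a tree of matroids $t't''$ would be an edge of $T$, so $t,t',t''$ would bound a cycle unless $t' = t''$. Hence $t$ has at most $|E(t)| \le \aleph_0$ neighbours in $T$, so the connected graph $T$ has countable degree and is therefore countable, and $E(\Tcal) \se \bigcup_{t\in V(T)} E(t)$ is a subset of a countable union of countable sets. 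Next, $(O1)$ holds for $(\Ccal,\Dcal)$ by \autoref{2SumsO1}, and since every $M(t)$ is tame, $C \cap D$ is finite whenever $C \in \Ccal$ and $D \in \Dcal$ by \autoref{2Sumstame}. So the only remaining hypothesis of the tameness variant of \autoref{ortho_axioms+} to verify is $(O2)$; once it is verified, that theorem produces a matroid $M$ with $\Ccal(M) \se \Ccal \se \Scal(M)$ and $\Ccal(M^*) \se \Dcal \se \Scal(M^*)$, which is exactly the statement that $(\Tcal,\Psi)$ induces a matroid.

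It thus remains to establish $(O2)$ for every partition $E(\Tcal) = \{e\} \dot\cup P_{Co} \dot\cup P_{De}$, and by \autoref{2sumO2} this is equivalent to determinacy of the circuit game $\Gcal(\Tcal, \Psi, P_{Co}, P_{De})$. I would code this game, as indicated in the introduction, as a standard game in which the players alternately select elements of a single set $A$ large enough to contain all of Sarah's admissible circuit-moves (of which there is a set, since $T$ is countable and each $M(t)$ has at most continuum-many circuits), all of Colin's admissible node-moves, and a default token; positions at which the player to move has no admissible move, or plays an inadmissible one, are resolved by a fixed rule (the player who cannot continue legitimately loses). Along an infinite play, Colin's moves form a ray in $T$ and so determine a unique end $\omega(\mathrm{play}) \in \Omega(T)$, and Sarah wins precisely when $\omega(\mathrm{play}) \in \Psi$. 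The key observation is that the map $\mathrm{play} \mapsto \omega(\mathrm{play})$, defined on the set of infinite legal plays, is continuous: any finite initial segment of a play already pins down a finite initial segment of the ray, hence a basic open set of ends. Consequently the set of plays won by Sarah is the union of (i) the preimage of the Borel set $\Psi$ under this continuous map, intersected with the closed set of infinite legal plays, and (ii) an open set of ``stuck'' plays on which the winner is determined by a clopen condition; this is Borel. By Martin's theorem on determinacy of Borel games \cite{Martin_borel}, the game is determined, so $(O2)$ holds, and the corollary follows.

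The genuinely delicate step is the middle one: faithfully recoding the circuit game as a game of the standard form while verifying that its winning set stays Borel — in particular, pinning down continuity of the passage from an infinite play to the end it determines, and checking that the conventions adopted for plays that terminate early are confined to an open (indeed clopen-structured) set of plays and hence do no harm. Everything else is a direct application of \autoref{2SumsO1}, \autoref{2Sumstame}, \autoref{2sumO2} and the tameness variant of \autoref{ortho_axioms+}.
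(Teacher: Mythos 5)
Your proposal is correct and follows essentially the same route as the paper: the paper's own proof is the one-line observation that the corollary is immediate from Borel determinacy, \autoref{2sumO2}, and the continuity of the map sending an infinite legal play to the end containing its sequence of nodes $(t_n \mid n \in \Nbb)$, with the remaining hypotheses ($(O1)$, tameness, countability of the ground set) supplied implicitly by \autoref{2SumsO1}, \autoref{2Sumstame} and the tameness variant of \autoref{ortho_axioms+}. You have simply made explicit the details the paper leaves to the reader, including the countability of $E(\Tcal)$ and the recoding of the circuit game in standard form.
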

\begin{proof}
This is immediate from Borel determinacy, \autoref{2sumO2} and the fact that for each partition of the ground set as $\{e\} \dot \cup P_{Co} \dot \cup P_{De}$ the projection map from the set of legal infinite plays in $\Gcal(\Tcal, \Psi, P_{Co}, P_{De})$ to $\Omega(T)$ sending a play to the end containing the sequence $(t_n | n \in \Nbb)$ for that play is continuous.
\end{proof}

In \autoref{sec:arbi_sums} we will extend these techniques to trees of finite representable matroids and so get results applying to all locally finite graphs. However, our results so far already have implications for graphs with a tree structure of width 2.

\begin{thm}\label{Borelwidth2}
Let $G$ be a graph with a tree structure $T$ of width 2, and $\Psi$ a Borel set of ends of $G$. Then $(G, \Psi)$ induces a matroid.
\end{thm}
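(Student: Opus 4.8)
The plan is to deduce \autoref{Borelwidth2} from \autoref{Borel2sums}, so that all of the determinacy-theoretic content is imported wholesale. First I would pass to the graph $G'$ obtained from $G$ by subdividing every edge whose two endpoints lie in different nodes of $T$, exactly as in \autoref{treefromgraph2sums}. Subdividing does not change the space of ends, and the canonical homeomorphism between $\Omega(G)$ and $\Omega(T)$ furnished earlier lets us regard $\Psi$ as a Borel set of ends of $T$ (and of $G'$). Thus it suffices to show that $(G',\Psi)$ induces a matroid.

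Next I would check that $\Tcal(G,T) = (T,\, t \mapsto M(\tau(t)))$ meets the hypotheses of \autoref{Borel2sums}: since $T$ has width $2$, $\Tcal(G,T)$ has overlap $1$, and each torso $\tau(t)$ is a finite graph, so each $M(\tau(t))$ is a finite matroid, hence countable and tame. Applying \autoref{Borel2sums} we obtain a matroid $M'$ with $\Ccal(M') \se \Ccal(\Tcal(G,T),\Psi) \se \Scal(M')$ and $\Ccal((M')^*) \se \Ccal(\Tcal(G,T)^*, \Psi\ct) \se \Scal((M')^*)$. By \autoref{treefromgraph2sums} the two middle terms are precisely the set of $\Psi$-circuits of $G'$ and the set of $\Psi\ct$-bonds of $G'$. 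To conclude that $(G',\Psi)$ induces $M'$ I then need to upgrade these sandwich inclusions to equalities: the $\Psi\ct$-bonds of $G'$ are minimal nonempty $\Psi$-bounded cuts by definition, and the $\Psi$-circuits of $G'$, being edge sets of topological circles in $\widetilde G'_{\Psi}$, form an antichain (equivalently, one sees via \autoref{is_circuit} that each such set is already an $M'$-circuit); since $\Ccal(M')$ is exactly the set of minimal nonempty scrawls of $M'$, an antichain trapped between $\Ccal(M')$ and $\Scal(M')$ must coincide with $\Ccal(M')$, and dually for the bonds. Hence the $\Psi$-circuits and $\Psi\ct$-bonds of $G'$ are exactly the circuits and cocircuits of $M'$.

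Finally I would transfer this back to $G$. Since each subdivision vertex of $G'$ has degree $2$ and is separated from the rest of $\widetilde G'_{\Psi}$ by a finite, hence $\Psi$-bounded, cut, it survives undisturbed in the graph-like quotient, so each topological circle (resp. $\Psi$-bounded bond) of $G'$ either uses both or neither of the two edges of a subdivided pair. Collapsing each such pair to a single edge therefore gives a bijection between the $\Psi$-circuits (resp. $\Psi\ct$-bonds) of $G'$ and those of $G$, and this bijection is exactly the matroid operation of contracting one edge of each series pair; so $(G,\Psi)$ induces the corresponding minor of $M'$. The only points requiring care are the two translation steps — verifying the hypotheses of \autoref{Borel2sums} for $\Tcal(G,T)$ and the $G' \leftrightarrow G$ correspondence — together with the antichain/minimality argument upgrading the sandwich to an exact identification; the genuinely hard part, namely extracting a matroid from the orthogonality axioms and invoking Borel determinacy, has already been carried out in \autoref{Borel2sums} and presents no new obstacle here.
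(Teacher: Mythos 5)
Your proposal follows the paper's proof exactly: subdivide to get $G'$, apply \autoref{Borel2sums} to $\Tcal(G,T)$, identify the resulting circuits and cocircuits with the $\Psi$-circuits and $\Psi\ct$-bonds of $G'$ via \autoref{treefromgraph2sums}, and contract one edge of each subdivided pair; the antichain argument upgrading the sandwich inclusions to equalities is a detail the paper leaves implicit, and you supply it correctly. One small slip in your final step: a $\Psi\ct$-bond of $G'$ need \emph{not} use both or neither edge of a subdivided pair (the bond around an original endvertex of a subdivided edge $e=vv'$ can contain $vv_e$ but not $v'v_e$); the correct statement is that the cocircuits of $M/C$ are the cocircuits of $M$ avoiding the contracted set $C$, and the $\Psi\ct$-bonds of $G'$ avoiding $C$ correspond bijectively to the $\Psi\ct$-bonds of $G$ --- which is all the contraction step needs.
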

\begin{proof}
Let $G'$ be obtained from $G$ by subdividing certain edges as in the proof of \autoref{treefromgraph2sums}. Then by \autoref{Borel2sums}, $(\Tcal(G, T), \Psi)$ induces a matroid $M$, which by \autoref{treefromgraph2sums} is also induced by $(G', \Psi)$. Then the matroid obtained from $M$ by contracting one of each pair of edges subdividing an edge of $G$ is induced by $(G, \Psi)$.
\end{proof}

Assuming the Axiom of Choice holds, we can also give another example of a graph $G$ and a set of ends $\Psi$ of $G$ such that $(G, \Psi)$ doesn't induce a matroid.

\begin{eg}\label{eg:bad_torsos}
\autoref{3col} illustrates that we may 3-colour the edges of $T_2$ in such a way that the edges incident with any vertex $s$ are the same colour if $s$ has even length considered as a finite $\{0,1\}$-sequence, but are all different colours if $s$ has odd length. 

\begin{figure}
\begin{center}
 \includegraphics[height=4cm]{./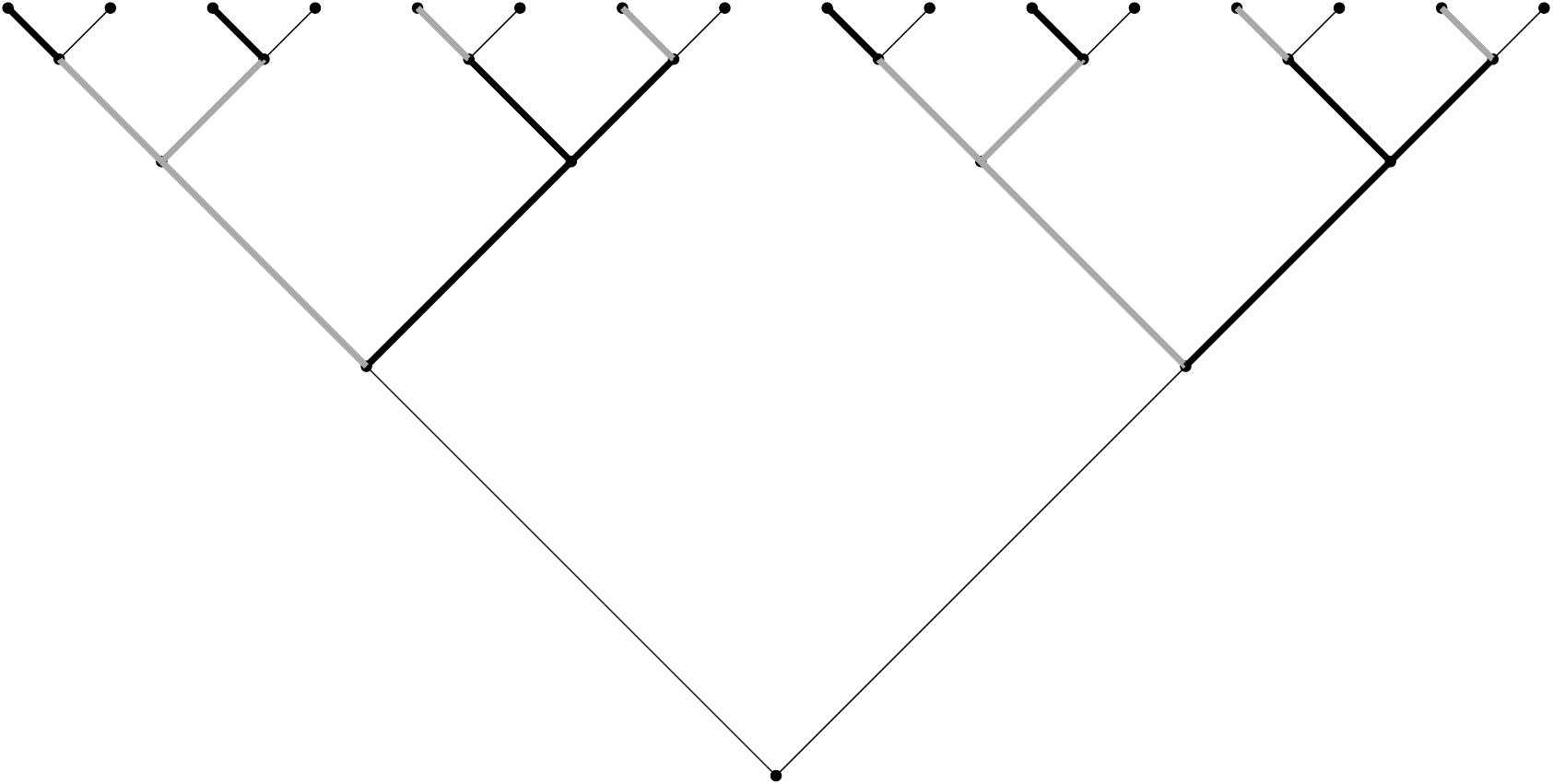}
\end{center}
\caption{The binary tree with a particular $3$-coloring of its edges.}\label{3col}
\end{figure}
We fix such a 3-colouring given as a function $c \colon E(T_2) \to V(K_3)$. Let $G$ be the graph obtained from $T_2 \times K_3$ by removing all edges of the form $e \times \{c(e)\}$ with $e \in E(T_2)$. Then $G$ has a tree structure of width 2, in which the vertices of $T$ are the sets $\{s\} \times V(K_3)$ with $s$ a vertex of $T_2$. The shapes of the torsos for this tree structure are given in \autoref{fig:badtorsos}. 

\begin{figure}
\begin{center}
 \includegraphics[height=4cm]{./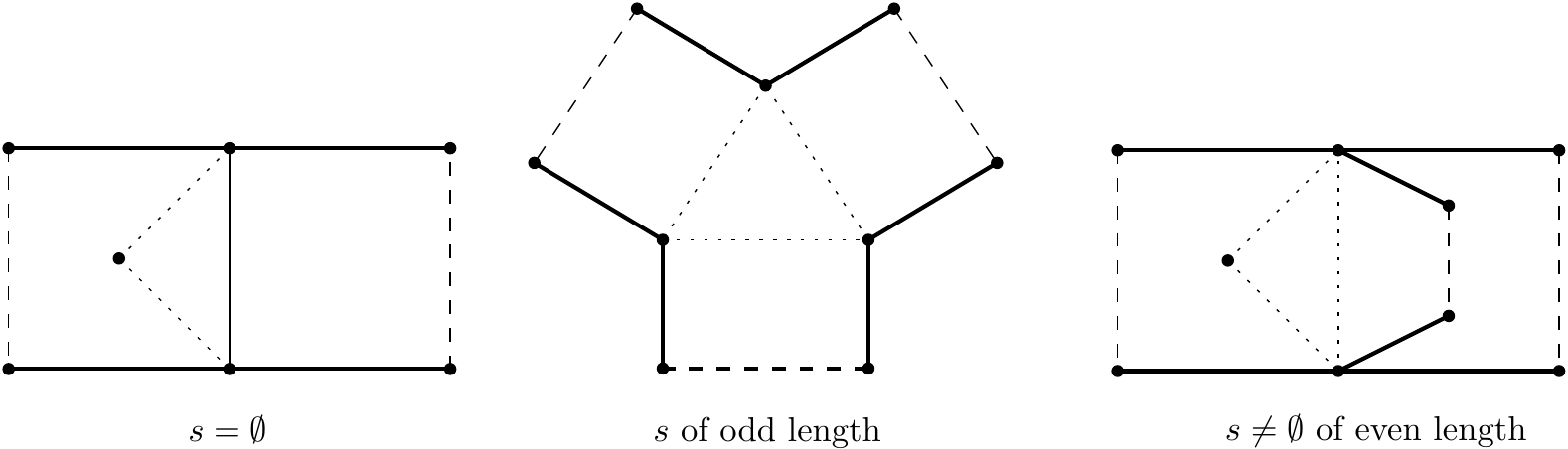}
\end{center}
\caption{The Torsos from \autoref{eg:bad_torsos}.}\label{fig:badtorsos}
\end{figure}

Let $\Psi$ be a set of ends of $G$ such that $\Gcal(\Psi)$ is not determined. Then the tree of matroids obtained from $\Tcal(G, T)$ by contracting the bold edges in \autoref{fig:badtorsos} and deleting the dotted edges is isomorphic to $\Tcal^{game}$, and we know $(\Tcal^{game}, \Psi)$ does not induce a matroid. Thus $(\Tcal(G, T), \Psi)$ does not induce a matroid, and so $(G, \Psi)$ cannot induce a matroid, and so $(T_2 \times K_3, \Psi)$ does not induce a matroid.
\end{eg}

Now we can explain the sense in which we said that the wild cycle graph was relatively simple when we discussed it in \autoref{treesofmatroids}.

\begin{lem}
For any set $\Psi$ of ends of the wild cycle graph $G_{wild}$, the pair $(G_{wild}, \Psi)$ induces a matroid. 
\end{lem}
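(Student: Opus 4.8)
The plan is to reduce, exactly as in the proof of \autoref{Borelwidth2}, to a statement about circuit games, and then to exploit the very restricted shape of the torso of $G_{wild}$ (see \autoref{fig:wildtorso}) in order to see that those games are determined \emph{outright}, with no recourse to Borel determinacy.

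First, let $T$ be the width-$2$ tree structure on $G_{wild}$ from \autoref{eg:wildstruc} and \autoref{eg:wildtorso}, and let $G'$ be the subdivision of $G_{wild}$ described in \autoref{treefromgraph2sums}. By \autoref{treefromgraph2sums} the $\Psi$-circuits of $G'$ are the $\Psi$-circuits of $\Tcal(G_{wild},T)$ and the $\Psi\ct$-bonds of $G'$ are the $\Psi\ct$-cocircuits of $\Tcal(G_{wild},T)$; so, just as in \autoref{Borelwidth2}, it suffices to show that $(\Tcal(G_{wild},T),\Psi)$ induces a matroid, since contracting one edge from each subdividing pair then yields a matroid induced by $(G_{wild},\Psi)$. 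Now $E(G_{wild})$ is countable, $(O1)$ holds for the $\Psi$-system of $\Tcal(G_{wild},T)$ by \autoref{2SumsO1}, and the intersection of a $\Psi$-circuit with a $\Psi\ct$-cocircuit of $\Tcal(G_{wild},T)$ is finite by \autoref{2Sumstame} (every torso is a finite matroid, hence tame). Thus, by the tame variant of the orthogonality theorem (the theorem stated just after \autoref{ortho_axioms+}, together with \autoref{ortho_axioms+} itself), it is enough to verify $(O2)$ for every partition $E=\{e\}\,\dot\cup\,P_{Co}\,\dot\cup\,P_{De}$ of the ground set; and by \autoref{2sumO2} this is precisely the assertion that every circuit game $\Gcal(\Tcal(G_{wild},T),\Psi,P_{Co},P_{De})$ is determined.

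The heart of the argument is therefore to show that each such circuit game is determined for \emph{every} $\Psi$. This is where the simplicity of the torso $\tau$ of $G_{wild}$ enters: reading off \autoref{fig:wildtorso}, one establishes the structural fact that every circuit of the finite matroid $M(\tau)$ meets the set of dummy edges of $\tau$ in at most two elements. (This is exactly what fails for the torsos of \autoref{eg:bad_torsos}, where a single circuit may contain three dummy edges, and it is precisely that extra dummy edge which hands Colin a genuine choice and thereby reproduces the full alternating game $\Gcal(\Psi)$.) Granting this fact, consider $\Gcal=\Gcal(\Tcal(G_{wild},T),\Psi,P_{Co},P_{De})$, with $\Tcal(G_{wild},T)$ of overlap $1$. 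At every move of Sarah after her first, the current edge $e_c$ is a dummy edge lying in Sarah's circuit $o_n$; since $o_n$ contains at most one further dummy edge, and there is exactly one dummy edge between any two adjacent nodes, Colin's node $t_n$ is forced whenever he has a legal move at all. Hence Colin makes at most one genuine choice in the entire game, possibly at his first move (where the current edge $e=e_c$ need not be a dummy edge); after that his moves are determined by Sarah's, so $\Gcal$ is, up to one bounded branching for Colin, a one-player game. Such a game is determined by elementary means: below Colin's first move the outcome depends only on Sarah's choices, so from each such position one of the two players can already force a win — Sarah if some play she can steer into lands on the $\Psi$-side, and Colin otherwise — and the single finite branching above is then resolved by backward induction. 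No appeal to the Axiom of Determinacy or to Borel determinacy is required, which is exactly the sense in which $G_{wild}$ is ``relatively simple'' in contrast with \autoref{eg:bad_torsos}.

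The only real work, and the step I expect to be the main obstacle, is the finite combinatorial verification of this dummy-edge property of $M(\tau)$ from \autoref{fig:wildtorso} (or of whatever mild variant of it the picture actually yields), together with the bookkeeping needed to convert it into the precise statement ``Colin has at most one genuine choice in every circuit game'' — in particular, that Sarah can never be forced to play a circuit containing two distinct forward dummy edges, so that the game cannot degenerate into a two-player interaction. Once that verification is carried out, determinacy of all the circuit games — and hence $(O2)$, and hence the fact that $(\Tcal(G_{wild},T),\Psi)$ induces a matroid, and finally the lemma — follows as sketched above.
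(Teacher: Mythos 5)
Your reduction to determinacy of the circuit games is sound and matches the paper's strategy, but the structural fact you hang everything on --- that every circuit of the torso $M(\tau)$ contains at most two dummy edges --- is false, and you correctly identified it as the step most likely to fail. The graph is called the wild cycle graph precisely because $|G_{wild}|$ contains a wild circle, i.e.\ an $\Omega(G_{wild})$-circuit whose closure contains uncountably many ends. By \autoref{treefromgraph2sums} such a circuit arises from a $\Psi$-pre-circuit $(C,o)$ whose subtree $C$ branches at infinitely many nodes, and at any branching node $t$ that is not the root of $C$ the circuit $o(t)$ must contain the dummy edge towards the parent \emph{and} the dummy edges towards two children --- three dummy edges in all. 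So in the circuit game Sarah can (and, to trace the wild circle, would) play circuits with two forward dummy edges, handing Colin a genuine binary choice at every step; the game does not degenerate into a one-player game, and your backward-induction argument collapses. Note also that the relevant worry is not whether Sarah is \emph{forced} to play such a circuit, but whether she is \emph{allowed} to: Colin's forcedness must hold against every legal move of hers.

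The paper's proof uses the dual structural fact, which is the one that actually holds for these torsos: no \emph{bond} of $\tau$ contains more than two dummy edges. Consequently it is the \emph{cocircuit} game $\Gcal^*$ that trivialises --- there Colin plays cocircuits of $M(t_c)$, and since each contains the current (backward) dummy edge and at most one further dummy edge, all of Sarah's moves after her first are forced. That makes $\Gcal^*$ essentially a one-player game, hence determined by the elementary argument you sketch, and determinacy is then transported to the circuit game via \autoref{2sumtransfer} (and its dual) before applying \autoref{2sumO2}. So your proof is repairable, but only by replacing the circuit game with the cocircuit game and inserting the transfer step; as written, the claimed dummy-edge property of circuits cannot be verified because it is not true.
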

\begin{proof}
As in the proof of \autoref{Borelwidth2}, it is enough to check that $(\Tcal(G_{wild}, T), \Psi)$ induces a matroid, where $T$ is the tree structure from \autoref{eg:wildstruc}. Now we may note that the torsos for this tree structure, depicted in \autoref{fig:wildtorso}, have the property that no bond contains more than 2 dummy edges. Thus in the cocircuit games for this tree of matroids, all of Sarah's moves apart from her first one are forced. Thus all these games are determined, and we are done by \autoref{2sumtransfer} and \autoref{2sumO2}.
\end{proof}

There are other simple examples of graphs which induce a matroid for any $\Psi$:

\begin{lem}\label{T_2_lem}
Let $T$ be any locally finite tree, and let $\Psi$ be any set of ends of $T \times K_2$. Then $(T \times K_2, \Psi)$ induces a matroid.
\end{lem}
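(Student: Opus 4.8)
The plan is to reduce, as in the proof of the wild-cycle-graph lemma, to showing that $(\Tcal(T\times K_2, T'), \Psi)$ induces a matroid for a suitable tree structure $T'$ on $T\times K_2$, and then to argue that all the relevant circuit and cocircuit games are trivially determined because almost every move is forced. Concretely, I would take $T'$ to be the tree structure on $T\times K_2$ whose nodes are the sets $\{v\}\times V(K_2)$ for $v\in V(T)$; two such nodes are adjacent exactly when the corresponding vertices of $T$ are adjacent, each induced subgraph has two vertices and no edge (or one edge, depending on how $T\times K_2$ is set up — in our definition $\{v\}\times e$ is an edge of $T\times K_2$, so each partition class spans a single edge, which is finite and connected), and between adjacent classes $\{v\}\times V(K_2)$ and $\{w\}\times V(K_2)$ there are exactly the two $T$-edges $vw\times\{0\}$ and $vw\times\{1\}$, so $T'$ has width $2$ and hence $\Tcal(T\times K_2, T')$ has overlap $1$.

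Next I would identify the torsos $\tau(t)$ for this tree structure. For a node $t=\{v\}\times V(K_2)$, the torso has the two vertices $(v,0),(v,1)$, the single $K_2$-edge joining them, and for each $T$-neighbour $w$ of $v$ a pair of dummy vertices together with a dummy edge between them (coming from the two parallel $T$-edges $vw\times\{0\}$, $vw\times\{1\}$), each dummy vertex joined to the corresponding real vertex $(v,0)$ or $(v,1)$. So $\tau(t)$ is obtained from a single edge $(v,0)(v,1)$ by, for each neighbour $w$, adding a parallel path of length $3$ through two new degree-$2$ dummy vertices. The cocircuits (bonds) of $M(\tau(t))$ are then easy to enumerate: a bond either is the single $K_2$-edge together with exactly one edge from each of the parallel paths, or is confined to one parallel path and then consists of at most two edges of that path; in every case a bond of $\tau(t)$ contains at most two dummy edges. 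Dually, a circuit of $M(\tau(t))$ is a cycle, which must use at most two of the parallel connections and hence again meets at most two dummy edges.

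With this structural fact in hand, the determinacy argument is immediate. In the circuit game $\Gcal(\Tcal(T\times K_2, T'), \Psi, P_{Co}, P_{De})$, after Sarah's first move the current edge entering any later torso $\tau(t_c)$ is always a dummy edge, and Sarah must play a circuit of $M(t_c)$ through it avoiding $P_{De}$; since such a circuit contains at most two dummy edges, and one of them is already the current edge, there is at most one remaining dummy edge, so Colin has at most one legal move — his play is forced for the rest of the game. Hence Colin wins this infinite run iff the unique resulting end lies in $\Psi\ct$, which depends only on finitely much information (Sarah's first move) together with the fixed branch determined by that move; in particular one of the two players has a winning strategy, so $\Gcal$ is determined. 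The dual game $\Gcal^*$ has the symmetric property with the roles reversed — there Sarah's moves after the first are forced by the at-most-two-dummy-edges property of bonds — so $\Gcal^*$ is determined as well, and combining \autoref{2sumtransfer} and \autoref{2sumO2} gives $(O2)$ for every partition. Since $(O1)$ and tameness hold by \autoref{2SumsO1} and \autoref{2Sumstame}, $(\Tcal(T\times K_2, T'), \Psi)$ induces a matroid by \autoref{ortho_axioms+}, and then the argument of \autoref{Borelwidth2} transfers this to $(T\times K_2, \Psi)$.

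The only genuinely delicate point, and the one I would write out with care, is the torso computation: verifying that with our convention for $T\times K_2$ the partition classes really are finite and connected, that the width is exactly $2$, and above all that \emph{every} bond of $\tau(t)$ meets the dummy edges in at most two elements — this is what makes all the games trivially determined, and getting the enumeration of bonds (equivalently, of the ways of disconnecting the simple series-parallel-like torso) exactly right is the crux. Everything else is a direct appeal to results already proved in \autoref{treesofmatroids} and \autoref{sec:2sums}.
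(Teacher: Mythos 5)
Your overall route is the paper's: the same tree structure $T'$ with nodes $\{v\}\times V(K_2)$, the same picture of the torsos as a central $K_2$-edge together with one length-three parallel path per $T$-neighbour of $v$, and the same key observation that every circuit of such a torso uses at most two of the parallel connections and hence at most two dummy edges, so that in the circuit game Colin has at most two options on his first move and at most one thereafter; the paper concludes directly from this via \autoref{2sumO2}. However, the step you single out as the crux is false: it is \emph{not} true that every bond of $\tau(t)$ meets the dummy edges in at most two elements. A bond containing the central edge $(v,0)(v,1)$ consists of that edge together with exactly one edge from \emph{each} of the $d$ parallel paths (where $d$ is the degree of $v$ in $T$), and each of those edges may be the middle, dummy edge of its path: the cut separating $(v,0)$ together with the $d$ dummy vertices adjacent to it from the rest is a bond containing $d$ dummy edges. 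So for $d\geq 3$ Sarah's moves in $\Gcal^*$ are not forced and that half of your argument collapses. This happens to be harmless, because \autoref{2sumO2} requires only determinacy of the circuit game $\Gcal$, which your (correct) circuit computation delivers without any appeal to $\Gcal^*$ or \autoref{2sumtransfer} --- but you have located the load-bearing point of the proof in the wrong place.

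A second, smaller defect is your justification of the determinacy of $\Gcal$. The resulting end is not ``determined by Sarah's first move together with a fixed branch'': at every later stage the current edge is the dummy edge of one parallel path, and Sarah may answer with the $4$-cycle through the central edge or with any of the $d-1$ six-cycles through another parallel path, each sending Colin to a different (forced) node; so Sarah steers the path throughout the infinite play. The correct reason determinacy holds is that after Colin's first response (one of at most two options) the game is effectively a one-player game for Sarah: either some continuation of hers succeeds, in which case she fixes one and follows it, or none does, in which case Colin wins by making his forced moves. Backward induction over the finitely many opening moves then determines the whole game. With these two repairs your argument coincides with the paper's proof.
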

\begin{proof}
Once more it is enough to check that $(\Tcal(T \times K_2, T'), \Psi)$ induces a matroid, where $T'$ is the tree structure whose vertices are the sets $\{t\} \times V(K_2)$ for $t \in V(T)$. The torsos are of the form $S \times K_2$, where $S$ is a finite star. They have the property that no circuit contains more than 2 dummy edges, and so all the circuit games for this tree of matroids are determined, and we are done by \autoref{2sumO2}.
\end{proof}

\section{Trees of matroids II}\label{tm2}

To capture graphs which cannot be given a tree structure of width 2, we need a more general notion of pasting in a tree of matroids, for which we will work with representable matroids. Strictly speaking, we will be pasting together {\em represented} matroids, since the matroid structure after pasting can depend on the choices of representation before pasting. 

Before returning to trees of matroids, we shall first outline how to paste together just 2 matroids in this way. We shall take a slightly unusual point of view on representations: we think of a representation of a finite matroid $M$ over a field $k$ as given by a subspace $U$ of $k^{E(M)}$ such that the minimal nonempty supports of elements in $U$ are the $M$-circuits (there is such a subspace if and only if $M$ is representable in the usual sense over $k$). The dual of $M$ is then represented by the orthogonal complement $U^{\perp}$ of $U$.

Now suppose that we have two finite matroids $M_1$ and $M_2$ where $M_i$ has ground set $E_i$ and is represented over $k$ by a subspace $U_i$ of $k^{E_i}$. Then there are canonical embeddings of $U_1$, $U_2$ and $k^{E_1 \triangle E_2}$ as subspaces of $V = k^{E_1 \cup E_2}$. We let $U_1 \triangle U_2$ be $(U_1 + U_2) \cap k^{E_1 \triangle E_2}$: the vectors in this space are those $v$ such that there are $v_1 \in U_1$ and $v_2 \in U_2$ with $v_1 \restric_{E_1 \cap E_2} = -v_2 \restric_{E1 \cap E_2}$, $v \restric_{E_1 \sm E_2} = v_1 \restric_{E_1 \sm E_2}$ and $v \restric_{E_2 \sm E_1} = v_2 \restric_{E_2 \sm E_1}$. 

This construction is well behaved with respect to duality. The orthogonal complement of $U_1 \triangle U_2$ in $V$ is $(U_1^{\perp} \cap U_2^{\perp}) + \left(k^{E_1 \triangle E_2}\right)^{\perp} = (U_1^{\perp} \cap U_2^{\perp}) + k^{E_1 \cap E_2}$. So the orthogonal complement of $U_1 \triangle U_2$ in $k^{E_1 \triangle E_2}$ is the intersection of that space with $k^{E_1 \triangle E_2}$, which is the set of those $w$ such there are $w_1 \in U_1^{\perp}$ and $w_2 \in U_2^{\perp}$ with $w_1 \restric_{E_1 \cap E_2} = w_2 \restric_{E_1 \cap E_2}$, $w \restric_{E_1 \sm E_2} = w_1 \restric_{E_1 \sm E_2}$ and $w \restric_{E_2 \sm E_1} = w_2 \restric_{E_2 \sm E_1}$. This isn't quite the same as $U_1^{\perp} \triangle U_2^{\perp}$ - there is a missing minus sign in one of the equations - but the supports of the vectors, and so the induced matroids, are the same. Thus we have $(M_1 \triangle M_2)^* = M_1^* \triangle M_2^*$.

This construction also allows us to glue together pairs of tame thin sums matroids, provided that the overlap of their ground sets is finite. The details are beyond the scope of this paper, but the basic reason is that in proving (O2), which is potentially the trickiest of the axioms, it is possible by contracting $P$ and deleting $Q$ to reduce the problem to one on the finite set consisting of $e$ and the edges in the overlap set.

If we want to use a construction like this to glue together a tree of matroids, we will need a representation of each of the (finite) matroids.

\begin{dfn}
Let $k$ be a finite field.
A {\em $k$-representation} of a tree $(T, M)$ of matroids is a function $V$ assigning to each vertex $t$ of $T$ a subspace $V(t)$ of $k^{E(t)}$ such that $M(V(t)) = M(t)$.
The {\em dual} $V^{\perp}$ of such a $k$-representation is the representation of the dual tree of matroids which assigns to each node $t$ of $T$ the space $V(t)^{\perp}$. We will only ever consider representations of trees of finite matroids.

In this context, a {\em $\Psi$-vector of $V$} consists of a function $v$ assigning to each vertex $t$ of $T$ a vector $v(t) \in V(t)$, in such a way that for any edge $tt'$ of $T$ we have $v(t) \restric_{E(tt')} = v(t') \restric_{E(tt')}$ and that every end of $T$ in the closure of $\{t \in V(T) | v(t) \neq 0\}$ is in $\Psi$. The set of such $\Psi$-vectors is denoted $\Vcal(V, \Psi)$. The {\em support} of a $\Psi$-vector $v$ is the set $\underline v = E \cap \bigcup_{t \in T} \underline{v(t)}$. The set of such supports is denoted $\underline{\Vcal}(V, \Psi)$.

We say that $(V, \Psi)$ {\em induces a matroid} $M = M(V)$ if $\Ccal(M) \subseteq \underline{\Vcal}(V, \Psi) \subseteq \Scal(M)$ and $\Ccal(M^*) \subseteq \underline{\Vcal}(V^\perp, \Psi\ct) \subseteq \Scal(M^*)$.
\end{dfn}

The question of when $(O2)$ holds for these systems is once more tricky, and will be addressed in \autoref{sec:arbi_sums}. However, we are already in a position to give a simple proof of $(O1)$ and of tameness.

\begin{lem}[$(O1)$ and tameness for representable trees of finite matroids]\label{repO1tame}
Let $\Tcal = (T, M)$ be a tree of finite matroids with a $k$-representation $V$, let $\Psi$ be a set of ends of $T$, and let $v$ and $w$ be respectively a $\Psi$-vectors of $V$ and a $\Psi\ct$-vector of $V^{\perp}$. Then $|\underline v \cap \underline w|$ is finite but not equal to 1.
\end{lem}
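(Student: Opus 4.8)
The plan is to adapt the combinatorial argument of \autoref{2SumsO1} and \autoref{2Sumstame}, replacing the single-dummy-edge bookkeeping by an orthogonality computation inside each finite local vector space $V(t)$. First I would handle the claim that $\underline v \cap \underline w$ cannot be infinite, exactly as in \autoref{2Sumstame}: if $\underline v \cap \underline w$ were infinite then the set $\{t \in V(T) \mid v(t) \neq 0 \text{ and } w(t) \neq 0\}$ would be infinite; since each $M(t)$ is finite this set is locally finite, so by König's Infinity Lemma (\autoref{Infinity_Lemma}) it would have an end $\omega$ of $T$ in its closure. But then $\omega$ lies in the closure of $\{t \mid v(t) \neq 0\}$, hence $\omega \in \Psi$, and simultaneously in the closure of $\{t \mid w(t) \neq 0\}$, hence $\omega \in \Psi\ct$ — a contradiction.

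For the claim that $|\underline v \cap \underline w| \neq 1$, I would argue by contradiction in the style of \autoref{2SumsO1}. Suppose $\underline v \cap \underline w = \{e_0\}$ with $e_0 \in E(t_0)$. I would recursively build a ray $t_0 t_1 t_2 \ldots$ in $T$ with $v(t_n) \neq 0 \neq w(t_n)$ for all $n$. The key local step: at node $t_c$ we have a distinguished edge $f$ of $E(t_c)$ that lies in the support of both $v(t_c)$ and $w(t_c)$ (taking $f = e_0$ at $t_0$, and $f = e(t_{n-1}t_n)$ afterwards), and $v(t_c) \in V(t_c)$, $w(t_c) \in V(t_c)^\perp$, so $\sum_{g \in E(t_c)} v(t_c)(g)\, w(t_c)(g) = 0$. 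Since the $g = f$ term is nonzero, the supports of $v(t_c)$ and $w(t_c)$ must meet in some edge $g \neq f$; as in \autoref{2SumsO1} this edge cannot lie in $E$ (since $\underline v \cap \underline w = \{e_0\}$ and, for $n \geq 1$, $e_0$ is already ``used up'' at $t_0$), nor can it be the dummy edge $e(t_{n-2}t_{n-1})$ leading back toward $t_0$ for $n > 1$ — because along the unique path in $T$ the compatibility condition forces the supports on that shared edge to match, and if $g$ were that edge we could backtrack, but a short argument (identical to the ``$t_n \neq t_{n-2}$'' point in \autoref{2SumsO1}) rules this out. So $g = e(t_c t_n)$ for some neighbour $t_n$ of $t_c$ strictly further from $t_0$; by the compatibility condition $v(t_n)$ and $w(t_n)$ are both nonzero on $e(t_c t_n)$, so $t_n$ is a legitimate next node of the ray. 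The ray's end $\omega$ then lies in the closure of $\{t \mid v(t) \neq 0\}$ and of $\{t \mid w(t) \neq 0\}$, giving $\omega \in \Psi \cap \Psi\ct = \varnothing$, the desired contradiction.

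The only genuinely new ingredient compared with \autoref{2SumsO1} is replacing ``a circuit and a cocircuit of $M(t_c)$ meet at least twice'' by ``an element of $V(t_c)$ and an element of $V(t_c)^\perp$ with a common nonzero coordinate have overlapping supports of size at least $2$,'' which is immediate from the orthogonality relation; this is where I expect the ``main obstacle'' to be, though it is very mild. One subtlety worth spelling out carefully is that $e(t_c t_n)$ genuinely lies in the \emph{support} of $v(t_n)$ and $w(t_n)$ and not merely in $E(t_n)$: this follows because $v(t_c)(e(t_c t_n)) = v(t_n)(e(t_c t_n)) \neq 0$ by the definition of a $\Psi$-vector, and similarly for $w$. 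With that observation the recursion runs and the proof is complete.
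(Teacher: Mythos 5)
Your first half (finiteness) is fine and is exactly the paper's argument: an infinite common support would put an end of $T$ in the closure of both $\{t : v(t)\neq 0\}$ and $\{t : w(t)\neq 0\}$, hence in $\Psi\cap\Psi\ct$.

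The second half has a genuine gap, concentrated exactly where you locate the ``main obstacle''. In the representable setting the overlap $E(t_{n-2}t_{n-1})$ is a finite set that may have several elements; there is no single edge $e(t_{n-2}t_{n-1})$, and your use of that overlap-$1$ notation is the symptom. At node $t_{n-1}$, orthogonality $\langle v(t_{n-1}),w(t_{n-1})\rangle=0$ together with one nonzero product at $f\in E(t_{n-2}t_{n-1})$ only guarantees a second edge $g\neq f$ with nonzero product; nothing prevents $g$ from being \emph{another element of the same overlap} $E(t_{n-2}t_{n-1})$, in which case your recursion stalls. There is no ``short argument identical to the $t_n\neq t_{n-2}$ point'' here --- that point was free in \autoref{2SumsO1} precisely because overlap $1$ makes the backtracking edge unique and already accounted for. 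Concretely, over $\Fbb_3$ with $E(t_{n-2}t_{n-1})=\{a,b\}$, restrictions $(1,1)$ for $v$ and $(1,-1)$ for $w$ have both coordinate products nonzero yet pair to $0$ on the overlap, so orthogonality at $t_{n-1}$ imposes no constraint forcing common support elsewhere. The repair is to propagate not a single edge but the pairing over the whole overlap: set $p_n=\sum_{e\in E(t_{n-1}t_n)}v(t_{n-1})(e)w(t_{n-1})(e)$. Since the sets $E(t_nt')$ for the neighbours $t'$ of $t_n$ partition the dummy edges of $E(t_n)$ (no triangles in a tree) and the non-dummy part of $E(t_n)$ contributes nothing for $n\geq 1$, orthogonality at $t_n$ gives $\sum_{t'\neq t_{n-1}}p(t_n,t')=-p_n\neq 0$, so some neighbour strictly further from $t_0$ receives a nonzero pairing and the ray construction goes through. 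This corrected recursion is a localized version of what the paper does in one stroke: it defines $\hat w$ with alternating signs $(-1)^{d(t)}$ and shows by a telescoping sum over the tree that $\sum_{e\in E}\hat v(e)\hat w(e)=0$, which immediately rules out a support intersection of size exactly $1$. Either route works once the overlap is treated as a whole rather than edge by edge.
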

\begin{proof}
If it were infinite, then there would be an end $\omega$ in the closure of $\underline v \cap \underline w$ and so in the closure of both $\{t \in V(T) | v(t) \neq 0\}$ and $\{t \in V(T) | w(t) \neq 0\}$, so that $\omega$ would have to be in both $\Psi$ and $\Psi^{\complement}$, a contradiction. So it is finite.

Now fix some node $t_0$ of $T$ and for any node $t$ let $d(t)$ be the distance from $t_0$ to $t$ in $T$ (thus $d(t_0) = 0$). Let $\hat v \colon E \to k$ be the function sending $e \in E(t)$ to $v(t)(e)$, and $\hat w \colon E \to k$ be the function sending $e \in E(t)$ to $(-1)^{d(t)} w(t)(e)$. Then we have
\begin{eqnarray*}
\sum_{e \in E} \hat v(t) \hat w (t) &=& \sum_{t \in V(T)}(-1)^{d(t)}\left(\sum_{e \in E(t)} v(t)(e) w(t)(e)  - \sum_{tt' \in E(T)}\sum_{e \in E(tt')} v(t)(e)w(t)(e)\right) \\
&=& - \sum_{tt' \in E(T)} \left((-1)^{d(t)} + (-1)^{d(t')}\right) \left(\sum_{e \in E(tt')} v(t)(e) w(t)(e)\right) \\
&=& 0 \\
\end{eqnarray*}
and it follows that $|\underline v \cap \underline w| = |\underline{\hat v} \cap \underline{\hat w}| \neq 1$.
\end{proof}

\begin{rem}\label{tree_thin_sum}
Once we have shown that this system induces a (tame) matroid $M$, the proof above will also show that it is a thin sums matroid over $k$ according to the characterisation given in 
\cite{THINSUMS}, since we can choose the function $c_{\underline v} \colon \underline v \to k$ for a circuit $\underline v$ to be given by $\hat v \restric_{\underline v}$ and similarly take $d_{\underline w} = \hat w \restric_{\underline w}$.
\end{rem}

With this new construction, we can capture the $\Psi$-system of any graph with a tree decomposition.

\begin{dfn}
For a graph $G$ with a tree structure $T$, let $V(G, T)$ be the unique representation of $\Tcal(G, T)$ over $\Fbb_2$ (such a representation exists since for each $t \in V(t)$ the matroid $M(\tau(t))$ is graphic and so binary).
\end{dfn}

\begin{lem}\label{getcircs:rep}
Let $G$ be a graph, and let $T$ be a tree structure on $G$. Let $\Psi$ be a set of ends of $G$. Let $G'$ be the graph obtained from $G$ by subdividing each edge which has endpoints in different nodes of $T$.\footnote{as before, we add a new vertex $v_e$ corresponding to each such edge $e = vv'$, and replace $e$ in the set of edges by the two new edges $vv_e$ and $v'v_e$.}Then every $\Psi$-circuit of $G'$ is the support of a $\Psi$-vector of $V(G, T)$ and every $\Psi^{\complement}$-bond of $G'$ is the support of a $\Psi\ct$-vector of $(V(G, T))^{\perp}$.
\end{lem}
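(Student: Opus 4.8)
The plan is to carry out, for the $\Fbb_2$-representation $V(G,T)$, the same kind of direct construction used in the proof of \autoref{treefromgraph2sums}, with the two ``width-$2$ bookkeeping edges'' there replaced by a parity argument on the complete graph of dummy vertices shared by two adjacent torsos. I would first recall the relevant identifications: $V(G,T)(t)$ is the $\Fbb_2$-cycle space $Z(\tau(t))$ of the graphic torso $\tau(t)$ and $(V(G,T))^\perp(t)$ is its cut space $B(\tau(t))$; the ground set $E(G')$ equals $E(\Tcal(G,T))$, with each edge of $G'$ being a non-dummy edge of exactly one torso; and, all torsos being finite, $G$ and hence $G'$ is locally finite, so the graph-like quotient identifies no vertices of $|G'|_\Psi$ and a $\Psi$-circuit of $G'$ is the edge set of an honest topological circle in $|G'|_\Psi$ (in particular every vertex has degree $0$ or $2$ in it).

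For the first assertion, let $\underline o$ be a $\Psi$-circuit of $G'$ and put $S_t = \underline o \cap E(\tau(t))$, a set of non-dummy edges of $\tau(t)$ in which every real vertex has degree $0$ or $2$. For an edge $tt'$ of $T$, a shared dummy vertex $v_e$ (for a $G$-edge $e$ between $t$ and $t'$) has odd $S_t$-degree exactly when the circle $\underline o$ runs through the subdivision vertex $v_e$, equivalently when the ``$t'$-side half'' of $e$ lies in $\underline o$; hence the set $O_{tt'}$ of such vertices is the same whether read off from $\tau(t)$ or from $\tau(t')$, and it is in bijection with the set of those edges of $\underline o$ lying in a certain finite cut of $G'$, so $|O_{tt'}|$ is even (a topological circle meets a finite cut in an even number of edges). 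Since $E(tt')$ is the edge set of the complete graph on these shared dummy vertices, I would fix — as a function of $O_{tt'}$ alone, hence agreeing on the two sides — a set $P_{tt'} \se E(tt')$ which is a perfect matching on $O_{tt'}$, and put $v(t) = \mathbf 1_{S_t \cup \bigcup_{t'} P_{tt'}}$ (the union over neighbours $t'$ of $t$ in $T$, and disjoint). Then one checks routinely that every vertex of $\tau(t)$ has even $v(t)$-degree, so $v(t) \in Z(\tau(t))$; that $v(t)\restric_{E(tt')} = \mathbf 1_{P_{tt'}} = v(t')\restric_{E(tt')}$ (as $P_{tt'} = P_{t't}$); that the support of $v$ is $\bigcup_t S_t = \underline o$; and that $\{t : v(t) \neq 0\}$ is the set of nodes whose torso is met by $\underline o$, whose closure in $T$ — exactly as in the proof of \autoref{treefromgraph2sums} — contains only ends of $\Psi$ (the $|G'|_\Psi$-closure of the circle $\underline o$ contains no $\Psi\ct$-end). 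So $v \in \Vcal(V(G,T), \Psi)$ with $\underline v = \underline o$.

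For the second assertion, given a $\Psi\ct$-bond $\underline b$ of $G'$ with sides $X$ and $Y$, I would use that every vertex of every torso $\tau(t)$ — a genuine vertex of $t$ or a subdivision vertex $v_e$, both being honest vertices of $G'$ — lies in $X$ or in $Y$, and let $b(t)$ be the cut of $\tau(t)$ between its $X$-part and its $Y$-part, an element of $B(\tau(t)) = (V(G,T))^\perp(t)$. Compatibility across $E(tt')$ is automatic, since both restrictions are the cut induced by $X,Y$ on the literally shared complete graph on the dummy vertices; the support of $b$ is $\underline b$, because a non-dummy torso edge lies in $b(t)$ iff its two $G'$-endpoints are separated by $\underline b$; and $\{t : b(t) \neq 0\}$ is the set of nodes met by $\underline b$ (here one uses connectedness of each node in $G$), whose closure in $T$ contains only $\Psi\ct$-ends because $\underline b$ is $\Psi$-bounded. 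I expect the only genuinely delicate point in the whole proof to be the ``closing-up'' in the $\Psi$-circuit half: because the dummy edges shared by two adjacent torsos no longer form a single edge, each partial even subgraph $S_t$ must be extended to a cycle-space element of $\tau(t)$ using dummy edges, and this must be done compatibly across $E(tt')$ — which is precisely why $O_{tt'}$ has to be extracted from $\underline o$ inside $G'$ rather than from the individual torsos, and why the even cardinality of $O_{tt'}$ (coming from a circle meeting a finite cut evenly) is exactly what makes the completion possible.
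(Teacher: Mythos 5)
Your proposal is correct, and the bond half is essentially identical to the paper's (define $b(t)$ as the cut of $\tau(t)$ induced by the two sides of $\underline b$; compatibility and the end condition are automatic). The interesting divergence is in the circuit half. The paper also builds $v(t)$ by extending $\underline o \cap E(t)$ with dummy edges, but it chooses those dummy edges canonically from the arc structure of the circle $O$: for each tree edge $tt'$ it puts $v_ev_f$ into $F(tt')$ exactly when $O$ contains an arc from $v_e$ to $v_f$ running entirely through the $t'$-side, and then verifies that the resulting $o(t)$ is a vertex-disjoint union of circuits of $\tau(t)$ by cutting $O$ into maximal arcs of alternating type. Your version replaces this with a pure parity argument: the set $O_{tt'}$ of dummy vertices through which the circle passes is even (a circle meets the finite cut $\{vv_e : v \in t,\ e \text{ between } t \text{ and } t'\}$ evenly), so an arbitrary perfect matching inside the complete graph $E(tt')$ closes up $S_t$ to an even-degree edge set, which over $\Fbb_2$ is all that membership in $V(t)$ requires. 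This is genuinely leaner --- it avoids the arc decomposition entirely and makes clear that the only input needed from the topology of $\underline o$ is even intersection with finite cuts --- at the cost of being specific to characteristic $2$ and of producing a non-canonical $v$, neither of which matters here since the lemma only asserts existence of \emph{some} $\Psi$-vector with support $\underline o$. (The paper's $F(tt')$ is in fact one particular perfect matching on your $O_{tt'}$, so your argument strictly generalises the paper's choice.) The remaining points you flag --- that $O_{tt'}$ reads the same from either side because a subdivision vertex of $G'$ has degree $0$ or $2$ on the circle, and that the end conditions transfer through the identification of $\Omega(T)$ with $\Omega(G)$ --- are handled at the same level of detail as in the paper's own proof of \autoref{treefromgraph2sums}, so nothing is missing.
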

\begin{proof}
First we show that every $\Psi^{\complement}$-bond of $G'$ is the support of a vector of $(V(G, T, \Psi))^*$. Let $\underline b$ be such a $\Psi^{\complement}$-bond. Let $X$ be the set of vertices of $G'$ on one side of $\underline b$. For each $t \in V(T)$, let $b(t)$ be the $\tau(t)$-cut of edges of $\tau(t)$ with one endpoint in $X$ and the other not in $X$, and let $w(t)$ be the characteristic function of $b(t)$: thus $w$ is a vector of $(V(G, T, \Psi))^{\perp}$. Then $\underline b = \underline w$.

Next, we show that every $\Psi$-circuit of $G'$ is the support of a vector of $V(G, T, \Psi)$. Let $\underline o$ be such a $\Psi$-circuit, and let $O$ be the circle in $\tilde G_{\Psi}$ inducing $\underline o$. Fix some vertex $t_0$ of $T$ such that $\underline o$ meets $E(t_0)$. For any other vertex $t$ of $T$ let $T\upset t$ be the set of vertices $t'$ of $t$ on the other side of $t$ from $t_0$, together with $t$ itself. Let $E \upset t$ be $E \cap \bigcup_{t' \in T \upset t} E(t')$. For any $tt' \in E(T)$, with $t'$ further from $t_0$ than $t$, let $F(tt') \subseteq E(tt')$ be the set of those edges $v_ev_f$ such that there is an arc in $O$ from $v_e$ to $v_f$ using only edges of $E \upset t'$. For any vertex $t$ of $T$, let $o(t)$ be the union of $\underline o \cap E(t)$ with all of the $F(tt')$ for $t'$ adjacent to $t$ in $T$, and let $v(t)$ be the characteristic function of $o(t)$. Since $\underline o = \underline v$, it suffices to prove that $\underline v$ is a $\Psi$-vector. Every end in 
the closure of $\{t \in V(t) | v(t) \neq 0\}$ is in the closure of $o$ and so is in $\Psi$. So we just need to show that for each node $t$ of $T$ the function $v(t)$ is in the circuit space of $\tau(t)$. In fact, we shall show something stronger: that $o(t)$ is a vertex-disjoint union of circuits of $\tau(t)$.

The circle $O$ can be broken into finitely many arcs each of which uses either only edges in $E(t_0)$ or else only edges not in $E(t_0)$, with consecutive arcs around $O$ being of opposite types. For each arc using only edges not in $E(t_0)$ there is some $t'$ adjacent to $t_0$ such that that arc only uses edges from $E \upset t'$. Replacing each such arc with the corresponding edge in $F(t_0t')$ gives the set $o(t_0)$, which is therefore a circuit of $\tau(t_0)$. 

For any $t \neq t_0$, let $t_-$ be the neighbour of $t$ in the direction of $t_0$, and let $v_ev_f$ be any edge in $F(t_-t)$. Then there is an arc $A$ in $O$ from $v_e$ to $v_f$ using only edges of $E \upset t$. $A$ can be broken into finitely many arcs each of which uses either only edges in $E(t)$ or else only edges not in $E(t)$, with consecutive arcs along $A$ being of opposite types. For each arc using only edges not in $E(t)$ there is some $t'$ adjacent to $t$ such that that arc only uses edges from $E \upset t'$. Replacing each such arc with the corresponding edge in $F(tt')$ gives a path $P(ef)$ of $\tau(t)$, which together with $v_ev_f$ itself gives a circuit $o(v_ev_f)$ of $\tau(t)$. Then $o(t)$ is the union of the vertex-disjoint circuits $o(v_ev_f)$, completing the proof. 
\end{proof}

\begin{lem}
In the context of \autoref{getcircs:rep}, for any $\Psi$-vector $v$ of $V(G, T)$, $\underline v$ is a union of $\Psi$-circuits. For any vector $w$ of $(V(G, T, \Psi))^{\perp}$, $\underline w$ is a union of $\Psi^{\complement}$-bonds.
\end{lem}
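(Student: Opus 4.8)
The plan is to prove the first statement in full and then describe the (formally dual) changes needed for the second, in which circuits are replaced by bonds, $M_C(G')$ by $M_{FC}(G')$, and $\Psi$ by $\Psi\ct$. Throughout I identify the ends of $G'$, of $G$ and of $T$ via the canonical homeomorphisms; note also that, $G$ being simple with all torsos finite, $G$ and hence $G'$ is locally finite, so that the cocircuits of the topological cycle matroid $M_C(G')$ are exactly the finite bonds of $G'$, and the cocircuits of the finite cycle matroid $M_{FC}(G')$ are exactly all the bonds of $G'$ (see \cite{matroid_axioms}).

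Fix a $\Psi$-vector $v$ of $V(G, T)$ with support $\underline v$. First I would establish two facts. \emph{Fact A:} every end $\omega$ of $G'$ in the $|G'|$-closure of $\underline v$ lies in $\Psi$. Given a finite set $S$ of nodes of $T$, the finite vertex set $S' = \bigcup_{s \in S} s$ of $G'$ separates $G'$ at least as strongly as $S$ separates $T$, so the basic neighbourhood $\hat C(S', \omega)$ meets $\underline v$ in the interior of some edge $f$; since $f$ is not a dummy edge it belongs to a unique torso $\tau(t_f)$, whence $v(t_f) \neq 0$, and the location of $f$ forces $t_f$ into the component of $T - S$ containing $\omega$. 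As $S$ was arbitrary, $\omega$ lies in the closure of $\{t \mid v(t) \neq 0\}$ and so in $\Psi$. \emph{Fact B:} $\underline v$ meets no finite bond of $G'$ just once — a finite bond is in particular a minimal nonempty $\Psi$-bounded cut, hence a $\Psi\ct$-bond of $G'$, hence by \autoref{getcircs:rep} the support of a $\Psi\ct$-vector $w$ of $(V(G, T))^\perp$, and then \autoref{repO1tame} gives $|\underline v \cap \underline w| \neq 1$.

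Now by Fact B and \autoref{is_scrawl} applied to $M_C(G')$, the set $\underline v$ is a scrawl of $M_C(G')$: for every $e \in \underline v$ there is a topological circle $O$ of $|G'|$ with $e \in \underline O \se \underline v$. By Fact A the closure $\overline O$ meets $\Omega(G')$ only inside $\Psi$, so $\overline O$ lies in $|G'|_\Psi$; its image under the quotient map $|G'|_\Psi \to \widetilde{G'}_\Psi$ is then a union of $\Psi$-circuits of $G'$ (a single one, unless the quotient identifies distinct vertices of $\overline O$, in which case $\overline O$ is merely split into smaller topological circles), one of which contains $e$. Hence $\underline v$ is a union of $\Psi$-circuits. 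For the second statement I would argue symmetrically with a $\Psi\ct$-vector $w$ of $(V(G, T))^\perp$: every finite cycle of $G'$ is a $\Psi$-circuit and so, by \autoref{getcircs:rep}, the support of a $\Psi$-vector of $V(G, T)$, so \autoref{repO1tame} shows $\underline w$ meets no finite cycle just once; then \autoref{is_scrawl} applied to $M_{FC}(G')$ makes $\underline w$ a union of bonds of $G'$, and by the analogue of Fact A (using that $w$ is a $\Psi\ct$-vector) the $|G'|$-closure of $\underline w$ meets $\Omega(G')$ only inside $\Psi\ct$, so every bond $b \se \underline w$ has $|G'|$-closure containing no end of $\Psi$, i.e.\ is $\Psi$-bounded, and hence is a $\Psi\ct$-bond. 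The two delicate points are the bookkeeping in Fact A translating ``near an end of $G'$'' into ``near an end of $T$'', and — for the first statement only — controlling the quotient $|G'|_\Psi \to \widetilde{G'}_\Psi$, which is harmless precisely because it can only refine a topological circle into a union of smaller $\Psi$-circuits, which is all that we need.
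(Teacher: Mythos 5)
Your proof is correct and follows essentially the same route as the paper's: use \autoref{getcircs:rep} together with \autoref{repO1tame} to see that $\underline v$ meets no finite bond of $G'$ just once, apply \autoref{is_scrawl} to the topological cycle matroid of $G'$ to write $\underline v$ as a union of topological circuits, and observe that every end in the closure of such a circuit lies in the closure of $\{t \in V(T) \mid v(t) \neq 0\}$ and hence in $\Psi$, with the dual argument handling $w$. The extra care you take over the quotient $|G'|_\Psi \to \widetilde{G'}_\Psi$ is harmless but not needed here, since $G'$ is locally finite and the quotient identifies nothing.
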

\begin{proof}
By \autoref{getcircs:rep} and \autoref{repO1tame}, $\underline v$ never meets a finite bond of $G'$ just once and so, by \autoref{is_scrawl} it is a union of topological circuits of $G'$. Each such circuit is a $\Psi$-circuit since every end in its closure is in the closure of $\{t \in V(T) | v(t) \neq 0\}$ and so is in $\Psi$. The proof for $w$ is analogous.
\end{proof}

In fact, the results above apply to all locally finite graphs.

\begin{lem}
Any connected locally finite graph $G$ can be given a tree structure.
\end{lem}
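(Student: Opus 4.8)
The plan is to reduce the statement to constructing a suitable partition of $V(G)$, and then to build that partition greedily, growing the tree one leaf at a time. First observe that it suffices to find a partition $\mathcal P$ of $V(G)$ into finite, nonempty, connected sets such that the graph obtained by contracting each part and discarding loops and parallel edges is a tree; this tree, together with $\mathcal P$, is then a tree structure on $G$. If $G$ is finite we may take $\mathcal P=\{V(G)\}$, so assume $G$ is infinite and fix an enumeration $V(G)=\{v_0,v_1,\dots\}$.

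The construction proceeds in stages. At stage $n$ I would maintain a finite family $\mathcal P_n$ of pairwise disjoint finite nonempty connected sets with connected union $W_n$, a finite tree $T_n$ on vertex set $\mathcal P_n$ in which two parts are adjacent exactly when they contain adjacent vertices of $G$, and the key invariant $(\star)$: every component of $G-W_n$ has all of its neighbours in $W_n$ contained in a single part of $\mathcal P_n$. Start with $\mathcal P_0=\{\{v_0\}\}$. At stage $n$, let $v_m$ be the least-indexed not-yet-covered vertex, let $D$ be the component of $G-W_{n-1}$ containing it, and let $Q$ be the unique part meeting $N(D)$ (given by $(\star)$). Since $Q$ is finite, $N(Q)$ is finite, so $D\cap N(Q)$ is finite; using that $G[D]$ is connected, choose a finite connected subgraph of $G[D]$ whose vertex set $P$ contains $(D\cap N(Q))\cup\{v_m\}$, add $P$ as a new part, and join it to $Q$ in the tree. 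The crucial point is that by forcing $P\supseteq D\cap N(Q)$, every component of $G[D\setminus P]$ has no neighbour left in $Q$, hence all of its neighbours lie in $P$ — which is exactly what re-establishes $(\star)$. It is then routine to check that $P$ is $G$-adjacent to $Q$ and to no other already-present part, so the ``adjacent iff share an edge'' property persists and we have only added a leaf; and since $v_m$ gets covered while the least uncovered index strictly increases, $\bigcup_n W_n=V(G)$. Taking $\mathcal P=\bigcup_n\mathcal P_n$ and $T=\bigcup_n T_n$ then yields the desired tree structure.

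I expect the main obstacle to be precisely the design of the invariant $(\star)$ and the ``swallow the interface $D\cap N(Q)$'' rule that preserves it. The obvious first attempt — refine the breadth-first-search spheres $L_n$ into their connected pieces and merge pieces whenever they would create a cycle — fails, because a single part can then be forced to absorb a new sphere at every stage and so become infinite (this already happens for $\mathbb Z\times K_2$ rooted at one vertex). The role of $(\star)$ is to make every new part safely final the instant it is created, so that no merging is ever needed and the tree is grown purely by adding leaves; getting that invariant right is where the real work lies, and once it is in place the remaining verifications are bookkeeping.
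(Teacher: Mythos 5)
Your construction is correct, but it is genuinely different from the one in the paper. The paper's proof starts from a normal spanning tree $U$ of $G$ rooted at $v_0$, builds the finite vertex sets $V_{n+1} = N(V_n)\!\downarrow \cup\, \delta(V_n)$, and takes as parts the sets $t(v) = \{v' \in V_{n+1} \mid v \le v'\}$ for $v$ minimal outside $V_n$; the tree order on the parts is inherited from the tree order on $U$, and normality of $U$ is what guarantees that parts containing adjacent vertices of $G$ are comparable, hence adjacent, in $T$. You instead grow the partition greedily one leaf at a time, and your invariant $(\star)$ --- every component $D$ of $G - W_n$ sees only one existing part --- plays exactly the role that normality plays in the paper: it forces any $G$-edge from a new part into the already-built region to land in the designated parent $Q$, so the quotient stays a tree. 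Your preservation argument is sound: since $N(D)\cap W_{n-1}\subseteq Q$ and you swallow all of $D\cap N(Q)$ into the new part $P$, no vertex of $D\setminus P$ can be adjacent to $W_{n-1}$ at all, so each component of $G[D\setminus P]$ sees only $P$; local finiteness is used precisely where it must be, to keep $D\cap N(Q)$ finite. What the paper's route buys is brevity given the normal-spanning-tree machinery it already deploys elsewhere (e.g.\ in the Undomination-graph construction of Section 8), plus a canonical, non-greedy description of the parts; what your route buys is self-containedness --- you use nothing beyond connectedness and local finiteness (which together give countability for your enumeration) --- and your diagnosis of why the naive BFS-sphere refinement fails is accurate and worth keeping as motivation for the invariant.
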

\begin{proof}
Let $U$ be a normal spanning tree of $G$, with root node $v_0$. For any down-closed set $X$ of vertices of $G$ we take $\delta(X)$ to be the set of minimal vertices not in $X$ (here minimality is with respect to the tree order $\leq$ on $U$). For any set $X$ of vertices of $G$, let $X \downset$ be the down-closure of $X$ in $U$, and $N(X)$ the set of vertices adjacent to or in $X$. We build a sequence of finite subsets $V_n$ of the vertices of $G$ by setting $V_0 = \varnothing$ and $V_{n+1} = N(V_n) \downset \cup \delta(V_n)$. For any $n$ and any vertex $v \in \delta(V_n)$, we set $t(v) = \{v' \in V_{n+1} | v \leq v'\}$. Let $T$ be the set of sets $t(v)$ arising in this way. By construction, $T$ is a partition of the vertices of $T$ into finite, connected sets. We order the vertices of $T$ by $t(v) \leq t(v')$ if and only if $v \leq v'$ in the tree order on $N$. This gives a tree-order (with root $t(v_0)$) on $T$, making $T$ a tree. It remains to show that distinct vertices of $T$ are adjacent if and only if 
they 
contain 
adjacent vertices of $G$.

If $t(v)$ and $t(v')$ are adjacent in $T$, with $v < v'$, then let $n$ be such that $v \in \delta(V_n)$. As $v < v'$, $v' \not \in V_n \cup \delta(V_n)$ so $v' \not \in V_{n+1}$. Let $w$ be minimal such that $v < w \leq v'$ and $w \not \in V_{n+1}$. Then $w \in \delta(V_{n+1})$ and we have $t(v) < t(w) \leq t(v')$ in $T$, so $w = v'$. Thus the predecessor $v^-$ of $v'$ in $U$ is in $V_{n+1}$, but it can't be in $V_{n}$ since $v' > v$. So $v^- \in t(v)$ and so there is an edge from $t(v)$ to $t(v')$.

Now let $v \neq v'$ be such that there is an edge from $t(v)$ to $t(v')$ in $G$. Say the endpoints of this edge are $w \in t(v)$ and $w' \in t(w)$. Since $U$ is normal we have without loss of generality that $w < w'$. Let $n$ be such that $v \in \delta(V_n)$. Then $v \leq w < w'$, so since $w' \not \in t(v)$ we have $w' \not \in V_{n+1}$. Since $w \in t(v)$ we have $w \in V_{n+1}$ and so $w' \in V_{n+2}$, so that $v' \in \delta(V_{n+1})$. Since both $v$ and $v'$ lie below $w'$, we have $v < v'$ and so $v$ and $v'$ are adjacent in $T$.
\end{proof}

\section{Determinacy and $(O2)$ for representable trees of matroids}\label{sec:arbi_sums}

We fix a finite field $k$, a $k$-representation $V$ of a tree $\Tcal = (T, M)$ of finite matroids and a set $\Psi$ of ends of $T$, together with a partition $E = \{e\} \dot \cup P_{Co} \dot \cup P_{De}$ of the ground set of $\Tcal$. 
Let $t_0$ be the node of $T$ such that $e \in E(t_0)$.
For a function $f$ whose domain is a subset of $\bigcup_{t\in V(T)}E(t)$, 
we obtain a function $\bar f\colon \bigcup_{t\in V(T)}E(t)\to k$  from $f$ by assigning to each value in $\bigcup_{t\in V(T)}E(t)$ 
but not in the domain of $f$ the value zero.

\begin{dfn}
The {\em circuit game} $\Gcal = \Gcal(V, \Psi, P_{Co}, P_{De})$ is played between two players, called Sarah and Colin, as follows:

Play alternates between the players, with Sarah making the first move. 
At any point in the game there is a {\em current node} $t_c \in V(T)$, 
a {\em current challenge set $S_c\se E(t_c)$}
and a 
{\em current challenge function $x_c\colon S_c\to k$}.
Initially we set $t_c = t_0$, $S_c=\{e\}$ and $x_c(e)=1$. 
For any $n$ the $(2n-1)$\textsuperscript{st} move is made by Sarah: she must play a 
vector $v_n\in V(t_c)$ such that $\bar v_n\restric_{P_{De}}=0$
and $ \bar v_n\not \perp \bar x_c$.
 Then the $2n$\textsuperscript{th} move is made by Colin: he must play a node $t_n$ adjacent to $t_c$ and further away from $t_0$ than $t_c$ is and
a vector $x_n\in k^{E(t_ct_n)}$ 
such that $\bar v_n\not \perp \bar x_n$.
After he does this, the current node is updated to $t_n$, the current challenge set 
to $S_n=E(t_nt_{n-1})$ and the current challenge function to $x_n$. 
If play continues forever, then Sarah wins if the end $\omega$ of $T$ containing $(t_n | n \in \Nbb)$ is in $\Psi$, and Colin wins if $\omega \in \Psi^{\complement}$.

The {\em cocircuit game} $\Gcal^* = \Gcal^*(V, \Psi, P_{Co}, P_{De})$ is the game like the dual circuit game $\Gcal(V^\perp, \Psi\ct, P_{De}, P_{Co})$, but with the roles of Sarah and Colin reversed. We will also use a different notation for the cocircuit game, putting stars on the notation for the circuit game. Thus for example the current challenge function is denoted $x_c^*$ and Colin's $n$\textsuperscript{th} move is denoted $v_n^*$.
\end{dfn}

\begin{lem}\label{Sarah_win}
Sarah has a winning strategy in $\Gcal$ if and only if there is a 
$\Psi$-vector $v$ of $V$ such that $e\in \underline v \se \{e\} \dot\cup P_{Co}$.
\end{lem}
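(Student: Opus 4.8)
The plan is to prove both implications directly, mirroring the structure of the corresponding lemma for trees of matroids of overlap $1$ (the Lemma before \autoref{2sumtransfer}), but now tracking vectors rather than circuits.

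First I would do the easy direction. Suppose there is a $\Psi$-vector $v$ of $V$ with $e\in\underline v\se\{e\}\dot\cup P_{Co}$. I claim Sarah wins by always playing $v(t_c)$. I must check this is a legal move: since $\underline v\cap P_{De}=\varnothing$ we have $\bar v(t_c)\restric_{P_{De}}=0$, and I need $\bar v(t_c)\not\perp\bar x_c$. Initially $x_c$ is supported on $\{e\}$ with value $1$ and $v(t_c)(e)\neq 0$ since $e\in\underline v\se E(t_0)$, so this holds. Inductively, after Colin plays $t_n$ with challenge $x_n\in k^{E(t_ct_n)}$ satisfying $\bar v_n\not\perp\bar x_n$ where $v_n=v(t_{n-1})$, I use the compatibility condition $v(t_{n-1})\restric_{E(t_{n-1}t_n)}=v(t_n)\restric_{E(t_{n-1}t_n)}$ together with $x_n$ being supported on $E(t_{n-1}t_n)$ to get $\bar v(t_n)\not\perp\bar x_n$, so Sarah's next move $v(t_n)$ is legal. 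Hence play either terminates (Sarah wins by default once Colin cannot move) or continues forever along a ray whose end lies in the closure of $\{t\mid v(t)\neq 0\}\se$ the support-tree of $v$, hence in $\Psi$ by the definition of $\Psi$-vector; so Sarah wins.

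For the converse, suppose $\sigma$ is a winning strategy for Sarah in $\Gcal$. I would extract a $\Psi$-vector as follows. Since Sarah's moves are determined by $\sigma$ and the only branching comes from Colin's choices, for each node $t$ of $T$ there is at most one finite play according to $\sigma$ after an odd number of moves whose current node is $t$ (Colin's moves must be the vertices along the path from $t_0$ to $t$, and his challenge vectors are also forced up to... — here I need to be slightly careful, since Colin also chooses $x_n\in k^{E(t_ct_n)}$, so different plays could reach the same node with different challenge functions). To handle this I would let $C$ be the set of nodes reachable by \emph{some} $\sigma$-play and, for each such $t$, let $v(t)\in V(t)$ be Sarah's last move in \emph{any} such play reaching $t$; I then need to argue this is well-defined, i.e. independent of the play. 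The cleanest way is to observe that once Sarah commits to $v_1=\sigma(\emptyset)$ at the root, the value of $v_1\restric_{E(t_0t_1)}$ is fixed, and Colin's legal challenges $x_1$ are exactly those not orthogonal to $\bar v_1$; but whatever $x_1$ Colin picks, Sarah's response $v_2$ must agree with $v_1$ on $E(t_0t_1)$ — no, that's not forced by the rules. So the genuine subtlety is that $v(t)$ might depend on Colin's challenge choices. I expect this to be the main obstacle, and the resolution is to build $C$ and $v$ recursively, processing nodes in order of distance from $t_0$, and when extending to a child $t'$ of $t\in C$ with $e(tt')\in\dots$ — more precisely for each edge $tt'$ with $v(t)\restric_{E(tt')}\neq 0$, Colin has a legal move at $t$ taking challenge $x=v(t)\restric_{E(tt')}$ (viewed in $k^{E(tt')}$), which is not orthogonal to $\bar v(t)=\bar v_n$; feed Sarah this one canonical play, and let $v(t')$ be her response. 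Because Sarah's strategy wins against \emph{this} particular run of Colin, the resulting pair $(C,v)$ is consistent: $v(t')\restric_{E(tt')}=v(t)\restric_{E(tt')}$ holds since $v(t')=v_{n+1}$ is a legal reply, so $\bar v_{n+1}\not\perp\bar x_{n+1}$ where $x_{n+1}$ would be the next challenge — wait, I need the agreement on $E(tt')$, not merely non-orthogonality to $x$.

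So let me reconsider: I think the correct construction is to have Colin, in the canonical play reaching $t'$, first force the node $t'$, and the challenge function he passes is then $x=v(t)\restric_{E(tt')}$; legality needs $\bar v(t)\not\perp\bar x$, i.e. $\sum_{f\in E(tt')}v(t)(f)^2\neq 0$, which may fail in characteristic $2$! This is genuinely the crux, and I suspect the actual proof here is more delicate than the overlap-$1$ case and may require choosing Colin's challenge more cleverly (perhaps not equal to the restriction of $v(t)$ but to some functional detecting a single coordinate) — or the lemma is applied in a setting where this is circumvented. Given the constraints, I would: (i) define $C$ as the set of $\sigma$-reachable nodes; (ii) for each such $t$ argue by induction on $d(t)$ that Sarah's response is uniquely determined \emph{on $E(tt')$ for each child $t'\in C$} — the key point being that any two $\sigma$-plays reaching $t'$ must have passed through $t$ with challenge functions both not orthogonal to the common value $v(t)$, and since Colin at $t'$ only needs $\bar v_{n+1}\not\perp\bar x_{n+1}$, the constraint propagates; (iii) set $v(t)$ to be that common response (extending arbitrarily off the overlaps if needed, though in fact $V(t)$-membership should pin it down); (iv) check $v$ is a $\Psi$-vector: the compatibility equations hold by construction, and every end in the closure of the support lies on a ray through infinitely many $\sigma$-reachable nodes, hence corresponds to an infinite $\sigma$-play, which Sarah wins, so that end is in $\Psi$; (v) check $e\in\underline v$ since $v(t_0)(e)=1\neq 0$, and $\underline v\cap P_{De}=\varnothing$ since every $v_n$ played satisfies $\bar v_n\restric_{P_{De}}=0$. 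I would flag step (ii)–(iii), the well-definedness of $v$ in the presence of Colin's challenge-function freedom and the characteristic-$2$ pairing issue, as the part needing the most care and likely the point where a small additional observation from the definition of $\Gcal$ (e.g. that Colin may always choose a challenge of the form $\delta_f$ for a single $f\in E(tt')$ with $v_n(f)\neq 0$) is invoked.
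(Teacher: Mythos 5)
Your easy direction is essentially the paper's argument and is fine. In the hard direction, however, you have correctly located the obstacle but not the idea that overcomes it, and the routes you sketch for getting around it do not work. You try to extract the vector $v$ by selecting, for each reachable node $t$, a single canonical $\sigma$-play and taking $v(t)$ to be Sarah's response in that play, and you then hope to prove that this is well-defined, i.e.\ that Sarah's responses to Colin's different legal challenge functions at $t$ all agree on the relevant overlap $E(tt')$. There is no reason for this to be true: a winning strategy may answer the challenges $\delta_f$ and $\delta_{f'}$ (for distinct $f,f'\in E(tt')$) with vectors of $V(t')$ that differ on $E(tt')$, and your fallback of having Colin pass the challenge $v(t)\restric_{E(tt')}$ founders, as you yourself note, on the fact that $\sum_f v(t)(f)^2$ can vanish in characteristic $2$. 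So steps (ii)--(iii) of your plan cannot be completed as stated.

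The paper's resolution is not to make $v(t)$ well-defined but to \emph{average over all plays}. The key step is \autoref{sub_lem_Sarah}: for a play $r$ of length $n$ reaching node $t(r)$ and a child $t'$, every $w\in k^{E(t(r)t')}$ not orthogonal to $r_n$ admits a legal response $v\in P_{t'}(r)$ with $\bar w\not\perp\bar v$; taking contrapositives and applying \autoref{span} gives $r_n\restric_{E(t(r)t')}\in\langle v\restric_{E(t(r)t')}\mid v\in P_{t'}(r)\rangle$. Fixing coefficients $\lambda_{r.v}$ witnessing these spanning relations, one defines $v(t)=\sum_{r}r_n\cdot\prod_{i=2}^n\lambda_{r\restric_i}$, summing over \emph{all} $\sigma$-plays $r$ reaching $t$; the compatibility condition $v(t)\restric_{E(tt')}=v(t')\restric_{E(tt')}$ then follows by substituting the spanning relations and regrouping, with no uniqueness claim needed. (Finiteness of $k$ keeps these sums finite.) Finally, your step (iv) also needs more than you say: to show that an end in the closure of the support of $v$ lies in $\Psi$ one must extract a single infinite $\sigma$-play from the fact that infinitely many nodes along a ray carry nonzero contributions, which the paper does via K\"onig's Infinity Lemma applied to the sets of plays reaching each $t_n$. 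The missing span-and-linear-combination argument is the substance of the lemma, so the proposal has a genuine gap.
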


\begin{proof}
Suppose first that there is such a vector $v$. 
Then Sarah can win in $\Gcal$ by always choosing the vector $v(t_c)$ when it is her turn to play. Indeed, for any edge
$tt'\in E(\Tcal)$, the vectors $v(t)$ and $v(t')$ coincide 
when restricted to $E(tt')$.
Hence if 
$ \bar v_n\not\perp \bar x_n$, then also
$\bar v_{n+1}\not\perp \bar x_n$. 
So choosing  $v(t_c)$ is a legal move and since $v$ is a vector, 
the nodes $t_n$ from any play that is played according to this strategy will converge to some 
end in $\Psi$.

Suppose for the converse that Sarah has a winning strategy $\sigma$ in $\Gcal$. 
For each $n$, let $R_n$ be the set of sequences $(v_i|i\leq n)$ which can arise
as the first $n$ moves made by Sarah in a game played according to $\sigma$.

\begin{sublem}\label{sub_lem_Sarah}
Let $r\in R_n$ and let $t(r)$ be the node of $T$ that is current when $r_n$ is played. Let $t'$ be a node of $T$ that is adjacent to $t(r)$ and further
away from $t_0$ than $t(r)$. Let $P_{t'}(r)$ be the set of those $v\in V(t')$
such that the extension $r.v$ of the sequence $r$ by $r_{n+1}=v$ is in $R_{n+1}$.

Then $r_n\restric_{E(t(r)t')}\in \langle v\restric_{E(t(r)t')}|v\in P_{t'}(r)\rangle$.
\end{sublem}

Before proving \autoref{sub_lem_Sarah}, let us see how to derive \autoref{Sarah_win} from it.
By \autoref{sub_lem_Sarah}, for each $r\in R_n$, $t(r)$, $t'$ and $P_{t'}(r)$ as in that Lemma,
we can choose a representation.
\[
 r_n\restric_{E(t(r)t')} = \sum_{v\in P_{t'}(r)} \lambda_{r.v} v\restric_{E(t(r)t')}
\]

Let $r\restric_{i}$ denote the initial sequence of $r$ of length $i$.
For any $t\in V(T)$ at distance $n-1$ from $t_0$, we set:
\[
 v(t)=\sum_{r\in R_n: \ t(r)=t} r_n \cdot \prod_{i=2}^n \lambda_{r\restric_{i}}
\]
 Since each $r_n$ in this expression is in $V(t)$, the vector $v(t)$
is in $V(t)$. And also $e\in \underline v\se  \{e\}\dot\cup P_{Co} $. Next 
we check $t\mapsto v(t)$ is a $\Psi$-vector of $V$. 
For this, we first check that for any $tt'\in E(T)$ with $t'$ further away from $t_0$ than $t$ we have $v(t)\restric_{E(tt')}= v(t')\restric_{E(tt')}$:

\begin{eqnarray*}
v(t)\restric_{E(tt')}&=& \sum_{r\in R_n: \ t(r)=t} r_n\restric_{E(tt')} \cdot \prod_{i=2}^n \lambda_{r\restric_{i}} \\
 &=& \sum_{r\in R_n: \ t(r)=t} \left (\sum_{v\in P_{t'}(r)} \lambda_{r.v} v\restric_{E(tt')}\right ) \cdot \prod_{i=2}^n \lambda_{r\restric_{i}} \\
 &=& \sum_{r\in R_{n+1}: \ t(r)=t'} r_{n+1}\restric_{E(tt')} \cdot \prod_{i=2}^{n+1} \lambda_{r\restric_{i}} \\
 &=& v(t')\restric_{E(tt')}
\end{eqnarray*}

Next, suppose for a contradiction that there is a sequence $t_n$ with the support of $v(t_n)$ nonempty such that its limit is not in $\Psi$. 
Without loss of generality, we may assume that $t_n$ has distance at least $n$ from $t_0$. Hence for each $n\in\Nbb$ there is some $j\geq n$ and some $r\in R_j$ such that $r_j\neq 0$ and $t(r)=t_n$. Since $ 0\perp x$ for every $x$, no $r\restric_i$ can be $0$ for any $i\leq j$ since the play would then be finished after the $i$\supth \ move, which is not true. So without loss of generality, we may assume that $t_n$ has distance precisely $n$ from $t_0$.

Now we apply the Infinity Lemma where we take the $V_n$ from that Lemma to be the sets $\{r\in R_n| t(r)=t_n, r_n\neq0\}$. And we join $r\in R_{n+1}$
to $r'\in R_n$ if and only if $r\restric_{n}=r'$. Note that each $V_n$ is finite since $k$ is finite.
Hence we find a sequence of $a^n\in R_n$ such that $a^{n+1}\restric_{n}=a^n$.
This gives rise to an infinite play according to $\sigma$ whose end is not in $\Psi$,
contradicting the fact that $\sigma$ is a winning strategy.
Thus $t\mapsto v(t)$ is a $\Psi$-vector of $V$.

Having shown how \autoref{Sarah_win} can be deduced from \autoref{sub_lem_Sarah},
it remains to prove \autoref{sub_lem_Sarah}.
For this, we fix a particular finite play of length $2n+1$ according to $\sigma$
and giving rise to $r$, and consider the situation just after this play.
For any $w\in k^{E(t(r)t')}$ with $ \bar w\not\perp\bar r_n$ Sarah 
has a response prescribed by $\sigma$, that is, there is some $v\in P = P_{t'}(r)$ such that
$\bar w\not\perp\bar v$.
In other words, any $w\in k^{E(t(r)t')}$ that is not orthogonal to $x_n$
is also not orthogonal to some $v\in P$.
Put yet another way, any $z\in k^{E(t(r)t')}$ that 
is orthogonal to every $v\in P$ is orthogonal to $x_n$.
By \autoref{span}, $r_n\restric_{E(t(r)t')}\in \langle v\restric_{E(t(r)t')}|v\in P\rangle$.
This completes the proof of \autoref{sub_lem_Sarah}, and so of \autoref{Sarah_win}.
\end{proof}

\begin{cor}
Colin has a winning strategy in $\Gcal^*$ if and only if there is a 
$\Psi\ct$-vector $v^*$ of $V^\perp$ such that $e\in \underline v^* \se \{e\} \dot\cup P_{De}$.
\qed
\end{cor}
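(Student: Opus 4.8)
The plan is to unwind the definition of the cocircuit game and reduce directly to \autoref{Sarah_win}. Recall that $\Gcal^* = \Gcal^*(V, \Psi, P_{Co}, P_{De})$ was defined to be the game $\Gcal(V^\perp, \Psi\ct, P_{De}, P_{Co})$ with the roles of Sarah and Colin interchanged. Consequently Colin has a winning strategy in $\Gcal^*$ if and only if Sarah has a winning strategy in $\Gcal(V^\perp, \Psi\ct, P_{De}, P_{Co})$.

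Next I would check that \autoref{Sarah_win} is applicable to this dual game. This is immediate: $V^\perp$ assigns to each node $t$ the subspace $V(t)^\perp$ of $k^{E(t)}$, and $M(V(t)^\perp) = M(t)^*$, so $(T, M^*)$ together with the representation $V^\perp$ is again a tree of finite matroids with a $k$-representation, and $\Psi\ct$ is a set of ends of the same tree $T$. Thus \autoref{Sarah_win}, applied with $V$ replaced by $V^\perp$, $\Psi$ replaced by $\Psi\ct$, and the partition $E = \{e\} \dot\cup P_{Co} \dot\cup P_{De}$ replaced by $E = \{e\} \dot\cup P_{De} \dot\cup P_{Co}$ (so that $P_{De}$ now plays the role of the set called $P_{Co}$ in the statement of that lemma), yields: Sarah has a winning strategy in $\Gcal(V^\perp, \Psi\ct, P_{De}, P_{Co})$ if and only if there is a $\Psi\ct$-vector $v^*$ of $V^\perp$ with $e \in \underline{v^*} \se \{e\} \dot\cup P_{De}$.

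Combining the two equivalences gives exactly the statement of the corollary. There is no real obstacle here; the argument is a pure dualization, which is precisely why the statement was recorded with a \qed rather than a separate proof. The only thing one must take a moment to verify is that the swap of $P_{Co}$ and $P_{De}$ (and of $\Psi$ and $\Psi\ct$) built into the definition of $\Gcal^*$ matches the swap that appears when one transposes the hypothesis of \autoref{Sarah_win}, which it does.
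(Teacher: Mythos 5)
Your argument is correct and is exactly the dualization the paper has in mind: the corollary is stated with a \qed precisely because it follows immediately from the definition of $\Gcal^*$ as $\Gcal(V^\perp, \Psi\ct, P_{De}, P_{Co})$ with the players' roles swapped, together with \autoref{Sarah_win} applied to that dual game. Your check that $V^\perp$ is again a $k$-representation of a tree of finite matroids on the same tree $T$, so that \autoref{Sarah_win} applies, is the only point requiring any care, and you handle it correctly.
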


In order to relate $(O2)$ to determinacy of $\Gcal$, we need to show that $\Gcal$ and $\Gcal^*$ are closely related games.

\begin{lem}\label{arbi_sumtransfer}
Colin has a winning strategy in $\Gcal$ if and only if he has one in $\Gcal^*$.
\end{lem}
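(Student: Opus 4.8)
The plan is to prove the two implications separately, mirroring the proof of \autoref{2sumtransfer}; the `if' direction ($\Gcal^*$-win $\Rightarrow$ $\Gcal$-win) is the cleaner one and I would deduce it from the corollary just stated, while the `only if' direction is the analogue of the harder half of \autoref{2sumtransfer} and is where $(O2)$ for the finite matroids $M(t)$ does its work. For the `if' direction, assume Colin has a winning strategy in $\Gcal^*$. By the corollary there is a $\Psi\ct$-vector $v^*$ of $V^\perp$ with $e \in \underline{v^*} \se \{e\}\dot\cup P_{De}$, and after rescaling we may assume $v^*(t_0)(e) = 1$. I would let Colin play $\Gcal$ by ``following $v^*$'', maintaining the invariant that the current challenge function is $x_c = v^*(t_c)\restric_{S_c}$ (true initially). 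When Sarah plays $v_n \in V(t_c)$, so that $\bar v_n\restric_{P_{De}} = 0$ and $\bar v_n \not\perp \bar x_c$, I use that $v_n \in V(t_c)$ and $v^*(t_c) \in V^\perp(t_c)$ are orthogonal over $E(t_c)$, i.e.\ $\sum_{f \in E(t_c)} v_n(f) v^*(t_c)(f) = 0$. Since $v_n$ vanishes on $P_{De}$ and $v^*(t_c)$ vanishes on $P_{Co}$ (as $\underline{v^*}$ avoids $P_{Co}$), the only surviving terms are those over $S_c$ and over the dummy edges of $t_c$ leading away from $t_0$; the $S_c$-part equals $\sum_{f\in S_c}v_n(f)x_c(f) \neq 0$, so the away-part is nonzero, and hence there is a neighbour $t_n$ of $t_c$ further from $t_0$ with $\sum_{f\in E(t_ct_n)}v_n(f)v^*(t_c)(f) \neq 0$. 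Colin plays this $t_n$ together with the challenge function $x_n := v^*(t_n)\restric_{E(t_ct_n)}$; this is legal because $v^*$ agrees on the overlap $E(t_ct_n)$, so $\bar v_n \not\perp \bar x_n$, and it restores the invariant. Thus Colin never gets stuck, and each $v^*(t_n)$ is nonzero (being nonzero on $E(t_{n-1}t_n)$), so if play lasts forever the resulting end lies in the closure of $\{t \mid v^*(t)\neq 0\}$ and hence in $\Psi\ct$: Colin wins.

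For the `only if' direction, assume Colin has a winning strategy $\sigma$ in $\Gcal$. As in \autoref{2sumtransfer}, I would let Colin play $\Gcal^*$ while maintaining an imagined play of $\Gcal$ in which he follows $\sigma$, keeping the current node, the current challenge set, and (in a suitable correspondence) the current challenge function of the two games synchronised. The only nontrivial move for Colin is his vector $v_n^* \in V^\perp(t_c)$: he must pick it, subject to $\bar v_n^*\restric_{P_{Co}} = 0$ and $\bar v_n^* \not\perp \bar x_c^*$, so that \emph{every} legal reply $(t_n^*, x_n^*)$ that Sarah can then make in $\Gcal^*$ arises as $\sigma$'s response to some legal move $v_n$ of Sarah in $\Gcal$. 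Once this is done, Colin feeds the corresponding $v_n$ into his imagined play of $\Gcal$, keeps everything synchronised, and — $\sigma$ being winning — wins $\Gcal^*$ if play lasts forever, since the common node sequence then converges to an end of $\Psi\ct$.

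I expect the main obstacle to be precisely the construction of $v_n^*$ in the `only if' direction, together with pinning down the right correspondence between the two challenge functions so that the argument closes up. Whereas in \autoref{2sumtransfer} Colin could simply reuse a dummy edge lying in both $o_n$ and $\sigma$'s move, here he needs a genuine piece of linear algebra: using \autoref{span} to turn the orthogonality constraints coming from $\sigma$'s possible responses at $t_c$ into a span condition, and then invoking $(O2)$ for the finite matroid $M(t_c)$ to produce a vector of $V^\perp(t_c)$ that meets all those constraints while remaining non-orthogonal to the current challenge. The bookkeeping needed to keep this consistent across infinitely many rounds — and to guarantee that, should no such $v_n^*$ exist, one can extract a legal move of Sarah in $\Gcal$ that $\sigma$ cannot answer, contradicting that $\sigma$ is a strategy — is the technical heart of the lemma.
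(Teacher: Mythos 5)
Your `if' direction is complete and correct. It differs mildly from the paper's: the paper has Colin simulate an auxiliary play of $\Gcal^*$ according to $\sigma^*$ move by move, whereas you first convert $\sigma^*$ into a $\Psi\ct$-vector $v^*$ via the preceding corollary and then have Colin ``follow'' $v^*$. The key computation is the same in both versions (orthogonality of $v_n$ and $v^*(t_c)$ over $E(t_c)$, vanishing on $P_{De}$ and $P_{Co}$, and the nonzero contribution over $S_c$ forcing a nonzero contribution over some $E(t_ct_n)$), and your packaging is arguably cleaner.

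The `only if' direction, however, is not a proof: you explicitly defer ``the construction of $v_n^*$'' and the accompanying bookkeeping, and this is precisely the content of the lemma in this direction. The paper fills this gap with a purely linear-algebraic sublemma (\autoref{Colin_calc}): letting $O$ be the set of Sarah's legal moves in the imagined game $\Gcal$ and, for $v \in O$, letting $t(v)$ and $x(v)$ be the node and challenge function prescribed by $\sigma$, one shows that the current challenge function decomposes as
$\bar x_{n-1}=\overline {v^*}+\overline{w} +\sum_{t} \sum_{x} \lambda_{t,x} \bar x$
with $v^*\in V(t_{n-1})^\perp$, $w$ supported on $E(t_{n-1})\cap P_{De}$, and the $\bar x$ ranging over prescribed challenge functions. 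This follows from \autoref{span} applied to the statement that every legal Sarah-move in $\Gcal$ is non-orthogonal to some prescribed $\bar x$; Colin then plays the component $v^*$, and the decomposition is also what guarantees that Sarah's reply $(t_n, x_n^*)$ lands in $T_n$ and is non-orthogonal to some prescribed $x_n\in P(t_n)$ (because on $E(t_{n-1}t_n)$ all terms of the decomposition other than those indexed by $t_n$ vanish). Your proposed mechanism --- ``invoking $(O2)$ for the finite matroid $M(t_c)$'' --- is the right move in the overlap-1 case (\autoref{2sumtransfer}), but it does not suffice here: Sarah's challenge functions are arbitrary vectors in $k^{E(t_ct_n)}$ rather than circuits or cocircuits, so matroid-level $(O2)$ gives no handle on them; the argument genuinely needs the vector-space decomposition above. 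Also, your requirement that \emph{every} legal reply of Sarah in $\Gcal^*$ ``arises as $\sigma$'s response'' is stronger than what is needed or achieved; what the decomposition yields is only that every legal reply is non-orthogonal to \emph{some} prescribed response at the same node, which is enough to continue the simulation.
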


\begin{proof}
For the `if' part, suppose that he has a winning strategy $\sigma^*$ in $\Gcal^*$. Then he can win in $\Gcal$ by playing as follows:

He should imagine an auxilliary play in the game $\Gcal^*$, in which he plays according to $\sigma^*$, and for which he should ensure that at any point the current node 
and current challenge set agree with those in $\Gcal$, and additionally ensure that 
$x_n=v_n^*\restric_{S_n}$ and $x_n^* = v_{n+1} \restric_{S_n}$. We shall assume, without loss of generality, that $v_1(e) = 1$ (otherwise we can just multiply $v_1$ by some constant to make this true).

Suppose Sarah makes some move $v_n$. Then $x_c^* = v_n \restric_{S_{n-1}}$: if $n = 1$ then this is true by our assumption, and otherwise it is true by the condition that $x_n^* = v_{n+1} \restric_{S_n}$.  
Let $v_n^*$ be the move in $\Gcal^*$ that is prescribed by $\sigma^*$. Then $\sum_{f\in S_{n-1}}v_n(f)v_n^*(f) = \sum_{f\in S_{n-1}}x_c^*(f)v_n^*(f)\neq 0$
but $ v_n\perp v_n^*$. Since the support of the map $f\mapsto v_n(f)v_n^*(f)$
consists of dummy edges only, there is some $t_n\in V(T)$ that is adjacent to $t_{n-1}$ and has distance $n$ from $t_0$, such that $\sum_{f\in E(t_{n-1}t_n)}v_n(f)v_n^*(f)\neq 0$.
Then Colin plays $t_n$, $S_n=E(t_{n-1}t_n)$ and $x_n=v_n^*\restric_{S_{n}}$. 
And he plays $v_n^*$ in the imagined cocircuit-game, and imagines that Sarah plays $x_n^* = v_{n+1} \restric_{S_n}$ there. Note that this is a legal move since $\sum_{f \in S_n} v_n^*(f) x_n^*(f) = \sum_{f \in S_n} x_n(f) v_{n+1}(f) \neq 0$. 
If the play of the circuit game continues forever, then the end $\omega$ containing $(t_n | n \in \Nbb)$ is in $\Psi^{\complement}$ since $\sigma^*$ is winning.

For the `only if' part, suppose that he has a winning strategy $\sigma$ in $\Gcal$. 
Then he can win in $\Gcal^*$ by playing as follows:

He should imagine an auxilliary play in the game $\Gcal$, in which he plays according to $\sigma$, and for which he should ensure that at any point the current node 
and current challenge set agree with those in $\Gcal^*$.

When it is his turn to move, either it is his first move, in which case we let $x_0^*$ be the function with support $\{e\}$ that sends $e$ to $1$ or Sarah has just played $x_{n-1}^*$ in $\Gcal^*$.
Then he imagines the corresponding game of $\Gcal$ where he has just played $x_{n-1}$,
or else it is his first move, in which case we set $x_0=x_0^*$.

Let $O$ be the set of Sarah's legal moves in $\Gcal$. For $v\in O$, let 
$t(v)$ and
$x(v)$ be the node and challenge function prescribed by $\sigma$.
Let $T_n=\{t(v)|v\in O\}$.
And for each $t\in T_n$, let $P(t)=\{x(v)|v\in O: t(v)=t\}$.

\begin{sublem}\label{Colin_calc}
There is some $v^*\in V(t_{n-1})^\perp$ and coefficients $\lambda_{t,x}\in k$ and a vector $w \in k^{E(t_{n-1}) \cap P_{De}}$ 
such that
$$\bar x_{n-1}=\overline {v^*}+\overline{w} +\sum_{t\in T_n} \sum_{x\in P(t)} \lambda_{t,x} \bar x.$$
\end{sublem}

Before proving \autoref{Colin_calc}, let us complete the description of his strategy.
In $\Gcal^*$, he plays $v_n^*=v^*$ - by the equation above the support of this vector cannot meet the set $P_{co}$.
Let  $t_n$ and $x_n^*$ be the node and challenge set that Sarah plays in her next move in $\Gcal^*$.
Then  by the choice of $v_n^*$, the node $t_n$ is in $T_n$, and $\bar x_n^*\not \perp \bar v_n^*$. 
Since $v_n^*$ restricted to $E(t_{n-1}t_n)$ is equal to 
$\sum_{x\in P(t_n)} \lambda_{t,x} x$, there is some $x_n\in P(t_n)$
with $x_n\not \perp x_n^*$. Then he imagines that she plays some $v\in O$
with $x(v)=x_n$, and that he then plays $t_n$ and $x_n$.
This completes the description of his strategy.
If play continues forever, then the end $\omega$ containing $(t_n^* | n \in \Nbb)$ is in 
$\Psi^{\complement}$ since $\sigma$ is winning.

Hence it remains to prove \autoref{Colin_calc}.
For this, by \autoref{span}, it remains to show that
 $(V^\perp\cup k^{E(t_{n-1}) \cap P_{De}} \cup \bigcup_{t\in T_n} \bigcup_{x\in P(t)} \bar x)^\perp \se\bar \{x_{n-1}\}^\perp$.
In other words, any $y$ that is not orthogonal to $x_{n-1}$ is not orthogonal
to some $v^*\in V^\perp$ or to some $\bar x$ or has support meeting $P_{De}$.
This follows from the fact that for every $v\in V$ with $v\not \perp x_{n-1}$ and $\underline v \cap P_{De} = \varnothing$,
there is some $x$ such that $v\not\perp \bar x$.
This completes the proof of \autoref{Colin_calc}, and so 
also the proof of \autoref{arbi_sumtransfer}.
\end{proof}

\begin{cor}
$(O2)$ holds for the partition $E = \{e\} \dot\cup P_{Co} \dot\cup P_{De}$ of the groundset of $\Tcal$ if and only if $\Gcal(V, \Psi, P_{Co}, P_{De})$ is determined.\qed
\end{cor}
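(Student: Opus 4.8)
The claim to prove is the corollary stating that $(O2)$ holds for the partition $E = \{e\} \dot\cup P_{Co} \dot\cup P_{De}$ if and only if $\Gcal(V, \Psi, P_{Co}, P_{De})$ is determined.

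The plan is to combine the three preceding results in exactly the way the analogous \autoref{2sumO2} was obtained in \autoref{sec:2sums}. First I would recall that $(O2)$ for this partition says precisely that either there is a $\Psi$-circuit of $\Tcal$ through $e$ contained in $\{e\} \dot\cup P_{Co}$, or there is a $\Psi\ct$-cocircuit of $\Tcal$ through $e$ contained in $\{e\} \dot\cup P_{De}$ — this is just the translation of $(O2)$ into the language of $\Ccal(\Tcal,\Psi)$ and $\Ccal(\Tcal^*,\Psi\ct)$, using the fact that the relevant objects are supports of $\Psi$-vectors, and using the reduction (noted at the start of \autoref{sec:2sums}) that contracting $P_{Co}$ and deleting $P_{De}$ turns the general instance of $(O2)$ into the one-element instance. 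By \autoref{Sarah_win}, the first alternative is equivalent to Sarah having a winning strategy in $\Gcal = \Gcal(V, \Psi, P_{Co}, P_{De})$, and by the \autoref{Sarah_win} corollary (the dual statement), the second alternative is equivalent to Colin having a winning strategy in $\Gcal^*$.

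Next I would invoke \autoref{arbi_sumtransfer}: Colin has a winning strategy in $\Gcal$ if and only if he has one in $\Gcal^*$. Putting these together: $(O2)$ holds for this partition iff Sarah wins $\Gcal$ or Colin wins $\Gcal^*$, iff Sarah wins $\Gcal$ or Colin wins $\Gcal$, iff $\Gcal$ is determined. That is the whole argument; there is essentially no obstacle, since all the work has been done in establishing \autoref{Sarah_win}, its corollary, and \autoref{arbi_sumtransfer}. The only point requiring a word of care is the very first translation step — checking that $(O2)$ for the partition of $E(\Tcal)$ really is equivalent to the statement about $\Psi$-circuits through $e$ inside $\{e\}\dot\cup P_{Co}$ versus $\Psi\ct$-cocircuits through $e$ inside $\{e\}\dot\cup P_{De}$ — and this is routine given the definition of $(O2)$ together with the observation that passing to $\Tcal/P_{Co}\backslash P_{De}$ is harmless. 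Since the corresponding corollary in the overlap-$1$ case was simply marked \qed, I would present this proof extremely briefly, essentially as a one-line deduction:

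\begin{proof}
By (the proof of) \autoref{Sarah_win} and its corollary, together with \autoref{arbi_sumtransfer}, $(O2)$ holds for the partition $E = \{e\} \dot\cup P_{Co} \dot\cup P_{De}$ if and only if Sarah has a winning strategy in $\Gcal(V, \Psi, P_{Co}, P_{De})$ or Colin has a winning strategy in $\Gcal(V, \Psi, P_{Co}, P_{De})$, i.e.\ if and only if that game is determined.
\end{proof}
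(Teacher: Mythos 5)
Your proof is correct and is exactly the deduction the paper intends: the corollary is stated with \qed because it follows immediately from \autoref{Sarah_win}, its corollary, and \autoref{arbi_sumtransfer}, combined precisely as you do (and as was done for \autoref{2sumO2} in the overlap-1 case). The translation step is even more direct than you suggest, since in this setting the putative circuits and cocircuits in $(O2)$ are by definition the supports of $\Psi$-vectors of $V$ and $\Psi\ct$-vectors of $V^\perp$, so no contraction/deletion reduction is needed.
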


\begin{cor}
The Axiom of Determinacy is equivalent to the statment that every tree of finite matroids representable over a finite field induces a matroid. \qed
\end{cor}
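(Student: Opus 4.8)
The plan is to prove the two implications separately, leaning on the corollary immediately above (which equates $(O2)$ for a representable tree of matroids with determinacy of the associated circuit game), on \autoref{repO1tame}, and on the orthogonality axiom machinery of \autoref{orth}.

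For the forward implication I would fix a tree $\Tcal = (T, M)$ of finite matroids with a $k$-representation $V$ over a finite field $k$ and a set $\Psi$ of ends of $T$, put $\Ccal := \underline{\Vcal}(V, \Psi)$ and $\Dcal := \underline{\Vcal}(V^{\perp}, \Psi\ct)$, and show that $\Ccal$ and $\Dcal$ satisfy the last four orthogonality axioms; by \autoref{ortho_axioms+} this is exactly what it means for $(V, \Psi)$ to induce a matroid. Now \autoref{repO1tame} already gives $(O1)$ and the fact that $|C \cap D|$ is finite for all $C \in \Ccal$ and $D \in \Dcal$. For $(O2)$, given a partition $E = \{e\} \dot\cup P_{Co} \dot\cup P_{De}$, the preceding corollary reduces $(O2)$ for this partition to determinacy of $\Gcal(V, \Psi, P_{Co}, P_{De})$; in this game Sarah has only finitely many legal moves at each position (a vector in the finite space $V(t_c)$), and so does Colin (a neighbour $t'$ of $t_c$ with $E(t_c t') \neq \varnothing$ — finitely many, since the sets $E(t_c t')$ are disjoint subsets of the finite set $E(t_c)$ — together with a vector of $k^{E(t_c t')}$), so, $T$ being countable, the game is equivalent to one of the form $\Gcal(\Psi')$ with $\Psi' \se A^{\Nbb}$ for a countable $A$, and hence is determined under the Axiom of Determinacy. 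Thus $(O1)$ and $(O2)$ hold; applying the theorem just after \autoref{ortho_axioms+} (countable $\Ccal, \Dcal$ satisfying $(O1)$ and $(O2)$ with all pairwise intersections finite also satisfy $(O3)$ and $(O3^*)$ and induce a matroid in the sense of \autoref{ortho_axioms+}) then finishes this direction.

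For the converse I would use the tree of matroids $\Tcal^{game} = (T_2, M^{game})$ from \autoref{sec:2sums}, whose matroids are the uniform matroids $U_{1,3}$ and $U_{2,3}$; both of these are binary, so $\Tcal^{game}$ admits an $\Fbb_2$-representation $V^{game}$. For a tree of matroids of overlap $1$ the representable notion of inducing a matroid coincides with the one used in \autoref{sec:2sums}: the support of any $\Psi$-vector is a union of $\Psi$-circuits, and conversely every $\Psi$-circuit is the support of a $\Psi$-vector (one scales the representing vectors circuit by circuit along the tree). Hence if every representable tree of finite matroids induces a matroid, then $(\Tcal^{game}, \Psi)$ induces a matroid for every $\Psi \se \Omega(T_2) = \{0,1\}^{\Nbb}$, which by the analysis of \autoref{sec:2sums} forces $\Gcal(\Psi)$ to be determined; since any $\Gcal(\Psi)$ with $\Psi \se A^{\Nbb}$ and $A$ countable is coded by such a game with $A = \{0,1\}$, the Axiom of Determinacy follows.

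The genuinely deep input — that $(O2)$ for these systems is equivalent to determinacy of the circuit game — is already in hand (\autoref{arbi_sumtransfer} and the corollary after it), so the remaining work is essentially bookkeeping: checking that the circuit games are finitely branching and thus fall under the scope of the Axiom of Determinacy, and that $\Tcal^{game}$ is binary. The one real caveat, which I would flag in the statement, is that the theorem after \autoref{ortho_axioms+} and \autoref{ortho_axioms+} itself are stated for a countable ground set, so the corollary is to be read for trees of matroids on a countable ground set (equivalently, with countable $T$) — which is the only case relevant to this paper; removing this restriction is exactly the open problem recorded after \autoref{ortho_axioms+}.
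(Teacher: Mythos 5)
Your proof is correct and fills in exactly the argument the paper treats as immediate: the forward direction via $(O1)$ and tameness from \autoref{repO1tame}, $(O2)$ from determinacy of the finitely-branching circuit games, and the orthogonality-axiom machinery of \autoref{orth}; the converse via the binary tree of matroids $\Tcal^{game}$ from \autoref{sec:2sums}. Your caveat about countability of the ground set is a fair observation about the paper's own (implicitly assumed) hypotheses rather than a defect of your argument.
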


\begin{cor}\label{Borel_arbi_sums}
For any tree of finite matroids $\Tcal = (T, M)$ represented over a finite field and any Borel set $\Psi$ of ends of $T$, $(\Tcal, \Psi)$ induces a matroid.
\end{cor}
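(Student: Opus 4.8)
The proof is entirely analogous to that of \autoref{Borel2sums}. The plan is to verify the orthogonality axioms of \autoref{ortho_axioms+} for the sets $\Ccal := \underline{\Vcal}(V, \Psi)$ and $\Dcal := \underline{\Vcal}(V^\perp, \Psi\ct)$. Since every $M(t)$ is a finite, hence tame, matroid, \autoref{repO1tame} already supplies $(O1)$ together with the fact that $C \cap D$ is finite for all $C \in \Ccal$ and $D \in \Dcal$; then, by the theorem that for tame systems $(O1)$ and $(O2)$ imply $(O3)$ and $(O3^*)$, it is enough to establish $(O2)$. Note that $(O2)$ is self-dual — swapping $\Ccal$ with $\Dcal$ and $P_{Co}$ with $P_{De}$ leaves it unchanged — so there is really only one family of instances to check.

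So fix a partition $E = \{e\} \dot\cup P_{Co} \dot\cup P_{De}$ of the ground set of $\Tcal$. By the corollary characterising $(O2)$ for this partition in terms of the determinacy of $\Gcal = \Gcal(V, \Psi, P_{Co}, P_{De})$, it suffices to show that $\Gcal$ is determined, and for this I would appeal to Martin's theorem that Borel games are determined. Coding $\Gcal$ as a game of the standard type in which the two players alternately pick elements of a fixed move set, with the usual convention that a player with no legal move loses — which is possible by the remark in the introduction — we must check that the resulting payoff set is Borel.

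This is the routine part. The set of legal infinite plays is a closed subset of the space of all move sequences, and on it the map sending a play $p$ to the unique end $\omega(p) \in \Omega(T)$ containing the sequence of current nodes $(t_n \mid n \in \Nbb)$ that $p$ produces is continuous: each $t_n$ is determined by the first $2n$ moves of $p$, while the basic open neighbourhoods of an end of $T$ are the sets of ends lying beyond a fixed node, so the $\omega$-preimage of such a neighbourhood depends only on finitely many coordinates of $p$. Hence Sarah's winning set is the preimage of the Borel set $\Psi$, and Colin's is the preimage of $\Psi\ct$; together with the clopen contribution of the finite plays decided by the stuck-loses convention, the payoff set is Borel and $\Gcal$ is determined. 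Feeding $(O1)$, finiteness of intersections, and $(O2)$ into the orthogonality-axiom machinery then produces the matroid $M$ with $\Ccal(M) \se \underline{\Vcal}(V, \Psi) \se \Scal(M)$ and $\Ccal(M^*) \se \underline{\Vcal}(V^\perp, \Psi\ct) \se \Scal(M^*)$, as required.

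The main obstacle, such as it is, lies in the coding step rather than in any new idea: $T$ need not be locally finite, so Colin may have infinitely many legal moves at a position, and a play may terminate after finitely many moves when a player gets stuck. One has to phrase the coded game carefully enough that Borel determinacy genuinely applies and that \emph{a player has a winning strategy in the coded game} coincides, in the precise sense used in \autoref{Sarah_win} and \autoref{arbi_sumtransfer}, with the corresponding statement about $\Gcal$. Everything else — the closedness of the legal-play set, the topology on $\Omega(T)$, and the continuity of $p \mapsto \omega(p)$ — is immediate from the definitions.
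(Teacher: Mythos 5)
Your proposal is correct and follows essentially the same route as the paper: the paper's proof of \autoref{Borel_arbi_sums} is literally ``just like the proof of \autoref{Borel2sums}'', which in turn combines Borel determinacy, the corollary equating $(O2)$ for the partition $E = \{e\} \dot\cup P_{Co} \dot\cup P_{De}$ with determinacy of $\Gcal(V, \Psi, P_{Co}, P_{De})$, the continuity of the map sending an infinite legal play to the end containing $(t_n \mid n \in \Nbb)$, and then $(O1)$/tameness from \autoref{repO1tame} fed into the orthogonality-axiom machinery. The only difference is that you spell out the coding of $\Gcal$ as a standard game and the Borelness of the payoff set in more detail than the paper does, which the paper leaves implicit.
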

\begin{proof}
Just like the proof of \autoref{Borel2sums}.
\end{proof}

\begin{thm}\label{Borel}
Let $G$ be a locally finite graph, and $\Psi$ a Borel set of ends of $G$. Then $(G, \Psi)$ induces a matroid.
\end{thm}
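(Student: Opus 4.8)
The plan is to imitate the proof of \autoref{Borelwidth2}, but using the representable trees of matroids of \autoref{tm2} and \autoref{sec:arbi_sums} in place of the overlap-$1$ machinery. First I would reduce to the case that $G$ is connected: if $G$ has components $(G_i\mid i\in I)$ then its ends, its $\Psi$-system and its $\Psi$-bounded cuts all split over the components (each end and each topological circle of $\widetilde G_\Psi$ lies in a single $\widetilde{G_i}_{\Psi_i}$, and every minimal nonempty cut lies within one component), and each $\Psi_i=\Psi\cap\Omega(G_i)$ is Borel since $\Omega(G_i)$ is clopen in $\Omega(G)$; so if each $(G_i,\Psi_i)$ induces a matroid $M_i$ then $(G,\Psi)$ induces the direct sum $\bigoplus_i M_i$, which is again a matroid.

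Assuming now that $G$ is connected, I would fix a tree structure $T$ on $G$ (using the lemma that every connected locally finite graph admits one), identify the ends of $T$ with those of $G$ via the canonical homeomorphism so that $\Psi$ becomes a Borel set of ends of $T$, form the graph $G'$ by subdividing every edge of $G$ whose endpoints lie in distinct nodes of $T$, and let $V=V(G,T)$ be the $\Fbb_2$-representation of $\Tcal(G,T)$. By \autoref{Borel_arbi_sums} applied to $V$, the pair $(V,\Psi)$ induces a matroid $M$, so that $\Ccal(M)\se\underline{\Vcal}(V,\Psi)\se\Scal(M)$ and $\Ccal(M^*)\se\underline{\Vcal}(V^\perp,\Psi\ct)\se\Scal(M^*)$.

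The next step is to check that $(G',\Psi)$ induces $M$ in the strong (graph) sense. By \autoref{getcircs:rep}, every $\Psi$-circuit of $G'$ lies in $\underline{\Vcal}(V,\Psi)\se\Scal(M)$ and every $\Psi\ct$-bond of $G'$ lies in $\underline{\Vcal}(V^\perp,\Psi\ct)\se\Scal(M^*)$; by the lemma immediately following \autoref{getcircs:rep}, every member of $\underline{\Vcal}(V,\Psi)$ — in particular every $M$-circuit — is a union of $\Psi$-circuits of $G'$, and dually every $M$-cocircuit is a union of $\Psi\ct$-bonds of $G'$. Together with \autoref{never_meet_just_once} this is exactly the hypothesis of \autoref{cir_c_scrawl}, yielding $\Ccal(M)\se\{\Psi\text{-circuits of }G'\}\se\Scal(M)$ and $\Ccal(M^*)\se\{\Psi\ct\text{-bonds of }G'\}\se\Scal(M^*)$. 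To turn these inclusions into equalities I would use that the $\Psi$-circuits and the $\Psi\ct$-bonds of $G'$ each form an antichain — the latter by definition of bond, the former because the closure of one topological circle cannot properly contain that of another (a compact connected proper subspace of a circle is a point or an arc, neither of which contains a copy of $S^1$). Indeed, any $\Psi$-circuit of $G'$ is a nonempty scrawl of $M$ and so contains an $M$-circuit, which is itself a $\Psi$-circuit of $G'$ and hence equals it; dually for $\Psi\ct$-bonds. Thus $(G',\Psi)$ induces $M$.

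Finally, $G$ is recovered from $G'$ by contracting one edge of each subdividing pair: suppressing the corresponding degree-$2$ vertices is a homeomorphism of the relevant graph-like spaces which identifies the $\Psi$-circuits and $\Psi\ct$-bonds of $G'$ with those of $G$ and the circuits and cocircuits of $M$ with those of the appropriate contraction of $M$, just as in the proof of \autoref{Borelwidth2}; hence $(G,\Psi)$ induces that contraction of $M$. The argument is mostly assembly — the tree structure, the recovery of the $\Psi$-system of $G'$ from $\Tcal(G,T)$, and Borel determinacy through the circuit games are the real content and are already in hand. The point calling for the most care is upgrading the inclusions supplied by \autoref{Borel_arbi_sums} and \autoref{getcircs:rep} to the assertion that $(G',\Psi)$ genuinely induces $M$ in the graph sense, via the antichain observation above; the subdivision/contraction bookkeeping at the end is the only other place where one must be slightly careful.
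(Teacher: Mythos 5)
Your proposal is correct and follows essentially the same route as the paper, whose proof of this theorem is literally ``just like the proof of \autoref{Borelwidth2}'': fix a tree structure, apply \autoref{Borel_arbi_sums} to $V(G,T)$, transfer to $G'$ via \autoref{getcircs:rep} and its companion lemma, and contract the subdividing edges. The only additions are details the paper leaves implicit --- the reduction to connected $G$ (needed since the tree-structure lemma assumes connectedness) and the upgrade, via \autoref{cir_c_scrawl} and the antichain property of $\Psi$-circuits and $\Psi\ct$-bonds, from the scrawl inclusions to the exact equality demanded by the definition of a graph pair inducing a matroid --- and both are handled correctly.
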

\begin{proof}
Just like the proof of \autoref{Borelwidth2}.
\end{proof}

\section{From the locally finite case to the countable case}\label{loc_to_count}

\subsection{From the locally finite case to the case that the graph has a locally finite normal spanning tree}

We start with the following construction, which may also be useful in other cases.
Let $G$ be a graph having a normal spanning tree $T$.
Then the \emph{Undomination-graph} $U=U(G,T)$ of $G$ is the following.
Its vertex set is $V(U)=V(G)\times V(T)$.
The pair $(v,t)(v',t')$ is an edge if and only if either 
$v=v'$ and $t$ and $t'$ are adjacent in $T$
or $v$ and $v'$ are adjacent in $G$ and $v=t'$ and $v'=t$.
We call the edges of the first type \emph{$T$-edges} and the ones of the second type \emph{$G$-edges}. We will sometimes implicitly identify the $G$-edge $(v,v')(v',v)$ with the corresponding edge $vv'$ of $G$.

The following properties of $U$ are immediate.
Any vertex of $U$ is incident with at most one $G$-edge. 
$U$ has $G$ as a minor, where
the branching set of the vertex $v$ has the form $\{v\}\times V(T)$. In other words, we obtain $G$ as a minor of $U$ by contracting all $T$-edges.

\begin{dfn}\label{G-path_explicit}
 Let $P_G=p_1(p_1,p_2)p_2\ldots (p_{n-1},p_n)p_n$ be a walk in $G$.
Let $t,t'\in V(T)$. Then $u_{t,t'}(P_G)$ denotes the 
following walk in $U$.
\[
 u_{t,t'}(P_G)= [\{p_1\}\times (tTp_2)] \circ [(p_1,p_{2})(p_{2},p_1)]\circ [\{p_2\}\times (p_1Tp_3)]\circ
\]
\[
[(p_2,p_{3})(p_{3},p_2)]\circ [\{p_3\}\times (p_2Tp_4)]\circ
\ldots \circ [\{p_n\}\times (p_{n-1}Tt')]
\]

\end{dfn}

\begin{dfn}\label{U-path_explicit}
 Let $P_U$ be a walk in $U$ from $(p_1,t)$ to $(p_n,t')$.
Then the set of its $G$-edges forms a walk in $G$ from $p_1$ to $p_n$.
We denote this walk by $g(P_U)$.
\end{dfn}

\begin{lem}\label{inverse}
The operations $u$ and $g$ are inverse to each other for walks that traverse no edge more than once.
\end{lem}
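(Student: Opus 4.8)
The plan is to verify the two inverse relations separately. The underlying picture is that any walk in $U$ decomposes canonically into maximal subwalks lying inside single fibres $\{v\}\times V(T)$, joined by individual $G$-edges, and that inside a tree a walk with no repeated edge between two prescribed vertices must be the unique path between them. Throughout, when $Q$ is a walk in $U$ I write $(p,t)$ and $(p',t')$ for its endvertices, and I take $u_{t,t'}(g(Q))$ with those parameters.

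First I would check that $g(u_{t,t'}(P))=P$ for every walk $P=p_1(p_1,p_2)p_2\ldots p_n$ in $G$ with no repeated edge. Reading off \autoref{G-path_explicit}, the $G$-edges occurring in $u_{t,t'}(P)$, listed in the order in which they are traversed, are $(p_1,p_2)(p_2,p_1),\,(p_2,p_3)(p_3,p_2),\,\dots,\,(p_{n-1},p_n)(p_n,p_{n-1})$, which under the identification of the $G$-edge $(v,v')(v',v)$ with $vv'\in E(G)$ are exactly the edges $p_1p_2,\dots,p_{n-1}p_n$ of $P$. Since $P$ traverses no edge twice these are pairwise distinct, so the walk in $G$ they trace out, namely $g(u_{t,t'}(P))$, is precisely $p_1p_2\ldots p_n=P$ (with the degenerate case $n=1$ giving the trivial walk at $p_1$).

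The substantive direction is $u_{t,t'}(g(Q))=Q$ for every walk $Q$ in $U$ with no repeated edge. Here I would split $Q$ into its maximal subwalks consisting solely of $T$-edges, separated by single $G$-edges. A $T$-edge leaves the first coordinate unchanged, while a $G$-edge $(v,v')(v',v)$ changes it from $v$ to $v'$; hence the first coordinates of the vertices visited by $Q$ are constant along each such $T$-segment, and writing $r_0=p,r_1,\dots,r_m=p'$ for their successive values, $Q$ has the form $[Q_0]\circ[(r_0,r_1)(r_1,r_0)]\circ[Q_1]\circ[(r_1,r_2)(r_2,r_1)]\circ\cdots\circ[Q_m]$, where $Q_k$ lies in the fibre $\{r_k\}\times V(T)$. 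Tracking endvertices across the $G$-edges forces $Q_0$ to run from $(r_0,t)$ to $(r_0,r_1)$, each $Q_k$ ($0<k<m$) from $(r_k,r_{k-1})$ to $(r_k,r_{k+1})$, and $Q_m$ from $(r_m,r_{m-1})$ to $(r_m,t')$ (with the obvious reading $m=0$, $p=p'$, $Q=Q_0$). Now each $Q_k$ is a walk in a copy of the tree $T$ with no repeated edge, so it cannot revisit a vertex: a shortest repetition would produce a nontrivial closed subwalk with no repeated edge, hence after deleting its first vertex a cycle, which is impossible in a tree. Therefore each $Q_k$ is the unique $T$-path between its endvertices, i.e. $Q_0=\{r_0\}\times(tTr_1)$, $Q_k=\{r_k\}\times(r_{k-1}Tr_{k+1})$, and $Q_m=\{r_m\}\times(r_{m-1}Tt')$. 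On the other hand, the $G$-edges of $Q$ are $r_0r_1,\dots,r_{m-1}r_m$ in order, so $g(Q)=r_0r_1\ldots r_m$; substituting this into \autoref{G-path_explicit} reproduces verbatim the displayed decomposition of $Q$, whence $u_{t,t'}(g(Q))=Q$.

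The main obstacle is this second direction, and within it the only genuine point is the uniqueness step: recognising that each maximal $T$-subwalk of $Q$, being a no-repeated-edge walk in a tree, is forced to be the geodesic between its two endvertices. That is exactly where the hypothesis that $Q$ traverses no edge more than once is used; the rest is bookkeeping about how $T$-edges and $G$-edges act on the two coordinates. (The same hypothesis on $P$ in the first direction is what guarantees that the list of $G$-edges of $u_{t,t'}(P)$ has no repetition, so that $g$ recovers $P$ unambiguously.)
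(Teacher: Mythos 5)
Your proof is correct and follows essentially the same route as the paper: the direction $g(u_{t,t'}(P))=P$ is read off from the definitions, and the direction $u_{t,t'}(Q)=Q$ rests on the observation that each branching set $\{v\}\times V(T)$ is a tree, so an edge-repetition-free subwalk inside it must be the unique path between its endvertices. You have simply written out in full the bookkeeping that the paper compresses into two sentences.
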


\begin{proof}
It is immediate from the definitions that $g(u_{t,t'}(P))=P$. 

For the other direction, let $P$ be a walk in $U$ from $(p_1,t)$ to $(p_n,t')$.
We are to show that $u_{t,t'}(g(P))=P$.
This follows from the fact the branching set of every $v\in V(G)$ is a tree.
\end{proof}

Note that if $P$ is a path in $G$, then $u_{t,t'}(P)$ is a path whereas
if $P$ is a path in $U$, the walk $g(P)$ need not be a path.

\begin{cor}\label{G-ray_explicit}
 Let $R_G=p_1(p_1,p_2)p_2\ldots $ be a ray in $G$.
Then for any $t\in T$, there is a unique ray $u_t(R_G)$ starting at $(p_1,t)$ in $U$
included in the $T$-edges together with $\{(p_1,p_2)(p_2,p_1),\ldots\}$. 

More precisely:
\[
 u_t(R_G)= [\{p_1\}\times (tTp_2)] \circ [(p_1,p_{2})(p_{2},p_1)]\circ [\{p_2\}\times (p_1Tp_3)]\circ \ldots\]
\end{cor}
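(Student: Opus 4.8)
The plan is to deduce \autoref{G-ray_explicit} from \autoref{inverse} together with the uniqueness of paths in trees, treating existence and uniqueness separately. For existence I would simply verify that the walk written in the display is a well-defined ray with the required edge set; for uniqueness I would reduce to \autoref{inverse} by passing to finite truncations.

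For existence, first observe that for each $i\ge 1$ the endpoints of the $T$-path $p_{i-1}Tp_{i+1}$ are vertices of $G$ and hence of $T$ (as $T$ is a spanning tree of $G$), so this path is well defined; lifting it into the branching set $\{p_i\}\times V(T)$ produces a path $Q_i$ in $U$ from $(p_i,p_{i-1})$ to $(p_i,p_{i+1})$ consisting only of $T$-edges, where by convention $Q_1$ runs from $(p_1,t)$ to $(p_1,p_2)$. Since $p_ip_{i+1}\in E(G)$, the pair $(p_i,p_{i+1})(p_{i+1},p_i)$ is a $G$-edge of $U$ joining the last vertex of $Q_i$ to the first vertex of $Q_{i+1}$, so the concatenation $Q_1\circ[(p_1,p_2)(p_2,p_1)]\circ Q_2\circ\cdots$ in the display is a walk in $U$ from $(p_1,t)$. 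Because $R_G$ is a ray the vertices $p_i$ are pairwise distinct, so the branching sets $\{p_i\}\times V(T)$ are pairwise disjoint; as each $Q_i$ is internally a path and the $G$-edges run between distinct branching sets, this walk repeats no vertex, and being infinite it is a ray. Its edge set consists of $T$-edges (those inside the $Q_i$) together with the $G$-edges $(p_i,p_{i+1})(p_{i+1},p_i)$, so it is included in the indicated set; this is the ray $u_t(R_G)$.

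For uniqueness, let $R$ be a ray in $U$ from $(p_1,t)$ whose edges are all $T$-edges or members of $\{(p_i,p_{i+1})(p_{i+1},p_i)\mid i\ge 1\}$ and which is not eventually confined to a single branching set (this is how the uniqueness claim has to be understood, since $T$ itself may contain a ray from $t$, whose lift is a ray of $U$ inside $\{p_1\}\times V(T)$). By \autoref{U-path_explicit} the $G$-edges occurring on $R$ form a walk $g(R)$ in $G$ from $p_1$; each of its edges is some $p_ip_{i+1}$ and none is repeated since $R$ is a path, so $g(R)$ is an initial segment of $R_G$, and as $R$ uses infinitely many $G$-edges we get $g(R)=R_G$. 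For each $n$, the initial segment $R^{(n)}$ of $R$ up to and including its $n$-th $G$-edge is a $(p_1,t)$-$(p_{n+1},p_n)$-walk in $U$ that traverses no edge twice, so \autoref{inverse} gives $R^{(n)}=u_{t,p_n}\bigl(g(R^{(n)})\bigr)=u_{t,p_n}(p_1p_2\cdots p_{n+1})$, which is exactly the initial segment of $u_t(R_G)$ up to its $n$-th $G$-edge. Taking the union over all $n$ yields $R=u_t(R_G)$.

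The bulk of the effort is the bookkeeping in the existence part, unwinding the notation of \autoref{G-path_explicit} to confirm that the pieces fit together into an honest ray; this is routine. The one genuinely subtle point, which I would flag explicitly in the write-up, is that ``unique'' must be read as uniqueness among rays that actually follow $R_G$ --- equivalently, that are not eventually trapped inside one branching set $\{v\}\times V(T)\cong T$; with that understood, uniqueness is immediate from \autoref{inverse} applied to finite truncations, and no further obstacle remains.
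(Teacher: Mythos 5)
Your proof is correct and follows the route the paper intends: the paper states this as an unproved corollary of \autoref{inverse} and the definitions of $u$ and $g$, and your existence check plus uniqueness via finite truncations supplies exactly the missing detail. Your caveat that literal uniqueness can fail for rays eventually trapped inside a single branching set $\{v\}\times V(T)$ is a fair observation about the statement as written; the intended reading is indeed uniqueness among rays whose underlying walk is all of $R_G$, which is how the corollary is used later (e.g.\ in \autoref{comb_lift} and \autoref{loc_fin_to_fin_sep_bonds}).
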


\begin{rem}\label{comb_lift}
A result similar to \autoref{G-ray_explicit} also holds for combs
since we have it for paths and rays. A little bit of care is needed when choosing the 
starting points $t$ of the paths $u_{t,t'}(P)$ to ensure that these paths only meet the spine of the comb in their initial vertices.
\end{rem}

The following lemma allows us to turn finite separators in $G$ into finite separators in $U$.

\begin{lem}\label{U_sep_property}
 Let $X$ be a finite set of vertices of $G$, and let $w=(v,t)$ and $w'=(v',t')$
be vertices of $U$ such that $v$ and $v'$ are in different components of $G\sm X$.

Then $X\times X$ separates $w$ from $w'$ in $U$.
\end{lem}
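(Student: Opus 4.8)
The plan is to prove the contrapositive: if $w=(v,t)$ and $w'=(v',t')$ can be joined by a path $P_U$ in $U$ avoiding $X\times X$, then $v$ and $v'$ lie in the same component of $G\sm X$. So suppose such a $P_U$ exists, and pass to its underlying walk $W=g(P_U)=p_1(p_1,p_2)p_2\ldots(p_{n-1},p_n)p_n$ in $G$, which runs from $p_1=v$ to $p_n=v'$. Since the $G$-edges of $U$ are in bijection with the edges of $G$ and $P_U$, being a path, traverses no edge of $U$ twice, $W$ traverses no edge of $G$ twice, so \autoref{inverse} applies and gives $P_U=u_{t,t'}(W)$. Reading off the explicit description of $u_{t,t'}$ from \autoref{G-path_explicit}, and writing $p_0:=t$ and $p_{n+1}:=t'$, the vertex set of $P_U$ is exactly $\bigcup_{\ell=1}^{n}\{p_\ell\}\times V(p_{\ell-1}Tp_{\ell+1})$, where $p_{\ell-1}Tp_{\ell+1}$ denotes the path in $T$ between those two nodes.

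Now comes the main point. Fix any $\ell$ with $p_\ell\in X$. Since $p_1=v$ and $p_n=v'$ are not in $X$, we have $2\le\ell\le n-1$, so $p_{\ell-1}$ and $p_{\ell+1}$ are genuine vertices of $W$; and since the set $\{p_\ell\}\times V(p_{\ell-1}Tp_{\ell+1})$ is contained in $V(P_U)$ and hence disjoint from $X\times X$, the path $p_{\ell-1}Tp_{\ell+1}$ must be disjoint from $X$. In particular $p_{\ell-1},p_{\ell+1}\notin X$, so the indices $\ell$ with $p_\ell\in X$ are pairwise non-consecutive. As $T$ is a spanning tree of $G$, each path $p_{\ell-1}Tp_{\ell+1}$ lies in $G\sm X$. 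I would then replace in $W$, for every $\ell$ with $p_\ell\in X$, the length-two subwalk $p_{\ell-1}p_\ell p_{\ell+1}$ by the path $p_{\ell-1}Tp_{\ell+1}$; because the relevant indices are pairwise non-consecutive, these substitutions patch together into a single walk from $v$ to $v'$, and by construction it meets no vertex of $X$. Hence $v$ and $v'$ lie in the same component of $G\sm X$, the desired contradiction.

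I do not anticipate a real obstacle: the argument is essentially a dictionary between $P_U$ and $W$ supplied by \autoref{inverse}, together with the trivial fact that paths of $T$ are paths of $G$. The only point requiring slight care is checking that the local detours around the vertices of $X$ combine into a well-defined walk, and this is exactly what the non-consecutiveness of those indices guarantees (two detour windows can share at most an endpoint $p_{\ell+1}=p_{(\ell+2)-1}$, so they concatenate cleanly). Note that normality of $T$ is not used anywhere in this argument — only that $T$ is a spanning tree.
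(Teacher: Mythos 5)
Your proof is correct. It runs on the same dictionary as the paper's: pass to the underlying walk $g(P_U)$, recover $P_U$ as $u_{t,t'}(g(P_U))$ via \autoref{inverse}, and exploit the fact that between two consecutive $G$-edges the path traverses the tree path $\{p_\ell\}\times(p_{\ell-1}Tp_{\ell+1})$ inside a single fibre. The difference is purely organisational: the paper argues directly and locally, taking the maximal index $i$ with $p_i$ in the component $C_1$ of $v$ in $G\sm X$, deducing $p_{i+1}\in X$, and observing that the tree path $p_iTp_{i+2}$ must meet $X$ (since $p_i\in C_1$ but $p_{i+2}\notin C_1$), which immediately exhibits a vertex $(p_{i+1},x)\in X\times X$ on $P_U$; you instead argue contrapositively and globally, rerouting the whole walk around every occurrence of an $X$-vertex via the fibre tree paths to produce a $v$--$v'$ walk in $G\sm X$. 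Your version costs you the small patching argument about non-consecutive detour windows, which the paper's single-witness argument avoids, but it buys nothing extra here; both are valid, and your observation that only the spanning-tree property (not normality) of $T$ is needed applies equally to the paper's proof.
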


\begin{proof}
Let $P_U$ be some $w$-$w'$-path in $U$. 
Let $g(P_U)=p_1(p_1,p_2)p_2\ldots (p_{n-1},p_n)p_n$ with $p_1=v$
and $p_n=v'$.

Let $C_1$ be the component of $G\sm X$
containing $p_1$. Let $i\in \{1,\ldots,n\}$ be maximal such that $p_i\in C_1$.
Such an $i$ exists as $p_1\in C_1$.
Note that $p_i\neq p_n$.
Then $p_{i+1}$ is in $X$.

Since $p_{i+1}\neq p_n,p_1$, the path $P_U$ has $\{p_{i+1}\}\times (p_iTp_{i+2})$ as a subpath.
Since $p_i\in C_1$ but $p_{i+2}\notin C_1$, the path $p_iTp_{i+2}$ has to meet $X$ in some point $x$.
Then $P_U$ meets $X\times X$ in $(p_{i+1},x)$, completing the proof.
\end{proof}

The following lemma is the reason why we call $U$ the Undomination-graph of $G$.

\begin{prop}\label{U_no_dom}
 In $U(G,T)$, no vertex dominates a ray.
\end{prop}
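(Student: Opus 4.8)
The plan is to assume, for a contradiction, that some vertex $w=(v_0,t_0)$ of $U$ dominates a ray $R$ of $U$; since $w$ then also dominates every tail of $R$, it suffices to produce a finite set $Y\se V(U)$ with $w\notin Y$ separating $w$ from a tail of $R$ in $U$. The basic observation is that for every $(v,t)\in V(U)$ we have $\deg_U(v,t)=\deg_T(t)$, plus one more if $v$ is adjacent to $t$ in $G$: the $T$-neighbours of $(v,t)$ are exactly the $(v,t')$ with $t'\in N_T(t)$, and $(v,t)$ is incident with a (necessarily unique) $G$-edge precisely when $vt\in E(G)$, namely $(v,t)(t,v)$. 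Hence if $T$ is locally finite then so is $U$, and \autoref{U_no_dom} is immediate, since only finitely many internally disjoint paths can leave a vertex of finite degree. As $T$ will be locally finite throughout the rest of this section, this already suffices for our purposes; the general case takes more work, which I now outline. Note also that between any two branching sets $B_v:=\{v\}\times V(T)$ and $B_{v'}$ there is at most one $G$-edge, so if the $G$-coordinates of $R$ visit only finitely many vertices of $G$ then $R$ uses only finitely many $G$-edges; we may therefore split into the following two cases.

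Suppose first that $R$ uses finitely many $G$-edges, so a tail of $R$ lies in a single branching set $B_x$, whose induced subgraph in $U$ is the copy of $T$ carried by $B_x$. Being a ray in a tree, that tail is eventually increasing, so after passing to a further tail we may write $R=(x,s_1)(x,s_2)\cdots$ with $s_1<_T s_2<_T\cdots$ a chain of successive children, giving rise to a normal ray $\bar R=p_0p_1\cdots$ of $T$ with $s_j=p_{h+j-1}$. If $x$ is $\leq_T$-incomparable with $s_N$ for some $N$ large enough that $v_0,t_0$ lie outside the subtree $T_{s_N}$ of $\leq_T$-descendants of $s_N$, then by normality every $G$-neighbour of $x$ in $T_{s_N}$ is a proper $\leq_T$-ancestor of $x$ (otherwise $x$ and $s_N$ would be comparable), so $N_G(x)\cap T_{s_N}$ is finite, and likewise no $s_j$ with $j\geq N$ lies in $N_G(x)$; deleting the finite set $Y=\{(x,p^-)\}\cup\bigl(\{x\}\times(N_G(x)\cap T_{s_N})\bigr)$, with $p^-$ the $T$-parent of $s_N$, leaves $\{x\}\times T_{s_N}$ as a union of components of $U\sm Y$ containing the tail $(x,s_N)(x,s_{N+1})\cdots$ but not $w$ — a contradiction.

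Suppose instead that $R$ uses infinitely many $G$-edges and its $G$-coordinates visit infinitely many vertices of $G$. By \autoref{U_sep_property}, for every finite $X\se V(G)$ the $G$-coordinates of $R$ eventually lie in a single component of $G\sm X$; these components cohere to an end $\omega_G$ of $G$, with normal ray $\bar R=p_0p_1\cdots$ in $T$. For each $N$, normality of $T$ gives that the finite set $(p_N{\downset})\sm\{p_N\}$ of proper $\leq_T$-ancestors of $p_N$ separates the subtree $T_{p_N}$ of $\leq_T$-descendants of $p_N$ from $V(G)\sm T_{p_N}$ in $G$. For large $N$ the $G$-coordinates of a tail of $R$ lie in $T_{p_N}$, while $v_0$, having finite height, lies in neither $T_{p_N}$ nor $(p_N{\downset})\sm\{p_N\}$ — provided $v_0\notin\bar R$ — so $v_0$ and those $G$-coordinates lie in different components of $G\sm((p_N{\downset})\sm\{p_N\})$, and \autoref{U_sep_property} then shows that $((p_N{\downset})\sm\{p_N\})\times((p_N{\downset})\sm\{p_N\})$ is a finite set avoiding $w$ and separating $w$ from that tail, again a contradiction.

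The configurations not settled by the two previous arguments are exactly those in which the $G$-coordinate of $w$ lies on the normal ray $\bar R$ approached by $R$, and this is the main obstacle. It is handled by a closer look at the infinitely many internally disjoint $w$–$R$-paths: tracing their initial edges and using that each branching set induces a tree, one deduces that $v_0\leq_T t_0$ and that all but at most one of these paths leaves $B_{v_0}$ along a $G$-edge into a branching set $B_{u_i}$ with $u_i$ a proper $\leq_T$-descendant of $v_0$, the $u_i$ pairwise distinct and lying in pairwise distinct branches of $T$; applying normality once more — now to the subtrees of $T$ hanging off $\bar R$ — together with \autoref{U_sep_property}, one shows that infinitely many such paths cannot all reach the single ray $R$, completing the proof. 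In the locally finite case no such configuration can occur, which is why the degree observation of the first paragraph already does the job there.
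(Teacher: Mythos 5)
Your strategy is genuinely different from the paper's: you try to refute domination by exhibiting a finite set separating $w$ from a tail of $R$, case-splitting on the asymptotic behaviour of $R$, whereas the paper argues locally at $w$ — of the infinitely many internally disjoint $w$--$R$-paths, all but at most two must diverge immediately into pairwise distinct upper branches of the copy of $T$ induced on $w$'s branching set; normality together with \autoref{U_sep_property} then yields a single finite set of the form $\lceil v\rceil\times\lceil v\rceil$ whose removal places these branches in pairwise distinct components of $U$, and two of the (all but finitely many) paths avoiding that set and ending on a common tail of $R$ reconnect two such components, a contradiction. Your opening observation that the statement is trivial when $T$ is locally finite, and that this is the only case the paper uses downstream, is correct and worth making, and your second and third paragraphs are essentially sound for the configurations they cover. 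But they do not cover everything, and what remains is the heart of the matter.

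The genuine gap is your fourth paragraph. The configuration in which the relevant $G$-vertex lies on the normal ray $\bar R$ — i.e.\ in which that vertex may dominate the corresponding end of $G$ — is exactly what the Undomination graph is built to destroy, and your treatment of it is only a sketch: it asserts without proof that $v_0\leq_T t_0$ and that the exit vertices $u_i$ lie in pairwise distinct branches of $T$ (true, but only because internally disjoint paths inside the tree on $B_{v_0}$ must diverge at $t_0$ into distinct subtrees above distinct children of $t_0$ — which is precisely the paper's key step and must be spelled out), and it ends with ``one shows that infinitely many such paths cannot all reach the single ray $R$'', which is the entire content of the proposition in this case. Moreover your case bookkeeping does not close: paragraph 2 only treats the sub-case in which $x$ is eventually $\leq_T$-incomparable with the $s_N$, leaving open the sub-case in which $x$ is comparable with all of them (i.e.\ $x$ lies on the normal ray through the $s_N$, where $N_G(x)\cap T_{s_N}$ may be infinite and your separator $Y$ fails to be finite); this is not the same condition as ``$v_0\in\bar R$'', so the residual case described in paragraph 4 does not match what paragraphs 2 and 3 actually leave unproved. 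To close the gap you essentially have to carry out the paper's argument in full: follow each path until it first leaves $B_{v_0}$, observe that the exit points have first coordinates in pairwise distinct components of $G\sm\lceil t_0\rceil$ by normality, apply \autoref{U_sep_property} with $X=\lceil t_0\rceil$, and derive the contradiction from two surviving paths together with a tail of $R$.
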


\begin{proof}
Suppose for a contradiction that $U$ has a vertex $(v,t)$ dominating a ray $R$.
Then there is an infinite collection $(P_n|n\in \Nbb)$ of $(v,t)$-$R$-paths in $U$
that meet only in $(v,t)$. Since all edges except for at most one edge incident with  
$(v,t)$ are $T$-edges, we may assume that the second vertex on each $P_n$ has the form
$(v_n,t)$ where $v_n$ is an neighbour of $v$ in $T$. Since $v$ has at most one lower neighbour in $T$, we may even assume that all the $v_n$ are upper neighbours of $v$ in $T$.

Let $\lceil v\rceil$ be the set of those vertices that are less than or equal to $v$ in the 
tree order of $T$.
As $T$ is normal, all the $v_n$ are in different components of $G\sm \lceil v\rceil$.
By \autoref{U_sep_property}, all the $(v_n,t)$ are in different components of 
$U\sm (\lceil v\rceil\times \lceil v\rceil)$. 

Since $\lceil v\rceil\times \lceil v\rceil$ is finite,
we can find a tail $R'$ of $R$ that avoids $\lceil v\rceil\times \lceil v\rceil$.
Then for any two paths $P_i$ and $P_j$ that avoid $\lceil v\rceil\times \lceil v\rceil$
and meet $R'$, the set $R'\cup P_i \cup P_j$ is connected in $U\sm (\lceil v\rceil\times \lceil v\rceil)$.
Hence the vertices $(v_i,t)$ and $(v_j,t)$ are in the same connected component of $U\sm (\lceil v\rceil\times \lceil v\rceil)$.

Since $R\sm R'$ and $\lceil v\rceil\times \lceil v\rceil$ are both finite, there exist such paths $P_i$ and $P_j$, which yields the desired contradiction.
\end{proof}

Next we shall investigate how the ends of $U$ relate to the ends of $G$.

\begin{lem}\label{f_nice}
 Let $R_1$ and $R_2$ be rays of $G$.
Then $R_1$ and $R_2$ belong to the same end of $G$ if and only if
$u_t(R_1)$ and $u_{t'}(R_2)$ belong to same end of $U$ for any $t,t'\in V(T)$.
\end{lem}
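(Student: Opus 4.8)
The plan is to use the explicit lifting maps $u_t,u_{t'}$ and $g$ together with \autoref{U_sep_property} and the remark on lifting combs (\autoref{comb_lift}) to transfer separation facts back and forth between $G$ and $U$. Recall that two rays lie in the same end exactly when they cannot be separated by removing finitely many vertices, equivalently (by Menger, as in the proof of \autoref{spine}) when there are infinitely many vertex-disjoint paths between tails of them.

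For the \textbf{forward direction}, suppose $R_1$ and $R_2$ are in the same end of $G$. Then there are infinitely many vertex-disjoint $R_1$--$R_2$ paths $(P_n\mid n\in\Nbb)$ in $G$; by passing to subpaths we may take each $P_n$ to have precisely its endvertices on $R_1$ and $R_2$. I would lift each $P_n$ via $u$ (with appropriately chosen endpoint parameters $t,t'$, using the care mentioned in \autoref{comb_lift} so that the lifted paths meet the lifted rays $u_t(R_1)$ and $u_{t'}(R_2)$ only in their endvertices) to obtain $U$-paths from $u_t(R_1)$ to $u_{t'}(R_2)$. The key point is that these lifts remain vertex-disjoint: a vertex $(v,s)$ shared by two lifts $u(P_n)$ and $u(P_m)$ would, on applying $g$, force $v$ to be a common vertex of $P_n$ and $P_m$ (here one uses that the branching set $\{v\}\times V(T)$ of $v$ is a tree, as in \autoref{inverse}, so the $T$-edge part of each lift stays within a single branching set and the shared vertex pins down a shared $G$-vertex) — contradiction unless $v$ is an endvertex, which is handled by the careful choice of parameters. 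Hence $u_t(R_1)$ and $u_{t'}(R_2)$ are joined by infinitely many disjoint paths, so they lie in the same end of $U$. Since any two choices of $t,t'$ give rays that are eventually equal along $T$-edges inside the branching sets (indeed $u_t(R_1)$ and $u_{t''}(R_1)$ differ only in a finite initial segment within $\{p_1\}\times V(T)$), the conclusion holds for all $t,t'$.

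For the \textbf{backward direction} I would prove the contrapositive. Suppose $R_1$ and $R_2$ lie in different ends of $G$; then there is a finite vertex set $X\se V(G)$ separating a tail of $R_1$ from a tail of $R_2$ in $G$. Choose tails $R_1'\se R_1$, $R_2'\se R_2$ avoiding $X$ and lying in different components of $G\sm X$. Their lifts $u_t(R_1)$, $u_{t'}(R_2)$ have tails $u_s(R_1')$, $u_{s'}(R_2')$ (for suitable $s,s'$) which, by \autoref{U_sep_property} applied vertexwise, lie in different components of $U\sm(X\times X)$. Since $X\times X$ is finite, this shows $u_t(R_1)$ and $u_{t'}(R_2)$ are in different ends of $U$, as required. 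Combining the two directions (and noting the forward direction already gives independence of the choice of $t,t'$) completes the proof.

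The \textbf{main obstacle} I anticipate is the bookkeeping in the forward direction: ensuring the lifted paths $u(P_n)$ are genuinely pairwise vertex-disjoint and meet the lifted rays only at their endpoints. This requires choosing the parameters $t,t'$ in $u_{t,t'}(P_n)$ carefully — each $P_n$ should be lifted so that its initial $T$-segment in the branching set of its first vertex does not run into the lifted ray $u_t(R_1)$ except at the intended attachment point, and similarly at the other end — which is exactly the subtlety flagged in \autoref{comb_lift}. Everything else is a routine application of the already-established correspondence between walks in $G$ and walks in $U$ (\autoref{inverse}, \autoref{G-ray_explicit}) and of \autoref{U_sep_property}.
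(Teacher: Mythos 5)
Your proposal is correct, and your backward direction (contrapositive via \autoref{U_sep_property}) is exactly the paper's. For the forward direction you take a mildly different route: the paper argues by contrapositive, taking a finite set $S$ separating $u_t(R_1)$ from $u_{t'}(R_2)$ in $U$ (with the lifted rays avoiding $S$) and observing that the lift $u_{s,s'}(P)$ of any single $R_1$--$R_2$-path $P$ must meet $S$, so that the first coordinates of $S$ separate $R_1$ from $R_2$ in $G$; you instead argue directly, lifting an infinite family of pairwise disjoint $R_1$--$R_2$-paths and checking the lifts stay disjoint. Both arguments rest on the same key fact --- every vertex of $u_{s,s'}(P)$ has a vertex of $P$ as its first coordinate --- but the paper's version is a little leaner: it needs only one lifted path at a time, so it avoids the Menger-type characterisation of end equivalence and all of the bookkeeping you flag as the main obstacle (disjointness of the lifts and the choice of attachment parameters so that the lifts meet the lifted rays correctly). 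Note also that your worry about the lifted paths meeting the lifted rays \emph{only} at their endpoints is not actually needed: to defeat a given finite separator it suffices that all but finitely many of the disjoint lifted paths, together with tails of the two lifted rays, avoid it. So your proof goes through, at the cost of somewhat more verification than the paper's.
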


\begin{proof}
First suppose that $u_t(R_1)$ and $u_{t'}(R_2)$ belong to different ends of $U$.
Then there is a finite set $S=(v_1,t_1),\ldots,(v_n,t_n)$ separating them.
Without loss of generality we may assume that $u_t(R_1)$ and $u_{t'}(R_2)$ do not meet $S$.
Let $P$ be some $R_1$-$R_2$-path, which goes from $(w,s)$ to $(w',s')$.
Then $u_{s,s'}(P)$ meets $S$ in some point, say $(v_i,t_i)$.
Hence $\{v_1,\ldots,v_n\}$ separates $R_1$ from $R_2$, yielding the first implication.

The other implication is an immediate consequence of \autoref{U_sep_property}.
\end{proof}

By \autoref{f_nice}, the map $u$ induces an inclusion $\tilde u$ from the ends of $G$ into the ends of $U$. 
Let $C$ be the set of $T$-edges.
The purpose of this subsection is to prove the following.

\begin{thm}\label{loc_fin_to_fin_sep}
Assume that $(U,{\tilde u(\Psi)})$ induces a matroid $M$.
Then $(G,\Psi)$ induces the matroid $M/C$.
\end{thm}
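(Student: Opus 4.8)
Write $C$ for the set of $T$-edges, so that the ground set of $M/C$ is the set of $G$-edges, which we have identified with $E(G)$; hence we must show that the $M/C$-circuits are exactly the $\Psi$-circuits of $G$ and the $M/C$-cocircuits are exactly the $\Psi\ct$-bonds of $G$. The plan is to transport circuits and cuts along the collapse map $\pi\colon \widetilde U_{\tilde u(\Psi)}\to\widetilde G_{\Psi}$ that contracts each branch set $\{v\}\times V(T)$ of $U$ to the vertex $v$ and sends $\tilde u(\omega)$ to $\omega$. The hypothesis that $(U,\tilde u(\Psi))$ induces $M$ identifies the $M$-circuits with the $\tilde u(\Psi)$-circuits of $U$ and, dually, the $M^*$-circuits with the $\tilde u(\Psi)\ct$-bonds of $U$. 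A preliminary point is that $C$ is independent in $M$: otherwise some subset of $C$ lies in an $M$-circuit, hence in a circle of $\widetilde U_{\tilde u(\Psi)}$ using only $T$-edges; but the $T$-edges form the disjoint union of the branch sets, each a copy of the tree $T$, and since no vertex of $U$ dominates a ray (\autoref{U_no_dom}) no end survives in the closure of a single branch set, so that closure is a tree complex and carries no circle --- a contradiction. The same fact, together with local finiteness of $U$, is what guarantees that the graph-like quotient defining $\widetilde U_{\tilde u(\Psi)}$ does not collapse vertices unexpectedly and that $\pi$ is well defined and continuous.

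\textbf{Circuits.} Given an $M/C$-circuit $o'$, \autoref{rest_cir} with empty deletion produces an $M$-circuit $o$ with $o'\se o\se o'\cup C$, so $o$ is the edge set of a circle $O$ in $\widetilde U_{\tilde u(\Psi)}$. Since $o'\neq\varnothing$, no branch set contains all of $O$, so $O$ meets the closure of each branch set in an arc or a single point; collapsing these arcs sends $O$ to a circle of $\widetilde G_{\Psi}$ with edge set $o\cap E(G)=o'$, where \autoref{f_nice} and \autoref{U_sep_property} are used to see that the collapsed set accumulates only at points of $\widetilde G_{\Psi}$ and that ends are matched correctly. Thus every $M/C$-circuit is a $\Psi$-circuit of $G$. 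Conversely, a $\Psi$-circuit $o$ of $G$ never meets a $\Psi\ct$-bond in a single edge (\autoref{never_meet_just_once}); since by the cocircuit half below every $M/C$-cocircuit is a $\Psi\ct$-bond, $o$ meets no $M/C$-cocircuit in a single edge, so \autoref{is_scrawl} applied to $M/C$ shows that $o$ is a union of $M/C$-circuits and in particular contains one. As the $M/C$-circuits are $\Psi$-circuits and the $\Psi$-circuits of $G$ form an antichain (being the topological circuits of a graph-like space), this forces $o$ itself to be an $M/C$-circuit. Hence $\Ccal(M/C)$ is precisely the set of $\Psi$-circuits of $G$.

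\textbf{Cocircuits.} Here one uses deletion rather than contraction: $(M/C)^*=M^*\sm C$, so the $M/C$-cocircuits are exactly the circuits of $M^*$ that avoid $C$, that is, the $\tilde u(\Psi)\ct$-bonds of $U$ containing no $T$-edge. A cut of $U$ with no $T$-edge is exactly the $\pi$-preimage of a cut of $G$; because each branch set is connected, the two sides of such a cut are connected in $U$ if and only if their $\pi$-images are connected in $G$, and --- again via \autoref{f_nice} and \autoref{U_sep_property} --- the cut is $\tilde u(\Psi)$-bounded if and only if its image is $\Psi$-bounded (the only ends present in $\widetilde U_{\tilde u(\Psi)}$ being those in $\tilde u(\Psi)$). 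Consequently minimality transfers in both directions, so the minimal nonempty $\tilde u(\Psi)$-bounded cuts of $U$ avoiding $C$ correspond bijectively to the minimal nonempty $\Psi$-bounded cuts of $G$; that is, the $M/C$-cocircuits are exactly the $\Psi\ct$-bonds of $G$. Combining the two halves, $(G,\Psi)$ induces $M/C$, as claimed.

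\textbf{Main obstacle.} The delicate part is the circuit transfer of the previous paragraph: checking that $\pi$ descends to a continuous map of the graph-like quotients, that it carries circles to circles (in particular that the branch-set intersections really are arcs or points, with no stray limit points), and dually that an explicit tree-path lift of a circle of $G$ closes up to a genuine circle of $\widetilde U_{\tilde u(\Psi)}$ rather than acquiring extra ends. This is exactly where \autoref{U_no_dom}, \autoref{f_nice} and \autoref{U_sep_property} have to be used in concert; everything else reduces to routine matroid bookkeeping via \autoref{rest_cir}, \autoref{is_scrawl} and \autoref{never_meet_just_once}.
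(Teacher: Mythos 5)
Your overall strategy --- transfer bonds and circuits along the collapse map and finish with the orthogonality machinery --- is the paper's strategy, and your cocircuit half is essentially the paper's \autoref{loc_fin_to_fin_sep_bonds} (though the claim that ``the cut is $\tilde u(\Psi)$-bounded if and only if its image is $\Psi$-bounded'' is not a consequence of \autoref{f_nice} and \autoref{U_sep_property} alone, which concern separating rays; the direction from $G$ to $U$ needs the comb-lifting of \autoref{spine} and \autoref{comb_lift}, which is exactly what the paper does). The genuine gap is in the circuit half. You assert that the circle $O$ realising the $M$-circuit $o$ over $o'$ ``meets the closure of each branch set in an arc or a single point'', so that collapsing yields a circle of $\widetilde G_{\Psi}$ with edge set $o'$. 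That is not a topological fact and can fail: a circle in $U$ may enter and leave the branch set of a vertex $v$ several times, its intersection with that branch set being several pairwise disjoint tree-paths (this happens whenever $O$ uses four or more $G$-edges at $v$ whose connecting paths inside the copy of $T$ are disjoint). In that case the collapse identifies distinct points of $O$, the image is a wedge of closed curves at $v$ rather than a circle, and $o'$ is a priori only a union of $\Psi$-circuits. One can rule this out for an $M/C$-circuit $o'$, but only by a matroid argument (each lobe of the wedge contains a $\Psi$-circuit, which by your cocircuit half and \autoref{is_scrawl} is $M/C$-dependent, contradicting minimality of $o'$); it cannot be read off from the map $\pi$. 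Your ``Main obstacle'' paragraph flags exactly this point but supplies no proof.

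The paper sidesteps the issue: \autoref{loc_fin_to_fin_sep_circuits} proves only the weaker statement that every $M/C$-circuit includes a $\Psi$-circuit through each of its edges --- by composing the circle with the continuous map of \autoref{tilde_p_conti} and extracting an arc from the resulting path via the standard path-to-arc lemma --- and then feeds this, together with the exact bond correspondence and \autoref{never_meet_just_once}, into \autoref{cir_c_scrawl}, whose proof (via \autoref{o_cap_b}) performs the upgrade from ``union of $\Psi$-circuits'' to ``equal to a $\Psi$-circuit''. If you replace your direct claim by this two-step argument, or insert the wedge-decomposition argument sketched above, your proof goes through.
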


The Undomination-graph $U(G,T)$ is locally finite whenever $T$ is locally finite.
Thus \autoref{loc_fin_to_fin_sep}, reduces the case where $G$ has a locally finite normal spanning tree to the locally finite one, which is the aim of this subsection.

The proof of \autoref{loc_fin_to_fin_sep} takes the rest of this subsection.

\begin{lem}\label{loc_fin_to_fin_sep_bonds}
Assume that $(U,{\tilde u(\Psi)})$ induces a matroid $M$.
Then the edge set $b$ is an $M/C$-cocircuit $b$ if and only if it is a $\Psi\ct$-bond of $G$.
\end{lem}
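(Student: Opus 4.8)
\emph{Overview.} The plan is to push the whole statement through the undomination graph $U=U(G,T)$, using the correspondence between cuts of $G$ and the cuts of $U$ that avoid $C$ (equivalently, that use only $G$-edges).

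\emph{Step 1 (reduce to bonds of $U$) and Step 2 (the cut correspondence).} Since $(U,\tilde u(\Psi))$ induces $M$, the $M$-cocircuits are exactly the $\tilde u(\Psi)\ct$-bonds of $U$. As $(M/C)^*=M^*\sm C$ and the circuits of a deletion $N\sm C$ are precisely the circuits of $N$ avoiding $C$ (this is \autoref{rest_cir} with nothing contracted), the $M/C$-cocircuits are exactly the $M$-cocircuits disjoint from $C$. Since $C$ is the set of $T$-edges of $U$, an $M/C$-cocircuit is the same thing as a $\tilde u(\Psi)\ct$-bond of $U$ that uses only $G$-edges, so it suffices to identify those with the $\Psi\ct$-bonds of $G$. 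Now if $b'$ is a cut of $U$ avoiding every $T$-edge, then each branching set $\{v\}\times V(T)$, being connected by $T$-edges, lies entirely on one side of $b'$, so $b'=\{(v,v')(v',v)\mid vv'\in b\}$ for a unique cut $b$ of $G$; conversely every cut $b$ of $G$ lifts this way to a cut $b^+$ of $U$ that avoids $C$. The map $b\mapsto b^+$ is a bijection preserving nonemptiness, and on cuts contained in a fixed $T$-edge-free cut it preserves strict inclusion (any cut of $U$ strictly below $b^+$ is again $T$-edge-free, hence of the form $\hat b^+$). Thus, granting the equivalence of Step 3, a minimal nonempty $\Psi$-bounded cut $b$ of $G$ lifts to a nonempty $\tilde u(\Psi)$-bounded cut $b^+$ of $U$ that is minimal among \emph{all} nonempty $\tilde u(\Psi)$-bounded cuts of $U$, and conversely; this is exactly the Lemma.

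\emph{Step 3, easy direction.} It remains to show: $b$ is $\Psi$-bounded iff $b^+$ is $\tilde u(\Psi)$-bounded. Since $\Psi\se\Omega(G)$, $\tilde u$ is injective, and for $\omega\in\Psi$ the $|G|_\Psi$-closure of $b$ meets the same edges as its $|G|$-closure, this reduces to showing, for each $\omega\in\Psi$, that $\omega$ lies in the $|G|$-closure of $b$ iff $\tilde u(\omega)$ lies in the $|U|$-closure of $b^+$; here I repeatedly use that an end of a graph lies in the closure of an edge set $f$ iff for every finite $X$ the component through that end meets $V(f)$, iff (by \autoref{spine}) there is a comb with spine a prescribed ray of the end and teeth in $V(f)$. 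Assume first that $\omega$ lies in the closure of $b$, and fix a finite $S'\se V(U)$. Put $X=\bigcup_{(v,t)\in S'}\lceil v\rceil$; this is finite (each $\lceil v\rceil$ is a finite chain in the normal tree $T$) and down-closed, and $S'\se X\times V(T)$. By assumption $C(X,\omega)$ contains some $v\in V(b)$; since $X$ is down-closed and $T$ normal, $C(X,\omega)=\{w:w\ge v_0\}$ for the least vertex $v_0$ of $T-X$ in that component, so $v\ge v_0$ and $v\notin X$. Pick $w$ with $vw\in b$, a ray $R\in\omega$ lying in $C(X,\omega)$, and a path in $C(X,\omega)$ from $v$ to $R$; concatenating gives a ray $W$ from $v$ to $\omega$ inside $C(X,\omega)$. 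By \autoref{G-ray_explicit} its lift $u_w(W)$ starts at $(v,w)$, represents $\tilde u(\omega)$ by \autoref{f_nice}, and — its $G$-coordinates being vertices of $W$ or vertices on the tree paths between consecutive ones, all $\ge v_0$ by normality — is disjoint from $X\times V(T)$, hence from $S'$. So $(v,w)\in C(S',\tilde u(\omega))$ is an endvertex of the edge $(v,w)(w,v)$ of $b^+$; as $S'$ was arbitrary, $\tilde u(\omega)$ lies in the closure of $b^+$.

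\emph{Step 3, hard direction, and the main obstacle.} Now suppose $\tilde u(\omega)$ lies in the closure of $b^+$. Since $V(b^+)\se V(b)\times V(T)$, it is enough to prove the stronger claim that if $\tilde u(\omega)$ lies in the $|U|$-closure of $S\times V(T)$ for some $S\se V(G)$, then $\omega$ lies in the $|G|$-closure of $S$. So fix a finite $X$, which we may take down-closed, with $C(X,\omega)\cap S=\varnothing$, and write $C(X,\omega)=\{w:w\ge v_0\}$. The vertices of $(S\cap(V(G)\sm X))\times V(T)$ lie over components of $G-X$ other than $C(X,\omega)$, hence by \autoref{U_sep_property} are separated from every lift $u_t(R)$ of a ray of $\omega$ inside $C(X,\omega)$ by the finite set $\lceil v_0\rceil\times\lceil v_0\rceil$, so they lie outside $C(\lceil v_0\rceil\times\lceil v_0\rceil,\tilde u(\omega))$. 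The remaining vertices of $S\times V(T)$ lie over the \emph{finite} set $S\cap X$, so it suffices to show that for each $s\in S\cap X$ the end $\tilde u(\omega)$ is not in the $|U|$-closure of the single branching set $\{s\}\times V(T)$ — finitely many further enlargements of the separator then remove all of $S\times V(T)$ from the component through $\tilde u(\omega)$. This last point is exactly where \autoref{U_no_dom} is used: if $\tilde u(\omega)$ were in the closure of $\{s\}\times V(T)$, then \autoref{spine} would give a comb in $U$ with spine in $\tilde u(\omega)$ and infinitely many teeth in $\{s\}\times V(T)$, and the Menger/K\"onig argument of \autoref{U_no_dom} would then produce a vertex of $U$ dominating a ray of $\tilde u(\omega)$, contradicting \autoref{U_no_dom}. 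I expect this hard direction — and within it the fact that an end coming from $G$ can never be in the closure of a single branching set of $U$ — to be the main obstacle: it is precisely the property the undomination graph was designed to supply, and carrying it out requires reprising the separator-and-Menger bookkeeping behind \autoref{U_sep_property} and \autoref{U_no_dom}, with the extra care that the finite separators taken in $U$ be large enough to swallow the lifts of the vertices of $V(b)$ sitting near the $T$-boundary of $C(X,\omega)$ (keeping $X$ down-closed and exploiting normality of $T$ to realise $C(X,\omega)$ as an up-set is what keeps this under control).
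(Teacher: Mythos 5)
Your Steps 1 and 2, and the easy direction of Step 3, are sound and essentially follow the paper's route (the paper compresses the cut correspondence into the remark that $U/C$ and $G$ are equal, and handles the easy direction by lifting a comb via \autoref{comb_lift} where you compute basic neighbourhoods directly). The gap is in the hard direction. Your ``stronger claim'' --- that $\tilde u(\omega)$ in the $|U|$-closure of $S\times V(T)$ forces $\omega$ into the $|G|$-closure of $S$ --- is false, as is the sub-claim you reduce it to, namely that $\tilde u(\omega)$ can never lie in the closure of a single branching set $\{s\}\times V(T)$. Take $G$ to be the fan: a ray $R=r_1r_2r_3\ldots$ together with a vertex $s$ adjacent to every $r_i$, and let $T$ be the normal spanning tree given by the path $s\,r_1\,r_2\ldots$ rooted at $s$. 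In $U$ the branching set $\{s\}\times V(T)$ is the ray $(s,s)(s,r_1)(s,r_2)\ldots$, and for each $i$ the path $(s,r_i)(r_i,s)(r_i,r_1)\cdots(r_i,r_{i-1})$ joins it to $u_s(R)$; these paths are pairwise disjoint, so $\{s\}\times V(T)$ converges to $\tilde u(\omega)$ for the unique end $\omega$ of $G$, even though $\omega$ is certainly not in the closure of the single vertex $s$. This also shows why your proposed mechanism fails: a comb with infinitely many teeth in $\{s\}\times V(T)$ attaches to an infinite connected set and does not concentrate at any one vertex of $U$, so it yields no vertex dominating a ray, and \autoref{U_no_dom} is not contradicted (it holds in this example). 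The undomination graph guarantees that no single \emph{vertex} of $U$ dominates an end; it does not, and cannot, prevent a whole branching set from clustering at an end --- over a dominating vertex of $G$ that is exactly what happens.

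The reduction from $V(b^+)$ to $V(b)\times V(T)$ is therefore too lossy. What saves the lemma is that an endvertex of $b^+$ has the form $(v,w)$ with $vw\in b$, so \emph{both} coordinates lie in $V(b)$ and hence both avoid $C(X,\omega)$; the paper exploits this by taking a comb in $U$ with spine $u(R_\omega)$ whose teeth are actual endvertices of $b^+$ and projecting it through $g$ to obtain a comb in $G$ with spine $R_\omega$ and teeth in $V(b)$, contradicting the $\Psi$-boundedness of $b$. To repair your argument you would have to work with the set $\{(v,w):vw\in b\}$ itself rather than with $V(b)\times V(T)$: for instance, from infinitely many disjoint paths in $U$ from $u(R_\omega)$ to such vertices, extract their first entry points into the relevant branching sets, project to $G$, and apply Menger there, so that either you obtain infinitely many disjoint $R_\omega$--$V(b)$ paths in $G$ (whence $\omega$ lies in the closure of $b$), or a finite separator which, inflated via \autoref{U_sep_property}, cuts off all but finitely many of the original paths in $U$. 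Your current text skips this and lands on a false intermediate statement, so the hard direction as written does not go through.
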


\begin{proof}
First suppose that $b$ is an $M/C$-cocircuit.
 The cocircuit $b$ is a $\tilde u(\Psi)\ct$-bond of 
$U$ that does not meet $C$.
Since the graphs $U/C$ and $G$ are equal, it remains to show that $b$ considered as an edge set of $G$ does not have any end of $\Psi$ in its closure.

 Suppose for a contradiction that there is such an end $\omega\in \Psi$ that is in the closure of $b$. Let $R_\omega$ be some ray in $\omega$.

By \autoref{spine}, there is a comb $K$
with spine $R_\omega$ all of whose teeth are endvertices of $b$.
Then in $U$, the set $K\cup C$ contains a comb all of whose teeth are in endvertices of $b$
with spine $u_t(R_\omega)$ for some $t$ by \autoref{comb_lift}.
Hence $\tilde u(\omega)$ is in the closure of $b$, a contradiction.

Next suppose that $b$ is a $\Psi\ct$-bond of $G$.
As above, it is clear that $b$ considered as an edge set of $U$
is a bond. 

Now suppose for a contradiction that there is some end $\omega\in \tilde u(\Psi)$
in the closure of $b$. We pick a ray $R_\omega\in \tilde u^{-1}(\omega)$.
By \autoref{spine} there is a comb in $U$ with spine $u(R_\omega)$ all of whose teeth are endvertices of $b$. Then this comb defines a comb in $G$ with comb $R_\omega$, which is impossible. This completes the proof.
\end{proof}

Next we prove \autoref{loc_fin_to_fin_sep_bonds} for circuits, which is a little more complicated.

We define the map $p:|U|_{\tilde u(\Psi)}\to |G|_\Psi$ as follows.
A vertex $(v,t)$ maps to $v$, a $G$-edge $(v,t)(t,v)$ maps to the edge $vt$,
all interior points of a $T$-edge $(v,t)(v,t')$ map to $v$, and an end $\omega\in \tilde u(\Psi)$ maps to
$\tilde u^{-1}(\omega)$.

\begin{lem}\label{conti}
 $p$ is continuous.
\end{lem}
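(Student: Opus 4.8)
The map $p \colon |U|_{\tilde u(\Psi)} \to |G|_\Psi$ is continuous. Recall $p$ sends a vertex $(v,t)$ to $v$, a $G$-edge $(v,t)(t,v)$ homeomorphically to the edge $vt$ of $G$, all interior points of a $T$-edge $(v,t)(v,t')$ to the vertex $v$, and an end $\omega$ to $\tilde u^{-1}(\omega)$.

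The plan is to check continuity pointwise, using the explicit bases of neighbourhoods in {\scshape VTop} described in \autoref{sec:psi}. Since continuity is local, I would split into cases according to the type of point $z \in |U|_{\tilde u(\Psi)}$. The case of an interior point of a $G$-edge is trivial, since $p$ is a homeomorphism on each closed $G$-edge and interior points of edges have neighbourhood bases contained in those edges. The case of an interior point of a $T$-edge $(v,t)(v,t')$ is almost as easy: a small enough $\epsilon$-ball around such a point lies entirely in that $T$-edge, and its image is the single point $v$, which lies inside every basic neighbourhood of $v$. So the two genuine cases are: (i) $z = (v,t)$ is a vertex of $U$, with image $v$; and (ii) $z = \omega$ is an end of $U$ lying in $\tilde u(\Psi)$, with image the end $\tilde u^{-1}(\omega)$ of $G$ in $|G|_\Psi$.

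For case (i), fix a vertex $v$ of $G$ and a basic open neighbourhood $B_\epsilon(v)$ in $|G|_\Psi$; I may assume $\epsilon \le 1$, so $B_\epsilon(v)$ consists of $v$ together with the initial portions of length $<\epsilon$ of all edges at $v$. I would produce a basic open neighbourhood $N$ of $(v,t)$ in $U$ with $p(N) \subseteq B_\epsilon(v)$. The preimage $p^{-1}(v)$ is the entire branching set $\{v\} \times V(T)$ together with all the $T$-edges among these vertices; this is a connected subgraph of $U$, so it has arbitrarily small open neighbourhoods in $U$. Concretely, take $N$ to be the union of the $\epsilon$-ball about $(v,t)$ in the simplicial metric of $U$: the only edges of $U$ leaving $p^{-1}(v)$ near $(v,t)$ are $G$-edges, each of which $p$ maps isometrically onto an edge of $G$ at $v$, so for $\epsilon$ small enough $p(N) \subseteq B_\epsilon(v)$. (One must note that $(v,t)$ is incident with at most one $G$-edge — this was observed right after the definition of $U$ — so there is no combinatorial obstruction to keeping $N$ small.) Since $(v,t)$ was an arbitrary point of $p^{-1}(v)$ and the $B_\epsilon(v)$ form a neighbourhood base at $v$, this handles case (i). I expect this case to be routine.

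For case (ii) — the end case — let $\omega$ be an end of $U$ with $\omega \in \tilde u(\Psi)$, and let $\hat C(S, \tilde u^{-1}(\omega))$ be a basic open neighbourhood of $\tilde u^{-1}(\omega)$ in $|G|_\Psi$, with $S$ a finite vertex set of $G$. I would use \autoref{U_sep_property}: the finite set $S \times S$ separates in $U$ exactly as $S$ separates in $G$, in the sense that if $(v,s)$ and $(v',s')$ have $v, v'$ in different components of $G \setminus S$ then they lie in different components of $U \setminus (S\times S)$. Let $\hat C(S \times S, \omega)$ be the corresponding basic neighbourhood of $\omega$ in $U$. The claim to verify is $p\bigl(\hat C(S\times S, \omega)\bigr) \subseteq \hat C(S, \tilde u^{-1}(\omega))$. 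A point of $\hat C(S\times S, \omega)$ is a vertex, inner edge point, or end of the component $C(S\times S, \omega)$ of $U \setminus (S \times S)$ that contains a ray to $\omega$; such a ray is (a tail of) some $u_t(R)$ with $R$ a ray in $\tilde u^{-1}(\omega)$ by the construction of $\tilde u$, so by \autoref{U_sep_property} the second coordinate of every vertex of $C(S\times S,\omega)$ stays in the $G$-component $C(S,\tilde u^{-1}(\omega))$, which forces $p$ of each such vertex into $C(S,\tilde u^{-1}(\omega))$; inner edge points and ends are handled the same way, using \autoref{f_nice} for the ends (an end of $U$ in the closure of $C(S\times S,\omega)$ pulls back to an end of $G$ represented by a ray in $C(S,\tilde u^{-1}(\omega))$, hence lies in $\hat C(S,\tilde u^{-1}(\omega))$).

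The main obstacle is bookkeeping in case (ii): one must be careful that ``the image end'' $\tilde u^{-1}(\omega)$ genuinely lies in $|G|_\Psi$ (i.e. is not in $\Psi^\complement$) so that the target neighbourhood makes sense — this is exactly the hypothesis $\omega \in \tilde u(\Psi)$, unwound through \autoref{f_nice} — and that the separation statement \autoref{U_sep_property} is applied in the correct direction (it gives: $G$-separation lifts to $U$-separation, which is what we need to confine the $U$-component over the target $G$-component). Everything else is a routine neighbourhood-chasing argument in {\scshape VTop}.
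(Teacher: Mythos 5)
Your proof follows the paper's own argument essentially step for step: the same four-way case split on the type of point, the same trivial handling of edge-interior points and of vertices via $\epsilon$-balls, and in the end case the same candidate neighbourhood $\hat C(S\times S,\omega)$, confined into $\hat C(S,\tilde u^{-1}(\omega))$ by \autoref{U_sep_property} together with the ray-lifting maps $u_t$. One remark that applies equally to the paper's version (so it is a shared imprecision, not a divergence): \autoref{U_sep_property} only forces the first coordinate (which is what $p$ projects to, not the second as you wrote) of a vertex of $C(S\times S,\omega)$ into $C(S,\tilde u^{-1}(\omega))\cup S$ rather than into $C(S,\tilde u^{-1}(\omega))$ itself, since a vertex $(v,t)$ with $v\in S$ but $t\notin S$ can survive in that component and is sent by $p$ to $v\in S\notin\hat C(S,\tilde u^{-1}(\omega))$, so strictly one should shrink the neighbourhood a little further before concluding.
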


\begin{proof}
Let $O$ be some open set in $|G|_\Psi$.
Let $x\in p^{-1}(O)$. 
If $x$ is an interior point of a $G$-edge, then $p^{-1}(O)$ clearly includes a neighbourhood around $x$.
If $x$ is an interior point of a $T$-edge, then $p^{-1}(O)$ included
the whole interior of that edge. 

If $x$ is a vertex, then there is some $\epsilon$ with $B_{\epsilon}(p(x)) \subseteq O$: then $B_{\epsilon}(x) \subseteq p^{-1}(O)$.

If $x$ is an end, then some basic open set $ \hat C(S,p(x))$
 is included in $O$.
Let $D=D(S\times S,x)$ be the unique component of $U\sm S\times S$ having $x$ in its closure.
We show that $ \hat D(S\times S,x)$ is a subset of $p^{-1}(O)$.
Clearly all edges and vertices of $ \hat D(S\times S,x)$ are in $p^{-1}(O)$.
So let $\omega\in \hat D(S\times S,x)$ be an end.

Let $(v,t)\in D$.
Let $R$ be a ray in $G$ that is in $\tilde u^{-1}(\omega)$.
Then (for any $t$) $u_t(R)$ is eventually in $D$
as $u_t(R)\in\omega$. 
By  \autoref{U_sep_property}, $R$ is then eventually in the same component as $v$.
So it is in $C(S,p(x))$.
Hence $\omega\in \hat C(S,p(x))$.
This completes the proof of the continuity of $p$.
\end{proof}

Since $\tilde G_\Psi$ has the quotient topology, the quotient map $\pi_G:|G|_\Psi\to\tilde G_\Psi$ is continuous. Similarly, the quotient map 
$\pi_U:|U|_{\tilde u(\Psi)}\to\tilde U_{\tilde u(\Psi)}$ is continuous.
All the maps occurring here are shown in \autoref{fig:maps}.

\begin{lem}\label{tilde_p_conti}
For any two $x,y\in \tilde  U_{\tilde u(\Psi)}$ 
with $\pi_G(p(x))\neq \pi_G(p(y))$, we have $\pi_U(x)\neq \pi_U(y)$.

In particular, there is a unique map $\tilde p: \tilde U_{\tilde u(\Psi)}\to \tilde G_\Psi$
satisfying $\tilde p(\pi_U(x))=\pi_G(p(x))$.
Moreover, $\tilde p$ is continuous.
 \end{lem}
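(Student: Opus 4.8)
\textbf{Proof plan for \autoref{tilde_p_conti}.}

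The plan is to first establish the key implication --- that $\pi_G(p(x)) \neq \pi_G(p(y))$ forces $\pi_U(x) \neq \pi_U(y)$ --- and then derive the existence and continuity of $\tilde p$ as formal consequences. For the key implication, I would argue contrapositively: suppose $\pi_U(x) = \pi_U(y)$, i.e. $x$ and $y$ are equivalent vertices of $\tilde U_{\tilde u(\Psi)}$ (using the $\sim$ relation from the graph-like quotient). By \autoref{vertices_equivalent}, this means $x$ and $y$ lie on the same side of every $\tilde u(\Psi)$-bounded cut of $U$. I want to conclude that $p(x)$ and $p(y)$ lie on the same side of every $\Psi$-bounded cut of $G$, which by \autoref{vertices_equivalent} again gives $\pi_G(p(x)) = \pi_G(p(y))$. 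So the heart of the matter is: given a $\Psi$-bounded cut $b$ of $G$ separating $p(x)$ from $p(y)$, produce a $\tilde u(\Psi)$-bounded cut of $U$ separating $x$ from $y$.

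The natural candidate is $b$ itself, viewed as a set of $G$-edges of $U$ (under the identification of the $G$-edge $(v,v')(v',v)$ with $vv'$). Here I would use \autoref{U_sep_property}: if $b$ separates the vertex sets $V_1, V_2$ of $G$, then $b$ as a cut of $U$ has the property that any $(v,t)$ with $v \in V_1$ is separated from any $(v',t')$ with $v' \in V_2$ --- more precisely, deleting the $G$-edges in $b$ from $U$ leaves $\{v\} \times V(T)$ wholly on one side (since these branching sets are connected via $T$-edges, none of which lie in $b$), so $U \setminus b$ has exactly two sides, namely $V_1 \times V(T)$ and $V_2 \times V(T)$. Thus $b$ is a genuine cut of $U$ separating $x$ (which lies over $p(x) \in V_1$, say) from $y$. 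It remains to check this cut is $\tilde u(\Psi)$-bounded, i.e. no end of $\tilde u(\Psi)$ lies in its $|U|_{\tilde u(\Psi)}$-closure. If some $\tilde u(\omega)$ with $\omega \in \Psi$ were in the closure of $b$ in $U$, then by \autoref{spine} there is a comb in $U$ with spine $u_t(R_\omega)$ and all teeth at endvertices of $b$; projecting via $p$ (or directly, since $g$ sends the spine to a ray in $\omega$ and sends the comb's paths to walks ending at endvertices of $b$ in $G$, cf. \autoref{comb_lift} and \autoref{inverse}) yields a comb in $G$ with spine in $\omega$ and teeth at endvertices of $b$, so $\omega$ is in the closure of $b$ in $|G|_\Psi$, contradicting $\Psi$-boundedness of $b$. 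This establishes the key implication.

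For the ``in particular'' part: the implication just proved says exactly that the composite $\pi_G \circ p \colon |U|_{\tilde u(\Psi)} \to \tilde G_\Psi$ is constant on the fibres of $\pi_U$, so it factors (uniquely, since $\pi_U$ is surjective) through a set map $\tilde p \colon \tilde U_{\tilde u(\Psi)} \to \tilde G_\Psi$ with $\tilde p \circ \pi_U = \pi_G \circ p$. Continuity of $\tilde p$ is then automatic: $\pi_U$ is a quotient map, $\pi_G \circ p = \tilde p \circ \pi_U$ is continuous (as $p$ is continuous by \autoref{conti} and $\pi_G$ is continuous), and a map out of a quotient space is continuous iff its precomposition with the quotient map is. I expect the main obstacle to be the careful verification that $b$, regarded as a cut of $U$, really does separate $x$ from $y$ and really is $\tilde u(\Psi)$-bounded --- the comb-lifting argument for the latter needs \autoref{spine} and the inverse relationship between $u$ and $g$ to be applied with some care about basepoints, as flagged in \autoref{comb_lift}, but no genuinely new idea beyond what \autoref{loc_fin_to_fin_sep_bonds} already uses.
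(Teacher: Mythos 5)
Your proposal is correct and follows essentially the same route as the paper: both reduce the claim to \autoref{vertices_equivalent} applied in $G$ and in $U$, transfer the separating $\Psi$-bounded cut from $G$ to $U$, and get continuity from the universal property of the quotient map $\pi_U$. The only difference is that the paper passes to a $\Psi\ct$-bond and cites \autoref{loc_fin_to_fin_sep_bonds} for the transfer step, whereas you re-derive that step inline via \autoref{U_sep_property} and the comb-lifting argument --- which is exactly the content of the second half of the proof of \autoref{loc_fin_to_fin_sep_bonds}.
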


It might be worth noting that since $U$ is locally finite, the map $\pi_U$ is the identity,
which makes the Lemma rather trivial. 
However we will not use this in the proof as we rely on this Lemma later on  
in a slightly different context where $\pi_U$ is not the identity.

\begin{proof}
Since $\pi_G(p(x))\neq \pi_G(p(y))$, there is some
$\Psi$-bounded cut of $G$ with $p(x)$ and $p(y)$ on different sides by \autoref{vertices_equivalent}.
Then there is also a $\Psi\ct$-bond $b$ of $G$ with $p(x)$ and $p(y)$ on different sides.
By \autoref{loc_fin_to_fin_sep_bonds}, the bond $b$ is also a 
$\tilde u(\Psi)\ct$-bond in $U$. And this bond witnesses that
$\pi_U(x)\neq \pi_U(y)$ by the other implication of \autoref{vertices_equivalent}.
This proves the first part of the Lemma.

It remains to show that $\tilde p$ is continuous.
This follows from the universal property of the quotient map $\pi_U$
since the concatenation of $\pi_G$ and $p$ is continuous.
\end{proof}

\begin{cor}\label{loc_fin_to_fin_sep_circuits}
Assume that $(U,{\tilde u(\Psi)})$ induces a matroid $M$.
Then for any $M/C$-circuit $o$ and any edge $e\in o$, the circuit $o$ includes a $\Psi$-circuit of $G$ containing $e$.
\end{cor}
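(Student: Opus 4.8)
The plan is to lift $o$ to a circuit of $M$, read that circuit off as a topological circle in $\widetilde U_{\tilde u(\Psi)}$, push it forward to $\widetilde G_{\Psi}$ along the continuous map $\tilde p$ of \autoref{tilde_p_conti}, and extract from its (possibly self-touching) image an honest topological circle through $e$ whose edge set lies inside $o$. Concretely, I would first apply \autoref{rest_cir} to $M$ with the partition of $E(M)$ into the set $C$ of $T$-edges and the set $E(G)$ of $G$-edges (taking the deleted part there to be empty, so that the minor in question is exactly $M/C$). As $o$ is an $M/C$-circuit, this produces an $M$-circuit $\hat o$ with $o \se \hat o \se o \cup C$. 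Since $(U,\tilde u(\Psi))$ induces $M$, the set $\hat o$ is a $\tilde u(\Psi)$-circuit of $U$, so its closure $O$ in $\widetilde U_{\tilde u(\Psi)}$ is homeomorphic to the circle and contains $\bar e$ as a subarc. One then works inside $\tilde p(O) \se \widetilde G_{\Psi}$, which is compact and path-connected.

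The argument hinges on two observations about $\tilde p \restric_O$. The map $p$ carries the interior of each $G$-edge of $U$ homeomorphically onto the interior of the corresponding edge of $G$, carries interiors of $T$-edges and all vertices into $V(G)$, and carries no point into an edge-interior of $G$ except along a $G$-edge; moreover distinct $G$-edges of $U$ correspond to distinct edges of $G$, and the $G$-edges of $\hat o$ are exactly the edges of $o$. From this: (i) an edge $f$ of $\widetilde G_\Psi$ meets $\tilde p(O)$ in an interior point precisely when $f \in o$, and then $\bar f \se \tilde p(O)$; and (ii) $\tilde p$ maps no point of $O \sm \mathring e$ into $\mathring e$.

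To finish, if $e$ is a loop in $\widetilde G_\Psi$ then $\bar e$ is already a circle, so $\{e\}$ is a $\Psi$-circuit and $e \in \{e\} \se o$. Otherwise let $x,y$ be the two distinct endvertices of $e$ in $\widetilde G_\Psi$ (the images of the endvertices of $e$ in $U$); they lie in the path-connected set $\tilde p(O \sm \mathring e)$, which is arc-connected since $\widetilde G_\Psi$ is Hausdorff, so there is an arc $A \se \tilde p(O \sm \mathring e)$ from $x$ to $y$. By (ii) we have $A \cap \bar e = \{x,y\}$, hence $A \cup \bar e$ is a topological circle; let $o_0$ be its edge set. Then $e \in o_0$; by (i), every edge of $\widetilde G_\Psi$ meeting $A \cup \bar e \se \tilde p(O)$ lies in $o$, so $o_0 \se o$; and $A \cup \bar e$ is the $\widetilde G_\Psi$-closure of $o_0$ because no subarc of it can avoid all edges (the vertices of $\widetilde G_\Psi$ form a totally disconnected subspace). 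Thus $o_0$ is a $\Psi$-circuit of $G$ with $e \in o_0 \se o$, as desired.

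I expect the main obstacle to be conceptual rather than computational: $\tilde p$ is typically far from injective on $O$, since it crushes each maximal $T$-edge subarc of $O$ to a point and may identify various vertices of $G$ with one another, so $\tilde p(O)$ need not be a circle but only a circle pinched at a number of points. This is precisely why the statement claims only that $o$ \emph{includes} a $\Psi$-circuit through $e$, and why the arc-extraction step — via the classical fact that a path-connected Hausdorff space is arc-connected — is needed to recover a genuine circle from the degenerate image.
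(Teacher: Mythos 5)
Your proposal is correct and follows essentially the same route as the paper: lift $o$ to an $M$-circuit $\hat o\se o\cup C$ via \autoref{rest_cir}, view it as a topological circle in $\widetilde U_{\tilde u(\Psi)}$, push it forward along the continuous map $\tilde p$ from \autoref{tilde_p_conti}, delete the interior of $e$, and recover an arc between the endvertices of $e$ from the resulting path-connected image using the classical path-connected-implies-arc-connected fact for Hausdorff spaces. Your write-up is somewhat more careful than the paper's (the loop case, the verification that the resulting edge set lies in $o$ and that its closure really is the circle), but the underlying argument is the same.
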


\begin{proof}
Let $o$ be some $M/C$-circuit. Then there is some $M$-circuit $o\se o'\se o\cup C$
by \autoref{rest_cir}. 
Let $o''=\tilde p\circ o'$ as in \autoref{fig:maps}.

Let $e$ be some edge in $o$. 
Then $e$ considered as an edge of $U$ is mapped under $\tilde p$
to the edge $e$ considered as an edge of $G$, which is then in the image of $o''$. 

Then the restriction of $o''$ to those points that do not map to interior points of $e$
is a path between the two endvertices of $e$, that is a continuous function from $[0,1]$
to $\tilde G_\Psi$ mapping $0$ and $1$ to the endvertices of $e$.
By a well-known Lemma of basic topology\cite{Armstrong}, there is an arc (injective path) between the two endvertices of $e$ whose image is included in the image of that path. 
The concatenation of this arc with some continuous function from $[0,1]$ to $e$ defines the desired $\Psi$-circuit.
\end{proof}

\begin{figure}
\begin{center}
 \includegraphics[height=4cm]{./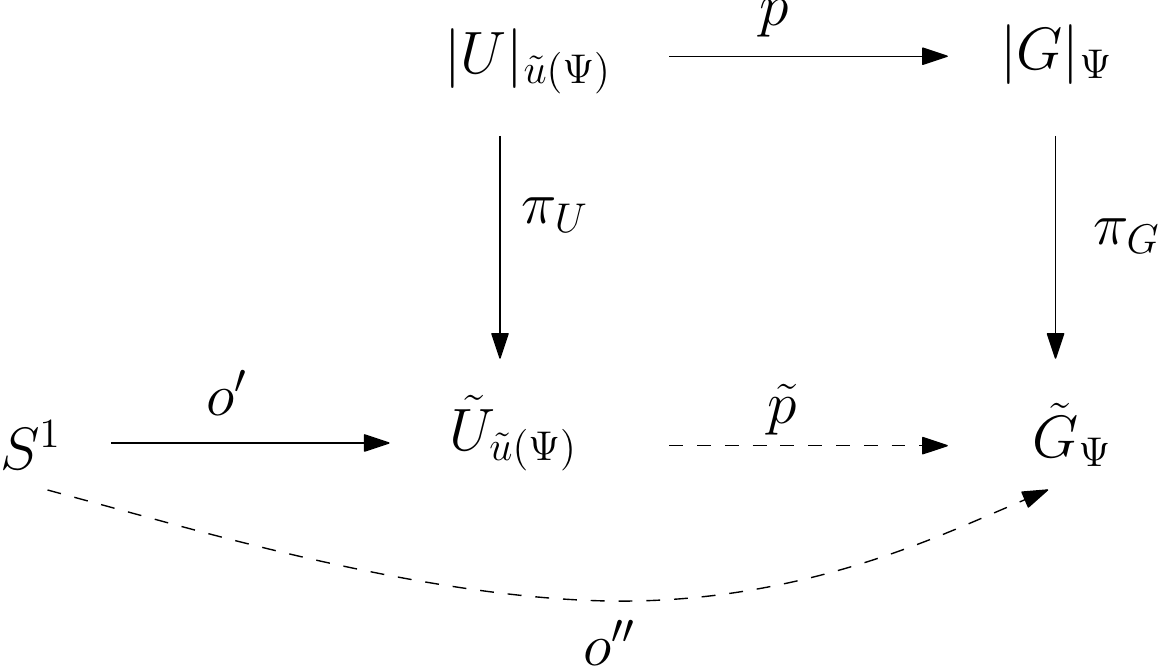}
\end{center}
\caption{The construction of the map $o''$.}\label{fig:maps}
\end{figure}

By \autoref{loc_fin_to_fin_sep_circuits}, \autoref{loc_fin_to_fin_sep_bonds}
and \autoref{never_meet_just_once}, we can apply \autoref{cir_c_scrawl} and deduce
\autoref{loc_fin_to_fin_sep}.

\subsection{From the case that the graph has a locally finite normal spanning tree to the countable case}

The aim of this subsection is to prove the following.

\begin{prop}\label{fin_sep_to_countable}
For every countable graph $G$ together with $\Psi_G\se \Omega(G)$ there is a graph $H$
having a locally finite normal spanning tree together with $\Psi_H\se \Omega(H)$  and $C\se E(H)$ such that
if $(H,{\Psi_H})$ induces a matroid $M$, then $(G,{\Psi_G})$ induces $M/C$.
\end{prop}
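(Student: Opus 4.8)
The plan is to replace $G$ by a locally finite graph, which then automatically has a locally finite normal spanning tree. First I would reduce to the case that $G$ is connected: a disconnected $G$ is handled componentwise, since the $\Psi$-system of a graph is the ``direct sum'' of the $\Psi$-systems of its components (one then reads ``normal spanning forest'' for ``normal spanning tree''; I suppress this). Now for each vertex $v$ of $G$ fix an enumeration $(e^v_i)_{i<d(v)}$, where $d(v)\le\omega$ is the degree of $v$, of the edges at $v$, and let $P_v$ be a path (a ray if $d(v)=\omega$) on fresh vertices $v^0,v^1,v^2,\ldots$ with edges $v^iv^{i+1}$; call these \emph{path edges}. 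Let $H$ be the graph whose vertex set is $\{v^i\mid v\in V(G),\ i<d(v)\}$, whose edges are all the path edges together with, for each edge $e=uv=e^u_i=e^v_j$ of $G$, a \emph{link edge} $u^iv^j$ (identified with $e$). Let $C$ be the set of all path edges and let $q\colon V(H)\to V(G)$, $v^i\mapsto v$, be the map contracting each $P_v$ to a point. Then every vertex of $H$ has degree at most $3$, so $H$ is locally finite; $H$ is connected; $H/C=G$ with $q$ the corresponding quotient of vertex sets; and $q^{-1}(v)=V(P_v)$. A connected locally finite graph has a normal spanning tree, and such a tree is a subgraph and hence locally finite; so $H$ has a locally finite normal spanning tree.

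\textbf{The set $\Psi_H$.} Next I would describe $\Omega(H)$. Since $G$ is simple, any two of the paths $P_v$ are joined by at most one link edge, so any finite family of them is joined by only finitely many link edges; as a ray of $H$ uses each edge at most once, a ray either meets infinitely many of the $P_v$ (type (a)) or has a tail inside a single $P_v$, necessarily with $d(v)=\omega$ (type (b)). A small refinement of this argument shows moreover that a type-(a) ray meets each $P_v$ in only finitely many vertices. Hence, after contracting $C$, a type-(a) ray $R$ gives a walk $q(R)$ in $G$ meeting each vertex finitely often and infinitely many vertices in all, from which one reads off an end of $G$; exactly as in \autoref{U_sep_property} and \autoref{f_nice}, a finite-separator argument shows that this end depends only on the end of $R$ and so defines an injection $\bar q$ from the set of type-(a) ends of $H$ into $\Omega(G)$, under which finite separators lift in both directions. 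I then set $\Psi_H:=\bar q^{-1}(\Psi_G)\se\Omega(H)$, so in particular every type-(b) end of $H$ lies in $\Psi_H\ct$. (If $\Psi_G$ is Borel then so is $\Psi_H$, as one needs for the Borel corollary via \autoref{Borel}.)

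\textbf{The reduction proof.} From here I would run the argument in parallel with the previous subsection. Define $p\colon|H|_{\Psi_H}\to|G|_{\Psi_G}$ by sending $v^i$ and every interior point of a path edge of $P_v$ to $v$, each link edge affinely onto the corresponding edge of $G$, and each end $\omega\in\Psi_H$ to $\bar q(\omega)$; as in \autoref{conti} this is continuous, the only point to watch being, just as there, that the fibres $P_v$ may be infinite --- harmless, since continuity is tested only against $\epsilon$-balls and basic end-neighbourhoods $\hat C(S,p(x))$, and such a neighbourhood pulls back to a set containing $\hat D(q^{-1}(S),x)$. Now suppose $(H,\Psi_H)$ induces a matroid $M$. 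First, $b\se E(G)$ is an $M/C$-cocircuit if and only if it is a $\Psi_G\ct$-bond of $G$: an $M/C$-cocircuit is a $\Psi_H\ct$-bond of $H$ avoiding $C$, hence a bond of $G=H/C$, and if some $\omega\in\Psi_G$ lay in its closure in $|G|_{\Psi_G}$ then, lifting a suitable comb (\autoref{spine}) through the splitting as in \autoref{comb_lift}, the end $\bar q^{-1}(\omega)\in\Psi_H$ would lie in its closure in $|H|_{\Psi_H}$, contradicting $\Psi_H$-boundedness; the converse is symmetric. This bond correspondence together with \autoref{vertices_equivalent} shows, exactly as in \autoref{tilde_p_conti}, that $p$ descends to a continuous surjection $\widetilde p\colon\widetilde H_{\Psi_H}\to\widetilde G_{\Psi_G}$ with $\widetilde p\circ\pi_H=\pi_G\circ p$. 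Second, every $M/C$-circuit $o$ through an edge $e$ contains a $\Psi_G$-circuit of $G$ through $e$: lift $o$ to an $M$-circuit $o'$ with $o\se o'\se o\cup C$ by \autoref{rest_cir}, push $o'$ forward by $\widetilde p$, and extract from the resulting path between the endvertices of $e$ an arc, using the standard fact that a path in a Hausdorff space contains an arc --- just as in \autoref{loc_fin_to_fin_sep_circuits}. By these two facts, \autoref{never_meet_just_once} and \autoref{cir_c_scrawl} (applied with $\Ccal$ the set of $\Psi_G$-circuits, $\Dcal$ the set of $\Psi_G\ct$-bonds, and the matroid $M/C$), the pair $(G,\Psi_G)$ induces $M/C$, as required.

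\textbf{Main obstacle.} The delicate part is the end analysis of the second step: checking that a ray of $H$ cannot oscillate forever among finitely many of the $P_v$, nor meet a single $P_v$ infinitely often while being of type (a), that $\bar q$ is well-defined with finite separators lifting in both directions, and --- most importantly --- that collapsing all the new ends of $H$ into $\Psi_H\ct$ does not destroy the circuit correspondence. These are precisely the analogues of the subtle points behind \autoref{f_nice}, \autoref{conti} and \autoref{tilde_p_conti}; the vertex-split graph $H$ is less tree-like than the Undomination graph, so a little more care is needed, but no essentially new idea is required.
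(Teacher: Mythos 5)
There is a genuine gap: your construction of $H$ by splitting every vertex into a path destroys the end structure of $G$, and the map $\bar q$ you rely on is not well defined. Concretely, let $G$ consist of two disjoint rays $R=r_0r_1\ldots$ and $R'=r_0'r_1'\ldots$ together with one further vertex $v$ adjacent to every $r_i$ and every $r_i'$. Then $G$ has two ends $\omega\ne\omega'$ (they are separated by the finite set $\{v\}$), both dominated by $v$. In your $H$, whatever enumeration of the edges at $v$ you fix, $P_v$ is a single ray carrying infinitely many ports linking it to the lift of $R$ and infinitely many linking it to the lift of $R'$; these disjoint link edges force the lift of $R$, the lift of $R'$ and $P_v$ all into one end $\epsilon$ of $H$. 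So a single end of $H$ contains type-(a) rays projecting to $\omega$ and others projecting to $\omega'$, and $\bar q^{-1}(\Psi_G)$ makes no sense when $\Psi_G=\{\omega\}$. The failure is not merely definitional: $H$ is locally finite with one end, so $(H,\Psi_H)$ induces a matroid $M$ for either choice of $\Psi_H$ by \autoref{Borel}; if $\epsilon\in\Psi_H$ then $M/C$ has the circuit $\{vr_0'\}\cup E(R')$, which is not a $\Psi_G$-circuit (its closure in $\widetilde G_{\Psi_G}$ is not a circle since $\omega'\notin\Psi_G$), while if $\epsilon\notin\Psi_H$ then the genuine $\Psi_G$-circuit $\{vr_0\}\cup E(R)$ lifts only to an acyclic subgraph of $H$ inside $\{vr_0\}\cup E(R)\cup C$ and is therefore $M/C$-independent. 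Either way the conclusion of the proposition fails for your $H$, $\Psi_H$, $C$. (A secondary, related error: your claim that a type-(a) ray meets each $P_v$ in only finitely many vertices is false --- a ray of $H$ can return to $P_v$ infinitely often via ever-new link edges while still meeting infinitely many other $P_w$.)

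The underlying issue is that a path expansion of a vertex cannot encode which groups of neighbours of $v$ must remain finitely separable from which others; that information is tree-shaped, not path-shaped. This is exactly why the paper proceeds in two stages. In its proof of this proposition it does \emph{not} make $H$ locally finite: it takes a normal spanning tree $T_G$ of $G$ and, only at vertices with infinitely many upper tree-neighbours, attaches a ray and redistributes those tree-children along it, leaving every chord of $G$ at its original endvertices. This keeps the ends and domination pattern intact (normal rays extend canonically, giving the inclusion $\tilde h$), at the price of $H$ still having vertices of infinite degree --- but with a locally finite normal spanning tree, which is all the proposition asks for. The reduction to a genuinely locally finite graph is then done separately by the Undomination graph $U(H,T_H)$ of the previous subsection, whose expansion of a vertex is a copy of the whole normal tree, so that finite separators of $G$ lift to finite separators $X\times X$ (\autoref{U_sep_property}) and domination is removed without merging ends. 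If you want a one-step construction, you would have to expand each vertex into a tree ordered compatibly with a normal spanning tree rather than into a path; at that point you have essentially reconstructed the paper's two-step argument.
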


First we need the following lemma.

\begin{lem}\label{fin_sep_to_countable_simply}
Let $G$ be a countable graph together with a normal spanning tree $T_G$.
Then there is a countable graph $H$ 
together with a locally finite normal spanning tree $T_H$ and $C\se E(T_H)$ such that
$G=H/C$ and $T_G=T_H/C$.
\end{lem}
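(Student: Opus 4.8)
The plan is to build $H$ from $G$ by replacing each vertex $v$ of $G$ with a path (or ray) $R_v$ long enough to carry all the children of $v$ in $T_G$ on separate vertices, while routing every remaining edge of $G$ through the topmost vertices of these paths; contracting each $R_v$ will then recover $G$ and $T_G$. Concretely: fix a root of $T_G$; for each $v\in V(G)$ enumerate its children in $T_G$ as $c^v_1,c^v_2,\dots$, and let $R_v=v^{[0]},v^{[1]},\dots$ be a path with one vertex $v^{[j-1]}$ for each child $c^v_j$ (just the single vertex $v^{[0]}$ if $v$ is a leaf of $T_G$), rooted at $v^{[0]}$. Put $V(H)=\bigsqcup_v V(R_v)$, and let $E(H)$ consist of: the \emph{path edges} $v^{[i]}v^{[i+1]}$; a \emph{tree edge} $p^{[j-1]}c^{[0]}$ for each $T_G$-edge $pc$ with $p$ the parent of $c=c^p_j$; and an edge $u^{[0]}w^{[0]}$ for each non-tree edge $uw$ of $G$, where $u<_{T_G}w$ (such a comparison exists because $T_G$ is normal). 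Let $T_H$ be the subgraph of $H$ formed by the path edges together with the tree edges, and let $C$ be the set of all path edges.

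The structural bookkeeping is then routine. Contracting the $R_v$ collapses $T_H$ onto $T_G$ (its non-path edges become precisely the $T_G$-edges) and likewise collapses $H$ onto $G$ (the tree edges becoming the $T_G$-edges and the edges $u^{[0]}w^{[0]}$ becoming the non-tree edges of $G$); no loop or parallel edge is created since there is at most one edge of $H$ between any two distinct $R_v$ and $R_w$. Thus $G=H/C$, $T_G=T_H/C$, and $C\se E(T_H)$. The graph $T_H$ is a spanning tree of $H$: it clearly spans $V(H)$, it is acyclic since it arises from the tree $T_G$ by subdividing each vertex into a path, and it is connected because the $T_G$-path from the root to any $v$ lifts to a path in $T_H$ from $\mathrm{root}^{[0]}$ into $R_v$. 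Finally $T_H$ is locally finite: a vertex $v^{[i]}$ has at most two neighbours inside $R_v$, at most one further neighbour $c^{[0]}$ coming from a child placed at position $i$, and $v^{[0]}$ has one more edge to the path $R_p$ of its parent; the edges $u^{[0]}w^{[0]}$ do not lie in $T_H$, so they are irrelevant here. Also $H$ is countable, since each $R_v$ is.

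The real content is that $T_H$ is \emph{normal} in $H$, and this is exactly where normality of $T_G$ in $G$ is used. View $T_H$ as a rooted tree at $\mathrm{root}^{[0]}$; for $v\ne w$ one has $v^{[i]}\le w^{[j]}$ in $T_H$ iff $v<_{T_G}w$ and $i$ does not exceed the position at which the branch of $R_v$ leading toward $w$ departs from $R_v$. Each path edge joins comparable vertices trivially; a tree edge $p^{[j-1]}c^{[0]}$ is fine because the branch toward $c$ leaves $R_p$ at position $j-1$; and a non-tree edge, turned into $u^{[0]}w^{[0]}$ with $u<_{T_G}w$, is fine because position $0$ cannot exceed any position, so $u^{[0]}\le w^{[0]}$. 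Hence every edge of $H$ joins comparable vertices of $T_H$, as required, and the lemma follows. The one point that takes a moment to see — and the only place the argument could go wrong if handled carelessly — is that it suffices for $T_H$, not $H$, to be locally finite; this is what lets us park all non-tree edges of $G$ at the vertices $v^{[0]}$, which is precisely what keeps normality intact while preserving local finiteness of $T_H$.
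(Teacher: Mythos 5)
Your proof is correct and is essentially the construction the paper uses: spread the $T_G$-children of each problematic vertex along a new path or ray, keep the non-tree edges of $G$ attached at the bottom of that path so that comparability of edge-endpoints (normality) is preserved, and take $C$ to be the new path edges. The only cosmetic difference is that the paper performs this surgery only at vertices with infinitely many upper neighbours, leaving the original vertex in place as the bottom of the added ray (so non-tree edges need not be rerouted at all), whereas you do it uniformly at every vertex; both arguments turn on the same two observations you highlight, namely that only $T_H$ and not $H$ need be locally finite, and that parking the non-tree edges at the minimal vertex of each path keeps their endpoints comparable.
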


\begin{proof}[Proof of \autoref{fin_sep_to_countable_simply}.]
 First we construct $T_H$. Let $X$ be the set of those vertices of $T_G$ that have infinitely many upper neighbours. We obtain the tree $T'$ from $T_G$ by adding a ray $R_x$ starting at $x$ for every $x\in X$.

We obtain $T_H$ from $T'$ by replacing each edge of the type $vx$ where $v$ is an upper neighbour in $T'$ of $x$ by the edge $vx'$ for some $x'\in R_x-x$ in such a way that for all $x\in X$ all 
vertices in $R_x-x$ get degree $3$. This is possible by the choice of $X$.
Note that any $x\in X$ has degree at most $2$ in $T_H$, and hence $T_H$ is locally finite.
Let $C$ be the set of all those edges contained in some $R_x$. Then $C\se E(T_H)$ and $T_G=T_H/C$.

Note that $V(G)\se V(T_H)$. We obtain $H$ from $T_H$ by adding all edges $e\in E(G)\sm E(T_G)$.
It is straightforward to check that $G=H/C$ and $T_H$ is normal in $H$.
This completes the proof.
\end{proof}

\begin{proof}[Proof of \autoref{fin_sep_to_countable}.]
First note that every countable graph has a normal spanning tree \cite{DiestelBook10}.
Hence we may pick a normal spanning tree $T_G$ of $G$.

By \autoref{fin_sep_to_countable_simply}, 
there is a countable graph $H$ 
together with a locally finite normal spanning tree $T_H$ and $C\se E(T_H)$ such that
$G=H/C$ and $T_G=T_H/C$.

Every normal ray $R$ of $G$ starting at some vertex $v\in V(G)$ extends to a unique normal ray $h(R)$ starting at the same vertex $v$ and that is included in $R\cup C$.

It is straightforward to check that $R$ and $R'$ belong to the same end of $G$ if and only if $h(R)$ and $h(R')$ belong to the same end of $H$.
This defines an inclusion $\tilde h$ from the ends of $G$ into the ends of $H$.
We let $\Psi_H=\tilde h(\Psi_G)$. 

We define the map $p:|H|_{\Psi_H}\to |G|_{\Psi_G}$ 
to be $\tilde h^{-1}$ on the ends, map $R_x$ to $x$ for every $x\in X$, and to be the identity everywhere else.
As in the proof of \autoref{loc_fin_to_fin_sep}, we show the following.

\begin{lem}\label{conti_count}
 $p$ is continuous.
\end{lem}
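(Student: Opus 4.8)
The plan is to follow the proof of \autoref{conti} closely: we fix an open set $O\se |G|_{\Psi_G}$ and a point $z\in p^{-1}(O)$ and exhibit a basic open neighbourhood of $z$ inside $p^{-1}(O)$, distinguishing the cases that $z$ is an interior point of an edge, a vertex, or an end of $H$. The first two cases are routine. If $z$ is an interior point of an edge $e$: when $e\in C$, say $e\in E(R_x)$, the map $p$ is constant with value $x$ on all of $R_x$, so $x=p(z)\in O$ and every sufficiently small $\epsilon$-ball about $z$ maps into $\{x\}\se O$; when $e\notin C$, $p$ restricts to the identity on the interior of $e$, so a small ball about $z$ works. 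If $z$ is a vertex of $H$, then each edge of $C$ incident with $z$ is crushed by $p$ to the vertex $p(z)$ and every other edge incident with $z$ is mapped identically onto the corresponding edge of $G$ at $p(z)$, so $p(B_\epsilon(z))\se B_\epsilon(p(z))$ for small $\epsilon$, and we pick $\epsilon$ with $B_\epsilon(p(z))\se O$.

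The substantive case is $z=\omega_H$ an end of $H$, so $\omega_H\in\Psi_H$; put $\omega_G=p(\omega_H)=\tilde h^{-1}(\omega_H)\in\Psi_G$ and fix a basic open neighbourhood $\hat C(S,\omega_G)\se O$ with $S$ a finite vertex set of $G$. First I would replace $S$ by a neighbourhood coming from the normal spanning tree $T_G$: writing $\lceil v\rceil_{T}$ for the set of vertices $\le v$ in the tree order of a normal tree $T$, and letting $R_G$ be the normal ray of $\omega_G$ in $T_G$, choose $t\in V(R_G)$ so far along that its $R_G$-successor $t^+$ avoids the finite set $\bigcup_{v\in S}\lceil v\rceil_{T_G}$ and lies in $C(S,\omega_G)$. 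Then the up-closure of $t^+$ in $T_G$ is disjoint from $S$, is connected, and meets $R_G$, hence is contained in $C(S,\omega_G)$; by normality of $T_G$ it is exactly the component $C(\lceil t\rceil_{T_G},\omega_G)$, so $\hat C(\lceil t\rceil_{T_G},\omega_G)\se\hat C(S,\omega_G)$ and we may assume $S=\lceil t\rceil_{T_G}$. Since $t^+\in V(G)\se V(H)$ also lies on the normal ray $h(R_G)$ of $\omega_H$ in $T_H$, the set $S'=\lceil t^+\rceil_{T_H}$ is a finite vertex set of $H$ and $\hat C(S',\omega_H)$ is a basic open neighbourhood of $\omega_H$; the goal is $p\bigl(\hat C(S',\omega_H)\bigr)\se\hat C(\lceil t\rceil_{T_G},\omega_G)$.

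This is the heart of the argument. By normality of $T_H$, the component $C(\lceil t^+\rceil_{T_H},\omega_H)$ is the up-closure in $T_H$ of the $h(R_G)$-successor $u$ of $t^+$, so every vertex $w$ of it satisfies $w>t^+$ in $T_H$. If $w\in V(G)$, then contracting $C$ in the root-to-$w$ path of $T_H$ (which contains $t^+$) yields the root-to-$w$ path of $T_G=T_H/C$, which still contains the $G$-vertex $t^+$; hence $w>t^+$ in $T_G$ and $p(w)=w\in C(\lceil t\rceil_{T_G},\omega_G)$. If $w$ is a new vertex lying on $R_x-x$ for some $x\in X$, then $p(w)=x$ and $x\le w$ in $T_H$, while the root-to-$w$ path of $T_H$ meets $V(G)$ only inside $\lceil x\rceil_{T_H}$ (it runs through $\lceil x\rceil_{T_H}$ and then up $R_x$); as $t^+\le w$ and $t^+\in V(G)$, this gives $t^+\in\lceil x\rceil_{T_H}$, and contracting $C$ as before yields $t^+\le x$ in $T_G$, so $p(w)=x\in C(\lceil t\rceil_{T_G},\omega_G)$. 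The requirements on inner points of edges incident with the component, and on ends represented in it, then follow from this: $p$ sends the two endpoints of an $H$-edge not in $C$ to the two endpoints of the corresponding $G$-edge and crushes each $C$-edge to a vertex, and an end $\omega'_H\in\Psi_H$ represented by a ray in the component has $p(\omega'_H)=\tilde h^{-1}(\omega'_H)$ represented by its normal ray $R'_G$ in $T_G$, which passes through $t^+$ (because its image $h(R'_G)$ does, and $h(R'_G)$ meets $V(G)$ exactly in $V(R'_G)$) and hence lies eventually in $C(\lceil t\rceil_{T_G},\omega_G)$.

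The main obstacle is precisely that a finite separator of $G$ does not in general lift to a finite separator of $H$: the rays $R_x$ for $x\in S$, which $p$ crushes to single points, cannot be cut by removing finitely many vertices of $H$, so one cannot simply transport $S$ as in \autoref{conti} (where the analogue $X\times X$ was available). The remedy, exploited above, is to work throughout with separators of the form $\lceil t\rceil$ on both sides, using that the contraction $T_G=T_H/C$ from \autoref{fin_sep_to_countable_simply}, the associated tree orders, and the ray-extension map $h$ from the proof of \autoref{fin_sep_to_countable} are all compatible with the inclusion $V(G)\hookrightarrow V(H)$.
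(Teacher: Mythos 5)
Your proof is correct and takes essentially the same approach as the paper: the vertex and edge cases are dispatched as in \autoref{conti}, and for an end the separator $S$ is replaced by the down-closure in $T_H$ of a vertex of $G$ chosen far enough along the normal ray of $T_G$ --- exactly the paper's set $S_v$. The paper's own argument is much terser; your write-up simply supplies the verification, via the compatibility of the tree orders under $T_G=T_H/C$, that this down-closure works.
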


\begin{proof}
Let $O$ be some open set in $|G|_{\Psi_G}$.
Let $y\in p^{-1}(O)$.
If $y$ is a vertex or an interior point of an edge, then
$p^{-1}(O)$ includes an open neighbourhood around $y$ as in the proof of \autoref{conti}.
 
If $y$ is an end in $\Psi_H$, 
then $O$ includes a basic open set of the form $ \hat C(S,\tilde h^{-1}(y))$.
We pick $v\in V(G)$ such that in $T_G$ it separates $S$ from $\tilde h^{-1}(y)$.
Note that this is possible since $S$ is finite.

Then $\hat C(S_v,y)\se p^{-1}(O)$ where $S_v$ is the down-closure of $v$ in $T_H$.
This completes the proof of the continuity of $p$.
\end{proof}

Now assume that $(H,{\Psi_H})$ induces a matroid $M$. 
The proofs of Lemmata \ref{loc_fin_to_fin_sep_bonds}, \ref{tilde_p_conti} and \ref{loc_fin_to_fin_sep_circuits} extend immediately to our setting.
Hence we can apply the proof of \autoref{loc_fin_to_fin_sep} from the last section
to conclude that $(G,{\Psi_G})$ induces $M/C$.
\end{proof}


\section{Applications}\label{apps}

We are now in a position to begin applying our main results, to answer some of the basic questions about matroids discussed in the introduction. We begin by showing that there are as many countable tame matroids as there could possibly be: we prove that there are $2^{2^{\aleph_0}}$ non-isomorphic countable tame matroids with no $M(K_4)$-minor and no $U_{2,4}$-minor (\autoref{non_iso_cor} from the Introduction).

\begin{proof}
First we outline the construction of the $2^{2^{\aleph_0}}$ non-isomorphic matroids.
 Let $T$ be a tree with precisely one vertex of each finite degree $\geq 2$.
We will use the graph $G = T \times K_2$: that is, $G$ is built from two disjoint copies of $T$ by adding an edge between each vertex and its clone. We call the two copies of the vertex of degree $n$ $v_n$ and $v_n'$.
So for any $\Psi\se \Omega(G)$ the pair $(G,\Psi)$ induces a matroid $M(\Psi)$ 
by \autoref{T_2_lem}.

It suffices to show for any isomorphism $f:M(\Psi)\to M(\Psi')$ that $\Psi = \Psi'$.

The edge $v_nv_n'$ is in precisely $n$ circuits of length $4$. 
Since all edges not of the type $v_nv_n'$ are in precisely one circuit of length $4$,
the map $f$ maps $v_nv_n'$ to itself.

Let $e$ be some edge not of the type $v_nv_n'$. Then it is contained in a unique circuit of length $4$ which contains its clone and two other edges, say $v_iv_i'$ and $v_jv_j'$.
The edge $e$ cannot be distinguished from its clone and $f$ may map it to itself or to its clone but it cannot map it to some other edge because $f(e)$ must lie in a common $4$-circuit
with $v_iv_i'$ and $v_jv_j'$.

For every end $\omega$ of $G$, $\omega \in \Psi$ if and only if the unique double ray $D$ containing $v_2v_2'$ and with both ends in $\omega$ is a circuit of $M(\Psi)$. But by the above argument, each such $D$ is fixed by $f$.
Hence $\Psi = \Psi'$. 

Having shown that there are $2^{2^{\aleph_0}}$ non-isomorphic tame matroids,
it remains to show that none of them has $M(K_4)$ or $U_{2,4}$ as a minor.
Combining the fact that in these matroids every circuit-cocircuit intersection is even
by \autoref{tree_thin_sum}
with a result of \cite{BC:rep_matroids}, yields that they do not have a $U_{2,4}$-minor.

If $M(\Psi)$ had an $M(K_4)$-minor, we would be able to find a 2-separation of $G$ with at least two of the six edges of that minor on each side. But this would induce a 2-separation of $M(\Psi)$, which in turn would induce a separation of $M(K_4)$ with at least 2 edges on each side. Since there is no such 2-separation, there can be no $M(K_4)$ minor.
\end{proof}

A direct consequence of \autoref{non_iso_cor} is that there is no universal matroid for the class of countable planar matroids (\autoref{uni_cor} from the Introduction) since every countable matroid has at most $2^\omega$ many non-isomorphic minors
but the class of countable planar matroids has $2^{2^{\aleph_0}}$ many non-isomorphic members.

Finally, we prove that the countable binary matroids of branch-width at most $2$ are not well-quasi-ordered (\autoref{wqo} from the Introduction).

Throughout the rest of this section $2^\Nbb$ is endowed with the product topology. The next Lemma finds complicated subsets of $2^\Nbb$.

\begin{lem}\label{wqo_lem}
 There is a sequence of subsets $\Psi_n \se 2^\Nbb$ with the following properties.
\begin{enumerate}
 \item Each $\Psi_n$ has cardinality $2^{\aleph_0}$.  \label{cond1}
\item There do no not exist $i<j\in \Nbb$ and an injective continuous map $f:2^\Nbb\to 2^\Nbb$ such that $f(\Psi_i)\se \Psi_j$. \label{cond2}
\end{enumerate}
\end{lem}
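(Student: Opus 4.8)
The plan is to build the $\Psi_n$ as "coding" sets that record an integer parameter $n$ in a way that is invariant under injective continuous maps. The natural candidate to get robustness against continuous maps is to use topological complexity (Borel/difference hierarchy rank) as the invariant: if $\Psi_i$ is "more complicated" than $\Psi_j$ in a sense preserved under continuous preimages, then no injective continuous $f$ can satisfy $f(\Psi_i)\subseteq\Psi_j$ — because $\Psi_i = f^{-1}(f(\Psi_i)) \cap \dots$ would then be at most as complicated as $\Psi_j$. The subtlety is that we are pushing forward ($f(\Psi_i)\subseteq\Psi_j$) rather than pulling back, so $f$ being injective and continuous is used to relate $\Psi_i$ to $\Psi_j$ via $f^{-1}$: indeed $f^{-1}(\Psi_j)\supseteq \Psi_i$ and also $f^{-1}(\Psi_j^{\complement})\supseteq f^{-1}(f(\Psi_i^{\complement}))\supseteq$... — wait, that last inclusion fails without surjectivity. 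So the cleaner route is: since $f$ is continuous and injective on the compact(-ish) or at least Polish space $2^{\Nbb}$, one works instead with a separation-type invariant. I would use the following classical fact: for each $n$ there are disjoint $\boldsymbol{\Sigma}^0_1$-type "test pairs" whose $n$-fold Wadge-style combination cannot be reduced into the $(n-1)$-fold one even under injective continuous maps in the pushforward direction, because injective continuous images of Borel sets are Borel and, more importantly, one can track the number of "alternations" needed.

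Concretely, first I would fix a countable dense set and for each $n$ define $\Psi_n$ using $n$ independent coordinates each carrying a copy of a fixed set $D\subseteq 2^{\Nbb}$ that is $\boldsymbol{\Sigma}^0_2$-complete (or simply: $D$ = the set of sequences that are eventually $0$, which is $\boldsymbol{\Sigma}^0_2$ and has an $\aleph_0$-fold self-similarity we can exploit), combined by a parity or "exactly one" condition so that $\Psi_n$ sits properly at level roughly $n$ of the difference hierarchy over $\boldsymbol{\Sigma}^0_2$. Property~(\ref{cond1}) is then immediate: each $\Psi_n$, being a nontrivial Borel set built from $\aleph_0$ many coordinates with a product-flavoured definition, has a perfect subset, hence cardinality $2^{\aleph_0}$; I would exhibit an explicit Cantor scheme inside $\Psi_n$.

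For property~(\ref{cond2}), suppose toward a contradiction that $i<j$ and $f:2^{\Nbb}\to 2^{\Nbb}$ is injective and continuous with $f(\Psi_i)\subseteq\Psi_j$. Since $2^{\Nbb}$ is compact Hausdorff and $f$ is a continuous injection, $f$ is a homeomorphism onto its closed image $K=f(2^{\Nbb})$. Then $\Psi_i = f^{-1}(f(\Psi_i))$ and $\Psi_i^{\complement}$ (inside $2^{\Nbb}$) equals $f^{-1}(K\setminus f(\Psi_i))$, so via the homeomorphism $f$ the pair $(\Psi_i,\Psi_i^{\complement})$ is carried to the pair $(f(\Psi_i),\, K\setminus f(\Psi_i))$ with $f(\Psi_i)\subseteq\Psi_j$ and $K\setminus f(\Psi_i)\subseteq \Psi_j^{\complement}\cup(2^{\Nbb}\setminus K)$. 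Because $K$ is closed (hence $\boldsymbol{\Pi}^0_1$), this shows $\Psi_i$ is the preimage under the homeomorphism $f$ of a set that is, relative to $K$, of the same difference-hierarchy complexity as $\Psi_j$. Hence $\Psi_i$ would be at worst at level $j$ of the relevant hierarchy relative to a closed set — but I will have arranged that $\Psi_i$ is properly at a level strictly above what level-$j$-relative-to-closed sets can achieve once $i$ is taken large compared to $j$. (Reindexing if necessary, it suffices to have the complexity of $\Psi_n$ strictly increasing with $n$ and "stable" under intersecting with closed sets; standard difference-hierarchy completeness gives this.)

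The main obstacle I anticipate is making the complexity-increase argument survive the pushforward-plus-injectivity framing rather than the usual pullback framing — i.e., pinning down exactly which invariant of $\Psi_i$ is preserved when we only know $f(\Psi_i)\subseteq\Psi_j$ (not equality, not surjectivity of $f$). The resolution above is to upgrade "continuous injection on a compact space" to "homeomorphism onto a closed subspace", which converts the containment into a statement about $\Psi_i$ as a relative Borel set inside $K$, and then to use that the difference hierarchy over $\boldsymbol{\Sigma}^0_2$ does not collapse and is not lowered by restricting to a closed subspace. Getting the quantifiers right (strictly increasing, robust complexity; $i$ vs.\ $j$ rather than needing $i$ much larger than $j$) is where the care is needed; if a clean "$i<j$" version is awkward, I would instead build a single doubly-indexed family or simply observe that passing to a subsequence of the $\Psi_n$ is harmless for the intended application to \autoref{wqo}, since a subsequence of a bad sequence for well-quasi-ordering is still bad.
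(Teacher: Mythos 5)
Your plan cannot be repaired, for a structural reason worth internalising: any Borel (indeed analytic) set $\Psi_j\subseteq 2^{\Nbb}$ of cardinality $2^{\aleph_0}$ contains a nonempty perfect set $P$, and $P$, being a nonempty perfect compact zero-dimensional metrizable space, is homeomorphic to $2^{\Nbb}$ itself. The resulting homeomorphism $h\colon 2^{\Nbb}\to P$ is an injective continuous map with $h(\Psi_i)\subseteq P\subseteq \Psi_j$ for \emph{every} subset $\Psi_i$ of $2^{\Nbb}$, so condition~(2) fails outright whenever $\Psi_j$ is Borel of size continuum. Hence the sets demanded by the lemma must lack the perfect set property and are necessarily non-Borel; no difference-hierarchy or Wadge-style complexity invariant can produce them. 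There is also a local error in your reduction: from $f(\Psi_i)\subseteq\Psi_j$ you infer $K\setminus f(\Psi_i)\subseteq\Psi_j\ct\cup(2^{\Nbb}\setminus K)$, which (since $K\setminus f(\Psi_i)=f(\Psi_i\ct)\subseteq K$) amounts to $f(\Psi_i\ct)\cap\Psi_j=\varnothing$. That does not follow: the hypothesis says nothing about where $f$ sends $\Psi_i\ct$, so you only get $\Psi_i\subseteq f^{-1}(\Psi_j)$, not equality, and the complexity of $\Psi_i$ is not controlled by that of $\Psi_j$.

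The paper's proof is instead a Bernstein-style transfinite diagonalisation using the Axiom of Choice. Since $2^{\Nbb}$ is second countable and Hausdorff, there are only $2^{\aleph_0}$ continuous injections $f\colon 2^{\Nbb}\to 2^{\Nbb}$. Having built $\Psi_1,\ldots,\Psi_n$, one well-orders all pairs $(i,f)$ with $i\le n$ in order type $\kappa$, the least ordinal of cardinality $2^{\aleph_0}$, and at stage $\alpha$ chooses two fresh points $s_\alpha,t_\alpha$ with $t_\alpha\in f_\alpha(\Psi_{i_\alpha})$; this is possible because $f_\alpha(\Psi_{i_\alpha})$ has cardinality $2^{\aleph_0}$ (injectivity plus the induction hypothesis~(1)) while fewer than $2^{\aleph_0}$ points have been consumed. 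Setting $\Psi_{n+1}=\{s_\alpha\mid\alpha<\kappa\}$ makes each $t_\alpha$ a witness that $f_\alpha(\Psi_{i_\alpha})\not\subseteq\Psi_{n+1}$, while $|\Psi_{n+1}|=2^{\aleph_0}$. This fits the paper's broader theme that the pathological $\Psi$'s are exactly the choice-dependent, non-Borel ones.
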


Before proving this lemma, let us see how we can deduce \autoref{wqo} from it.

\begin{proof}[Proof that \autoref{wqo_lem} implies \autoref{wqo}.]
As in \autoref{T_2_lem}, we consider the graph $G=T_2\times K_2$.
Note that $\Omega(G)$ and $2^\Nbb$ are homeomorphic.
Let $M_n$ be the $\Psi$-matroid $M(G_n,\Psi_n)$ with $G_n=G$, which is a matroid as shown in that Lemma.
It is easy to check that $G$ has branch-width $2$, so $M_n$ has branch-width $2$ as well.

Suppose for a contradiction that there are $i<j$ such that $M_i \cong M_j/C\backslash D$.
By \autoref{wqo_lem}, it remains to find an injective continuous map $f\colon \Omega(G_i)\to \Omega(G_j)$ such that $f(\Psi_i)\se \Psi_j$. 

For $\omega\in \Omega(G_i)$, we pick a double ray $D(\omega)$ having only $\omega$ in its closure. Then the edge set of $D(\omega)$ considered as an edge set of $G_j$ has only a single end in its closure. Indeed, if there were two ends in its closure, then there is a  $2$-separation of $M_j$ having infinitely many edges from $D(\omega)$ on both sides. This then would give rise to a $2$-separation of $M_i$ with infinitely many edges from $D(\omega)$ on both sides, which is impossible.

This motivates the following definition: we define $f(\omega)$ to be the unique end of $G_j$ in the closure of $D(\omega)$. Note that this does not depend on the choice of $D(\omega)$
since any two such choices differ by finitely many edges only.

To see that $f(\Psi_i)\se \Psi_j$, note that for every $\omega\in\Psi_i$ the set $D(\omega)$ 
extends to a circuit of $M_j$ using additionally only edges from $C$. This circuit has only ends from $\Psi_j$ in the closure. Hence the unique end in the closure of $D(\omega)$ must be in $\Psi_j$.

To see that $f$ is continuous, let $\omega\in \Omega(G_i)$ and let $\hat C_\epsilon(S,f(\omega))$ be a basic open neighbourhood of $f(\omega)$. 
Then $S$ defines a separation of $M_j$ of finite order with the edges of $C(S,\omega)$ on one side.
Then the set $F$ of all these edges without $C\cup D$ forms the side of a separation of finite order in $M_i$, which gives rise to a vertex separator $S'$ in $G_i$ (Formally, $S'$ consists of those vertices that are incident with one edge in $F$ and one outside). Then 
$\hat C_\epsilon(S',\omega)\se f^{-1}(\hat C_\epsilon(S,f(\omega)))$.
Hence $f$ is continuous.

It remains to show that $f$ is injective.
So suppose for a contradiction that there are $\omega_1\neq\omega_2$ in $\Omega(G_i)$
that are mapped to the same end $\tau$ in $\Omega(G_j)$.
We may assume that we picked $D(\omega_1)$ and $D(\omega_2)$ such that they are 
vertex-disjoint.

We shall construct a $2$-separation $(A,B)$ of $M_j$ such that 
$A$ and $B$ both include an edge from each of $D(\omega_1)$ or $D(\omega_2)$.
For this, we pick some $e_1\in D(\omega_1)$ and some $e_2\in D(\omega_2)$.
Then in $G_j$, there are two vertices $v$ and $w$ such that 
the components of $G-v-w$ containing $e_1$ or $e_2$ do not have $\tau$ in their closure.
Let $B$ consist of those edges of $G$ that are only 
incident with $v$, $w$ or vertices of the component $G/\{v,w\}$ that has $\tau$ in its closure.
Let $A=E(M_j)\sm B$.

Since $A\sm(C\cup D)$ and $B\sm(C\cup D)$ both have at least $2$ elements,
 $(A\sm(C\cup D), B\sm(C\cup D) )$ is a $2$-separation of $M_j/C\backslash D$.
Since $M_i \cong M_j/C\backslash D$, this gives rise to a $2$-separation
of $M_i$ having on each side at least one edge from each of $D(\omega_1)$ and $D(\omega_2)$.

This gives rise to a $2$-separation $(A',B')$ in $G_i$, and it induces a separation on the closure of $D(\omega_1)$ in $M_i$. 
Since this closure is $2$-connected, this separation has order $2$
and thus $D(\omega_1)$ includes the separator of $(A',B')$.
Similarly,  $D(\omega_2)$ includes this separator,
contradicting the fact that $D(\omega_1)$ and $D(\omega_2)$ are vertex-disjoint.
This completes the proof.

\end{proof}

\begin{proof}[Proof of \autoref{wqo_lem}.]

We build the sets $\Psi_n$ recursively. So let us suppose that $\Psi_1,\ldots,\Psi_n$
are already constructed such that they satisfy (\ref{cond1}) and (\ref{cond2})
for all $j\leq n$.

Let $K$ be the set of pairs $(i,f)$ where $i\leq n$ and $f:2^\Nbb\to 2^\Nbb$
is a continuous injective function.

Since $2^\Nbb$ has a countable basis as a topological space, the set $K$ has size $2^{\aleph_0}$. Let $\kappa$ be the least ordinal
of size $2^{\aleph_0}$.
We can well-order $K$ as $((i_\alpha,f_\alpha)|\alpha<\kappa)$.

For every $\alpha<\kappa$ we pick two elements $s_\alpha,t_\alpha\in 2^\Nbb$
such that all the $s_\alpha$ and $t_\alpha$ are distinct and
$t_\alpha\in  f_\alpha(\Psi_{i_\alpha})$.
This is possible as $|2^\Nbb|=2^{\aleph_0}$.

We let $\Psi_{n+1}=\{s_\alpha|\alpha<\kappa\}$.
Then $|\Psi_{n+1}|=2^{\aleph_0}$ since all the $s_\alpha$ are disjoint.
Let $f$ be some continuous function $f:2^\Nbb\to 2^\Nbb$ and $i<n+1$.
Then there is some $\alpha<\kappa$ such that $f_\alpha=f$ and $i_\alpha=i$.
We ensured at step $\alpha$ that $t_\alpha\in f_\alpha(\Psi_{i_\alpha})$ and hence
$f(\Psi_i)\not\se \Psi_{n+1}$,
yielding (\ref{cond2}) for all $j\leq n+1$.
This completes the proof.
\end{proof}

\section{Acknowledgements}

We are grateful to Julian Pott for some useful discussions of an earlier version of these ideas.

\bibliographystyle{plain}
\bibliography{literatur}

\end{document}